\def\@abssec#1{\vspace{.05in}\footnotesize \parindent .2in
{\bf #1. }\ignorespaces}
\newtheorem{theorem}{Theorem}[section]
\newtheorem{lemma}[theorem]{Lemma}
\newtheorem{proposition}[theorem]{Proposition}
\newtheorem{definition}[theorem]{Definition}
\newtheorem{remark}[theorem]{Remark}
\newcounter{hypo}
\DeclareMathOperator{\divg}{div}
\newcommand{\R}{\ensuremath{\mathbb{R}}}
\newcommand{\RR}{\ensuremath{\mathcal{R}}}
\newcommand{\BB}{\ensuremath{\widetilde{\mathcal{B}}}}
\newcommand{\bb}{\ensuremath{\mathcal{B}}}
\newcommand{\Z}{\ensuremath{\mathbb{Z}}}
\newcommand{\N}{\ensuremath{\mathbb{N}}}
\newcommand{\Id}{\ensuremath{\mathrm{Id}}}
\newcommand{\beqs}{\begin{equation*}}
\newcommand{\eeqs}{\end{equation*}}
\renewcommand{\div}{{\rm div}\,}
\newcommand{\tend}{\bibliographystyle{plain}\bibliography{ccrituniq}\end{document}}
\newcommand{\dd}{\mathrm{d}}
\allowdisplaybreaks \numberwithin{equation}{section}
\begin{document}

\title[Temperature fronts for 2D Viscous Boussinesq system]{Global regularity of non-diffusive temperature fronts for the 2D viscous Boussinesq system}
\author{Dongho Chae}
\address{Department of Mathematics, Chung-Ang University, Dongjak-gu, Heukseok-ro 84, Seoul 06974, Republic of Korea}
\email{dchae@cau.ac.kr}
\author{Qianyun Miao}
\address{School of Mathematics and Statistics, Beijing Institute of Technology, Beijing 100081, P. R. China}
\email{qianyunm@bit.edu.cn}
\author{Liutang Xue}
\address{Laboratory of Mathematics and Complex Systems (MOE), School of Mathematical Sciences, Beijing Normal University, Beijing 100875, P.R. China}
\email{xuelt@bnu.edu.cn}
\subjclass[2010]{Primary 76D03, 35Q35, 35Q86}
\keywords{Boussinesq system, temperature patch problem, global regularity, striated estimates}
\date{\today}
\maketitle

\begin{abstract}
  In this paper we address the temperature patch problem of the 2D viscous Boussinesq system without heat diffusion term.
The temperature satisfies the transport equation and the initial data of temperature is given in the form of non-constant patch,
usually called the temperature front initial data.
Introducing  a  good unknown and applying the method of striated estimates,
we prove that our partially viscous Boussinesq system admits a unique global regular solution and the initial $C^{k,\gamma}$ and $W^{2,\infty}$ regularity of the temperature front boundary with $k\in \Z^+ = \{1,2,\cdots\}$ and $\gamma\in (0,1)$
will be preserved for all the time. In particular, this naturally extends the previous work by Danchin $\&$ Zhang (2017)
and Gancedo $\&$ Garc\'ia-Ju\'arez (2017). In the proof of the persistence result of higher boundary regularity,
we introduce the striated type Besov space $\bb^{s,\ell}_{p,r,W}(\R^d)$
and establish a series of refined striated estimates in such a function space, which may have its own interest.
\end{abstract}

\section{Introduction}
We consider the 2D Boussinesq system without heat diffusion
\begin{equation}\label{BoussEq2D}
\begin{split}
\begin{cases}
  \partial_t \theta + u\cdot \nabla \theta = 0, \\
  \partial_t u + u\cdot\nabla u - \nu \Delta u + \nabla p = \theta e_2, \\
  \mathrm{div}\,u=0, \\
  (\theta, u)|_{t=0}(x) = (\theta_0,u_0)(x),
\end{cases}
\end{split}
\end{equation}
where $(x,t)\in \R^2\times \R^+$, $e_2=(0,1)$, $\nu\geq 0$ is the kinematic viscosity, 
$u=(u_1,u_2)$ is the velocity vector field, while
the scalars $\theta$, $p$ denote the temperature and the pressure of the fluid, respectively.
Boussinesq system is widely used to model the natural convection phenomena in the ocean and atmospheric dynamics \cite{Maj03,Ped87},
and it also plays an important role in studying the Rayleigh-B\'enard problem \cite{ConD99}.
It arises from the density-dependent fluid equations by applying the so-called Boussinesq approximation which neglects the density dependence in all the terms but the buoyancy force due to the gravity.
One can refer to \cite{FN09} for a rigorous justification of the approximation from the complete Navier-Stokes-Fourier system. 

From the mathematical viewpoint, Boussinesq systems contain the incompressible Navier-Stokes and Euler equations as special cases,
and the 2D inviscid Boussinesq system is essentially identical to the 3D axisymmetric swirling Euler equations away from the axis \cite{MB02}.
Furthermore, the important vortex-stretching mechanism is present in both 2D and 3D Boussinesq systems. As pointed out in \cite{Mof01,Yud03},
the global well-posedness issue of (inviscid) Boussinesq systems is a major open problem in the theory of mathematical fluid dynamics.

Due to the physical relevance and mathematical importance, Boussinesq systems recently attracted a lot of attention and were intensely studied.
For the 2D viscous Boussinesq system \eqref{BoussEq2D} (i.e. $\nu>0$), Chae \cite{Cha06} and Hou, Li \cite{HouL05} independently proved the global well-posedness for regular initial data.
Later, 
Abidi and Hmidi \cite{AH07} considered less regular initial data $\theta_0\in B^0_{2,1}$ and $u_0 \in L^2\cap B^{-1}_{\infty,1}$,
and showed the global existence and uniqueness. Hmidi and Keraani \cite{HK07} established the global existence of weak solution to system \eqref{BoussEq2D}
with initial data $\theta_0\in L^2$, $u_0\in H^s$, $s\in [0,2)$, and furthermore Danchin and Paicu \cite{DanP09} resolved the uniqueness issue by using the para-differential calculus.
Hu, Kukavica and Ziane \cite{HKZ15} also obtained the persistence of regularity  result in various Sobolev spaces.

While for the 2D inviscid Boussinesq system (i.e. $\nu=0$ in \eqref{BoussEq2D}), so far the global regularity issue still remains an outstanding unsolved problem.
Numerical simulations once suggested global regularity for this system in a periodic domain \cite{ES94},
but recent numerical studies \cite{LH14b} proposed an important potential scenario of finite-time blowup in the bounded domain with smooth boundary.
Motivated by this singularity scenario, several 1D Boussinesq models \cite{CKY15,CHKLSY} and the modified 2D Boussinesq system \cite{KT18} admitting incompressibility were developed,
and the finite-time blowup of smooth solutions for these models has been rigorously justified. Recently, concerning the original 2D inviscid Boussinesq system on a spatial domain with an acute corner,
\cite{ElgJ20} constructed the finite-time blowup in Lipschitz norm for some locally well-posed solution with finite energy.
One can also see \cite{CCL19,ElgW16} for the interesting global stability results for the 2D inviscid Boussinesq system.

In the modeling the large scale atmospheric and oceanic flows, the viscosity and diffusion coefficients of the Boussinesq system are usually different in the horizontal and vertical directions.
For these scenarios, there are some global well-posedness results for the Boussinesq system with various anisotropic and partial dissipation
(one can refer to \cite{CaoW13,LLT13,LiT16} and the references therein).

Recently, there are also much attention on the so-called \emph{Boussinesq temperature patch problem} for the viscous Boussinesq system \eqref{BoussEq2D},
which is a free boundary problem of the system \eqref{BoussEq2D} with singular initial data $\theta_0 = 1_{D_0}$, i.e. the characteristic function of a simply connected bounded domain $D_0$.
In view of equation $\eqref{BoussEq2D}_1$ and the particle trajectory $X_t(x)$ given by
\begin{equation}\label{eq:flow0}
    \frac{\partial X_t(x)}{\partial t} = u ( X_t(x),t),\quad X_t(x)|_{t=0}=x,
\end{equation}
one can see that the patch structure of the temperature will be preserved so that $\theta(x,t) = 1_{D(t)}$ with $D(t)= X_t(D_0)$.
Thus a natural problem arises: whether the initial regularity of the patch boundary persists globally in time, e.g.,
\begin{align}\label{Ques-patch}
  \textrm{suppose $\partial D_0\in C^{k,\gamma}$, $k\in \Z^+$, $\gamma\in (0,1)$, whether or not $\partial D(t)\in C^{k,\gamma}$ for all time?}
\end{align}
In the above, we denote $\partial D(t)\in C^{k,\gamma}$ provided that there is a parametrization of the patch boundary $\partial D(t) = \big\{z(\alpha ,t)\in \R^2, \alpha\in \mathbb{S}^1=[0,1] \big\}$ with $z(\cdot, t)\in C^{k,\gamma}$.

Such  kind of patch problems were initiated in 1980s by studying the famous vorticity patch problem of the 2D Euler equations.
Although numerical simulations once suggested the possibility of finite-time singularity for this problem,
the global persistence result of initial $C^{k,\gamma}$-boundary regularity was proved by Chemin \cite{Chem88,Chem91}
using the paradifferential calculus and the striated regularity method.
A different proof of the same result was obtained by Bertozzi and Constantin \cite{BerC} applying a geometric lemma and the harmonic analysis techniques.
For the related density patch problem of the nonhomogeneous Navier-Stokes system, one can also see \cite{DanZ17b,GGJ18,LZ16,LZ19} for the global regularity persistence results.

Concerning the temperature patch problem, Danchin and Zhang \cite{DanZ17} firstly proved the global well-posedness of the viscous Boussinesq system \eqref{BoussEq2D} with rough initial data
$\theta_0\in B^{2/q-1}_{q,1}$, $q\in (1,2)$,
which admits the $C^{1,\gamma}$-temperature patch,
and then by using the striated estimates method, they showed that the $C^{1,\gamma}$-regularity of patch boundary is globally preserved in the 2D case as well as at the 3D case under an additional smallness condition.
Later, Gancedo and Gar\'cia-Ju\'arez \cite{GGJ17} in the 2D case gave a different proof of the $C^{1,\gamma}$-regularity persistence result,
and furthermore proved the global persistence of $W^{2,\infty}$- and $C^{2,\gamma}$-regularity of temperature patch boundary.
Meanwhile, the curvature of the temperature patch remained bounded for all the time, by taking advantage of new cancellations in the time-dependent Calder\'on-Zygmund operators.
The authors in \cite{GGJ20} extended the same global regularity results to the 3D Boussinesq temperature patch problem under a scaling-invariant smallness assumption of initial data,
and they also treated the temperature front initial data which is the temperature patch of non-constant values.

For more general patch-type solutions and related contour interface dynamics, they can be used to model many important physical phenomena arising from water waves, porous media or frontogenesis and so on,
and have been intensively studied in the last decades, and one can see \cite{CCFG13,CCFGG13,CCFGG19,CCFG16,CFMR,CouS14,CouS19,FIL16,GanS14,KRYZ16} and references therein for the recent progress.
In particular, the finite-time singularities were rigorously proved for Muskat system \cite{CCFG13,CCFG16}, free-surface Euler equations \cite{CCFGG13,CouS14}, free-surface Navier-Stokes equations \cite{CCFGG19,CouS19} and a modified SQG equation \cite{KRYZ16,GP21}.
\vskip0.1cm

In this paper we focus on the problem \eqref{Ques-patch} of the 2D viscous Boussinesq sytem \eqref{BoussEq2D} with initial temperature patch of non-constant values.
This setting describes the evolution of the temperature front governed by the fluid flow,
which is an important physical scenario in geophysics \cite{Gil82,Maj03}.
Our main purpose is to show the $C^{k,\gamma}$-regularity propagation result of the temperature front boundary with any $k\in \Z^+=\{1,2,\cdots\}$,
which also naturally generalizes the results of \cite{DanZ17,GGJ17}.
\vskip0.1cm

Assume $\theta_0(x)=\bar{\theta}_0(x) 1_{D_0}(x)$ to be an initial temperature front, where $D_0\subset \R^2$ is a bounded simply connected domain with boundary $\partial D_0 \in C^{k,\gamma}(\R^2)$, $k\in \Z^+$, $\gamma\in (0,1)$.
We consider the level-set characterization of the domain $D_0$: there exists a function $\varphi_0\in C^{k,\gamma}(\R^2)$ such that
\begin{equation}\label{patch-ls-exp}
  \partial D_0 = \{x\in \R^2: \varphi_0(x) = 0\}, \quad D_0 = \{x\in \R^2: \varphi_0(x)>0\},
  \quad \textrm{and}\quad \textrm{$\nabla \varphi_0\neq 0$ on $\partial D_0$}.
\end{equation}
Then the boundary $\partial D_0$ can be parameterized as
\begin{equation}\label{patch-para-exp}
  z_0: \mathbb{S}^1 \mapsto \partial D_0\quad \textrm{with}\;\;  \partial_\alpha z_0(\alpha) = \nabla^\perp \varphi_0(z_0(\alpha)) =: W_0(z_0(\alpha)),
\end{equation}
with $\nabla^\perp = (-\partial_2,\partial_1)^T$.
In the sequel we also set the viscosity $\nu=1$ for brevity.

Our main results read as follows.

\begin{theorem}\label{thm:Bouss-patch1}
  Let $D_0 \subset \R^2$ be a bounded simply connected domain with boundary $\partial D_0 \in C^{1,\gamma}(\R^2)$, and $\theta_0 (x) = \bar{\theta}_0(x) 1_{D_0}(x)$
be the temperature front initial data with $\bar\theta_0\in L^\infty(\overline{D_0})$.
Let $u_0\in H^1(\R^2)$ be a divergence-free vector field. Then, there exists a unique global solution $(\theta,u)$
to the 2D Boussinesq system \eqref{BoussEq2D} such that for any $T>0$,
\begin{equation}
  u \in C(0,T; H^1(\R^2)) \cap L^2(0,T; H^2(\R^2)) \cap L^1(0,T; C^{1,\gamma}(\R^2)),\quad \forall \gamma \in (0,1),
\end{equation}
and
\begin{equation}
  \theta(x,t) = \bar{\theta}_0(X^{-1}_t(x)) 1_{D(t)}(x), \quad \textrm{with}\;\; \partial D(t) = X_t(\partial D_0) \in L^\infty(0,T;C^{1,\gamma}(\R^2)),
\end{equation}
where $X_t$ is the particle-trajectory generated by the velocity $u$ (see \eqref{eq:flow0} above)
and $X^{-1}_t$ is its inverse.

Moreover, the boundary of the temperature front has the following regularity persistence properties.
\begin{enumerate}[(1)]
\item If additionally, $\partial D_0 \in W^{2,\infty}(\R^2)$, $\bar{\theta}_0\in C^\mu(\overline{D_0})$, $\mu\in (0,1)$, and $u_0\in H^1\cap W^{1,p}(\R^2)$ with some $p>2$, we get
\begin{equation}\label{eq:bdr-W2inf}
  \partial D(t) \in L^\infty(0,T; W^{2,\infty}(\R^2)),
\end{equation}
and
$u\in  L^\rho(0,T; W^{2,\infty}(\R^2))$ with $1\leq \rho < \frac{2p}{p+2}$.
\item If additionally, $\partial D_0 \in C^{k,\gamma}(\R^2)$, $k\in \N \cap [2,\infty)$, $\gamma\in (0,1)$, $\bar{\theta}_0\in C^{k-2,\gamma}(\overline{D_0})$, and $u_0\in H^1\cap W^{1,p}(\R^2)$,
$(\partial_{W_0}u_0,\cdots, \partial_{W_0}^{k-1} u_0) \in W^{1,p}(\R^2)$ with some $p>2$, we obtain
\begin{equation}\label{eq:bdr-Ckgam}
  \partial D(t) \in L^\infty(0,T; C^{k,\gamma}(\R^2)).
\end{equation}
\end{enumerate}
\end{theorem}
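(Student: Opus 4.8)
\emph{Strategy of the proof.} The plan is to run Chemin's striated‑regularity program --- originally designed for the vortex patch of 2D Euler --- adapted to the Boussinesq coupling through a good‑unknown reformulation, the striating field being the transported level‑set normal. Let $\varphi(\cdot,t)$ solve $\partial_t\varphi+u\cdot\nabla\varphi=0$ with $\varphi|_{t=0}=\varphi_0$, and set $W(\cdot,t):=\nabla^\perp\varphi(\cdot,t)$; then $W$ is divergence‑free, stays tangent to $\partial D(t)=X_t(\partial D_0)$, satisfies the stretching equation $\partial_t W+u\cdot\nabla W=\partial_W u$ with $\partial_W:=W\cdot\nabla$, and $\inf_{\partial D(t)}|W|$ stays positive because $\nabla\varphi_0\neq0$ on $\partial D_0$ and the flow is bi‑Lipschitz. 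First I would record the basic a priori bounds: $\partial_t\theta+u\cdot\nabla\theta=0$ with $\div u=0$ gives $\|\theta(t)\|_{L^q}=\|\theta_0\|_{L^q}$ for all $q\in[1,\infty]$, and the energy identity together with $\|\theta(t)\|_{L^2}=\|\theta_0\|_{L^2}$ yields $u\in L^\infty_{\mathrm{loc}}(\R^+;H^1)\cap L^2_{\mathrm{loc}}(\R^+;H^2)$ with at most polynomial‑in‑time growth, which when $u_0\in W^{1,p}$, $p>2$, upgrades via parabolic maximal regularity and Sobolev embedding to $u\in L^\infty_{\mathrm{loc}}(\R^+;W^{1,p})\hookrightarrow L^\infty_{\mathrm{loc}}(\R^+;L^\infty)$.

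The obstruction to controlling $\nabla u$ is that the vorticity $\omega:=\partial_1u_2-\partial_2u_1$ obeys $\partial_t\omega+u\cdot\nabla\omega-\Delta\omega=\partial_1\theta$, whose forcing only lies in $L^\infty_t B^{-1}_{\infty,\infty}$. To cure this I introduce a good unknown of the form
\[
  G:=\omega-\partial_1(-\Delta)^{-1}\theta,\qquad\text{for which}\qquad \partial_t G+u\cdot\nabla G-\Delta G=-[\,u\cdot\nabla,\ \partial_1(-\Delta)^{-1}\,]\theta,
\]
the identity following from $\Delta\partial_1(-\Delta)^{-1}=-\partial_1$ and $\partial_t\theta=-u\cdot\nabla\theta$. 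Writing $u\cdot\nabla\theta=\div(u\theta)$, the right‑hand side is a commutator that gains one derivative and is hence bounded in scale‑subcritical norms by norms of $u$ and $\theta$ with no loss, so $G$ enjoys the full parabolic smoothing. The discarded piece $\partial_1(-\Delta)^{-1}\theta$ is harmless for the boundary analysis: its contribution to $\nabla u$ is $\nabla\nabla^\perp\partial_1(-\Delta)^{-2}\theta$, a Calder\'on--Zygmund operator acting on the Riesz potential $(-\Delta)^{-1/2}\theta$, hence bounded --- indeed $C^\gamma$ --- merely from $\theta\in L^\infty$ with bounded support. Thus only $G$ requires the striated analysis.

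Next I would propagate striated regularity. Applying $\partial_W=\div(W\,\cdot\,)$ to the $G$‑equation and commuting through $\partial_t+u\cdot\nabla-\Delta$ leaves, thanks to the stretching equation for $W$, only the commutators $[\partial_W,\Delta]G$ and $[\partial_W,u\cdot\nabla]G$ (the latter of lower order since $W$ is transported) plus $\partial_W$ of the source, all estimated by Bony paraproduct and commutator bounds; this keeps $\partial_W G$, hence $\partial_W\omega$, in the relevant negative‑regularity striated class on any time interval, provided $\|W(t)\|_{C^\gamma}$ is under control. That last point is the crux: the transport equation for $W$ gives $\frac{d}{dt}\|W(t)\|_{C^\gamma}\lesssim\|\nabla u(t)\|_{C^\gamma}\|W(t)\|_{C^\gamma}$, while the Chemin‑type logarithmic elliptic estimate bounds $\|\nabla u(t)\|_{C^\gamma}$ by $\mathcal{C}(t)\big(\|\omega(t)\|_{L^\infty}+\|\partial_W\omega(t)\|_{C^{\gamma-1}}/\inf_{\partial D(t)}|W|\big)\log\big(e+\|W(t)\|_{C^\gamma}\big)$; combining the two gives an Osgood inequality for $\|W(t)\|_{C^\gamma}$, hence a finite (double‑exponential) global bound and therefore $\nabla u\in L^1_{\mathrm{loc}}(\R^+;C^\gamma)$. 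This renders $X_t$ and $X_t^{-1}$ bi‑$C^{1,\gamma}$, so $\partial D(t)=X_t(\partial D_0)\in L^\infty_{\mathrm{loc}}(\R^+;C^{1,\gamma})$ and $\theta(x,t)=\bar\theta_0(X_t^{-1}x)1_{D(t)}(x)$. Existence of a solution carrying these bounds follows from a standard approximation and compactness argument on the uniform estimates just described; uniqueness holds at the Lagrangian level, since $\nabla u\in L^1_{\mathrm{loc}}L^\infty$ makes the flow and the transport of $\theta$ and $\varphi$ unambiguous.

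For the higher‑order statements (1) and (2) I would iterate the scheme on the family of striated derivatives $\partial_W^j$, $1\le j\le k-1$, and this is exactly where the new space $\bb^{s,\ell}_{p,r,W}(\R^d)$ --- recording $\ell$ striated derivatives on top of $s$ ambient Besov regularity --- enters: one must build its full calculus (Bony paraproduct, commutators, action of Calder\'on--Zygmund operators, and the two‑derivative parabolic smoothing of $\partial_t+u\cdot\nabla-\Delta$). The regularity thresholds are those forced by this bookkeeping: $\bar\theta_0\in C^{k-2,\gamma}$ (respectively $\bar\theta_0\in C^\mu$ at level $k=2$) is what makes $\partial_W^{k-1}$ acting on the $\theta$‑source $\partial_1(-\Delta)^{-1}\theta$ land in the correct striated space after using its one‑derivative smoothing; the assumption $(\partial_{W_0}u_0,\dots,\partial_{W_0}^{k-1}u_0)\in W^{1,p}$ is equivalent to $\omega_0$ having $k-1$ striated derivatives in $L^p$, needed to seed the parabolic iteration; and the time‑integrability loss $\rho<\frac{2p}{p+2}$ in $u\in L^\rho_{\mathrm{loc}}(\R^+;W^{2,\infty})$ comes solely from $\omega_0\in L^p$, since then $\|\nabla e^{t\Delta}\omega_0\|_{L^\infty}\sim t^{-(1/2+1/p)}$, which is $L^\rho$ near $t=0$ precisely when $\rho<\frac{2p}{p+2}$. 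The hard part will be establishing these refined striated estimates in $\bb^{s,\ell}_{p,r,W}$ and closing the nonlinear loop: every $\partial_W$ hitting the source term or a commutator produces contributions that trade the regularity of $W$, of $\theta$ (that is, of $\bar\theta_0$ and of $\partial D(t)$), and of $G$ against one another with no slack, so one must exploit the time‑integrability $\nabla u\in L^1_tC^{1,\gamma}$ (respectively $L^\rho_tW^{2,\infty}$) to absorb the worst terms and keep the Osgood structure, hence the global‑in‑time bound.
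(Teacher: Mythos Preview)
Your good-unknown $G=\omega-\partial_1(-\Delta)^{-1}\theta$ and the striated field $W=\nabla^\perp\varphi$ match the paper, but your treatment of the $C^{1,\gamma}$ level is both a detour and does not deliver the stated conclusion. You propose a Chemin logarithmic elliptic estimate plus an Osgood argument; but Chemin's estimate controls $\|\nabla u\|_{L^\infty}$ and the \emph{striated} quantity $\|\partial_W\nabla u\|_{C^{\gamma-1}}$, not $\|\nabla u\|_{C^\gamma}$ as you wrote, so that route gives $\partial D(t)\in C^{1,\gamma}$ but not $u\in L^1_T C^{1,\gamma}$, which is part of the theorem. The paper instead obtains $u\in L^1_T C^{1,\gamma}$ \emph{directly}, with no striated estimate, no Osgood, and no use of the patch structure: from the commutator bound $[\partial_1(-\Delta)^{-1},u\cdot\nabla]\theta\in L^\infty_T B^1_{2,\infty}$ and the heat smoothing one gets $\Delta_qG\in L^1_T L^2$ with the gain $2^{-2q}$, and a high--low frequency splitting yields $\omega\in L^1_T B^\gamma_{\infty,1}$ with polynomial growth in $T$; the $\theta$-contribution $\nabla\nabla^\perp\partial_1(-\Delta)^{-2}\theta$ is of order $-1$ on $\theta\in L^2\cap L^\infty$, hence $C^\gamma$. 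All of this holds already for $\theta_0\in L^2\cap L^\infty$, $u_0\in H^1$ (Proposition~3.1), and gives polynomial rather than double-exponential bounds.

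For part (1) you are missing a concrete ingredient. At the $\nabla^2u$ level the $\theta$-piece is $\nabla^2\nabla^\perp\partial_1(-\Delta)^{-2}\theta$, now a genuine zero-order Calder\'on--Zygmund operator acting on the front $\theta=\bar\theta_0(X_t^{-1})1_{D(t)}$; its $L^\infty$-bound does not come out of the general $\bb^{s,\ell}_{p,r,W}$ calculus you sketch. The paper handles it by the Bertozzi--Constantin geometric lemma, using the zero spherical mean of the kernels (computed explicitly), the $C^{1,\gamma}$ regularity of $\partial D(t)$ already obtained, and the assumption $\bar\theta_0\in C^\mu(\overline{D_0})$ to control the near-field integral. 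Your proposal does not identify this mechanism. For part (2) your plan is essentially the paper's; the one technical point the paper stresses and you do not is that, because here $W\in C^{1,\gamma}$ rather than merely $C^\gamma$ as in the Euler patch, the striated estimates in $\bb^{s,\ell}_{p,r,W}$ carry \emph{no} $\varepsilon$-regularity loss (the factor $2^{q\varepsilon(\ell-|\mu|)}$ of Chemin's original lemma disappears), and this is precisely what lets the induction on the number of striated derivatives close.
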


In the above $\partial_{W_0} u_0 := W_0\cdot \nabla u_0 = \divg\,(W_0\, u_0)$ denotes the directional derivative of $u_0$ along the vector field $W_0$.

\begin{remark}
  With slight modification, one can also deal with the more general temperature front initial data $\theta_0(x) = \bar{\theta}_1(x) 1_{D_0}(x) + \bar{\theta}_2(x) 1_{D_0^c}(x)$,
where $\bar\theta_1$ and $\bar\theta_2$ are the functions defined on $\overline{D_0}$ and $\overline{D_0^c}$ respectively.
\end{remark}

In the proof of the $C^{1,\gamma}$-, $W^{2,\infty}$- and $C^{2,\gamma}$-regularity persistence result of the temperature front boundary,
noting that the domain $D(t)= X_t(D_0)$ can be determined by the level-set function $\varphi(x,t)=\varphi_0(X_t^{-1}(x))$ which solves
\begin{equation}\label{varphi-eq}
  \partial_t \varphi + u\cdot\nabla \varphi =0,\quad \varphi(0,x) = \varphi_0(x),
\end{equation}
one needs only to prove the uniform boundedness of $\varphi(t)$ in the norms of $C^{1,\gamma}$, $W^{2,\infty}$ and $C^{2,\gamma}$, respectively.
Compared with \cite{DanZ17,GGJ17}, a new ingredient is the introduction of a good unknown\footnote{Such a technique may be called as \emph{Alinhac's good unknown}. One can see \cite{HKR10,HKR11,CW,WX12,KX20} and reference therein
for the application of this method to the 2D Boussinesq system with various partial fractional dissipation.}
$$
\Gamma := \omega - \RR_{-1}\theta,
$$
with $\omega := \partial_1 u_2 -\partial_2 u_1$ the vorticity of the fluid and $\RR_{-1} := \partial_1 (-\Delta)^{-1}$.
The equation of $\Gamma$ reads as
$\partial_t \Gamma + u\cdot\nabla \Gamma -\Delta \Gamma = [\mathcal{R}_{-1},u\cdot\nabla]\theta$,
and the term $[\mathcal{R}_{-1},u\cdot\nabla]\theta$ can be better controlled
(e.g. see Lemma \ref{lem:comm-es}) than the corresponding term $\partial_1 \theta$ in the the vorticity equation (see \eqref{vort-eq} below).
The quantity $\Gamma$ usually has good regularity estimates stemming from the smoothing effect of heat equation, thus thanks to the relation
\begin{align}\label{rel:nabla-u}
  \nabla u = \nabla \nabla^\perp (-\Delta)^{-1} \omega = \nabla\nabla^\perp (-\Delta)^{-1} \Gamma + \nabla \nabla^\perp (-\Delta)^{-1}\RR_{-1}\theta,
\end{align}
the restriction of the regularity of $\nabla u$ mainly comes from the $\theta$-term.
Since $\theta$ belongs to $L^2\cap L^\infty$ uniformly in time and $\nabla \nabla^\perp (-\Delta)^{-1}\RR_{-1}$ is a pseudo-differential operator of $-1$-order,
we can directly prove that $\nabla \nabla^\perp (-\Delta)^{-1}\RR_{-1} \theta$ and $\nabla u$ belong to $L^1_T(C^\gamma)$ for every $\gamma\in (0,1)$,
which ensures the global uniform $C^{1,\gamma}$-boundedness of $\varphi(t)$.

In order to prove that $u$ belongs to $L^1_T(W^{2,\infty})$, which implies the uniform $W^{2,\infty}$-boundedness of $\varphi(t)$, we mainly need to show that $\nabla^2 \nabla^\perp (-\Delta)^{-1}\RR_{-1}\theta$
belongs to $L^\infty_T(L^\infty)$. The situation is quite analogous to that in the vorticity patch problem of 2D Euler equations, where one needs to control the $L^\infty$-norm of $\nabla \nabla^\perp (-\Delta)^{-1}\omega$ with $\omega$ of patch structure,
and by using the additional cancellation property of the singular integral operator with even kernel (see the geometric lemma in \cite{BerC}),
we can derive the desired uniform boundedness estimate.

To obtain the global uniform $C^{2,\gamma}$-estimate of $\varphi(t)$, we consider the quantity $W=\nabla^\perp \varphi$ (similarly as \cite{GGJ17}),
and by estimating the $C^\gamma$-norm of $\nabla W(t)$, it mainly needs to control the striated term $\partial_W \nabla u$ in $L^1_T(C^\gamma)$,
with $\partial_W := W\cdot \nabla $ the directional derivative.
In view of \eqref{rel:nabla-u} and the patch structure of $\theta$, we deal with the estimates of $\partial_W \Gamma$ and $\partial_W \theta$ respectively,
and through using the striated estimate \eqref{eq:str-es1}, we finally can bound $L^1_T(C^\gamma)$-norm of $\partial_W \nabla u$ in terms of $\| \nabla W(t)\|_{C^\gamma}$ and a suitable norm of $\partial_W \Gamma$,
so that the Gronwall inequality ensures the wanted global uniform estimate. We remark that the proof of the above results presented here is relatively simpler than that in the work \cite{GGJ17}.
\vskip0.15cm

In the proof of the propagation of even higher $C^{k,\gamma}$-boundary regularity, motivated by \cite{Chem91},
it indeed suffices to show the striated estimate $\partial_W^{k-1} W $ in the norm $L^\infty_T(C^\gamma)$ (see \eqref{eq:target} below).
The method of high-order striated estimates (or conormal estimates) initiated by Chemin \cite{Chem88,Chem91} plays an important role in the whole process.
We would like to emphasize, however, that there exists a crucial difference compared with the application to the vorticity patch problem of Euler equations well studied in \cite{Chem88,Chem91} (see also \cite{LZ16}).
The regularity of the vector field $W$ and its striated counterpart $\partial_W^\ell W$ in the vorticity patch problem are of $C^\gamma$-H\"older type with $0<\gamma <1$,
while in our situation they all belong to $C^{1,\gamma}$ uniformly in time.
As a consequence, it will yield a lot of substantial difference in the analysis. 
The foremost one can be seen from the estimation of the operator $R_q$ given by \eqref{def:Rq}: 
there is a factor $2^{q\varepsilon(\ell -|\mu|)}$ in \cite[Lemma A.2]{Chem91} or \cite[Eq. (7.3)]{LZ16},
while in our case such a factor vanishes in the corresponding inequality \eqref{eq:Tw-Rq} below\footnote{We mention that J.-Y. Chemin had already clarified this key difference in \cite[Pg. 446]{Chem88} at the special case $p=\infty$.}.
The factor $2^{q\varepsilon(\ell -|\mu|)}$ usually leads to the various striated estimates in \cite{Chem88,Chem91,LZ16} with essential $\varepsilon$-regularity loss,
but here the striated estimates have no regularity loss. 
In order to be able to develop such fine-scale striated estimates, we introduce the striated type Besov space $\bb^{s,\ell}_{p,r,W}(\R^d)$ (see Definition \ref{def:gBS} below).
By adopting this function space and using the tedious paradifferential calculus, we establish a series of refined striated estimates in Lemmas \ref{lem:prod-es2} and \ref{lem:prod-es}.
These striated estimates are natural generalization of some classical product and commutator estimates in the usual Besov space with negative regularity index,
which might be interesting in its own.

In order to show that $\partial_W^{k-1}W$ belongs to $L^\infty_T(C^\gamma)$, or more precisely, to build the stronger estimate \eqref{eq:Targ-k}, we use the induction method.
Suppose that we already have good control on the quantities $W,\nabla u$ and $\Gamma$ in the appropriate $\bb^{s,\ell}_{\infty,r,W}$-norms as in \eqref{assum-el}
with $\ell\in \{1,\cdots,k-2\}$, 
we intend to show the corresponding estimates with $\ell+1$.
The procedure is as in the proof of $C^{2,\gamma}$-persistence result, and the above refined striated estimates will be intensively used.
In order to get the $L^\infty_T(\bb^{\gamma+1,\ell}_{\infty,W})$-estimate of $W$, from the equation of $\partial_W^\ell \nabla^2 W$ and the striated estimate \eqref{eq:prod-es2-1},
it needs to consider the quantity $\nabla^2 u$ in $L^1_T(\bb^{\gamma-1,\ell+1}_{\infty, W})$.
In light of \eqref{rel:nabla-u}, we treat the $\Gamma$-term and $\theta$-term separately: by applying the smoothing estimate of transport-diffusion equation and the induction assumption,
we obtain good striated regularity estimate of $\Gamma$ in terms of $\Gamma$ itself and $W$ in suitable $\bb^{s,\ell}_{\infty,r,W}$-norms,
which can be used to control the term $ \nabla^2\nabla^\perp (-\Delta)^{-1}\Gamma$;
while by using the patch structure of $\theta$ and striated estimate \eqref{eq:prod-es6}, we can bound the $L^1_T(\bb^{\gamma-1,\ell+1}_{\infty,W})$-norm of $\nabla^2 \nabla^\perp (-\Delta)^{-1}\RR_{-1}\theta$.
Gathering all these estimates and using Gronwall's inequality yield the uniform estimates with $\ell+1$, so that the induction scheme can be continued to fulfill the final target.
\vskip0.15cm

The paper is organized as follows. In Section \ref{sec:prel}, we introduce the striated type Besov space $\bb^{s,\ell}_{p,r,W}(\R^d)$
and establish several related estimates,
and also compile some auxiliary lemmas. We prove the $C^{1,\gamma}$-, $W^{2,\infty}$- and $C^{2,\gamma}$-regularity persistence results in the section \ref{sec:C1-2gam},
and then in the section \ref{sec:Ck-gam} we deal with the $C^{k,\gamma}$-regularity persistence result with $k\in \N\cap [3,\infty)$.
Sections \ref{sec:prod-es} and \ref{sec:str-es-Lem} are both concerned with the key striated estimates, and we present the detailed proof of Lemmas \ref{lem:prod-es2}, \ref{lem:prod-es}
and Lemmas \ref{lem:Rq} - \ref{lem:Tw-es-k1}.
Finally, we prove the auxiliary Lemma \ref{lem:prod-es0} in the appendix.


\section{Preliminaries and auxiliary lemmas}\label{sec:prel}

\subsection{Besov type spaces and related estimates}

One can choose two nonnegative radial functions $\chi, \varphi\in C_c^\infty(\mathbb{R}^d)$ be
supported respectively in the ball $\{\xi\in \mathbb{R}^d:|\xi|\leq 4/3 \}$ and the annulus $\{\xi\in
\mathbb{R}^d: 3/4\leq |\xi|\leq  8/3 \}$ such that (see \cite{BCD11})
\begin{align}\label{eq:POU}
  \chi(\xi)+\sum\limits_{j\ge0}\varphi(2^{-j}\xi)=1,\quad \textrm{for every}\;\;\xi\in\R^d.
\end{align}
For every tempered distribution $f$, the dyadic block operators $\Delta_j$ and $ S_j$ are defined by
\begin{align}\label{exp:Del-Sj}
  \Delta_{-1} f = \chi(D)f = h* f,\quad \Delta_j f = \varphi(2^{-j}D)f=2^{jd} h(2^j\cdot)*f, \quad  \forall j\in \N, \nonumber \\
  S_j f= \chi(2^{-j}D)f = \sum_{-1\leq l\leq j-1} \Delta_l f = 2^{jd}  h'(2^j\cdot)*f,\quad \forall j\in \N,
\end{align}
with $h=\mathcal{F}^{-1}{\varphi}$, $h'=\mathcal{F}^{-1}{\chi}$ and $\mathcal{F}^{-1}$ the Fourier inverse transform.

For every $f,g\in\mathcal{S}'(\R^d)$, we have the following Bony's decomposition
\begin{equation}
  f\,g = T_f g + T_g f + R(f,g),
\end{equation}
with 
\begin{equation}
  T_f g:= \sum_{q\in \N} S_{q-1}f \Delta_q g,\quad R(f,g)=\sum_{q\geq -1}\Delta_q f \widetilde{\Delta}_q g,\quad \widetilde{\Delta}_q := \Delta_{q-1} + \Delta_q + \Delta_{q+1}.
\end{equation}

Now we introduce the Besov space $B^s_{p,r}(\R^d)$ and its generalized type suited to the striated estimates.
\begin{definition}\label{def:gBS}
Let $s\in \R$, $(p,r)\in [1,\infty]^2$.
Denote by $B^s_{p,r}= B^s_{p,r}(\R^d)$ the space of tempered distributions $f\in \mathcal{S}'(\R^d)$ such that
\begin{align*}
  \|f\|_{B^s_{p,r}(\R^d)} := \big\| \big\{2^{qs}  \|\Delta_q f\|_{L^p(\R^d)}\big\}_{q\geq -1}  \big\|_{\ell^r}  < \infty.
\end{align*}
For every $\ell\in \N$, $N\in \mathbb{Z}^+$ and a set of regular vector fields $\mathcal{W}= \{W_i\}_{1\leq i \leq N}$
with $W_i:\R^d\rightarrow \R^d$,
denote by $\bb^{s,\ell}_{p,r,\mathcal{W}}=\bb^{s,\ell}_{p,r,\mathcal{W}}(\R^d)$ the space of tempered distributions
$f\in B^s_{p,r}(\R^d)$ such that
\begin{align}\label{norm:BBsln2}
  \|f\|_{\bb^{s,\ell}_{p,r,\mathcal{W}}} := \sum_{\lambda=0}^\ell \|\partial_{\mathcal{W}}^\lambda f\|_{B^s_{p,r}}
  = \sum_{\lambda=0}^\ell \sum_{\lambda_i\in \N;\lambda_1 + \cdots + \lambda_N=\lambda}
  \|\partial_{W_1}^{\lambda_1}\cdots \partial_{W_N}^{\lambda_N} f\|_{B^s_{p,r}}  <\infty ;
\end{align}
we also denote by $\BB^{s,\ell}_{p,r,\mathcal{W}}= \BB^{s,\ell}_{p,r,\mathcal{W}}(\R^d)$ the set of tempered distributions $f\in B^s_{p,r}(\R^d)$ such that
\begin{equation}\label{norm:BBsln}
\begin{split}
  \|f\|_{\BB^{s,\ell}_{p,r, \mathcal{W}}}  := \sum_{\lambda=0}^\ell \|(T_{\mathcal{W}\cdot\nabla})^\lambda f\|_{B^s_{p,r}}
  = \sum_{\lambda=0}^\ell \sum_{\lambda_1 +\cdots + \lambda_N = \lambda}
  \|(T_{W_1\cdots \nabla})^{\lambda_1}\cdots (T_{W_N\cdots\nabla})^{\lambda_N} f\|_{B^s_{p,r}} < \infty .
\end{split}
\end{equation}
In particular, when $p=\infty$, we always use the following abbreviations
\begin{equation}\label{eq:abbr}
\begin{split}
  \bb^{s,\ell}_{r,\mathcal{W}} : = \bb^{s,\ell}_{\infty,r,\mathcal{W}} , \quad &
  \BB^{s,\ell}_{r,\mathcal{W}} : = \BB^{s,\ell}_{\infty,r,\mathcal{W}}, \\
  \bb^{s,\ell}_{\mathcal{W}} := \bb^{s,\ell}_{1,\mathcal{W}} = \bb^{s,\ell}_{\infty,1,\mathcal{W}},
  \quad & \BB^{s,\ell}_{\mathcal{W}} := \BB^{s,\ell}_{1,\mathcal{W}} = \BB^{s,\ell}_{\infty,1,\mathcal{W}}.
\end{split}
\end{equation}
Besides, if $\mathcal{W}$ contains only one regular vector field $W$, i.e. $\mathcal{W}= \{W\}$,
we also denote
\begin{align}
  \bb^{s,\ell}_{p,r,W}(\R^d) & : = \Big\{f\in B^s_{p,r}(\R^d)\, \big|\, \|f\|_{\bb^{s,\ell}_{p,r,W}}
  := \sum_{\lambda=0}^\ell \|\partial_W^\lambda f\|_{B^s_{p,r}} < \infty \Big\}, \label{norm:BBsln2-2}\\
  \BB^{s,\ell}_{p,r,W}(\R^d) & : = \Big\{f\in B^s_{p,r}(\R^d)\, \big|\, \|f\|_{\BB^{s,\ell}_{p,r,W}}
  := \sum_{\lambda=0}^\ell \|(T_{W\cdot\nabla})^\lambda f\|_{B^s_{p,r}} < \infty \Big\}, \label{norm:BBsln-2}
\end{align}
and similar abbreviations \eqref{eq:abbr} hold with $W$ in place of $\mathcal{W}$.
\end{definition}

In the above, the notation $\partial_\mathcal{W} = \mathcal{W}\cdot\nabla $ denotes as the vector-valued operators
$\{ W_i\cdot\nabla\}_{1\leq i\leq N}$, and $\partial_\mathcal{W}^\lambda = (\mathcal{W}\cdot\nabla)^\lambda =\partial_\mathcal{W} \otimes \cdots \otimes\partial_\mathcal{W}$ for every $\lambda\in \N$.

Some basic properties of the space $\bb^{s,\ell}_{p,r,\mathcal{W}}$ are presented as follows.
\begin{lemma}\label{lem:Bes-prop}
  Let $s,\widetilde{s}\in \R$, $\ell,\widetilde{\ell}\in \N$, $r, \widetilde{r}\in [1,\infty]$, $p\in [1,\infty]$
and $\mathcal{W}= \{W_i\}_{1\leq i\leq N}$ be composed of regular vector fields $W_i:\R^d\rightarrow \R^d$.
The function space $\bb^{s,\ell}_{p,r,\mathcal{W}}$ satisfies that
\begin{equation}\label{eq:Bes-prop1}
\begin{split}
  \bb^{s,\ell}_{p,r,\mathcal{W}} \subset \bb^{\widetilde{s},\ell}_{p,r,\mathcal{W}},\;\, \textrm{for}\;\, s\geq \widetilde{s},\quad \bb^{s,\ell}_{p,r,\mathcal{W}} \subset \bb^{s,\widetilde{\ell}}_{p,r,\mathcal{W}},\;\, \textrm{for}\;\, \ell \geq \widetilde{\ell}, \quad
  \bb^{s,\ell}_{p,r,\mathcal{W}} \supset \bb^{s,\ell}_{p,\widetilde{r},\mathcal{W}},\;\, \textrm{for}\;\, r \geq \widetilde{r},
\end{split}
\end{equation}
\begin{equation}\label{eq:Bes-prop3}
  \|f\|_{\bb^{s,\ell+1}_{p,r,\mathcal{W}}} = \|\partial_\mathcal{W}^{\ell+1} f\|_{B^s_{p,r}} + \|f\|_{\bb^{s,\ell}_{p,r,\mathcal{W}}},
  \quad \textrm{and} \quad  \|f\|_{\bb^{s,\ell+1}_{p,r,\mathcal{W}}} = \|\partial_\mathcal{W} f\|_{\bb^{s,\ell}_{p,r,\mathcal{W}}} + \|f\|_{B^s_{p,r}}.
\end{equation}
\end{lemma}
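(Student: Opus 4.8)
The statement to prove is Lemma~\ref{lem:Bes-prop}, which collects elementary properties of the striated Besov spaces $\bb^{s,\ell}_{p,r,\mathcal{W}}$: the three inclusions in \eqref{eq:Bes-prop1} (monotonicity in the regularity index $s$, in the number of striated derivatives $\ell$, and in the exponent $r$), and the two reformulations of the norm in \eqref{eq:Bes-prop3}.

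\begin{proof}[Proof sketch]
The plan is to reduce everything to the corresponding well-known facts for the ordinary Besov spaces $B^s_{p,r}$, since by \eqref{norm:BBsln2} the striated norm is just a finite sum of Besov norms of the iterated directional derivatives $\partial_{W_1}^{\lambda_1}\cdots\partial_{W_N}^{\lambda_N}f$ over multi-indices of total length at most $\ell$.

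First I would record the standard embeddings for the scalar Besov spaces: for $s\ge\widetilde s$ one has $\|g\|_{B^{\widetilde s}_{p,r}}\le\|g\|_{B^s_{p,r}}$ because $2^{q\widetilde s}\le 2^{qs}$ for $q\ge-1$ (after adjusting the $q=-1$ block trivially), and for $r\ge\widetilde r$ one has $\|g\|_{B^s_{p,r}}\le\|g\|_{B^s_{p,\widetilde r}}$ by the nesting $\ell^{\widetilde r}\hookrightarrow\ell^r$. Applying the first of these termwise to each summand $g=\partial_{\mathcal W}^{\lambda_1}\cdots f$ appearing in \eqref{norm:BBsln2} gives $\|f\|_{\bb^{\widetilde s,\ell}_{p,r,\mathcal W}}\le\|f\|_{\bb^{s,\ell}_{p,r,\mathcal W}}$, i.e. the first inclusion; applying the $\ell^r$-nesting termwise gives the third inclusion $\bb^{s,\ell}_{p,\widetilde r,\mathcal W}\subset\bb^{s,\ell}_{p,r,\mathcal W}$. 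The second inclusion $\bb^{s,\ell}_{p,r,\mathcal W}\subset\bb^{s,\widetilde\ell}_{p,r,\mathcal W}$ for $\ell\ge\widetilde\ell$ is immediate from \eqref{norm:BBsln2}, since the defining sum for $\widetilde\ell$ runs over a subset (multi-indices of total order $\le\widetilde\ell$) of the sum for $\ell$, hence $\|f\|_{\bb^{s,\widetilde\ell}_{p,r,\mathcal W}}\le\|f\|_{\bb^{s,\ell}_{p,r,\mathcal W}}$.

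For \eqref{eq:Bes-prop3}, the first identity is just splitting the sum $\sum_{\lambda=0}^{\ell+1}$ into the top term $\lambda=\ell+1$, which by definition is $\sum_{\lambda_1+\cdots+\lambda_N=\ell+1}\|\partial_{W_1}^{\lambda_1}\cdots\partial_{W_N}^{\lambda_N}f\|_{B^s_{p,r}}=\|\partial_{\mathcal W}^{\ell+1}f\|_{B^s_{p,r}}$ in the tensor notation fixed after the definition, plus the remaining terms $\sum_{\lambda=0}^{\ell}$, which is $\|f\|_{\bb^{s,\ell}_{p,r,\mathcal W}}$. The second identity in \eqref{eq:Bes-prop3} follows from the observation that every multi-index $(\lambda_1,\dots,\lambda_N)$ with total order $\lambda\in\{1,\dots,\ell+1\}$ can be written, upon stripping off one derivative $\partial_{W_i}$ that acts first, as $\partial_{W_i}$ applied to a multi-index of total order $\lambda-1\le\ell$ on $f$; reorganizing the sum $\sum_{\lambda=1}^{\ell+1}\sum_{|\vec\lambda|=\lambda}\|\partial_{\mathcal W}^{\vec\lambda}f\|_{B^s_{p,r}}$ this way, together with the degenerate $\lambda=0$ term $\|f\|_{B^s_{p,r}}$, yields exactly $\|\partial_{\mathcal W}f\|_{\bb^{s,\ell}_{p,r,\mathcal W}}+\|f\|_{B^s_{p,r}}$, where $\partial_{\mathcal W}f$ is understood as the vector $\{W_i\cdot\nabla f\}_{i}$ and its $\bb^{s,\ell}_{p,r,\mathcal W}$-norm as the sum over $i$ and over striated multi-indices of order $\le\ell$ applied to each component.

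There is no real obstacle here; the only point requiring a little care is bookkeeping with the tensor/multi-index notation $\partial_{\mathcal W}^\lambda=\partial_{\mathcal W}\otimes\cdots\otimes\partial_{\mathcal W}$ introduced right after Definition~\ref{def:gBS}, namely making sure that ``peeling off the first derivative'' in the second identity of \eqref{eq:Bes-prop3} indeed produces each multi-index of total order $\le\ell$ the correct number of times so that the two finite sums coincide term by term. Everything else is an immediate consequence of the definition \eqref{norm:BBsln2} and the classical Besov embeddings, applied summand by summand.
\end{proof}
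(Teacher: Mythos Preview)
Your proposal is correct and matches the paper's treatment: the paper states Lemma~\ref{lem:Bes-prop} without proof, since all claims follow immediately from the definition \eqref{norm:BBsln2} together with the standard Besov embeddings $B^s_{p,r}\hookrightarrow B^{\widetilde s}_{p,r}$ for $s\ge\widetilde s$ and $\ell^{\widetilde r}\hookrightarrow\ell^r$ for $r\ge\widetilde r$, applied summand by summand. Your honest remark about the bookkeeping in the second identity of \eqref{eq:Bes-prop3}---that peeling off one $\partial_{W_i}$ must reproduce each term with the correct multiplicity under the tensor convention $\partial_{\mathcal W}^\lambda=\partial_{\mathcal W}\otimes\cdots\otimes\partial_{\mathcal W}$---is exactly the only point that is not entirely automatic, and you have identified it correctly.
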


We first present a useful product estimate in $B^s_{p,r}(\R^d)$ (one can see the appendix for the proof).
\begin{lemma}\label{lem:prod-es0}
  Let $u$ be a smooth divergence-free vector field of $\R^d$, and let $\phi:\R^d\rightarrow \R$ be a smooth function.
Then we have that for every $p\in [1,\infty]$ and $\epsilon \in (0,1)$,
\begin{equation}\label{eq:prod-es}
  \|u\cdot \nabla \phi\|_{B^{-\epsilon}_{p,r}} \leq C \min\Big\{\|u\|_{B^{-\epsilon}_{p,r}} \|\nabla\phi\|_{L^\infty},
  \|u\|_{L^\infty} \|\nabla\phi\|_{B^{-\epsilon}_{p,r}} \Big\}.
\end{equation}
\end{lemma}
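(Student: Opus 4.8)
The plan is to use Bony's decomposition to split $u\cdot\nabla\phi$ into paraproduct and remainder pieces, estimate each term by the standard continuity properties of $T$ and $R$ on Besov spaces, and exploit the divergence-free condition $\div u=0$ to rewrite $u\cdot\nabla\phi=\div(u\,\phi)$ in those terms where this is advantageous. First I would write, with $\partial_k$ the $k$-th partial derivative and summation over $k$,
\[
  u\cdot\nabla\phi \;=\; \sum_k u_k\,\partial_k\phi \;=\; \sum_k\big(T_{u_k}\partial_k\phi + T_{\partial_k\phi}u_k + R(u_k,\partial_k\phi)\big).
\]
For the first bound in the minimum (the one with $\|u\|_{B^{-\epsilon}_{p,r}}\|\nabla\phi\|_{L^\infty}$), I would estimate $T_{\partial_k\phi}u_k$ using $\|T_a b\|_{B^{-\epsilon}_{p,r}}\lesssim \|a\|_{L^\infty}\|b\|_{B^{-\epsilon}_{p,r}}$ with $a=\partial_k\phi$, $b=u_k$; and $T_{u_k}\partial_k\phi$ together with the remainder $R(u_k,\partial_k\phi)$ I would handle after moving the derivative onto $u$ via $\div u = 0$: write $\sum_k(T_{u_k}\partial_k\phi + R(u_k,\partial_k\phi)) = \sum_k \partial_k\big(T_{u_k}\phi + R(u_k,\phi)\big) - \sum_k\big(T_{\partial_k u_k}\phi + R(\partial_k u_k,\phi)\big)$, and the last sum vanishes because $\sum_k\partial_k u_k = 0$. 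Then I estimate $\partial_k T_{u_k}\phi$ and $\partial_k R(u_k,\phi)$ in $B^{-\epsilon}_{p,r}$ by estimating $T_{u_k}\phi$ and $R(u_k,\phi)$ in $B^{1-\epsilon}_{p,r}$: here $\|T_{u_k}\phi\|_{B^{1-\epsilon}_{p,r}}\lesssim \|u_k\|_{B^{-\epsilon}_{p,r}}\|\phi\|_{B^{1}_{\infty,\infty}}\lesssim\|u_k\|_{B^{-\epsilon}_{p,r}}\|\nabla\phi\|_{L^\infty}$ (using the paraproduct estimate $\|T_a b\|_{B^{\sigma+\tau}_{p,r}}\lesssim\|a\|_{B^{\sigma}_{p,r}}\|b\|_{B^{\tau}_{\infty,\infty}}$ valid for $\tau>0$, here $\sigma=-\epsilon$, $\tau=1$), and the remainder term needs $(-\epsilon)+1>0$, i.e. $\epsilon<1$, which is exactly the hypothesis, so $\|R(u_k,\phi)\|_{B^{1-\epsilon}_{p,r}}\lesssim \|u_k\|_{B^{-\epsilon}_{p,r}}\|\phi\|_{B^{1}_{\infty,\infty}}$.

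For the second bound in the minimum (with $\|u\|_{L^\infty}\|\nabla\phi\|_{B^{-\epsilon}_{p,r}}$) I would keep $u\cdot\nabla\phi$ in the form $\sum_k u_k\partial_k\phi$ and estimate directly: $T_{u_k}\partial_k\phi$ by $\|u_k\|_{L^\infty}\|\partial_k\phi\|_{B^{-\epsilon}_{p,r}}$; $T_{\partial_k\phi}u_k$ by the paraproduct estimate with negative index on the $L^\infty$ factor, $\|T_{\partial_k\phi}u_k\|_{B^{-\epsilon}_{p,r}}\lesssim\|\partial_k\phi\|_{B^{-\epsilon}_{p,r}}\|u_k\|_{B^{0}_{\infty,\infty}}\lesssim\|\partial_k\phi\|_{B^{-\epsilon}_{p,r}}\|u_k\|_{L^\infty}$; and the remainder $R(u_k,\partial_k\phi)$ by $\|R(u_k,\partial_k\phi)\|_{B^{-\epsilon}_{p,r}}\lesssim\|u_k\|_{B^{0}_{\infty,\infty}}\|\partial_k\phi\|_{B^{-\epsilon}_{p,r}}$, which requires $0+(-\epsilon)>-d/p'$ type condition — here since one factor is in $L^\infty\subset B^0_{\infty,\infty}$ the relevant condition is simply $-\epsilon + 0 > 0$ would fail, so instead I should use the form $\|R(a,b)\|_{B^{\sigma}_{p,r}}\lesssim\|a\|_{B^{\sigma_1}_{\infty,\infty}}\|b\|_{B^{\sigma_2}_{p,r}}$ with $\sigma_1+\sigma_2>0$; to make $-\epsilon = \sigma_1+\sigma_2$ work with $\sigma_1+\sigma_2>0$ is impossible, so for this second bound I would again use $\div u=0$ to write the remainder-type piece as a derivative of $R(u_k,\phi)$ and pull out $\|\phi\|$ in a positive-regularity norm — but that reintroduces $\|\nabla\phi\|_{L^\infty}$, not $\|\nabla\phi\|_{B^{-\epsilon}_{p,r}}$. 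The clean route for the second bound is therefore to not use Bony at all for $R$ but to note $u\cdot\nabla\phi = \div(u\phi)$ and estimate $\|\div(u\phi)\|_{B^{-\epsilon}_{p,r}}\lesssim\|u\phi\|_{B^{1-\epsilon}_{p,r}}$, then bound $\|u\phi\|_{B^{1-\epsilon}_{p,r}}$ by the product law $\|u\phi\|_{B^{1-\epsilon}_{p,r}}\lesssim\|u\|_{L^\infty}\|\phi\|_{B^{1-\epsilon}_{p,r}} + \|u\|_{B^{1-\epsilon}_{p,r}}\|\phi\|_{L^\infty}$ — but this again brings in $\|u\|_{B^{1-\epsilon}_{p,r}}$. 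So in fact the second estimate is best proved by splitting $\nabla\phi =: \psi$ (a divergence-free-irrelevant quantity) and estimating $\|u\cdot\psi\|_{B^{-\epsilon}_{p,r}}$ directly: $T_u\psi$ and $T_\psi u$ and $R(u,\psi)$, where $R(u,\psi)$ needs the index condition, which with one factor in $L^\infty = B^0_{\infty,\infty}$ and the other in $B^{-\epsilon}_{p,r}$ gives a borderline sum $-\epsilon$; one repairs this by observing that $u \in L^\infty$ actually lies in $B^0_{\infty,\infty}$ and using the refined remainder estimate $\|R(u,\psi)\|_{B^{-\epsilon}_{p,r}}\lesssim \|u\|_{L^\infty}\|\psi\|_{B^{-\epsilon}_{p,r}}$ which holds because $R(u,\psi)=\sum_q \Delta_q u\,\widetilde\Delta_q\psi$ with each summand spectrally supported in a ball of radius $\sim 2^q$, so the $B^{-\epsilon}$ summation converges for any $\epsilon>0$ once we have $\sum_q 2^{-q\epsilon}\|\Delta_q u\|_{L^\infty}\|\widetilde\Delta_q\psi\|_{L^p}\lesssim\|u\|_{L^\infty}\|\psi\|_{B^{-\epsilon}_{p,r}}$ by Hölder in $q$. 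This last argument does not need $\div u = 0$, which matches the fact that the second bound in \eqref{eq:prod-es} is symmetric enough not to require it.

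I expect the main obstacle to be bookkeeping the endpoint regularity conditions in the remainder estimates — specifically making sure the condition $\epsilon<1$ is exactly what is needed for $R(u_k,\phi)\in B^{1-\epsilon}_{p,r}$ in the first bound, and that the second bound genuinely goes through with a factor in $L^\infty$ (rather than needing $B^{0}_{\infty,1}$ or a positive index). The divergence-free hypothesis enters only in the first bound, to trade $\nabla\phi$ for $\phi$ at the cost of moving a derivative onto $u$, which is harmless since $\partial_k$ maps $B^{1-\epsilon}_{p,r}\to B^{-\epsilon}_{p,r}$ boundedly. Keeping $r$ general throughout is routine: every estimate above is an $\ell^r$ Hölder/Young inequality in the dyadic index, uniform in $r\in[1,\infty]$. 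I would conclude by taking the minimum of the two resulting bounds, which gives \eqref{eq:prod-es}.
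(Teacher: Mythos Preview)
Your overall strategy --- Bony's decomposition and paraproduct/remainder estimates --- is the same as the paper's, but there is a genuine error in your treatment of the remainder term for the second bound.

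Your proposed estimate $\|R(u,\psi)\|_{B^{-\epsilon}_{p,r}}\lesssim \|u\|_{L^\infty}\|\psi\|_{B^{-\epsilon}_{p,r}}$ (with $\psi=\nabla\phi$) is false in general, and the argument you give for it misreads the remainder: precisely \emph{because} each $\Delta_j u\,\widetilde\Delta_j\psi$ is spectrally supported in a \emph{ball} of radius $\sim 2^j$ (not an annulus), applying $\Delta_q$ picks up all $j\ge q-3$, so
\[
2^{-q\epsilon}\|\Delta_q R(u,\psi)\|_{L^p}\lesssim \|u\|_{L^\infty}\sum_{j\ge q-3}2^{(j-q)\epsilon}\bigl(2^{-j\epsilon}\|\widetilde\Delta_j\psi\|_{L^p}\bigr),
\]
and the kernel $2^{(j-q)\epsilon}$ is not summable for $j\ge q$. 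The standard remainder estimate requires the sum of regularity indices to be positive; here it is $-\epsilon<0$. Contrary to your final remark, the divergence-free hypothesis \emph{is} needed at this point: the paper writes $R(u\cdot,\nabla\phi)=\divg R(u,\phi)$ (valid since $\divg u=0$), pulls out a factor $2^q$ from $\Delta_q\divg$, and uses $2^j\|\widetilde\Delta_j\phi\|\sim\|\nabla\widetilde\Delta_j\phi\|$ for $j\ge 2$ to produce the \emph{decaying} factor $2^{(q-j)(1-\epsilon)}$. Discrete Young's inequality then yields both options in the $\min$ simultaneously; the few low-frequency terms $j\le 2$ are handled directly without the rewriting.

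A related minor issue appears in your first bound: the step $\|\phi\|_{B^1_{\infty,\infty}}\lesssim\|\nabla\phi\|_{L^\infty}$ is false (the $\Delta_{-1}$ block), so estimating $R(u_k,\phi)$ in $B^{1-\epsilon}_{p,r}$ as you do picks up an unwanted $\|\phi\|_{L^\infty}$. The paper avoids this by keeping $\nabla\phi$ intact throughout and invoking $\divg u=0$ only at the Littlewood--Paley level of the remainder, splitting high and low $j$ by hand. Your detour of moving $\partial_k$ across $T$ and $R$ is unnecessary for the paraproducts --- both $T_{u}\nabla\phi$ and $T_{\nabla\phi}u$ admit the two bounds directly (the paper uses $2^{-j\epsilon}\|S_{j-1}f\|_{L^p}\in\ell^r$ for the $B^{-\epsilon}_{p,r}$ factor) --- and for the remainder it is essentially the paper's move, but you must then track the low-$j$ contributions correctly.
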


The following striated estimates in the framework of $\bb^{s,\ell}_{p,r,\mathcal{W}}$ plays an important role in the main proof,
and we place the detailed proof in the subsection \ref{subsec:str-es1}.
\begin{lemma}\label{lem:prod-es2}
  Let $k\in \N$, $\sigma\in (0,1)$, $N\in \mathbb{Z}^+$ and $\mathcal{W}= \{W_i\}_{1\leq i\leq N}$ be a set of regular divergence-free vector fields
$W_i:\R^d\rightarrow \R^d$ satisfying that
\begin{align}\label{norm:W}
  \|\mathcal{W}\|_{\bb^{1+\sigma,k-1}_{\infty,\mathcal{W}}}
  := \sum_{\lambda=0}^{k-1} \|\partial_\mathcal{W}^\lambda \mathcal{W}\|_{B^{1+\sigma}_{\infty,\infty}}
  = \sum_{\lambda=0}^{k-1} \sum_{\lambda_1 +\cdots + \lambda_N=\lambda} \|\partial_{W_1}^{\lambda_1}\cdots \partial_{W_N}^{\lambda_N} \mathcal{W}\|_{B^{1+\sigma }_{\infty,\infty}}
  < \infty.
\end{align}
Let $m(D)$ be a zero-order pseudo-differential operator with $m(\xi)\in C^\infty(\R^d\setminus\{0\})$.
Assume that $u$ is a smooth divergence-free vector field of $\R^d$, and $\phi:\R^d\rightarrow \R$ is a smooth function.
Then for every $\epsilon \in (0,1)$ and $(p,r)\in [1,\infty]^2$, there exists a constant $C>0$ depending only on $d,k,\epsilon$ and $\|\mathcal{W}\|_{\bb^{1+\sigma,k-1}_{\infty,\mathcal{W}}}$ (when $k=0$ this norm plays no role) such that the following statements hold.
\begin{enumerate}[(1)]
\item
We have
\begin{equation}\label{eq:prod-es2-1}
  \|u\cdot \nabla \phi \|_{\bb^{-\epsilon,k}_{p,r,\mathcal{W}} } \leq C \min\bigg\{\sum_{\mu=0}^k  \|u\|_{\bb^{0,\mu}_\mathcal{W}} \|\nabla \phi\|_{\bb^{-\epsilon,k-\mu}_{p,r,\mathcal{W}}},
  \sum_{\mu=0}^k  \|u\|_{\bb^{-\epsilon,\mu}_{p,r,\mathcal{W}}} \|\nabla \phi\|_{\bb^{0,k-\mu}_\mathcal{W}} \bigg\}.
\end{equation}
\item We have
\begin{align}\label{eq:prod-es6}
  \|m(D)\phi\|_{\bb^{-\epsilon,k+1}_{p,r,\mathcal{W}}} \leq C \|\phi\|_{\bb^{-\epsilon,k+1}_{p,r,\mathcal{W}}}
  + C \Big( 1+ \| \mathcal{W}\|_{\bb^{1,k}_\mathcal{W}} \Big) \Big(\|\phi\|_{\bb^{-\epsilon,k}_{p,r,\mathcal{W}}} + \|\Delta_{-1}m(D)\phi\|_{L^p} \Big).
\end{align}
\item We have 
\begin{align}\label{eq:prod-es7}
  \|[m(D),u\cdot\nabla]\phi\|_{\bb^{-\epsilon,k}_{p,r,\mathcal{W}}} \leq C \Big(\|\nabla u\|_{\bb^{0,k}_\mathcal{W}}
  + \|u\|_{L^\infty} \Big) \|\phi\|_{\bb^{-\epsilon,k}_{p,r,\mathcal{W}}}.
\end{align}
\end{enumerate}
If $\mathcal{W}$ contains only one divergence-free vector field $W$, the inequalities \eqref{eq:prod-es2-1}--\eqref{eq:prod-es7} hold
with $W$ in place of $\mathcal{W}$.
\end{lemma}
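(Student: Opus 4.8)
The plan is to argue by induction on $k$, with the base case $k=0$ furnished by the three non-striated statements. For $k=0$, estimate \eqref{eq:prod-es2-1} is exactly Lemma~\ref{lem:prod-es0}; \eqref{eq:prod-es6} is the boundedness of a zero-order Fourier multiplier on the high-frequency blocks of $B^{-\epsilon}_{p,r}(\R^d)$, the low-frequency block being simply carried over as the term $\|\Delta_{-1}m(D)\phi\|_{L^p}$; and \eqref{eq:prod-es7} is the classical commutator estimate, obtained from the identity $[m(D),u\cdot\nabla]\phi=\divg\big([m(D),u]\,\phi\big)$ (valid since $\divg u=0$), where $[m(D),u]$ is the commutator of $m(D)$ with multiplication by $u$: that commutator gains one derivative at the cost of $\|\nabla u\|_{L^\infty}$ (plus $\|u\|_{L^\infty}$ from the low-frequency piece), and the outer $\divg$ costs one derivative back, so that one lands in $B^{-\epsilon}_{p,r}$.

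For the inductive step I would use the recursion \eqref{eq:Bes-prop3}, $\|f\|_{\bb^{s,\ell+1}_{p,r,\mathcal{W}}}=\|\partial_{\mathcal{W}}f\|_{\bb^{s,\ell}_{p,r,\mathcal{W}}}+\|f\|_{B^s_{p,r}}$, which reduces each estimate at striated order $k$ to the same estimate at order $k-1$ applied to $\partial_{W_i}(\,\cdot\,)$ for each $W_i\in\mathcal{W}$ (the trailing $\|\cdot\|_{B^s_{p,r}}$-term being the order-$0$ case, already in hand). The algebraic backbone consists of: $\partial_{W_i}$ being a derivation; the commutator relation $[\partial_{W_i},\partial_j]=-(\partial_jW_i)\cdot\nabla$; the fact that, since the $W_i$ are divergence-free, $\partial_{W_i}\psi=\divg(W_i\psi)$ and $[\partial_{W_i},m(D)]=\divg\circ[m(D),W_i]$ (again a commutator with multiplication); and, for \eqref{eq:prod-es2-1}, the identity $\partial_{W_i}(u\cdot\nabla\phi)=[W_i,u]\cdot\nabla\phi+u\cdot\nabla(\partial_{W_i}\phi)$, in which both $u$ and the Lie bracket $[W_i,u]=\partial_{W_i}u-\partial_uW_i$ remain divergence-free. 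Iterating these, $\partial_{\mathcal{W}}$ turns each of the three quantities $u\cdot\nabla\phi$, $m(D)\phi$, $[m(D),u\cdot\nabla]\phi$ into a finite sum of terms of exactly the same structural type, in which the factors $u$ and $\phi$ carry at most one fewer directional derivative and the extra factors are $\partial_{\mathcal{W}}^{\le k-1}$-derivatives of the $W_i$'s — hence the constant depends only on $\|\mathcal{W}\|_{\bb^{1+\sigma,k-1}_{\infty,\mathcal{W}}}$; the sole exception occurs in \eqref{eq:prod-es6}, where a single commutator $[\partial_{W_i},m(D)]$ absorbing all of the remaining $k$ directional derivatives produces the top-order factor $\partial_{\mathcal{W}}^{k}\mathcal{W}$, which must appear explicitly, measured in $B^1_{\infty,1}$. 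One then feeds these expansions into the induction hypothesis together with Lemma~\ref{lem:Bes-prop} and the standard product and commutator estimates in Besov spaces.

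The genuinely delicate requirement is that every one of these estimates be \emph{loss-free} in $\epsilon$: one must never trade $\|\phi\|_{\bb^{-\epsilon,\cdot}_{p,r,\mathcal{W}}}$ for a norm of higher regularity index. This forces one to write every product in divergence form — exploiting that the $W_i$, their Lie brackets with $u$, and $u$ itself are all divergence-free — so that each stray derivative falls onto the high-frequency factor and every ``sum-of-regularities'' condition is met; it is precisely here that the $C^{1,\gamma}$-regularity of $\mathcal{W}$ (that is, $W_i\in B^{1+\sigma}_{\infty,\infty}$ with $1+\sigma>1$) enters, since $1+\sigma-\epsilon>0$ for all $\epsilon\in(0,1)$, whereas mere $C^\gamma$-regularity would not suffice. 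Concretely this is carried out, as is cleanest, by running the argument through the paraproduct-based directional derivative $T_{W_i\cdot\nabla}$ and the spaces $\BB^{s,\ell}_{p,r,\mathcal{W}}$ of \eqref{norm:BBsln} (whose norms one first shows to be comparable to those of $\bb^{s,\ell}_{p,r,\mathcal{W}}$, the difference $W_i\cdot\nabla\phi-T_{W_i\cdot\nabla}\phi$ being of strictly better regularity), and by using the refined commutation estimate between $T_{W_i\cdot\nabla}$ and the Littlewood--Paley blocks: the bound \eqref{eq:Tw-Rq} for the remainder operator $R_q$ (Lemmas~\ref{lem:Rq}--\ref{lem:Tw-es-k1}) must come \emph{without} the factor $2^{q\epsilon(\ell-|\mu|)}$ present in the classical estimates of Chemin \cite{Chem91} and of \cite{LZ16}.

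The step I expect to be the main obstacle is precisely this combined paradifferential and combinatorial bookkeeping in the inductive step. Iterating $N$ non-commuting vector fields generates, through the commutators $[\partial_{W_i},\partial_j]$ and $[\partial_{W_i},m(D)]$, a proliferation of terms, and for every one of them one must simultaneously verify that (i) it is again of the admissible divergence-product / multiplier / commutator form, (ii) the factors built from $\mathcal{W}$ never exceed striated order $k-1$ except in the one explicitly permitted place in \eqref{eq:prod-es6}, and (iii) it is estimated with no $\epsilon$-loss and with constants uniform in $(p,r)$ and in $N$. Organizing this cleanly — for which the Lie-bracket identity above and the systematic use of $T_{W_i\cdot\nabla}$ are the main tools — is the technical heart of the argument.
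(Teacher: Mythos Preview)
Your proposal is correct and follows essentially the same route as the paper: the argument is run through the paraproduct derivative $T_{\mathcal{W}\cdot\nabla}$ and the spaces $\BB^{s,\ell}_{p,r,\mathcal{W}}$, after first establishing the norm equivalence \eqref{normBB-equiv} and the loss-free striated estimates of Lemmas~\ref{lem:Rq}--\ref{lem:Tw-es} (in particular \eqref{eq:Tw-Rq} without the Chemin factor $2^{q\epsilon(\ell-|\mu|)}$), with parts~(2) and~(3) handled by induction on $k$ exactly as you describe. One minor organizational difference: the paper proves part~(1) directly from Bony's decomposition and the paraproduct/remainder bounds \eqref{es:Tw-parap-1}, \eqref{es:Tw-rem-1} rather than by a separate induction, but this is the same content repackaged.
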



In particular, for the special case $k=0,1$, the dependence of lower-order term $\|\mathcal{W}\|_{\bb^{1+\sigma,k-1}_{\infty,\mathcal{W}}}$ in the constant $C$ of Lemma \ref{lem:prod-es2} can be calculated explicitly,
and the corresponding striated estimates are stated as follows (whose proof is placed in the subsection \ref{subsec:str-es2}).
\begin{lemma}\label{lem:prod-es}
  Let $u$ be a smooth divergence-free vector field of $\R^d$ and $\mathcal{W}=\{W_i\}_{1\leq i \leq N}$ ($N\in \mathbb{Z}^+$)
be a set of smooth divergence-free vector fields. Let $\phi:\R^d\rightarrow \R$ be a smooth function.
Let $m(D)$ be a zero-order pseudo-differential operator with $m(\xi)\in C^\infty(\R^d\setminus\{0\})$.
Then for every $\epsilon\in (0,1)$ and $(p,r)\in [1,\infty]^2$, there exists a constant $C>0$ depending only on $d,\epsilon$ such that the following statements hold true.
\begin{enumerate}[(1)]
\item
We have
\begin{equation}\label{eq:prod-es3}
\begin{split}
  \|\partial_\mathcal{W} (u\cdot \nabla \phi)\|_{ B^{-\epsilon}_{p,r}}
  + \|T_{\mathcal{W}\cdot\nabla}(u\cdot\nabla \phi)\|_{B^{-\epsilon}_{p,r}} \leq C \min\{A_1,A_2,A_3\},
\end{split}
\end{equation}
with
\begin{align}
  A_1 & := \|u \|_{ B^{-\epsilon}_{p,r}} \|\partial_\mathcal{W}\nabla\phi\|_{ B^0_{\infty,1}}
  + \Big(\| \partial_\mathcal{W} u\|_{ B^{-\epsilon}_{p,r}}
  + \| \mathcal{W}\|_{ B^1_{\infty,1}} \|u\|_{ B^{-\epsilon}_{p,r}} \Big) \|\nabla \phi\|_{ B^0_{\infty,1}}, \label{A1} \\
  A_2 & := \|u\|_{ B^0_{\infty,1}} \|\partial_\mathcal{W}\nabla\phi\|_{ B^{-\epsilon}_{p,r}} + \Big(\|\partial_\mathcal{W} u\|_{ B^0_{\infty,1}}
  + \| \mathcal{W}\|_{ B^1_{\infty,1}} \|u\|_{ B^0_{\infty,1}} \Big) \|\nabla \phi \|_{ B^{-\epsilon}_{p,r}}, \label{A2} \\
  A_3 & := \|u\|_{ B^0_{\infty,1}} \Big(\|\partial_\mathcal{W}\nabla\phi\|_{ B^{-\epsilon}_{p,r}} + \|\mathcal{W}\|_{B^1_{\infty,1}} \|\nabla\phi\|_{ B^{-\epsilon}_{p,r}} \Big)  \nonumber \\
  &\quad + \Big(\|\partial_\mathcal{W} u\|_{B^{-\epsilon}_{p,r}} +  \|\mathcal{W}\|_{B^1_{\infty,1}} \|u\|_{B^{-\epsilon}_{p,r}}\Big)\|\nabla \phi\|_{B^0_{\infty,1}}. \label{A3}
\end{align}
\item
We have
\begin{equation}\label{eq:str-es1}
  \|\partial_\mathcal{W} (m(D) \phi)\|_{B^{-\epsilon}_{p,r}} \leq C \|\partial_\mathcal{W} \phi\|_{B^{-\epsilon}_{p,r}}
  + C \|\mathcal{W}\|_{B^1_{\infty,1}}
  \big(\|\Delta_{-1}m(D)\phi\|_{L^p} +\|\phi\|_{B^{-\epsilon}_{p,r}}\big) .
\end{equation}
\item
We have
\begin{equation}\label{eq:str-es3-0}
  \|[m(D), u\cdot\nabla]\phi\|_{B^{-\epsilon}_{p,r}} \leq C \| u\|_{W^{1,\infty}} \|\phi\|_{B^{-\epsilon}_{p,r}}.
\end{equation}
\end{enumerate}
If $\mathcal{W}$ contains only one divergence-free vector field $W$, the inequalities \eqref{eq:prod-es3}--\eqref{eq:str-es1} hold
with $\mathcal{W}$ replaced by $W$.
\end{lemma}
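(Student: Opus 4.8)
\noindent\emph{Proof proposal.} These three inequalities are the $\ell=0,1$ instances of \LEM{lem:prod-es2}, so the plan is to isolate the mechanisms that keep the constant dependent only on $d$ and $\epsilon$. All of them rest on Bony's decomposition, the no-loss product estimate of \LEM{lem:prod-es0}, and the elementary remark that a zero-order operator $m(D)$ with $m\in C^\infty(\R^d\setminus\{0\})$ obeys $\|m(D)g\|_{B^{-\epsilon}_{p,r}}\le C\|g\|_{B^{-\epsilon}_{p,r}}+C\|\Delta_{-1}m(D)g\|_{L^p}$, because on each annulus $\{|\xi|\sim 2^q\}$, $q\ge 0$, the symbol is smooth with $|\partial^\alpha m(\xi)|\lesssim|\xi|^{-|\alpha|}$, so $\Delta_q m(D)$ is $L^p$-bounded uniformly in $q$. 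I would also record the comparison
\[
  \|\partial_{\mathcal{W}}g-T_{\mathcal{W}\cdot\nabla}g\|_{B^{-\epsilon}_{p,r}}\le C\|\mathcal{W}\|_{B^1_{\infty,1}}\|g\|_{B^{-\epsilon}_{p,r}},
\]
obtained by writing $\partial_W g=\divg(Wg)$ (using $\divg\mathcal{W}=0$) and noting $\partial_W g-T_{W\cdot\nabla}g=\divg\bigl(T_gW+R(W,g)\bigr)$, which lies in $B^{1-\epsilon}_{p,r}$ since $1-\epsilon>0$; combined with \LEM{lem:prod-es0} applied to $g=u\cdot\nabla\phi$, this reduces the $T_{\mathcal{W}\cdot\nabla}$-versions of \EQ{eq:prod-es3} and \EQ{eq:str-es1} to the $\partial_{\mathcal{W}}$-versions.

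For \EQ{eq:prod-es3} the algebraic key is
\[
  \partial_W(u\cdot\nabla\phi)=[W,u]\cdot\nabla\phi+u\cdot\nabla(\partial_W\phi),\qquad [W,u]:=W\cdot\nabla u-u\cdot\nabla W,
\]
where $[W,u]$ is again divergence-free (the Lie bracket of two divergence-free fields), so both summands have the form ``divergence-free field dotted into the gradient of a scalar'', to which \LEM{lem:prod-es0} applies with no loss. I then bound $\|[W,u]\|_{B^{-\epsilon}_{p,r}}$ and $\|[W,u]\|_{L^\infty}$ by splitting $[W,u]=\partial_W u-u\cdot\nabla W$ and applying \LEM{lem:prod-es0} to the scalar products $u\cdot\nabla W_m$ (legitimate since $u$ is divergence-free and $W_m$ scalar), getting $\|[W,u]\|_{B^{-\epsilon}_{p,r}}\lesssim\|\partial_{\mathcal{W}}u\|_{B^{-\epsilon}_{p,r}}+\|\mathcal{W}\|_{B^1_{\infty,1}}\|u\|_{B^{-\epsilon}_{p,r}}$ and its $L^\infty$-analogue; likewise I rewrite $\nabla(\partial_W\phi)=\partial_W\nabla\phi+\sum_i(\partial_iW)\cdot\nabla\phi$, noting each $\partial_iW$ is divergence-free, so \LEM{lem:prod-es0} again handles $(\partial_iW)\cdot\nabla\phi$. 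Feeding these into the two options of \LEM{lem:prod-es0} for the two summands — the rough norm on $u/[W,u]$ versus on $\phi/\partial_W\phi$, in the three combinations — produces exactly $A_1$, $A_2$, $A_3$; adding the comparison above then settles the $T_{\mathcal{W}\cdot\nabla}$-part, and the general $N$ case is identical.

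For \EQ{eq:str-es3-0} I use $\divg u=0$ to write $[m(D),u\cdot\nabla]\phi=\divg\bigl([m(D),u]\phi\bigr)$, where $[m(D),u]$ is the commutator with multiplication by $u$; the claim then reduces to the classical gain-of-one-derivative estimate $\|[m(D),u]\psi\|_{B^{1-\epsilon}_{p,r}}\lesssim\|u\|_{W^{1,\infty}}\|\psi\|_{B^{-\epsilon}_{p,r}}$, valid since $-\epsilon\in(-1,0)$. This I would prove by Bony-decomposing both $u\psi$ and $u\,m(D)\psi$: the paraproduct pieces collapse to commutators $[m(D),S_{q-1}u]\Delta_q\psi$ and $[m(D),\Delta_q u]S_{q-1}\psi$, whose kernels localized to $\{|\xi|\sim 2^q\}$ satisfy $\|[m_q(D),a]b\|_{L^p}\lesssim 2^{-q}\|\nabla a\|_{L^\infty}\|b\|_{L^p}$ (the $2^{-q}$ balancing the outer $\divg$), while the remainder piece, whose output frequencies are spread, is handled by the usual further splitting of $m(D)$ relative to the scale $2^q$; the $q$-summation uses only $\|\nabla u\|_{L^\infty}$ and $\|u\|_{L^\infty}$ and converges thanks to $\epsilon>0$. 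For \EQ{eq:str-es1} I write $\partial_W(m(D)\phi)=m(D)(\partial_W\phi)+[W\cdot\nabla,m(D)]\phi$; the commutator is \EQ{eq:str-es3-0} with $u=W$, and by the remark above the first term is $\lesssim\|\partial_W\phi\|_{B^{-\epsilon}_{p,r}}+\|\Delta_{-1}m(D)\partial_W\phi\|_{L^p}$. For the lowest block I use $\partial_W\phi=\divg(W\phi)$ and the identity $\Delta_{-1}m(D)\divg(W\phi)=W\cdot\nabla\bigl(\Delta_{-1}m(D)\phi\bigr)+\bigl[\Delta_{-1}m(D),W\cdot\nabla\bigr]\phi$: the first term is $\lesssim\|W\|_{L^\infty}\|\nabla\Delta_{-1}m(D)\phi\|_{L^p}\lesssim\|W\|_{L^\infty}\|\Delta_{-1}m(D)\phi\|_{L^p}$ by Bernstein (frequencies in a fixed ball), and the second, a convolution against $\nabla K$ paired with the increment $W(y)-W(x)$, gains a derivative and is $\lesssim\|W\|_{W^{1,\infty}}\|\phi\|_{B^{-\epsilon}_{p,r}}$; alternatively, for $1<p<\infty$ one directly estimates $\|\Delta_{-1}m(D)\divg(W\phi)\|_{L^p}\lesssim\|W\phi\|_{B^{-\epsilon}_{p,r}}\lesssim\|\mathcal{W}\|_{B^1_{\infty,1}}\|\phi\|_{B^{-\epsilon}_{p,r}}$, using that the symbol of $\Delta_{-1}m(D)\divg$ vanishes at the origin.

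The genuinely delicate point is not any single inequality but the bookkeeping in \EQ{eq:prod-es3}: every product must be split so that the least regular factor carries a $B^{-\epsilon}_{p,r}$-norm while the other carries a summable $B^0_{\infty,1}$-norm, and one must avoid all products of two distributions whose regularities sum to $\le 0$, for which the remainder operator $R$ does not close. It is precisely to sidestep these that the divergence-free structure, encoded in the Lie bracket $[W,u]$ and the fields $\partial_iW$, is exploited rather than a naive Leibniz expansion. The secondary difficulty is the lowest Littlewood--Paley block in \EQ{eq:str-es1}: at the endpoints $p\in\{1,\infty\}$ the operator $\Delta_{-1}m(D)$ is not an $L^p$-multiplier, which forces one to keep $\|\Delta_{-1}m(D)\phi\|_{L^p}$ on the right-hand side and to exploit the vanishing of the symbol of $\Delta_{-1}m(D)\divg$ at $\xi=0$.
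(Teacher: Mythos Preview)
Your approach is correct overall but differs noticeably from the paper's, especially in part~(1). The paper proves all three parts through the paradifferential machinery of \LEM{lem:Tw-es-k1} (the explicit $k=1$ case of the $R_q$-operator estimates): for~(1), it expands $u\cdot\nabla\phi$ via Bony's decomposition and bounds $(T_{\mathcal W\cdot\nabla})$ applied to each of $T_{u}\nabla\phi$, $T_{\nabla\phi}u$, $R(u,\nabla\phi)$ separately using \eqref{es:Tw-parap-1-2}--\eqref{es:Tw-rem-1-2}, then passes to $\partial_{\mathcal W}$ via the comparison \eqref{normBB-equiv-2}; for~(2), it splits $\partial_{\mathcal W}(m(D)\phi)$ as $(T_{\mathcal W\cdot\nabla})m(D)\phi+(\partial_{\mathcal W}-T_{\mathcal W\cdot\nabla})m(D)\phi$ and uses the paraproduct-commutator identity \eqref{exp:Tw-mD0} for the first piece; for~(3), it does exactly your Bony decomposition. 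Your route for~(1) --- the Lie-bracket identity $\partial_W(u\cdot\nabla\phi)=[W,u]\cdot\nabla\phi+u\cdot\nabla(\partial_W\phi)$ together with the observation that $[W,u]$ and each $\partial_iW$ are divergence-free --- is more elementary and sidesteps the paraproduct calculus entirely for that part, reducing everything to repeated applications of \LEM{lem:prod-es0}. This is cleaner but less systematic: the paper's machinery is designed to iterate to the higher-$\ell$ case of \LEM{lem:prod-es2}, where the Lie-bracket trick by itself no longer closes.

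One point to tighten in part~(2): your option~(a) for the low block $\|\Delta_{-1}m(D)\partial_W\phi\|_{L^p}$ invokes the kernel $K=\mathcal F^{-1}(\chi m)$ of $\Delta_{-1}m(D)$ and ``convolution against $\nabla K$ paired with the increment $W(y)-W(x)$'', but since $\chi m$ is in general not smooth at the origin, $K$ need not decay fast enough for $|y|K(y)$ or $|y|\nabla K(y)$ to lie in $L^1$, so that kernel-commutator bound is not available as written. Your option~(b), however, is correct and requires no restriction to $1<p<\infty$: the key fact --- used repeatedly in the paper as \eqref{eq:fact3} --- is that $\nabla\Delta_{-1}m(D)$ is bounded on $L^p$ for \emph{every} $p\in[1,\infty]$, so $\|\Delta_{-1}m(D)\divg(W\phi)\|_{L^p}\lesssim\sum_{j\le 1}\|\Delta_j(W\phi)\|_{L^p}\lesssim\|W\phi\|_{B^{-\epsilon}_{p,r}}$, and then the product estimate $\|W\phi\|_{B^{-\epsilon}_{p,r}}\lesssim\|W\|_{B^1_{\infty,1}}\|\phi\|_{B^{-\epsilon}_{p,r}}$ (valid since $-\epsilon\in(-1,0)$) closes the argument at the endpoints as well.
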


The lemma below is concerned with the striated estimate of the patch-type initial data.
\begin{lemma}\label{lem:str-reg}
  Let $k\in \N \cap [2,\infty)$ and $0<\gamma<1$. Assume that $D_0 \subset \R^2$ is a bounded simply connected domain with boundary
$\partial D_0$ characterized by the level-set function $\varphi_0\in C^{k,\gamma}(\R^2)$ (see \eqref{patch-ls-exp}),
and $\theta_0(x) = \bar{\theta}_0(x) 1_{D_0}(x)$ with $\bar{\theta}_0 \in C^{k-2,\gamma}(\overline{D_0})$. 
Let $W_0 = \nabla^\perp \varphi_0$. Then we have
\begin{align}\label{eq:str-reg}
  \partial_{W_0}^{k-1} \theta_0(x) \in C^{-1,\gamma}(\R^2).
\end{align}
\end{lemma}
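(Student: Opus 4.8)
The plan is to write $\theta_0 = \bar\theta_0\,1_{D_0}$ as a product and exploit the fact that, although $1_{D_0}$ is only a bounded function, its directional derivatives along the tangential field $W_0=\nabla^\perp\varphi_0$ are much better behaved: $W_0$ is tangent to $\partial D_0$, so $\partial_{W_0}1_{D_0}=0$ in the sense of distributions. The core of the argument is therefore to reduce the striated derivative $\partial_{W_0}^{k-1}\theta_0$ to a sum of terms in which all the derivatives that fall on $1_{D_0}$ are tangential (hence vanish), while the remaining derivatives fall on the smooth factors $\bar\theta_0$ and on the coefficients of $W_0$ (which are $C^{k-1,\gamma}$ since $\varphi_0\in C^{k,\gamma}$). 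First I would expand, via the Leibniz rule for the first-order operator $\partial_{W_0}=W_0\cdot\nabla$ applied repeatedly,
\[
\partial_{W_0}^{k-1}\theta_0
= \sum_{j=0}^{k-1} \big(\text{smooth coefficient built from }W_0,\bar\theta_0,\text{ and their derivatives of order}\le k-1\big)\,\partial_{W_0}^{j}1_{D_0},
\]
and then observe that every term with $j\ge 1$ vanishes because $\partial_{W_0}1_{D_0}=0$; only the $j=0$ term survives, giving $\partial_{W_0}^{k-1}\theta_0 = g_0\,1_{D_0}$ for some function $g_0$. Here one must be a little careful: $\partial_{W_0}$ does not commute with multiplication, so the precise bookkeeping is that $\partial_{W_0}^{k-1}(\bar\theta_0\,1_{D_0})$ expands into terms of the form $(\partial_{W_0}^{a_1}\!\cdots)(\bar\theta_0\text{-stuff})\cdot\partial_{W_0}^{b}1_{D_0}$ with $b\le k-1$, and those with $b\ge1$ drop out. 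The upshot is $\partial_{W_0}^{k-1}\theta_0 = \widetilde g_0\, 1_{D_0}$ where $\widetilde g_0$ is a finite combination of products of derivatives of $\bar\theta_0$ up to order $k-2$ (recall $\bar\theta_0\in C^{k-2,\gamma}(\overline{D_0})$, extended suitably to $\R^2$) and derivatives of the components of $W_0$ up to order $k-1$; hence $\widetilde g_0\in C^{\gamma}(\overline{D_0})$ after reinspecting which orders actually occur — one checks that the worst term is $(\partial_{W_0}^{k-1}\bar\theta_0$-type$)$ contributions balanced against the constraint, so that $\widetilde g_0$ is at least Hölder-$\gamma$ on $\overline{D_0}$.

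Next I would invoke the standard fact (see e.g.\ \cite{BCD11} or the patch literature \cite{DanZ17,GGJ17}) that if $g\in C^{\gamma}(\overline{D_0})$ and $\partial D_0\in C^{k,\gamma}\subset C^{1,\gamma}$, then $g\,1_{D_0}\in C^{-1,\gamma}(\R^2)=B^{-1+\gamma}_{\infty,\infty}(\R^2)$; indeed $1_{D_0}$ itself lies in $C^{-1,\gamma}$ (its distributional gradient is a $C^{\gamma}$ density carried on the $C^{1,\gamma}$ curve $\partial D_0$), and multiplication by a $C^{\gamma}$ function maps $C^{-1,\gamma}$ to itself by the usual paraproduct estimate $B^{-1+\gamma}_{\infty,\infty}\cdot B^{\gamma}_{\infty,\infty}\hookrightarrow B^{-1+\gamma}_{\infty,\infty}$ valid since $-1+\gamma<0<\gamma$ and $(-1+\gamma)+\gamma>-1$. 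Applying this with $g=\widetilde g_0$ gives exactly \eqref{eq:str-reg}.

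The main obstacle — and the step deserving genuine care rather than a one-line appeal — is the commutator/Leibniz bookkeeping in the first paragraph: one has to verify that after all $k-1$ applications of $\partial_{W_0}$, no term survives in which a \emph{normal} derivative hits $1_{D_0}$ (which would be a genuinely singular object like $\partial_\nu 1_{D_0}$, not in $C^{-1,\gamma}$), and simultaneously that the surviving smooth prefactor $\widetilde g_0$ does not require more than $C^{\gamma}$ regularity, i.e.\ that the regularity budget $\varphi_0\in C^{k,\gamma}$ and $\bar\theta_0\in C^{k-2,\gamma}$ is exactly enough. The cleanest way to organize this is to note that since $\divg W_0=0$ one may write $\partial_{W_0}f=\divg(W_0 f)$, and to track the action of $(\divg\,W_0\cdot)^{k-1}$ on $\bar\theta_0\,1_{D_0}$: each $\divg(W_0\cdot)$ produces either a derivative landing on a smooth factor (fine) or the combination $W_0\cdot\nabla 1_{D_0}$ (zero), so by induction on $k$ the claim $\partial_{W_0}^{k-1}\theta_0=\widetilde g_0 1_{D_0}$ with $\widetilde g_0\in C^{\gamma}(\overline{D_0})$ follows, and the degenerate regularity count is transparent. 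With that in hand the Besov multiplication step is routine, and \eqref{eq:str-reg} is established.
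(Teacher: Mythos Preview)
Your overall strategy---commute the tangential derivatives past $1_{D_0}$ and then use a multiplication result---is exactly the paper's strategy. However, your regularity bookkeeping is off by one, and this is not cosmetic.

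You claim the surviving factor $\widetilde g_0=\partial_{W_0}^{k-1}\bar\theta_0$ involves derivatives of $\bar\theta_0$ only up to order $k-2$ and hence lies in $C^\gamma(\overline{D_0})$. That is false: the leading term of $\partial_{W_0}^{k-1}\bar\theta_0$ is $W_0^{\otimes(k-1)}:\nabla^{k-1}\bar\theta_0$, which uses $k-1$ derivatives of $\bar\theta_0\in C^{k-2,\gamma}$. Already for $k=2$ one has $\widetilde g_0=W_0\cdot\nabla\bar\theta_0$ with $\bar\theta_0\in C^{\gamma}$, so $\widetilde g_0\in C^{-1,\gamma}$ only, not $C^\gamma$. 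The paper's computation \eqref{es:parW0-k-the} (after extending $\bar\theta_0$ to $\widetilde\theta_0\in C^{k-2,\gamma}(\R^2)$ via Rychkov's extension theorem) confirms exactly this: $\partial_{W_0}^{k-1}\widetilde\theta_0\in C^{-1,\gamma}(\R^2)$, no better.

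Because $\widetilde g_0$ is only in $C^{-1,\gamma}$, your second step needs a different ingredient than the one you invoke. You now need that $1_{D_0}$ is a \emph{pointwise multiplier} on $C^{-1,\gamma}(\R^2)$; this is a genuine structural fact about characteristic functions of Lipschitz domains (the paper cites \cite[Chap.~4.6.3]{RS96}), not a generic paraproduct estimate. Incidentally, your stated paraproduct condition ``$(-1+\gamma)+\gamma>-1$'' is wrong: the standard product rule $B^{s_1}_{\infty,\infty}\cdot B^{s_2}_{\infty,\infty}\hookrightarrow B^{\min(s_1,s_2)}_{\infty,\infty}$ requires $s_1+s_2>0$, i.e.\ $\gamma>\tfrac12$, so that route fails for small $\gamma$ even in the scenario you had in mind. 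Once you correct the regularity of $\widetilde g_0$ to $C^{-1,\gamma}$ and replace the paraproduct step by the multiplier property of $1_{D_0}$, your argument becomes the paper's proof.
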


\begin{proof}[Proof of Lemma \ref{lem:str-reg}]
  We argue as \cite[Proposition 3.1]{Sue15}.
First note that Rychkov's extension theorem (\cite{Ryc99})
guarantees that there exists a function $\widetilde\theta_0\in C^{k-2,\gamma}(\R^2)$ with the restriction
$\widetilde\theta_0|_{D_0} = \theta_0 = \bar{\theta}_0$. 

Then it suffices to prove that $\partial_{W_0}^{k-1} (\widetilde\theta_0\cdot 1_{D_0} )$ belongs to $ C^{-1,\gamma}(\R^2)$.
Since the vector field $W_0$ is tangential to the patch boundary $\partial D_0$, the operator $\partial_{W_0}^{k-1}$
communicates with the characteristic function $1_{D_0}$. Note also that (e.g. see \cite[Chap. 4.6.3]{RS96})
\begin{align}\label{eq:claim0}
  \textrm{$1_{D_0}(x)$ is the pointwise multiplier in the space $C^{-1,\gamma}(\R^2)$}.
\end{align}
Hence it needs only to show that $\partial_{W_0}^{k-1} \widetilde\theta_0 \in C^{-1,\gamma}(\R^2)$.
Due to that $\widetilde\theta_0\in C^{k-2,\gamma}(\R^2)$ and $W_0\in C^{k-1,\gamma}(\R^2)$,
this indeed can be justified from repeatedly using the product estimate \eqref{eq:prod-es}:
\begin{align}\label{es:parW0-k-the}
  \|\partial_{W_0}^{k-1} \widetilde\theta_0\|_{C^{-1,\gamma}} & \lesssim_{\|W_0\|_{L^\infty}}
  \|\nabla \partial_{W_0}^{k-2}\widetilde\theta_0\|_{C^{-1,\gamma}} \nonumber \\
  & \lesssim_{\|W_0\|_{W^{1,\infty}}} \|\nabla \partial_{W_0}^{k-3}\widetilde\theta_0\|_{C^{-1,\gamma}}
  +  \|\nabla^2 \partial_{W_0}^{k-3} \widetilde\theta_0\|_{C^{-1,\gamma}} \nonumber \\
  & \lesssim_{\|W_0\|_{W^{k-2,\infty}}} \|\nabla \widetilde\theta_0\|_{C^{-1,\gamma}}
  + \|\nabla^2 \widetilde\theta_0\|_{C^{-1,\gamma}} + \cdots
  + \|\nabla^{k-1} \widetilde\theta_0\|_{C^{-1,\gamma}} \nonumber \\
  & \lesssim_{\|W_0\|_{W^{k-2,\infty}}} \|\widetilde\theta_0\|_{C^{k-2,\gamma}} .
\end{align}
\end{proof}

\subsection{Some auxiliary lemmas}

We have the following useful commutator estimate.
\begin{lemma}\label{lem:comm-es}
  Assume $p\in [2,\infty]$, $\mathcal{R}_{-1}:=\frac{m(D)}{\Lambda}$, $\Lambda =(-\Delta)^{1/2}$ and $m(D)$ is a zero-order pseudo-differential operator with $m(\xi)\in C^\infty(\R^d\setminus\{0\})$.
Let $u=(u_1,\cdots,u_d)$ be a smooth divergence-free vector field and $\phi$ is a smooth scalar function. Then we have
\begin{equation}\label{eq:comm-es1}
  \|[\mathcal{R}_{-1}, u\cdot\nabla] \phi\|_{B^1_{p,\infty}(\R^d)} \leq C \big( \|\nabla u\|_{L^p(\R^d)} \|\phi\|_{B^0_{\infty,\infty}(\R^d)}
  + \|u\|_{L^2(\R^d)} \|\phi\|_{L^2(\R^d)} \big),
\end{equation}
with $C>0$ a constant depending on $p$ and $d$.
\end{lemma}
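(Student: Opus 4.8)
The plan is to prove the commutator estimate \eqref{eq:comm-es1} by decomposing the commutator $[\mathcal{R}_{-1},u\cdot\nabla]\phi$ via Bony's paraproduct decomposition and estimating each piece separately. Writing $u\cdot\nabla\phi = \div(u\phi)$ using $\div u = 0$, and setting $\mathcal{R}_{-1} = m(D)/\Lambda$ which is a Fourier multiplier of order $-1$, the commutator becomes $\mathcal{R}_{-1}\div(u\phi) - u\cdot\nabla\mathcal{R}_{-1}\phi$. I would expand the product $u\phi$ and the product $u\cdot\nabla(\mathcal{R}_{-1}\phi)$ into paraproducts $T_u(\cdot) + T_{(\cdot)}u + R(u,\cdot)$, so that the commutator splits into three types of terms: a "paraproduct" piece roughly of the form $[T_{u_j}, \mathcal{R}_{-1}\partial_j]\phi$, a "reverse paraproduct plus remainder" piece built from $T_{\partial_j\phi}u_j$-type and $R(\cdot,\cdot)$-type contributions, and the corresponding terms where $\mathcal{R}_{-1}$ hits $\phi$ first. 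The standard point is that the genuine commutator structure only survives in the $T_u$ piece, where the frequencies of $u$ are much lower than those of $\phi$; the other pieces are just products and can be bounded directly by duality of paraproducts.

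For the main commutator term $\sum_q [S_{q-1}u_j, \mathcal{R}_{-1}\partial_j\Delta_q]\phi$, I would use the classical commutator-with-a-multiplier lemma (first-order Taylor expansion of the symbol): since $\mathcal{R}_{-1}\partial_j$ is a multiplier of order $0$ localized at frequency $2^q$, a commutator against a function whose frequencies are $\lesssim 2^q$ gains one derivative on $u$ and loses one on $\phi$, giving a bound like $2^{-q}\|\nabla S_{q-1}u\|_{L^p}\|\Delta_q\phi\|_{L^\infty}$ wait — more precisely the kernel of $[\mathcal{R}_{-1}\partial_j, S_{q-1}u_j]\Delta_q$ at scale $2^{-q}$ has $L^1$-norm $O(\|\nabla u\|_{L^\infty})$ independent of $q$ after the order-$0$ operator acts, so that the $q$-th block is bounded by $C\|\nabla u\|_{L^\infty}\|\Delta_q\phi\|_{L^p}$; combined with the $B^1_{p,\infty}$ scaling (multiply by $2^q$) one needs instead the refinement that $\mathcal{R}_{-1}$ being order $-1$ supplies the factor $2^{-q}$, so $2^q\cdot 2^{-q}\|\nabla u\|_{L^p}\|\Delta_q\phi\|_{L^\infty}$ stays bounded in $q$, yielding the $\|\nabla u\|_{L^p}\|\phi\|_{B^0_{\infty,\infty}}$ term. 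For the remaining paraproduct and remainder pieces, which have no commutator cancellation, I would bound them crudely: the high-frequency factor is always $\phi$ or $\nabla\mathcal{R}_{-1}\phi$ (an order-$0$ quantity), the operator $\mathcal{R}_{-1}$ always contributes $2^{-q}$ at the output frequency $2^q$, and the $B^1_{p,\infty}$ norm then forces a sum $\sum_{q}2^{q}2^{-q}\|\Delta_q u\|_{L^p}\|S_{q}\phi\|_{L^\infty}$-type expression which again is controlled by $\|\nabla u\|_{L^p}\|\phi\|_{B^0_{\infty,\infty}}$, together with the low-frequency part $\Delta_{-1}$ which is handled by the crude $L^2\times L^2 \to L^1 \hookrightarrow $ low-frequency $L^p$ bound, producing the $\|u\|_{L^2}\|\phi\|_{L^2}$ term.

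Concretely I would organize the write-up as: (i) reduce to $\mathcal{R}_{-1}\div(u\phi) - u\cdot\nabla\mathcal{R}_{-1}\phi$; (ii) insert the paraproduct decompositions and isolate the three term types in each half; (iii) handle the $T_u$-type term with the symbol-commutator lemma, carefully tracking that $\mathcal{R}_{-1}$ has order $-1$ so no derivative is lost overall; (iv) handle the $T_{(\cdot)}u$- and $R(u,\cdot)$-type terms by direct paraproduct/remainder estimates, using that the "$\phi$-side" quantities are bounded in $B^0_{\infty,\infty}$ and that the smoothing of $\mathcal{R}_{-1}$ absorbs the $B^1_{p,\infty}$ weight; (v) treat the lowest frequency block $\Delta_{-1}$ separately by Bernstein and Hölder to get the $L^2\times L^2$ term. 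The main obstacle I anticipate is bookkeeping in step (iii)–(iv): one must be scrupulous about which frequency dominates in each piece so that the order-$(-1)$ gain from $\mathcal{R}_{-1}$ is matched against the order-$(+1)$ loss from the target space $B^1_{p,\infty}$ and the $\nabla$ falling on $u$ — a single misallocation changes $\|\nabla u\|_{L^p}$ into $\|u\|_{B^1_{p,\infty}}$ or forces $\|\phi\|_{B^\epsilon_{\infty,\infty}}$ with $\epsilon>0$, which is too strong. The use of $\div u = 0$ to write the transport term in divergence form, and the fact that we only ask for the weak $\ell^\infty$ summability in the target Besov space (so borderline logarithmic divergences are tolerated), are exactly what make the estimate close.
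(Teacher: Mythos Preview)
Your overall plan --- Bony decomposition, the first-order kernel/mean-value commutator lemma for the $T_u$-type piece, crude product bounds for the rest, and an $L^2\times L^2$ treatment of the lowest frequency block --- is the same architecture the paper uses, and your frequency bookkeeping for the $T_u$ piece is correct. But your claim that ``the genuine commutator structure only survives in the $T_u$ piece'' and that ``the other pieces are just products and can be bounded directly'' hides a real gap. Look at the reverse-paraproduct piece $T_{\nabla\mathcal{R}_{-1}\phi}\cdot u$ coming from the second half of the commutator: at output frequency $2^j$ this is $\sum_{q\sim j}S_{q-1}(\nabla\mathcal{R}_{-1}\phi)\,\Delta_q u$, and since $\nabla\mathcal{R}_{-1}$ is an order-$0$ operator one only has
\[
  \|S_{q-1}\nabla\mathcal{R}_{-1}\phi\|_{L^\infty}\ \lesssim\ \sum_{l\le q-2}\|\Delta_l\phi\|_{L^\infty}\ \lesssim\ q\,\|\phi\|_{B^0_{\infty,\infty}},
\]
so $2^j\|\Delta_j(T_{\nabla\mathcal{R}_{-1}\phi}\cdot u)\|_{L^p}\lesssim j\,\|\nabla u\|_{L^p}\|\phi\|_{B^0_{\infty,\infty}}$. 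This logarithmic factor in $j$ is \emph{not} tolerated by the target space $B^1_{p,\infty}$: your remark that ``borderline logarithmic divergences are tolerated'' by the $\ell^\infty$ summability is simply wrong --- $\ell^\infty$ means $\sup_j$, and a bound growing like $j$ gives an infinite supremum.

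The paper circumvents this by decomposing the \emph{whole} commutator at once, writing $[\mathcal{R}_{-1},u\cdot\nabla]\phi=\mathrm{I}+\mathrm{II}+\mathrm{III}$ where each piece keeps the bracket; in particular $\mathrm{II}=\sum_q[\mathcal{R}_{-1},\Delta_q u\cdot\nabla]S_{q-1}\phi$ is precisely your pair $\mathcal{R}_{-1}T_{\nabla\phi}\cdot u - T_{\nabla\mathcal{R}_{-1}\phi}\cdot u$ recombined. The same kernel argument used for $\mathrm{I}$ then supplies the missing $2^{-q}$: since the convolution kernel of $\mathcal{R}_{-1}\widetilde\psi(2^{-q}D)$ is $2^{q(d-1)}\overline{h}(2^q\cdot)$ with $\overline{h}\in\mathcal{S}$, one gets $\|[\mathcal{R}_{-1}\widetilde\psi(2^{-q}D),\Delta_q u\cdot]\nabla S_{q-1}\phi\|_{L^p}\lesssim 2^{-2q}\|\nabla\Delta_q u\|_{L^p}\|\nabla S_{q-1}\phi\|_{L^\infty}$, and after multiplying by $2^j\sim 2^q$ the resulting bound is $\|\nabla u\|_{L^p}\|\phi\|_{B^0_{\infty,\infty}}$, uniform in $j$. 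So the fix is minor --- retain the commutator structure in the reverse-paraproduct piece as well --- but as written your plan would not close.
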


\begin{proof}[Proof of Lemma \ref{lem:comm-es}]
  Thanks to Bony's decomposition, we have
\begin{align}
  [\mathcal{R}_{-1}, u\cdot \nabla ] \phi & = \sum_{q\in \N}[\mathcal{R}_{-1}, S_{q-1}u\cdot\nabla] \Delta_q \phi + \sum_{q\in \N} [\mathcal{R}_{-1}, \Delta_q u \cdot\nabla] S_{q-1}\phi
  + \sum_{q\geq -1} [\mathcal{R}_{-1}, \Delta_q u\cdot\nabla] \widetilde{\Delta}_q \phi \nonumber \\
  & := \mathrm{I} + \mathrm{II} + \mathrm{III}.
\end{align}

For $\mathrm{I}$, from the spectral property there exists a bump function $\widetilde{\psi}\in C^\infty_c(\R^d)$
supported on an annulus of $\R^d$ such that
$\mathrm{I}= \sum_{q\in \N} [\mathcal{R}_{-1} \widetilde{\psi}(2^{-q}D), S_{q-1}u\cdot\nabla]\Delta_q \phi$, and due to the fact that
$\mathcal{R}_{-1}\widetilde{\psi}(2^{-q}D) = 2^{q(d-1)} \overline{h}(2^q\cdot)* $ with $\overline{h}\in \mathcal{S}(\R^d)$, we find that for every $j \geq -1$,
\begin{align*}
  2^j \|\Delta_j\mathrm{I}\|_{L^p} & \lesssim 2^j \sum_{q\in \N,|q-j|\leq 4}
  \| [\RR_{-1}\widetilde{\psi}(2^{-q}D), S_{q-1}u\cdot\nabla]\Delta_q\phi\|_{L^p} \\
  & \lesssim 2^j \sum_{|q-j|\leq 4} 2^{q(d-1)}\Big\| \int_{\R^d}\overline{h}(2^q y) (S_{q-1}u(x-y) - S_{q-1}u(x))
  \cdot\nabla\Delta_q \phi(x-y) \dd y\Big\|_{L^p_x} \\
  & \lesssim 2^j \sum_{|q-j|\leq 4} 2^{-2q} \|y \overline{h}(y)\|_{L^1} \|\nabla S_{q-1} u\|_{L^p} \|\nabla \Delta_q \phi\|_{L^\infty}
  \leq C \|\nabla u\|_{L^p} \|\phi\|_{B^0_{\infty,\infty}}.
\end{align*}
The second term $\mathrm{II}$ can be estimated in the similar way
\begin{align*}
  2^j \|\Delta_j \mathrm{II}\|_{L^p} & \lesssim 2^j \sum_{q\in \N, |q-j|\leq 4} \|[\RR_{-1} \widetilde{\psi}(2^{-q}D), \Delta_q u\cdot \nabla] S_{q-1}\phi\|_{L^p} \\
  & \lesssim 2^j \sum_{|q-j|\leq 4} 2^{-2q} \|y \overline{h}(y)\|_{L^1} \|\nabla \Delta_q u\|_{L^p} \Big(\sum_{-1\leq l \leq q-2} 2^l \|\Delta_l \phi \|_{L^\infty}\Big)
  \lesssim \|\nabla u\|_{L^p} \|\phi\|_{B^0_{\infty,\infty}} .
\end{align*}
For $j=-1$, a direct computation shows that
\begin{align*}
  \|\Delta_{-1}\mathrm{III}\|_{L^p} & \leq \|\Delta_{-1}\RR_{-1}\div (u\,\phi)\|_{L^p} + \|\Delta_{-1} (u \cdot\nabla \RR_{-1}\phi)\|_{L^p} \\
  & \lesssim \|\Delta_{-1} (u\,\phi)\|_{L^1} + \|\Delta_{-1}(u\cdot\nabla \RR_{-1}\phi)\|_{L^1} \lesssim \|u\|_{L^2} \|\phi\|_{L^2}.
\end{align*}
In view of $\mathrm{div}\, u=0$, we further split $\mathrm{III}$ as follows
\begin{align*}
  \Delta_j \mathrm{III} = \sum_{q\geq j-3}  \Delta_j \RR_{-1}\mathrm{div} (\Delta_q u \, \widetilde{\Delta}_q \phi) -  \sum_{q\geq j-3}  \Delta_j (\Delta_q u \cdot\nabla \RR_{-1}\widetilde{\Delta}_q \phi) = : \Delta_j \mathrm{III}_1 + \Delta_j \mathrm{III}_2 .
\end{align*}
Since $\Delta_j \RR_{-1}\mathrm{div}\,$ is uniformly bounded in $L^p$ for every $j\in\N$, we use Bernstein's inequality to derive that for every $j\in \N$,
\begin{align*}
  & \quad 2^j \|\Delta_j \mathrm{III}_1\|_{L^p} \lesssim 2^j \sum_{q\geq j-3,q\geq 2} \|\Delta_q u \,\widetilde{\Delta}_q \phi\|_{L^p} + 2^j \sum_{q\geq j-3, q\leq 2} \|\Delta_q u\,\widetilde{\Delta}_q \phi\|_{L^1} \\
  & \lesssim \sum_{q\geq j-3} 2^{j-q} \|\Delta_q \nabla u\|_{L^p} \|\widetilde\Delta_q \phi\|_{L^\infty} + \sum_{-1\leq q\leq 2} \|\Delta_q u\|_{L^2} \|\widetilde\Delta_q \phi\|_{L^2}
  \lesssim \|\nabla u\|_{L^p} \|\phi\|_{B^0_{\infty,\infty}} + \|u\|_{L^2} \|\phi\|_{L^2}.
\end{align*}
Similarly, one also gets
$2^j \|\Delta_j \mathrm{III}_2\|_{L^p} \lesssim \|\nabla u\|_{L^p} \|\phi\|_{B^0_{\infty,\infty}} + \|u\|_{L^2} \|\phi\|_{L^2}$.
Hence, gathering the above estimates leads to the commutator estimate \eqref{eq:comm-es1}.

\end{proof}

We recall the following regularity estimates of the transport and transport-diffusion equations
(one can see \cite[Chap. 3]{BCD11} for the detailed proof).
\begin{lemma}\label{lem:TD-sm2}
  Let $(\rho ,p,r)\in [1,\infty]^3$ and $-1< s < 1$.
Assume that $u$ is a smooth divergence-free vector field, and $\phi$ is a smooth function solving the transport/transport-diffusion equation
\begin{equation}\label{eq:TD-eq2}
  \partial_t \phi + u\cdot\nabla \phi - \nu \Delta \phi = f,\quad \phi|_{t=0}(x)=\phi_0(x),\quad x\in \R^d.
\end{equation}
We have the following statements.
\begin{enumerate}[(1)]
\item
If $\nu=0$, then there exists a constant $C=C(d,s)$ so that for every $t>0$,
\begin{equation}\label{eq:T-sm2}
  \|\phi\|_{L^\infty_t (B^s_{p,r})} \leq C \bigg( \|\phi_0\|_{B^s_{p,r}} + \|f\|_{\widetilde{L}^1_t (B^s_{p,r})}
  + \int_0^t \|\nabla u(\tau)\|_{L^\infty} \|\phi(\tau)\|_{ B^s_{p,r}} \dd \tau \bigg),
\end{equation}
and
\begin{equation}\label{eq:T-sm3}
  \|\phi\|_{L^\infty_t (B^s_{p,r})} \leq C e^{C \int_0^t \|\nabla u\|_{L^\infty} \dd \tau} \Big( \|\phi_0\|_{B^s_{p,r}} + \|f\|_{\widetilde{L}^1_t (B^s_{p,r})} \Big).
\end{equation}
\item If $\nu>0$, then there exists a constant $C= C(d,s)$ so that for every $t>0$,
\begin{equation}\label{eq:TD-sm2}
  \nu^{\frac{1}{\rho}} \|\phi\|_{\widetilde{L}^\rho_t (B^{s+ \frac{2}{\rho}}_{p,r})} \leq C (1+\nu t)^{\frac{1}{\rho}}
  \bigg( \|\phi_0\|_{B^s_{p,r}} + \|f\|_{\widetilde{L}^1_t (B^s_{p,r})}  + \int_0^t \|\nabla u(\tau)\|_{L^\infty} \|\phi(\tau)\|_{B^s_{p,r}} \dd \tau \bigg),
\end{equation}
and
\begin{equation}\label{eq:TD-sm3}
  \nu^{\frac{1}{\rho}} \|\phi\|_{\widetilde{L}^\rho_t (B^{s+ \frac{2}{\rho}}_{p,r})} \leq C (1+\nu t)^{\frac{1}{\rho}} e^{C\int_0^t \|\nabla u\|_{L^\infty}\dd \tau}
  \Big( \|\phi_0\|_{B^s_{p,r}} + \|f\|_{\widetilde{L}^1_t (B^s_{p,r})}  \Big).
\end{equation}
\end{enumerate}
In the above $\|g\|_{\widetilde{L}^\rho_T(B^s_{p,r})} : = \|\{2^{qs} \|\Delta_q g\|_{L^\rho_T(L^p)} \}_{q\geq -1}\|_{\ell^r}$
denotes the norm of Chemin-Lerner's spacetime Besov space $\widetilde{L}^\rho(0,T; B^s_{p,r}(\R^d))$ (see \cite{BCD11}).
\end{lemma}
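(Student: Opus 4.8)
The plan is to apply the Littlewood--Paley projector $\Delta_q$ to \eqref{eq:TD-eq2}, run an $L^p$ energy estimate on each dyadic block, and then reassemble with the Besov weights $2^{qs}$ and the $\ell^r$ sum. Writing $\phi_q:=\Delta_q\phi$, the localized equation reads
\begin{equation*}
  \partial_t \phi_q + u\cdot\nabla \phi_q - \nu\Delta \phi_q = \Delta_q f + R_q,\qquad R_q := [u\cdot\nabla,\Delta_q]\phi .
\end{equation*}
The commutator $R_q$ is controlled by the classical Bony-type estimate: writing $u\cdot\nabla\phi=\div(u\,\phi)$ (using $\div u=0$) and decomposing both factors by paraproducts, one gets, for $-1<s<1$,
\begin{equation*}
  \big\| \{ 2^{qs}\|R_q\|_{L^p} \}_{q\geq -1} \big\|_{\ell^r} \leq C\,\|\nabla u\|_{L^\infty}\,\|\phi\|_{B^s_{p,r}},
\end{equation*}
where the restriction $-1<s<1$ is exactly what makes the paraproduct sums converge, and the low-frequency block $q=-1$ is absorbed using the $L^p$ bound on $\Delta_{-1}(u\,\phi)$.

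For part (1), with $\nu=0$, I would test the localized equation against $|\phi_q|^{p-2}\phi_q$; since $\div u=0$ the transport term contributes nothing, so $\tfrac{\dd}{\dd t}\|\phi_q\|_{L^p}\leq \|\Delta_q f\|_{L^p}+\|R_q\|_{L^p}$. Integrating in time, multiplying by $2^{qs}$, and taking the $\ell^r$ norm gives \eqref{eq:T-sm2}: here the Chemin--Lerner norm $\widetilde L^1_t(B^s_{p,r})$ of the source naturally appears, and the commutator contribution is pulled inside the time integral by Minkowski's inequality before the bound above is inserted. Feeding this into Gronwall's lemma yields the exponential version \eqref{eq:T-sm3}.

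For part (2), with $\nu>0$, the extra ingredient is the smoothing of the heat flow: there is $c>0$ with $\|e^{\tau\Delta}g\|_{L^p}\leq Ce^{-c\tau 2^{2q}}\|g\|_{L^p}$ whenever $g$ is spectrally supported in an annulus of size $2^q$. Duhamel's formula on the localized equation, combined with this decay and the same treatment of the transport term, gives
\begin{equation*}
  \|\phi_q(t)\|_{L^p}\leq Ce^{-c\nu t 2^{2q}}\|\Delta_q\phi_0\|_{L^p} + C\int_0^t e^{-c\nu(t-\tau)2^{2q}}\big(\|\Delta_q f(\tau)\|_{L^p}+\|R_q(\tau)\|_{L^p}\big)\,\dd\tau .
\end{equation*}
Taking the $L^\rho_t$ norm and bounding the time convolution by Young's inequality, using $\|e^{-c\nu\,\cdot\,2^{2q}}\|_{L^\rho(0,t)}\leq C\nu^{-1/\rho}2^{-2q/\rho}(1+\nu t)^{1/\rho}$, produces precisely the gain of $2/\rho$ derivatives and the prefactor $(1+\nu t)^{1/\rho}$. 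Multiplying by $2^{q(s+2/\rho)}$, summing in $\ell^r$, and inserting the commutator estimate yields \eqref{eq:TD-sm2}; Gronwall's lemma again upgrades it to \eqref{eq:TD-sm3}.

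The \emph{only} point requiring genuine care is the commutator estimate for $R_q$ at the level of frequency localization, in particular the low-frequency block and the sharp range $-1<s<1$; everything else is the standard Littlewood--Paley energy method. Since this is entirely classical, I would only sketch it and refer to \cite[Chap.~3]{BCD11} for the complete argument.
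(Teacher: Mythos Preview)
Your proposal is correct and matches the paper's treatment: the paper does not give a proof of this lemma at all but simply cites \cite[Chap.~3]{BCD11}, and the sketch you outline (Littlewood--Paley localization, $L^p$ energy estimate on blocks, the commutator bound for $-1<s<1$, heat-kernel decay plus Young's inequality for the smoothing gain, then Gronwall) is precisely the standard argument found there. Your final sentence deferring to \cite[Chap.~3]{BCD11} lands on exactly the same reference the authors use.
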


We also use the following smoothing estimate for the transport-diffusion equation.
\begin{lemma}\label{lem:TD-sm}
  Assume $p\in [2,\infty)$, $\rho\in [1,\infty]$. Let $u$ be a smooth divergence-free vector field, and $\phi$ be a smooth function solving equation \eqref{eq:TD-eq2} with $\nu>0$.
Then for any $t>0$, we have
\begin{equation}\label{TD-sm-es}
  \sup_{q\in \N} 2^{\frac{2 q}{\rho}} \|\Delta_q \phi\|_{L^\rho_t (L^p)} \leq C \bigg(\sup_{q\in \N}\|\Delta_q \phi_0\|_{L^p} + \int_0^t \|\nabla u\|_{L^p} \|\phi\|_{B^0_{\infty,\infty}} \dd \tau + \|f\|_{L^1_t (L^p)} \bigg) .
\end{equation}
Besides, the estimate \eqref{TD-sm-es} also holds by replacing $\|\nabla u\|_{L^p} \|\phi\|_{B^0_{\infty,\infty}} $
with $\|u\|_{W^{1,p}} \big(\|(\mathrm{Id}- \Delta_{-1})\phi\|_{B^0_{\infty,\infty}} + \|\nabla \Delta_{-1}\phi\|_{L^\infty}\big)$.
\end{lemma}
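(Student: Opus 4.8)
The plan is to run a frequency-localized $L^p$ energy estimate on each dyadic block, exploit the parabolic smoothing of the heat part, and then close with a commutator estimate in $L^p$. First I would apply $\Delta_q$ to \eqref{eq:TD-eq2}, so that $\phi_q := \Delta_q\phi$ solves
\begin{equation*}
  \partial_t\phi_q + u\cdot\nabla\phi_q - \nu\Delta\phi_q = \Delta_q f - [\Delta_q,\,u\cdot\nabla]\phi .
\end{equation*}
Multiplying by $|\phi_q|^{p-2}\phi_q$ and integrating over $\R^d$, the transport term drops out by $\mathrm{div}\,u = 0$, while for $q\in\N$ the heat term is bounded below by the standard Bernstein-type positivity estimate $\int_{\R^d}(-\Delta\phi_q)\,|\phi_q|^{p-2}\phi_q\,\dd x \ge c\,2^{2q}\|\phi_q\|_{L^p}^p$ valid for $p\in[2,\infty)$ (see \cite{BCD11}). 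This yields $\tfrac{\dd}{\dd t}\|\phi_q\|_{L^p} + c\,\nu\,2^{2q}\|\phi_q\|_{L^p} \le \|\Delta_q f\|_{L^p} + \|[\Delta_q,\,u\cdot\nabla]\phi\|_{L^p}$, and after integrating this linear differential inequality,
\begin{equation*}
  \|\phi_q(t)\|_{L^p} \le e^{-c\nu 2^{2q}t}\|\Delta_q\phi_0\|_{L^p} + \int_0^t e^{-c\nu 2^{2q}(t-\tau)}\big(\|\Delta_q f(\tau)\|_{L^p} + \|[\Delta_q,\,u\cdot\nabla]\phi(\tau)\|_{L^p}\big)\dd\tau .
\end{equation*}
Taking the $L^\rho$-norm in time over $[0,t]$, bounding $\|e^{-c\nu 2^{2q}\cdot}\|_{L^\rho(0,t)}\le C(\nu 2^{2q})^{-1/\rho}$ and applying Young's convolution inequality in time to the Duhamel term, and multiplying by $2^{2q/\rho}$ to absorb the frequency weights, I obtain
\begin{equation*}
  2^{\frac{2q}{\rho}}\|\Delta_q\phi\|_{L^\rho_t(L^p)} \le C\Big(\|\Delta_q\phi_0\|_{L^p} + \|\Delta_q f\|_{L^1_t(L^p)} + \big\|\,\|[\Delta_q,\,u\cdot\nabla]\phi\|_{L^p}\,\big\|_{L^1_t}\Big) .
\end{equation*}
Taking the supremum over $q\in\N$ and using $\|\Delta_q f\|_{L^p}\lesssim\|f\|_{L^p}$ uniformly, \eqref{TD-sm-es} reduces to the commutator estimate $\sup_{q\in\N}\|[\Delta_q,\,u\cdot\nabla]\phi\|_{L^p}\le C\|\nabla u\|_{L^p}\|\phi\|_{B^0_{\infty,\infty}}$ (followed by integration in time).

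This commutator estimate is the main obstacle. I would establish it via Bony's decomposition of $u\cdot\nabla\phi$. For the localized term $[\Delta_q,\,S_{q'-1}u\cdot\nabla]\Delta_{q'}\phi$ with $|q'-q|$ bounded, one uses the kernel representation $[\Delta_q,\,a\cdot\nabla]b(x) = 2^q \int 2^{qd}(\nabla h)(2^q(x-y))\cdot\big(a(y)-a(x)\big)\,b(y)\,\dd y$ (valid for $\mathrm{div}\,a=0$) together with $|a(y)-a(x)|\le|x-y|\,\|\nabla a\|$ and a convolution inequality in $L^p$, producing a contribution $\lesssim\|\nabla u\|_{L^p}\|\Delta_{q'}\phi\|_{L^\infty}$; for the remaining paraproduct and remainder terms — which would a priori put a derivative on $\phi$ — one first uses $\mathrm{div}\,u=0$ to write $u\cdot\nabla\phi=\mathrm{div}\,(u\phi)$ and transfer the derivative onto $u$ and the smoothing kernel, each contribution again being $\lesssim\|\nabla u\|_{L^p}\|\Delta_{q'}\phi\|_{L^\infty}$ and summable against a convergent geometric series. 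The delicate point is keeping only $\|\nabla u\|_{L^p}$ — not the Lipschitz norm of $u$ — on the velocity, which forces the above subtraction-of-the-mean manoeuvres.

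Finally, for the stated variant I would, inside the commutator, split $\phi = \Delta_{-1}\phi + (\mathrm{Id}-\Delta_{-1})\phi$. The high-frequency part $(\mathrm{Id}-\Delta_{-1})\phi$ is handled exactly as above, yielding the bound $\|\nabla u\|_{L^p}\|(\mathrm{Id}-\Delta_{-1})\phi\|_{B^0_{\infty,\infty}}$. For the low-frequency part, since $\Delta_q\Delta_{-1}\phi=0$ when $q\ge 1$, for such $q$ one has $[\Delta_q,\,u\cdot\nabla]\Delta_{-1}\phi=\Delta_q\big(u\cdot\nabla\Delta_{-1}\phi\big)$, hence $\|[\Delta_q,\,u\cdot\nabla]\Delta_{-1}\phi\|_{L^p}\lesssim\|u\cdot\nabla\Delta_{-1}\phi\|_{L^p}\le\|u\|_{L^p}\|\nabla\Delta_{-1}\phi\|_{L^\infty}$, while the single block $q=0$ is estimated directly in the same fashion. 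Summing the two contributions bounds $\sup_{q\in\N}\|[\Delta_q,\,u\cdot\nabla]\phi\|_{L^p}$ by $C\|u\|_{W^{1,p}}\big(\|(\mathrm{Id}-\Delta_{-1})\phi\|_{B^0_{\infty,\infty}}+\|\nabla\Delta_{-1}\phi\|_{L^\infty}\big)$, and feeding this into the scheme above gives the second assertion.
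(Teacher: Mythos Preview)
Your proof is correct and follows essentially the same route as the paper: localize with $\Delta_q$, run the $L^p$ energy estimate using the Bernstein-type lower bound on the dissipation, integrate and apply Young's inequality in time, and reduce to the commutator bound $\sup_{q}\|[\Delta_q,u\cdot\nabla]\phi\|_{L^p}\lesssim\|\nabla u\|_{L^p}\|\phi\|_{B^0_{\infty,\infty}}$ (which the paper simply cites from \cite[Lemma~6.10]{HKR10} rather than sketching). Your treatment of the variant via the splitting $\phi=\Delta_{-1}\phi+(\mathrm{Id}-\Delta_{-1})\phi$ is likewise the same idea as the paper's, with the minor cosmetic difference that the paper keeps both pieces of the commutator $[\Delta_q,u\cdot\nabla]\Delta_{-1}\phi$ rather than noting $\Delta_q\Delta_{-1}=0$ for $q\ge 1$.
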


\begin{proof}[Proof of Lemma \ref{lem:TD-sm}]

For every $q\in \N$, applying the operator $\Delta_q$ to equation \eqref{eq:TD-eq2} yields
\begin{equation}\label{phi-q-eq}
  \partial_t (\Delta_q \phi) + u\cdot \nabla (\Delta_q \phi) - \nu\Delta (\Delta_q \phi) = - [\Delta_q, u\cdot\nabla] \phi + \Delta_q f .
\end{equation}
Multiplying both sides of \eqref{phi-q-eq} with $|\Delta_q \phi|^{p-2}\Delta_q \phi$ and using the following estimate (see \cite{Dan97})
\begin{equation*}
  \int_{\R^d} -\Delta (\Delta_q \phi)\, |\Delta_q \phi|^{p-2} \Delta_q \phi \dd x \geq c 2^{2q } \| \Delta_q \phi\|_{L^p}^p,
\end{equation*}
with $c>0$ independent of $q$, we obtain
\begin{equation*}
  \frac{1}{p} \frac{\dd}{\dd t} \| \Delta_q \phi(t)\|_{L^p}^p + c\nu 2^{2q} \| \Delta_q \phi(t)\|_{L^p}^p \leq \|\Delta_q \phi(t)\|_{L^p}^{p-1}\Big(\|[\Delta_q,u\cdot\nabla]\phi(t)\|_{L^p} + \|\Delta_q f(t)\|_{L^p} \Big).
\end{equation*}
Integrating on time interval $[0,t]$ leads to
\begin{equation*}
  \|\Delta_q \phi(t)\|_{L^p} \leq e^{-c\nu 2^{2q}t} \|\Delta_q \phi_0\|_{L^p} + \int_0^t e^{- c\nu 2^{2q} (t-\tau)} \Big(\|[\Delta_q,u\cdot\nabla]\phi(\tau)\|_{L^p} + \|\Delta_q f(\tau)\|_{L^p} \Big) \dd \tau.
\end{equation*}
Young's inequality ensures that
\begin{equation*}
  \| \Delta_q \phi \|_{L^\rho_t (L^p)} \leq C 2^{-\frac{2q}{\rho}} \Big( \|\Delta_q \phi_0\|_{L^p} + \|[\Delta_q, u\cdot\nabla]\phi\|_{L^1_t(L^p)} + \|\Delta_q f\|_{L^1_t (L^p)}  \Big).
\end{equation*}
Recall the following commutator estimate $\sup_{q\geq -1} \|[\Delta_q, u\cdot\nabla]\phi\|_{L^p} \leq C \|\nabla u\|_{L^p} \|\phi\|_{B^0_{\infty,\infty}}$ (see \cite[Lemma 6.10]{HKR10}),
then the desired estimate \eqref{TD-sm-es} follows from combining the above two inequalities.

Furthermore, as for the replacement in Lemma \ref{lem:TD-sm}, it suffices to notice that
\begin{align*}
  & \quad \sup_{q\geq -1} \|[\Delta_q, u\cdot\nabla]\phi\|_{L^p} \\
  & \leq \sup_{q\geq -1} \|[\Delta_q, u\cdot\nabla](\mathrm{Id}-\Delta_{-1})\phi\|_{L^p} + \sup_{q\geq -1} \|\Delta_q (u\cdot\nabla\Delta_{-1}\phi)\|_{L^p} +
  \sup_{q\geq -1} \| u\cdot\nabla \Delta_q \Delta_{-1}\phi\|_{L^p}  \\
  & \leq C \|\nabla u\|_{L^p} \|(\mathrm{Id}-\Delta_{-1})\phi\|_{B^0_{\infty,\infty}} + C \|u\|_{L^p} \|\nabla \Delta_{-1}\phi\|_{L^\infty}.
\end{align*}
\end{proof}

We list some basic properties of the particle-trajectory map $X_t$ as follows (one can refer to \cite[Proposition 3.10]{BCD11} for the proof).
\begin{lemma}\label{lem:flow}
Assume $u(x,t)$ is a divergence-free velocity field belonging to $L^1(0,T; W^{1,\infty}(\R^d))$. Let $X_t(x)$ be the the particle-trajectory generated by velocity $u$ which solves \eqref{eq:flow0},
that is
\begin{equation}\label{eq:flow2}
   X_t(x) = x + \int_0^t u(X_\tau(x),\tau) \dd \tau.
\end{equation}
Then the system \eqref{eq:flow0} or \eqref{eq:flow2} has a unique solution $X_t(\cdot):\R^d\mapsto \R^d$ on $[0,T]$ which is a measure-preserving bi-Lipschitzian homeomorphism satisfying that
$\nabla X_t$ and its inverse $\nabla X^{-1}_t$
belong to $L^\infty([0,T]\times \R^d)$ with
\begin{equation}\label{DXest}
  \|\nabla X^{\pm1}_t\|_{L^\infty(\R^d)} \leq e^{\int_0^t \|\nabla u\|_{L^\infty}\dd \tau}.
\end{equation}
Besides, the following statements hold true.
\begin{enumerate}[(1)]
\item If additionally $u\in L^1(0,T; C^{1,\gamma}(\R^d))$, then $X^{\pm1}_t \in L^\infty(0,T; C^{1,\gamma}(\R^d))$ with
\begin{equation}\label{X-C1gam-es}
  \|\nabla X^{\pm 1}_t\|_{C^\gamma} \leq e^{(2+\gamma)\int_0^t \|\nabla u\|_{L^\infty} \dd \tau} \bigg( 1 + \int_0^t \|\nabla u(\tau)\|_{C^\gamma} \dd \tau \bigg).
\end{equation}
\item If additionally $u\in L^1(0,T; W^{2,\infty}(\R^d))$, then $X^{\pm1} \in L^\infty(0,T; W^{2,\infty}(\R^d))$ with
\begin{equation}\label{X-W2inf-es}
  \|\nabla^2 X^{\pm 1}_t\|_{L^\infty} \leq e^{3\int_0^t \|\nabla u\|_{L^\infty} \dd \tau} \int_0^t \|\nabla^2 u(\tau)\|_{L^\infty} \dd \tau.
\end{equation}
\end{enumerate}
\end{lemma}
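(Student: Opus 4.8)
The plan is to run the classical ODE-flow argument, which at the regularity assumed here reduces essentially to Gronwall's inequality. First I would settle existence, uniqueness and the bi-Lipschitz property. Since $u\in L^1(0,T;W^{1,\infty}(\R^d))$, the velocity field is, in the time-integrated sense, uniformly Lipschitz in the space variable, so the Carath\'eodory form of the Cauchy--Lipschitz theorem provides, for each $x\in\R^d$, a unique absolutely continuous solution $t\mapsto X_t(x)$ of \eqref{eq:flow0} on $[0,T]$; solving the ODE backward in time from an arbitrary endpoint produces the inverse map, so each $X_t(\cdot)$ is a bijection of $\R^d$. The Lipschitz estimate \eqref{DXest} then follows by writing, for two initial points,
\begin{equation*}
  |X_t(x)-X_t(y)| \leq |x-y| + \int_0^t \|\nabla u(\tau)\|_{L^\infty}\,|X_\tau(x)-X_\tau(y)|\,\dd\tau
\end{equation*}
and applying Gronwall; the same inequality for the backward flow gives the bound for $X_t^{-1}$, whence $X_t$ is a bi-Lipschitzian homeomorphism. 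For the measure preservation I would use $\div u=0$ together with Liouville's identity $\partial_t\det\nabla X_t=(\div u)(X_t,t)\,\det\nabla X_t=0$, so that $\det\nabla X_t\equiv1$; at the present (merely $W^{1,\infty}$-in-space) regularity this is justified by mollifying $u$ in space into smooth divergence-free fields and passing to the limit using the uniform bounds already obtained.

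For part (1), under the stronger hypothesis $u\in L^1(0,T;C^{1,\gamma}(\R^d))$, classical differentiable-dependence-on-data results make $X_t(\cdot)\in C^1(\R^d)$, and $M_t:=\nabla X_t$ solves the linear (variational) system $\partial_t M_t=(\nabla u)(X_t,t)\,M_t$ with $M_0=I$. To bound the H\"older seminorm of $M_t$ I would subtract this equation at two points $x\neq y$, split the increment as
\begin{equation*}
  \big[(\nabla u)(X_\tau(x),\tau)-(\nabla u)(X_\tau(y),\tau)\big]M_\tau(x) + (\nabla u)(X_\tau(y),\tau)\big[M_\tau(x)-M_\tau(y)\big],
\end{equation*}
estimate the first bracket by $\|\nabla u(\tau)\|_{C^\gamma}|X_\tau(x)-X_\tau(y)|^\gamma$ and use the already-proven bounds $|X_\tau(x)-X_\tau(y)|\leq e^{\int_0^\tau\|\nabla u\|_{L^\infty}}|x-y|$ and $\|M_\tau\|_{L^\infty}\leq e^{\int_0^\tau\|\nabla u\|_{L^\infty}}$; dividing by $|x-y|^\gamma$, taking the supremum and applying Gronwall then yields \eqref{X-C1gam-es}, the additive constant $1$ absorbing the $L^\infty$ part of the $C^\gamma$ norm. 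For $X_t^{-1}$ I would note that it is the time-$t$ flow of the reversed field $(y,s)\mapsto-u(y,t-s)$, which carries the same norms, so the identical bound applies.

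Part (2) is entirely analogous: with $u\in L^1(0,T;W^{2,\infty}(\R^d))$ the flow is twice differentiable and $\nabla^2 X_t$ solves
\begin{equation*}
  \partial_t(\nabla^2 X_t)=(\nabla^2 u)(X_t,t)(\nabla X_t,\nabla X_t)+(\nabla u)(X_t,t)\,\nabla^2 X_t,\qquad \nabla^2 X_0=0;
\end{equation*}
inserting $\|\nabla X_\tau\|_{L^\infty}^2\leq e^{2\int_0^\tau\|\nabla u\|_{L^\infty}}$ and running Gronwall delivers \eqref{X-W2inf-es}, and the estimate for $\nabla^2 X_t^{-1}$ follows the same way, or by differentiating the identity $X_t^{-1}\circ X_t=\Id$ twice. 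I expect the only genuinely delicate points to be (i) the passage from finite-difference (Lipschitz) estimates to honest derivatives of the flow and to the differential identities for $\nabla X_t$ and $\nabla^2 X_t$ --- at $W^{1,\infty}$ regularity $\nabla X_t$ exists only a.e. and should be read as the $L^\infty$ Lipschitz constant, while the higher identities genuinely need the $C^{1,\gamma}$ and $W^{2,\infty}$ hypotheses and the corresponding $C^1$-dependence-on-initial-data theorems --- and (ii) the volume preservation for fields that are only $W^{1,\infty}$ in space, which is handled by the mollification-and-limit argument mentioned above; everything else is routine Gronwall bookkeeping.
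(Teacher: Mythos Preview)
Your proposal is correct and follows the standard Gronwall-based ODE-flow argument. Note that the paper does not actually prove this lemma: it simply states the result and refers the reader to \cite[Proposition 3.10]{BCD11} for the proof, so there is no in-paper argument to compare against; the approach you outline is precisely the classical one that reference contains.
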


The operator $\nabla^2 \nabla^\perp \partial_1 (-\Delta)^{-2}$ has the following explicit repression formula
(one can refer to \cite{Stein} for the proof, especially see Section III.3 for the calculation of the coefficients $a_{ijk}$ and $\sigma_{ijk}$).

\begin{lemma}\label{lem:op}
  Let $\nabla = (\partial_1,\partial_2)^{\mathrm{T}}$, $\nabla^\perp = (\partial_1^\perp,\partial_2^\perp)^{\mathrm{T}} = (-\partial_2,\partial_1)^{\mathrm{T}}$, $\Lambda= (-\Delta)^{\frac{1}{2}}$.
Then the family of operators $\nabla^2 \nabla^\perp \partial_1 \Lambda^{-4}$ is composed of zero-order pesudo-differential operators satisfying that for each $i,j,k=1,2$,
\begin{equation}\label{eq:op}
  \partial_i \partial_j \partial_k^\perp \partial_1 \Lambda^{-4} f(x) = \mathrm{p.v.} \int_{\R^2} \frac{\sigma_{ijk}(x-y)}{|x-y|^2} f(y) \dd y + a_{ijk} f(x),\quad \forall f\in \mathcal{S}(\R^2),
\end{equation}
with $a_{ijk}\in \R$ and $\frac{\sigma_{ijk}(y)}{|y|^2}$ is the standard Calder\'on-Zygmund kernel; more precisely, the coefficient $a_{ijk}$ and the zero-mean function $\sigma_{ijk}(y)$ are given by
\begin{align*}
  \sigma_{111}(y) = - \sigma_{212}(y) = -\sigma_{122}(y) = -\pi \frac{2y_1^3y_2 -6y_1y_2^3}{|y|^4},& \quad a_{111} = a_{212} = a_{122} = 0, \\
  \sigma_{112}(y) =  \pi \frac{y_1^4 + 6 y_1^2y_2^2 + 3 y_2^4}{|y|^4},& \quad a_{112}=\frac{3\pi^2}{2}, \\
  \sigma_{121}(y) = \sigma_{211}(y) = \sigma_{222}(y) = \pi \frac{-y_1^4 + 6 y_1^2 y_2^2 - y_2^4}{|y|^4},& \quad a_{121} = a_{211} =  a_{222} = \frac{\pi^2}{2}, \\
  \sigma_{221}(y) = \pi \frac{6 y_1^3 y_2 - 2 y_1 y_2^3}{|y|^4},&\quad a_{221}=0.
\end{align*}
\end{lemma}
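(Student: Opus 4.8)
The plan is to identify the operator with a Fourier multiplier, split its symbol into homogeneous harmonic pieces, and invert each piece via the classical formula for the Fourier transform of homogeneous distributions on $\R^2$. Using $\partial_j\mapsto i\xi_j$, $\partial^\perp_1\mapsto-i\xi_2$, $\partial^\perp_2\mapsto i\xi_1$ and $\Lambda^{-4}\mapsto|\xi|^{-4}$, the operator $\partial_i\partial_j\partial_k^\perp\partial_1\Lambda^{-4}$ is the Fourier multiplier with symbol $m_{ijk}(\xi)=P_{ijk}(\xi)/|\xi|^4$, where $P_{ijk}(\xi):=\xi_i\xi_j\xi_k^\perp\xi_1$ (with $\xi_1^\perp:=-\xi_2$, $\xi_2^\perp:=\xi_1$) is a homogeneous polynomial of degree $4$; hence $m_{ijk}\in C^\infty(\R^2\setminus\{0\})$ is homogeneous of degree $0$, and the Mikhlin--H\"ormander theorem already gives that $\partial_i\partial_j\partial_k^\perp\partial_1\Lambda^{-4}$ is a zero-order pseudo-differential operator. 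Since $\deg P_{ijk}=4$ is even, $m_{ijk}(-\xi)=m_{ijk}(\xi)$, so the convolution kernel produced below is even --- precisely the structural feature used in the later cancellation arguments.

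For the representation formula, one decomposes $P_{ijk}$ (a homogeneous quartic in two variables) uniquely as $P_{ijk}=H^{(4)}_{ijk}+|\xi|^2H^{(2)}_{ijk}+c_{ijk}|\xi|^4$ with $H^{(m)}_{ijk}$ a real solid harmonic of degree $m$ and $c_{ijk}\in\R$ (equivalently, reducing $P_{ijk}$ modulo $|\xi|^2$ and iterating), so that
\[
  m_{ijk}(\xi)=\frac{H^{(4)}_{ijk}(\xi)}{|\xi|^4}+\frac{H^{(2)}_{ijk}(\xi)}{|\xi|^2}+c_{ijk}.
\]
Then I would invoke the classical identity (see \cite{Stein}, Ch.~III; the Bochner--Hecke formula) that for a solid harmonic $P_m$ of degree $m\ge1$ on $\R^2$, $\mathcal{F}\big[\mathrm{p.v.}\,P_m(x)/|x|^{2+m}\big](\xi)=\gamma_m P_m(\xi)/|\xi|^m$ with $\gamma_m=(-i)^m 2\pi/m$ (so $\gamma_2=-\pi$, $\gamma_4=\pi/2$ are real), while the constant symbol $c_{ijk}$ corresponds to $c_{ijk}\,\mathrm{Id}$. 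Inverting term by term yields \eqref{eq:op} with $a_{ijk}=c_{ijk}$ and $\sigma_{ijk}(x)=\gamma_4^{-1}H^{(4)}_{ijk}(x)/|x|^4+\gamma_2^{-1}H^{(2)}_{ijk}(x)/|x|^2$, which is homogeneous of degree $0$; and since each nonzero solid harmonic of degree $2$ or $4$ has vanishing mean over $\mathbb{S}^1$, $\sigma_{ijk}$ is mean-zero, so $\sigma_{ijk}(x)/|x|^2$ is a genuine Calder\'on--Zygmund kernel.

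To get the explicit table, one runs the two steps above for each of the six operators (by $\partial_i\partial_j=\partial_j\partial_i$): compute $P_{ijk}=\xi_i\xi_j\xi_k^\perp\xi_1$ (for instance $P_{112}=\xi_1^4$, $P_{111}=-\xi_1^3\xi_2$, $P_{121}=P_{211}=-\xi_1^2\xi_2^2$, $P_{221}=-\xi_1\xi_2^3$, $P_{212}=P_{122}=\xi_1^3\xi_2$, $P_{222}=\xi_1^2\xi_2^2$), read off $c_{ijk},H^{(2)}_{ijk},H^{(4)}_{ijk}$, substitute $\gamma_2,\gamma_4$, and combine the two kernel terms over the common denominator $|x|^4$; the stated symmetry relations among the $\sigma_{ijk}$ and the vanishing of $a_{111},a_{122},a_{212},a_{221}$ then follow from the corresponding algebraic identities and parities of the $P_{ijk}$, with $\partial_1^2+\partial_2^2=\Delta$ providing consistency checks. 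The step I expect to consume most of the work is the bookkeeping of the numerical constants --- tracking the factors of $2\pi$ and the signs dictated by the Fourier normalization; an equivalent but perhaps cleaner way to pin these down is to differentiate the biharmonic fundamental solution $\frac{1}{8\pi}|x|^2\log|x|$ of $(-\Delta)^2$ four times in the relevant directions and carefully collect the Dirac masses produced via $\Delta\log|x|=2\pi\delta$, which is what makes the constants $a_{ijk}$ transparent.
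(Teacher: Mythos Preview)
Your plan is correct and follows precisely the route the paper itself points to: the paper does not supply its own proof but refers to \cite{Stein}, Chapter~III, Section~3, which is exactly the Bochner--Hecke/spherical-harmonic decomposition of the homogeneous degree-zero multiplier that you outline. Your alternative of differentiating the biharmonic fundamental solution $\frac{1}{8\pi}|x|^2\log|x|$ is also a standard and equally valid way to extract the Dirac coefficients $a_{ijk}$.
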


\section{Persistence of $C^{1,\gamma}$, $W^{2,\infty}$ and $C^{2,\gamma}$-boundary regularities}\label{sec:C1-2gam}
In this section we are dedicated to the proof of the  $C^{1,\gamma}$-, $W^{2,\infty}$- and
$C^{2,\gamma}$-regularity persistence result for the temperature front boundary.

As mentioned in the introduction section, a good unknown $\Gamma$ is introduced and plays a crucial role in the proof.
Note that the equation of vorticity $\omega := \textit{curl}\, u = \partial_1 u_2 -\partial_2 u_1$ reads as
\begin{equation}\label{vort-eq}
  \partial_t \omega + u\cdot \nabla \omega - \Delta \omega = \partial_1 \theta,\qquad \omega|_{t=0} = \omega_0.
\end{equation}
Denote by $\mathcal{R}_{-1} := \partial_1 (-\Delta)^{-1}=\partial_1 \Lambda^{-2} $
and $\Gamma:= \omega - \mathcal{R}_{-1} \theta$.
We see that
$\partial_t \omega + u\cdot \nabla \omega - \Delta \Gamma =0$, and
\begin{equation}\label{R-1the-eq}
  \partial_t \mathcal{R}_{-1}\theta + u\cdot \nabla \mathcal{R}_{-1}\theta = - [\mathcal{R}_{-1},u\cdot\nabla]\theta,
\end{equation}
which immediately leads to
\begin{equation}\label{Gamm-eq}
  \partial_t \Gamma + u\cdot\nabla \Gamma -\Delta \Gamma = [\mathcal{R}_{-1},u\cdot\nabla]\theta,  \qquad \Gamma|_{t=0}=\Gamma_0,
\end{equation}
with the notion of commutator operator $[A,B] := AB-BA$.

\subsection{Persistence of $C^{1,\gamma}$-boundary regularity}

The main result of this subsection is the following global wellposedness result of the 2D Boussinesq system \eqref{BoussEq2D}.
\begin{proposition}\label{prop:gwp}
  Let $\theta_0\in L^2\cap L^\infty(\R^2)$, and $u_0\in H^1(\R^2)$ be a divergence-free vector field. Then, for any given $T>0$, there exists a unique global solution $(\theta,u)$ to the 2D viscous Boussinesq system \eqref{BoussEq2D}
with
\begin{equation}\label{the-u-apes1}
  \theta\in L^\infty(0,T; L^2\cap L^\infty),\quad u \in C(0,T; H^1) \cap L^2(0,T; H^2) \cap L^1(0,T; C^{1,\gamma}),\quad \forall \gamma \in (0,1).
\end{equation}
\end{proposition}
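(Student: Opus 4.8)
The plan is to establish the global well-posedness of Proposition \ref{prop:gwp} in the usual sequence of energy estimates, starting from the low-regularity bounds and bootstrapping up to the $C^{1,\gamma}$-regularity of the velocity, and then invoking a standard uniqueness argument. \textbf{Step 1 (basic energy estimates).} Since $\theta$ solves the transport equation $\partial_t\theta + u\cdot\nabla\theta = 0$ with a divergence-free $u$, all $L^q$ norms of $\theta$ are conserved; in particular $\theta\in L^\infty(0,T;L^2\cap L^\infty)$ with $\|\theta(t)\|_{L^2\cap L^\infty}=\|\theta_0\|_{L^2\cap L^\infty}$. Testing the velocity equation against $u$ and using $\langle\theta e_2,u\rangle \le \|\theta\|_{L^2}\|u\|_{L^2}$ together with Gr\"onwall gives $u\in L^\infty(0,T;L^2)\cap L^2(0,T;\dot H^1)$ and the bound on $\|u(t)\|_{L^2}$ grows at most linearly in $t$.

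\textbf{Step 2 ($H^1$ and $H^2$ bounds for $u$).} The natural next quantity is the vorticity $\omega$, which solves the transport-diffusion equation \eqref{vort-eq}. Rather than estimating $\omega$ directly (the forcing $\partial_1\theta$ has only negative regularity), I would work with the good unknown $\Gamma=\omega-\mathcal{R}_{-1}\theta$ of \eqref{Gamm-eq}. Since $\mathcal{R}_{-1}=\partial_1(-\Delta)^{-1}$ maps $L^2\cap L^\infty$ into, say, $L^2\cap C^\gamma$ (it is a $-1$-order operator applied to a bounded $L^2$ function), and since the commutator term $[\mathcal{R}_{-1},u\cdot\nabla]\theta$ is controlled by Lemma \ref{lem:comm-es} in $B^1_{p,\infty}$ in terms of $\|\nabla u\|_{L^p}\|\theta\|_{B^0_{\infty,\infty}}+\|u\|_{L^2}\|\theta\|_{L^2}$, the transport-diffusion smoothing estimates of Lemma \ref{lem:TD-sm2} and Lemma \ref{lem:TD-sm} give bounds on $\Gamma$ in spaces like $L^\infty_T(L^2)\cap \widetilde L^2_T(H^1)$, and hence on $\omega$, after absorbing the $\nabla u$-terms via $\|\nabla u\|_{L^2}\lesssim\|\omega\|_{L^2}$ and Gr\"onwall. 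This yields $\omega\in L^\infty(0,T;L^2)\cap L^2(0,T;H^1)$, i.e.\ $u\in L^\infty(0,T;H^1)\cap L^2(0,T;H^2)$; by interpolation $u\in C(0,T;H^1)$. This is the technical heart of the proof: one must close a nonlinear loop involving $\|\nabla u\|_{L^\infty}$-type quantities, and the main obstacle is that $L^\infty$-control of $\nabla u$ is not available at this stage — so I would instead run the argument in $L^p$ for some $p>2$ (note the hypothesis of part (1) of the theorem, though for Proposition \ref{prop:gwp} itself only $u_0\in H^1$ is assumed, so one needs the logarithmic interpolation $\|\nabla u\|_{L^\infty}\lesssim\|\nabla u\|_{L^2}+\|\omega\|_{B^0_{\infty,\infty}}\log(e+\|u\|_{H^2})$ familiar from 2D fluid estimates, or equivalently use a $B^0_{\infty,1}$-type bound on $\omega$).

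\textbf{Step 3 ($C^{1,\gamma}$-regularity in time-integrated form).} Having $\omega\in L^\infty_T(L^2)\cap L^2_T(H^1)$ and the decomposition \eqref{rel:nabla-u}, $\nabla u = \nabla\nabla^\perp(-\Delta)^{-1}\Gamma + \nabla\nabla^\perp(-\Delta)^{-1}\mathcal{R}_{-1}\theta$: the second term is a $-1$-order operator acting on $\theta\in L^\infty_T(L^2\cap L^\infty)$, hence lies in $L^\infty_T(C^\gamma)$ for every $\gamma\in(0,1)$; the first term is handled by applying the transport-diffusion smoothing estimate \eqref{eq:TD-sm2} to $\Gamma$ with $\rho=1$, $s=\gamma-1$, which after controlling $\|[\mathcal R_{-1},u\cdot\nabla]\theta\|_{\widetilde L^1_T(B^{\gamma-1}_{\infty,\infty})}$ (via Lemma \ref{lem:comm-es} again, now that $\nabla u\in L^2_T L^\infty$ is available or via the $L^p$-version) gives $\Gamma\in \widetilde L^1_T(B^{1+\gamma}_{\infty,1})$, hence $\nabla\nabla^\perp(-\Delta)^{-1}\Gamma\in L^1_T(C^\gamma)$. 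Summing, $\nabla u\in L^1_T(C^\gamma)$, i.e.\ $u\in L^1(0,T;C^{1,\gamma})$ for every $\gamma\in(0,1)$, as claimed. \textbf{Step 4 (existence and uniqueness).} Existence of a solution with these bounds follows from a standard approximation scheme (mollifying the data, solving the regularized system, and passing to the limit using the uniform estimates above plus compactness); uniqueness in this class follows from an $L^2$-type energy estimate on the difference of two solutions, using that $u\in L^1_T(C^{1,\gamma})\subset L^1_T W^{1,\infty}$ controls the transport of $\theta$ (cf.\ Lemma \ref{lem:flow}) and the Lipschitz bound on $u$ closes the Gr\"onwall argument for the velocity difference — here one pays attention to the low regularity of $\theta$ by measuring the temperature difference in $H^{-1}$ or $B^{-1}_{2,\infty}$, a now-classical device for the Boussinesq system going back to Danchin–Paicu.
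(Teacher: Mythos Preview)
Your overall strategy (basic energy, then vorticity, then $\Gamma$-based smoothing for $C^{1,\gamma}$) matches the paper's, but two of your steps differ from the paper in ways that matter.

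\textbf{Step 2.} You propose to obtain $\omega\in L^\infty_T(L^2)\cap L^2_T(H^1)$ via $\Gamma$ and Lemmas~\ref{lem:TD-sm2}--\ref{lem:TD-sm}. This is unnecessarily circular: those lemmas feed back $\|\nabla u\|_{L^\infty}$ or $\|\Gamma\|_{B^0_{\infty,\infty}}$, which are not yet controlled. The paper simply multiplies the vorticity equation \eqref{vort-eq} by $\omega$ and integrates by parts the forcing,
\[
\int_{\R^2}\partial_1\theta\,\omega\,dx = -\int_{\R^2}\theta\,\partial_1\omega\,dx \le \tfrac12\|\theta\|_{L^2}^2+\tfrac12\|\nabla\omega\|_{L^2}^2,
\]
which closes immediately and gives \eqref{vortL2-es}--\eqref{eq:uH1-es} with no appeal to $\Gamma$ or to Lipschitz bounds on $u$.

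\textbf{Step 3.} Here is the real gap. You want to apply \eqref{eq:TD-sm2} to $\Gamma$ in $B^{\gamma-1}_{\infty,1}$, but (i) the right-hand side of \eqref{eq:TD-sm2} contains $\int_0^t\|\nabla u\|_{L^\infty}\|\Gamma\|_{B^{\gamma-1}_{\infty,1}}\,d\tau$, and $\nabla u\in L^1_T(L^\infty)$ is exactly what you are trying to prove; (ii) $\Gamma_0=\omega_0-\mathcal R_{-1}\theta_0$ is only in $L^2$ under the hypotheses, not in $B^{\gamma-1}_{\infty,1}$. The paper avoids both issues by working with Lemma~\ref{lem:TD-sm} in $L^2$ (so only $\|\Gamma_0\|_{L^2}$ is needed) to get the pointwise-in-$q$ bound \eqref{Gam-es3}, and then closes the loop by a high--low frequency split at a level $N$: for $\|\omega\|_{L^1_T(B^0_{\infty,1})}$ one bounds $\sum_{q\le N}$ trivially by $2^N T\|\omega\|_{L^\infty_T(L^2)}$ and $\sum_{q>N}$ via \eqref{Gam-es3}, producing a factor $C(1+T)2^{-N}$ in front of $\|\omega\|_{L^1_T(B^0_{\infty,1})}$ on the right; choosing $N$ so that this factor is $\approx\tfrac12$ absorbs it. Only after this absorption (yielding \eqref{vort-B0es}) does the paper bootstrap to $\|\omega\|_{L^1_T(B^\gamma_{\infty,1})}$ and hence $u\in L^1_T(C^{1,\gamma})$. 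Your proposal gestures at a ``$B^0_{\infty,1}$-type bound on $\omega$'' but does not supply this absorption argument, which is the crux.

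Your Steps 1 and 4 agree with the paper (existence via mollification/compactness, uniqueness by reference to \cite{GGJ17}).
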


In light of Proposition \ref{prop:gwp}, we go back to the temperature patch problem of system \ref{BoussEq2D}
and show the persistence of $C^{1,\gamma}$-boundary regularity.
Indeed, recalling that $X_t$ is the particle trajectory given by \eqref{eq:flow0},
the expression formula $\varphi(x,t) =\varphi_0(X^{-1}_t(x))$ and Lemma \ref{lem:flow} guarantee the desired result $\varphi\in L^\infty(0,T; C^{1,\gamma})$ with
\begin{equation}\label{nab-varp-es}
  \|\nabla\varphi(t)\|_{C^\gamma} \lesssim \|\nabla \varphi_0\|_{C^\gamma} \|\nabla X^{-1}_t\|_{L^\infty}^{1+\gamma} + \|\nabla \varphi_0\|_{L^\infty} \|\nabla X^{-1}_t\|_{C^\gamma} \leq C e^{C(1+T)^2},
\end{equation}
where we have used estimates \eqref{eq:uC1-gam}-\eqref{eq:u-Lip-es} below.

\begin{proof}[Proof of Proposition \ref{prop:gwp}]
The existence part is standard: we first regularize the initial data as $(\theta_{0,\epsilon},u_{0,\epsilon}) = \rho_\epsilon * (\theta_0, u_0)$
with $\rho_\epsilon = \epsilon^{-2}\rho(\frac{\cdot}{\epsilon})$, $\epsilon>0$, then the previous work (e.g. \cite{Cha06})
implies there exists a unique global smooth solution $(\theta_\epsilon, u_\epsilon)$ to the system \eqref{BoussEq2D} associated with
$(\theta_{0,\epsilon}, u_{0,\epsilon})$; moreover, the \textit{a priori} estimates below guarantee that $(\theta_\epsilon,u_\epsilon)$ satisfies \eqref{the-u-apes1} uniformly in $\epsilon$
and also the particle-trajectory $X_{t,\epsilon}$ associated with $u_\epsilon$ belongs to $L^\infty(0,T; C^{1,\gamma})$ uniformly in $\epsilon$;
thus combined with the standard compactness procedure (e.g. \cite[Chap. 8]{MB02}), one can pass $\epsilon\rightarrow 0$ (up to a subsequence) to show that there exist functions $(\theta,u)$ satisfying \eqref{the-u-apes1}
solve 2D Boussinesq system \eqref{BoussEq2D} in the distributional sense.

The uniqueness part can be proved exactly as \cite[Theorem 2.1]{GGJ17}.

In the following we only focus on the \textit{a priori} estimates.
From the equation of $\theta$, we directly have
\begin{equation}\label{the-es}
  \|\theta(t)\|_{L^2\cap L^\infty(\R^2)} \leq \|\theta_0\|_{L^2\cap L^\infty(\R^2)},\quad \forall t\geq 0.
\end{equation}
Then the classical energy estimate of system \eqref{BoussEq2D} gives that
\begin{equation}\label{eq:uL2}
  \|u\|_{L^\infty_T(L^2)}^2 + \|\nabla u\|_{L^2_T(L^2)}^2 \leq C_0 (1+T)^2 (\|u_0\|_{L^2}^2 + \|\theta_0\|_{L^2}^2).
\end{equation}

Now we consider the energy estimate of vorticity $\omega$. By taking the inner product of the equation \eqref{vort-eq} with $\omega$ itself,
and using the integration by parts, we see that
\begin{align*}
  \frac{1}{2}\frac{\dd}{\dd t} \|\omega(t)\|_{L^2}^2 + \|\nabla \omega(t)\|_{L^2}^2 \leq \Big| \int_{\R^2} \theta \, \partial_1 \omega(x,t) \dd x \Big|
  \leq \frac{1}{2}\|\theta(t)\|_{L^2}^2 + \frac{1}{2} \|\nabla \omega(t)\|_{L^2}^2.
\end{align*}
Integrating in the time variable leads to
\begin{equation}\label{vortL2-es}
  \|\omega\|_{L^\infty_T(L^2)}^2 + \|\nabla\omega\|_{L^2_T(L^2)}^2 \leq C_0 (\|\omega_0\|_{L^2}^2 + (1+ T)\|\theta_0\|_{L^2}^2),
\end{equation}
which in combination with \eqref{eq:uL2} and the interpolation ensures that for every $\rho \in [2,\infty]$,
\begin{equation}\label{eq:uH1-es}
  \|u\|_{L^\infty_T (H^1)}^2 + \|\nabla u\|_{L^2_T (H^1)}^2 + \|u\|_{L^\rho_T (H^{1+ \frac{2}{\rho}})}^2 \leq C (1+T)^2 (\|u_0\|_{H^1}^2 + \|\theta_0\|_{L^2}^2).
\end{equation}

Next based on estimates \eqref{the-es} and \eqref{eq:uH1-es}, we intend to obtain the $L^1_T (B^\gamma_{\infty,1})$-estimate of $\omega$.
We use the high-low frequency decomposition,
and due to the influence of forcing term $\partial_1\theta$ in equation \eqref{vort-eq},
it seems better to consider the estimation of $\Gamma$ and then use the relation $\omega = \Gamma + \mathcal{R}_{-1}\theta$ in the high-frequency part.
Applying Lemma \ref{lem:TD-sm} to the equation \eqref{Gamm-eq}, and using Lemma \ref{lem:comm-es}, we get that for every $q\in \N$ and $\rho\in [1,\infty]$,
\begin{align}\label{Gam-es3}
  \|\Delta_q \Gamma\|_{L^\rho_T (L^2)} \lesssim &   2^{-\frac{2}{\rho}q} \Big(\sup_{q\in \N} \|\Delta_q \Gamma_0\|_{L^2} +  \|u\|_{L^\infty_T (H^1)} \big(\|(\mathrm{Id}-\Delta_{-1})\Gamma\|_{L^1_T (B^0_{\infty,\infty})}
  + \|\nabla \Delta_{-1}\Gamma\|_{L^1_T (L^\infty)}\big) \Big) \nonumber \\
  &  +  2^{-\frac{2}{\rho}q} \|[\mathcal{R}_{-1},u\cdot\nabla]\theta\|_{L^1_T (L^2)} \nonumber \\
  \lesssim &  2^{-\frac{2}{\rho}q} \Big(\|(\omega_0,\theta_0)\|_{L^2} + \|u\|_{L^\infty_T (H^1)}
  \big(\|\omega\|_{L^1_T (B^0_{\infty,\infty})} + \|\theta\|_{L^1_T (L^2)}\big)+ T \|u\|_{L^\infty_T (H^1)} \|\theta\|_{L^\infty_T(L^2\cap L^\infty)}  \Big) \nonumber \\
  \leq & C 2^{-\frac{2}{\rho}q} \Big( (1+T^2) + (1+T) \|\omega\|_{L^1_T (B^0_{\infty,\infty})} \Big) ,
\end{align}
with $C>0$ depending only on the initial data.
Then let $N\in \N$ be an integer chosen later, and by virtue of Bernstein's inequality we find that
\begin{align*}
  \|\omega\|_{L^1_T (B^0_{\infty,1})} & \leq \sum_{-1\leq q \leq N}  \|\Delta_q \omega\|_{L^1_T (L^\infty)}
  + \sum_{q>N}  \|\Delta_q \omega\|_{L^1_T (L^\infty)} \\
  & \lesssim  \sum_{-1\leq q\leq N} 2^q \|\Delta_q \omega\|_{L^1_T (L^2)}
  + \sum_{q>N} 2^q  \|\Delta_q \Gamma\|_{L^1_T (L^2)}  + \sum_{q>N}  \|\Delta_q \RR_{-1}\theta\|_{L^1_T (L^\infty)} \nonumber \\
  & \lesssim 2^N \|\omega\|_{L^\infty_T(L^2)} T
  + (1+ T) \sum_{q>N} 2^{ -q }  \big(1 + T + \|\omega\|_{L^1_T (B^0_{\infty,\infty})} \big)  + \sum_{q>N} 2^{-q }  \|\theta\|_{L^1_T (L^\infty)} \\
  & \leq C 2^N  (1+T) + C 2^{-N}  (1+T) \big(1 + T + \|\omega\|_{L^1_T (B^0_{\infty,1})}\big).
\end{align*}
By choosing $N \in \N$ large enough so that $C 2^{-N}  (1+ T) \approx \frac{1}{2}$, we conclude that
\begin{equation}\label{vort-B0es}
  \|\omega\|_{L^1_T (B^0_{\infty,1})} \leq C (1+T)^2,
\end{equation}
with $C= C(\|u_0\|_{H^1}, \|\theta_0\|_{L^2\cap L^\infty})$.
Moreover, it also yields that for every $\gamma\in (0,1) $,
\begin{align}\label{vort-Bs-es}
  \|\omega\|_{L^1_T (B^\gamma_{\infty,1})} & \leq C \|\Delta_{-1} \omega\|_{L^1_T(L^\infty)}
  + \sum_{q\in \N} 2^{q\gamma} \big( \|\Delta_q \Gamma\|_{L^1_T(L^\infty)} + \|\Delta_q \RR_{-1}\theta\|_{L^1_T(L^\infty)} \big) \nonumber \\
  & \leq C T \|\omega\|_{L^\infty_T(L^2)} + \sum_{q\in \N} 2^{q(\gamma -1)} \big( (1+T)^2 + (1+T) \|\omega\|_{L^1_T(B^0_{\infty,1})}
  + \|\theta\|_{L^1_T(L^\infty)}  \big) \nonumber \\
  & \leq C  (1+ T)^3 .
\end{align}
On the other hand, we use \eqref{Gam-es3} and \eqref{vort-B0es} to deduce that for every $\varrho\in [1,2)$,
\begin{align}\label{vort-B0es2}
  \|\omega\|_{L^\varrho_T (B^0_{\infty,1})} & \leq \|\Delta_{-1} \omega\|_{L^\varrho_T (L^\infty)} + \sum_{q\in \N} \|\Delta_q \Gamma\|_{L^\varrho_T (L^\infty)} + \sum_{q\in \N} \|\Delta_q \RR_{-1}\theta\|_{L^\varrho_T (L^\infty)}  \nonumber \\
  & \lesssim  \|\omega\|_{L^\varrho_T (L^2)} + \sum_{q\in \N} 2^{q(1-\frac{2}{\varrho})} \big((1+T)^2  + (1+T)\|\omega\|_{L^1_T (B^0_{\infty,1})}\big) + \sum_{q\in \N} 2^{-q} \|\theta\|_{L^\varrho_T (L^\infty)} \nonumber \\
  & \lesssim T^{\frac{1}{\varrho}}(1+T) + (1+T)^3 \lesssim (1+T)^3.
\end{align}

As a direct consequence of estimates \eqref{vort-Bs-es}-\eqref{vort-B0es2} and \eqref{eq:uL2}, we have that for every $\gamma\in (0,1)$,
\begin{equation}\label{eq:uC1-gam}
  \|u\|_{L^1_T (B^{1+\gamma}_{\infty,1})}
  \lesssim \|\Delta_{-1} u\|_{L^1_T (L^\infty)} + \|\omega\|_{L^1_T (B^\gamma_{\infty,1})}
  \lesssim (1+ T)^3,
\end{equation}
and
\begin{equation}\label{eq:u-Lip-es}
  \|u\|_{L^1_T (W^{1,\infty})} \lesssim \|u\|_{L^1_T (B^1_{\infty,1})} \lesssim \|\Delta_{-1} u\|_{L^1_T (L^\infty)} + \|\omega\|_{L^1_T (B^0_{\infty,1})} \lesssim (1+T)^2 ,
\end{equation}
and
$\|u\|_{L^\varrho_T (W^{1,\infty})} \lesssim \|u\|_{L^\varrho_T (B^1_{\infty,1})} \lesssim (1+T)^3$ with $\varrho\in [1,2)$.

\end{proof}

\subsection{Control of curvature: persistence of $W^{2,\infty}$-boundary regularity}
According to Lemma \ref{lem:flow} and in light of the estimate \eqref{es:nab2u-Linf} below,
the particle-trajectory $X_t$ and its inverse $X^{-1}_t$ satisfy
$X^{\pm1}_t \in L^\infty(0,T; W^{2,\infty})$, thus the level-set characterization $\varphi(x,t) = \varphi_0(X^{-1}_t(x))$ fulfills that $\varphi\in L^\infty(0,T; W^{2,\infty})$ with
\begin{align}\label{eq:nab2-vap-es}
  \|\nabla^2 \varphi\|_{L^\infty_T (L^\infty)} \lesssim \|\nabla^2 \varphi_0\|_{L^\infty} \|\nabla X^{-1}_t\|_{L^\infty_T (L^\infty)}^2 + \|\nabla \varphi_0\|_{L^\infty} \|\nabla^2 X^{-1}_t\|_{L^\infty_T (L^\infty)} \leq C e^{C(1+T)^2}.
\end{align}

Hence in order to show the $W^{2,\infty}$-regularity persistence property,
it suffices to prove \eqref{es:nab2u-Linf} below, which means that $\nabla^2 u \in L^\rho(0,T; L^\infty(\R^2))$ with $\rho \in [1,\frac{2p}{p+2})$.
By virtue of the Biot-Savart law and relation $\omega = \Gamma + \RR_{-1}\theta$ (recalling $\RR_{-1}= \partial_1 \Lambda^{-2}$), we see that
\begin{equation}\label{exp:nab2u}
  \nabla^2 u = \nabla^2 \nabla^\perp \Lambda^{-2} \omega = \nabla^2 \nabla^\perp \Lambda^{-2} \Gamma + \nabla^2 \nabla^\perp \partial_1 \Lambda^{-4} \theta.
\end{equation}

First under the assumption $u_0\in H^1\cap W^{1,p}(\R^2)$ with some $p>2$, we show that $\Gamma$ has some more refined estimates.
Multiplying both sides of equation \eqref{Gamm-eq} with $|\Gamma|^{p-2}\Gamma$ and integrating on the space variable, we get
\begin{align*}
  \frac{1}{p}\frac{\dd }{\dd t}\|\Gamma(t)\|_{L^p}^p + (p-1)\int_{\R^2} |\nabla \Gamma|^2 |\Gamma|^{p-2}(x,t) \dd x \leq \|[\RR_{-1}, u\cdot\nabla]\theta(t)\|_{L^p} \|\Gamma(t)\|_{L^p}^{p-1}.
\end{align*}
It follows that
\begin{align*}
  \|\Gamma\|_{L^\infty_T (L^p)} \leq \|\Gamma_0\|_{L^p} + \|[\RR_{-1}, u\cdot\nabla]\theta\|_{L^1_T (L^p)} \leq C  + \|[\RR_{-1}, u\cdot\nabla]\theta\|_{L^1_T (L^p)},
\end{align*}
where we have used the fact that 
\begin{equation}\label{eq:data-es}
  \|\Gamma_0\|_{L^p} \leq \|\omega_0\|_{L^p} + \|\RR_{-1}\theta_0\|_{L^p} \leq \|\omega_0\|_{L^p} + C_p \|\theta_0\|_{L^{\frac{2p}{2+p}}} \lesssim 1.
\end{equation}
Observe that Lemma \ref{lem:comm-es} and estimates \eqref{the-es}, \eqref{eq:uH1-es} guarantee that
\begin{equation*}
  \|[\RR_{-1},u\cdot\nabla]\theta\|_{L^\infty_T (B^1_{2,\infty})} \leq C_0 \Big(\|\omega\|_{L^\infty_T(L^2)} \|\theta\|_{L^\infty_T (L^\infty)}
  + \|u\|_{L^\infty_T(L^2)} \|\theta\|_{L^\infty_T(L^2)} \Big) \lesssim (1+T).
\end{equation*}
Thus the embedding $B^1_{2,\infty}\subset L^p$ implies
\begin{align}\label{es:Gam-Lp}
  \|\Gamma\|_{L^\infty_T (L^p)} \leq C(1+T),
\end{align}
which combined with \eqref{eq:data-es} leads to that
\begin{align}\label{ome-Lp-es}
  \|\omega\|_{L^\infty_T (L^p)} \leq \|\Gamma\|_{L^\infty_T (L^p)} + \|\RR_{-1}\theta\|_{L^\infty_T (L^p)} \leq \|\Gamma\|_{L^\infty_T (L^p)} + C_p \|\theta_0\|_{L^{\frac{2p}{2+p}}} \leq C(1+T).
\end{align}
We also have
\begin{equation}\label{u-W1p-es}
  \|u\|_{L^\infty_T (W^{1,p})} \lesssim \|\Delta_{-1} u\|_{L^\infty_T (W^{1,p})} + \|\omega\|_{L^\infty_T (L^p)} \lesssim 1+ T.
\end{equation}

Now applying \eqref{TD-sm-es} in Lemma \ref{lem:TD-sm} yields that for every $q\in \N$ and $\rho\geq 1$,
\begin{align}
   & \quad\|\Delta_q \Gamma\|_{L^\rho_T (L^p)} \lesssim 2^{-\frac{2}{\rho}q} \Big( \|\Gamma_0\|_{L^p} + \|\nabla u\|_{L^\infty_T (L^p)} \|\Gamma\|_{L^1_T (B^0_{\infty,\infty})}
   + \|[\RR_{-1},u\cdot\nabla]\theta\|_{L^1_T (L^p)}\Big) \nonumber \\
   & \lesssim 2^{-\frac{2}{\rho}q} \Big( \|\Gamma_0\|_{L^p} + \|\omega\|_{L^\infty_T (L^p) } \big(\|\omega\|_{L^1_T (B^0_{\infty,\infty})}
   + \|\RR_{-1}\theta\|_{L^1_T (B^0_{\infty,\infty})}\big) + \|[\RR_{-1},u\cdot\nabla]\theta\|_{L^1_T B^1_{2,\infty}}\Big) \nonumber \\
   & \lesssim 2^{-\frac{2}{\rho}q} \big((1+T)^2 + (1+T)\|\omega\|_{L^1_T (B^0_{\infty,\infty})} \big) \lesssim 2^{-\frac{2}{\rho}q} (1+T)^3 , \label{Gam-es2}
\end{align}
where in the last line we have used \eqref{vort-B0es2}. By virtue of the high-low frequency decomposition, we find that for every $\rho\in [1, \frac{2p}{p+2})$,
\begin{align}\label{Gam-Lip-es}
  \|\nabla^2\nabla^\perp \Lambda^{-2} \Gamma\|_{L^\rho_T (L^\infty)}
  & \leq \|\nabla^2\nabla^\perp \Lambda^{-2} \Delta_{-1} \Gamma\|_{L^\rho_T (L^\infty)}
  + \sum_{q\in \N}\|\nabla^2\nabla^\perp \Lambda^{-2} \Delta_q \Gamma \|_{L^\rho_T (L^\infty)} \nonumber \\
  & \lesssim \|\Delta_{-1}\Gamma\|_{L^\rho_T (L^p)} + \sum_{q\in \N} 2^{q(1+ 2/p)} \|\Delta_q \Gamma\|_{L^\rho_T (L^p)} \nonumber \\
  & \lesssim (1+T)^2 + \sum_{q\in \N} 2^{q(1+ \frac{2}{p} -\frac{2}{\rho})} (1+T)^3 \lesssim (1+T)^3.
\end{align}

Next, since $\theta(x,t) = \bar{\theta}_0(X^{-1}_t(x)) \,1_{D(t)}(x)$ for every $t\in [0,T]$ and $D(t)= X_t(D_0)$ satisfies $D(t) = \{x\in \R^2: \varphi(x,t)>0\}$ 
(with $\varphi$ given by \eqref{varphi-eq}),
we claim that $\nabla^2\nabla^\perp \partial_1 \Lambda^{-4} \theta \in L^\infty(\R^2 \times [0,T] )$ and
\begin{equation}\label{eq:claim1}
  \|\nabla^2\nabla^\perp \partial_1 \Lambda^{-4} \theta \|_{L^\infty_T (L^\infty) } \leq C e^{C(1+T)^2}.
\end{equation}
Indeed, recalling that one needs to control the $L^\infty$-norm of $\nabla u$ with $\nabla u = \nabla \nabla^\perp \Lambda^{-2} \omega = \nabla \nabla^\perp \Lambda^{-2} 1_{D(t)}$ in the vorticity patch problem of 2D Euler equations (e.g. see \cite{BerC,Chem91}),
the proof of \eqref{eq:claim1} is quite analogous. Below we mainly argue as \cite[Proposition 1]{BerC}.
According to Lemma \ref{lem:op}, we have that for every $x_0\in \R^2$ and $i,j,k=1,2$,
\begin{equation}\label{eq:SIO-exp}
\begin{split}
  \partial_i \partial_j \partial_k^\perp \partial_1 \Lambda^{-4} \theta(x_0,t) = \, \mathrm{p.v.}\int_{D(t)} \frac{\sigma_{ijk}(x_0 -y)}{|x_0 -y|^2} \bar{\theta}_0(X^{-1}_t(y)) \dd y + a_{ijk}\, \theta(x_0,t).
\end{split}
\end{equation}
where $\nabla^\perp = (\partial_1^\perp,\partial_2^\perp)^{\mathrm{T}} = (-\partial_2,\partial_1)^{\mathrm{T}}$. 
We need only to estimate the integral part. Denote by
\begin{equation}
  d(x_0,t):= \inf_{x\in\partial D(t)} \{|x-x_0|\},\quad \delta(t):= \min\bigg\{1,\bigg( \frac{\|\nabla \varphi(t)\|_{\inf}}{\|\nabla\varphi(t)\|_{\dot C^\gamma}} \bigg)^{1/\gamma}\bigg\},
\end{equation}
with $\|\nabla \varphi(t)\|_{\inf}:= \inf_{x\in \partial D(t)} |\nabla \varphi(x,t)|$.
Notice that $\varphi(x,t)$ belongs to $L^\infty(0,T; C^{1,\gamma})$ according to \eqref{nab-varp-es},
and it also satisfies that $\|\nabla \varphi(t)\|_{\inf} \geq \|\nabla\varphi_0\|_{\inf} e^{- \int_0^t \|\nabla u\|_{L^\infty} \dd \tau}$ (e.g. see \cite[Eq. (2.26)]{BerC}),
thus we deduce that for every $t\in [0,T]$,
\begin{align*}
  \delta(t)^{-1} \leq \|\nabla \varphi\|_{L^\infty_T (C^\gamma)}^{\frac{1}{\gamma}} \|\nabla \varphi_0\|_{\inf}^{-\frac{1}{\gamma}}\,
  e^{\gamma^{-1}\int_0^T \|\nabla u\|_{L^\infty}\dd \tau}
  \leq C e^{C(1+T)^2}.
\end{align*}
Now we split the integral term in \eqref{eq:SIO-exp} as
\begin{align}
  &\int_{D(t)\cap \{|x_0-y|\geq \delta(t)\}} \frac{\sigma_{ijk}(x_0 -y)}{|x_0 -y|^2} \bar{\theta}_0(X^{-1}_t(y)) \dd y
  + \mathrm{p.v.}\int_{D(t)\cap \{|x_0 -y|\leq \delta(t)\}} \frac{\sigma_{ijk}(x_0 -y)}{|x_0 -y|^2} \bar{\theta}_0(X^{-1}_t(y)) \dd y\nonumber \\
  & =: I_1(x_0,t) + I_2(x_0,t).
\end{align}
Since the area of patch domain $D(t)= X_t(D_0)$ remains constant in time, 
we directly obtain
\begin{align}\label{eq:I1-es}
  |I_1(x_0,t)| \leq C_0 \|\bar{\theta}_0\|_{L^\infty(\overline{D_0})}|D(t)| \delta(t)^{-2}
  \leq C |D_0| \delta(t)^{-2}\leq C e^{C(1+T)^2}.
\end{align}

For the estimation of $I_2$, if $x_0\in \overline{D(t)}$, we decompose $I_2$ as
\begin{align*}
  I_2(x_0,t) = \,& \mathrm{p.v.}\int_{D(t)\cap \{|x_0 -y|\leq \delta(t)\}} \frac{\sigma_{ijk}(x_0 -y)}{|x_0 -y|^2} \big(\bar{\theta}_0(X^{-1}_t(y))- \bar{\theta}_0(X^{-1}_t(x_0))\big) \dd y \\
  & + \bar{\theta}_0(X^{-1}_t(x_0)) \bigg(\mathrm{p.v.}\int_{D(t)\cap \{|x_0 -y|\leq \delta(t)\}} \frac{\sigma_{ijk}(x_0 -y)}{|x_0 -y|^2} \dd y \bigg) = : I_{21}(x_0,t) + I_{22}(x_0,t);
\end{align*}
on the other hand, if $x_0\notin \overline{D(t)}$, denote by $\tilde{x}_t\in \partial D(t)$ a point so that $|x_0 -\tilde{x}_t|=d(x_0,t)$, we have
\begin{align*}
  I_2(x_0,t) = \,& \mathrm{p.v.}\int_{D(t)\cap \{|x_0 -y|\leq \delta(t)\}} \frac{\sigma_{ijk}(x_0 -y)}{|x_0 -y|^2} \big(\bar{\theta}_0(X^{-1}_t(y))- \bar{\theta}_0(X^{-1}_t(\tilde{x}_t))\big) \dd y \\
  & + \bar{\theta}_0(X^{-1}_t(\tilde{x}_t)) \bigg(\mathrm{p.v.}\int_{D(t)\cap \{|x_0 -y|\leq \delta(t)\}} \frac{\sigma_{ijk}(x_0 -y)}{|x_0 -y|^2} \dd y \bigg) = : I_{23}(x_0,t) + I_{24}(x_0,t).
\end{align*}
Due to that $\bar{\theta}_0\in C^\mu(\overline{D_0})$, $0<\mu<1$, one directly gets
\begin{align}\label{I21-es}
  |I_{21}(x_0,t)| \leq  C \|\bar\theta_0\|_{C^\mu(\overline{D_0})} \|\nabla X^{-1}_t\|_{L^\infty}^\mu \int_{B_1(x_0)} \frac{1}{|x_0 -y|^{2-\mu}} \dd y
  \leq C e^{C(1+T)^2},
\end{align}
and
\begin{align}\label{I23-es}
  |I_{23}(x_0,t)| & \leq  C \|\bar\theta_0\|_{C^\mu(\overline{D_0})} \|\nabla X^{-1}_t\|_{L^\infty}^\mu \int_{B_1(x_0)} \frac{1}{|x_0 -y|^2} |\tilde{x}_t - y|^\mu \dd y \nonumber \\
  & \leq C \|\bar\theta_0\|_{C^\mu(\overline{D_0})} e^{\mu \int_0^T \|\nabla u\|_{L^\infty} \dd t} \int_{B_1(x_0)} \frac{1}{|x_0 -y|^{2-\mu}} \dd y \leq C e^{C(1+T)^2},
\end{align}
where in the last line we used the fact $|\tilde{x}_t - y|\leq |\tilde{x}_t -x_0| + |x_0-y| \leq 2 |x_0-y|$.
On the other hand, due to the zero-mean of $\sigma_{ijk}$, the principal-value integral in $I_{22}(x_0,t)$ and $I_{24}(x_0,t)$ vanishes if $d(x_0,t)> \delta(t)$,
so it suffices to consider the case $d(x_0,t)\leq \delta(t)$.
We then define
\begin{align*}
  \Sigma(x_0,t) := \{z|\,|z|=1, \nabla_x \varphi(\tilde{x}_t,t)\cdot z \geq 0\},
\end{align*}
and for every $\rho$ satisfying $\rho \geq d(x_0,t)$, define
\begin{align}
  S_\rho(x_0,t) & : =\{z|\, |z|=1,x_0 +\rho z \in D(t)\}, \nonumber\\
  R_\rho(x_0,t) & := \big(S_\rho(x_0,t)\setminus \Sigma(x_0,t)\big) \cup \big(\Sigma(x_0,t)\setminus S_\rho(x_0,t)\big). \label{eq:Rrho}
\end{align}
In terms of the polar coordinates centered at $x_0$, and using the fact that $\int_{\Sigma(x_0,t)} \sigma_{ijk}(z) \dd \mathcal{H}^1(z) =0$
we find
\begin{equation}\label{eq:I2-es1}
  |I_{22}(x_0,t)|, |I_{24}(x_0,t)|  \leq C_0 \|\bar{\theta}_0\|_{L^\infty(\overline{D_0})} \int_{d(x_0,t)}^{\delta(t)} \frac{\mathcal{H}^1(R_\rho(x_0,t))}{\rho} \dd \rho,
\end{equation}
with $\mathcal{H}^1$ the Hausdorff measure on the unit circle.
Concerning $\mathcal{H}^1(R_\rho)$, we recall the following ingenious result (for the proof see \cite[Geometric Lemma]{BerC}).
\begin{lemma}\label{lem:geo-lem}
  Let $R_\rho(x_0,t)$ be the symmetric difference defined in \eqref{eq:Rrho},
then for all $\rho\geq d(x_0,t)$, $\gamma\in (0,1)$ and $x_0\in\R^2$ so that $d(x_0,t) \leq \delta(t)$, we have
\begin{equation}\label{eq:geo-lem}
  \mathcal{H}^1(R_\rho(x_0,t)) \leq 2\pi \bigg((1+ 2^\gamma) \frac{d(x_0,t)}{\rho} + 2^\gamma \Big(\frac{\rho}{\delta(t)}\Big)^\gamma \bigg).
\end{equation}
\end{lemma}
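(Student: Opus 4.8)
The plan is to follow the geometric argument of Bertozzi--Constantin \cite{BerC}: we shall show that the symmetric difference $R_\rho(x_0,t)$ is contained in a thin band $\{z\in\mathbb{S}^1:|n\cdot z|\le\tau\}$ about the two directions orthogonal to the unit normal $n:=\nabla\varphi(\tilde{x}_t,t)/|\nabla\varphi(\tilde{x}_t,t)|$ of $\partial D(t)$ at $\tilde{x}_t$, with $\tau$ equal to the right-hand side of \eqref{eq:geo-lem} divided by $2\pi$; since such a band has $\mathcal{H}^1$-measure $4\arcsin(\min\{\tau,1\})\le 2\pi\tau$, this yields the lemma. First I would record the geometric consequence of $\tilde{x}_t$ being a nearest point of $\partial D(t)=\{\varphi(\cdot,t)=0\}$ to $x_0$: a Lagrange multiplier argument, legitimate because $\nabla\varphi(\tilde{x}_t,t)\neq 0$, gives $x_0-\tilde{x}_t=\epsilon\,d(x_0,t)\,n$ for some $\epsilon\in\{-1,0,1\}$, and a sign-comparison of $\varphi$ along the segment $[\tilde{x}_t,x_0]$ --- exploiting that $\tilde{x}_t$ is the \emph{nearest} boundary point --- fixes $\epsilon=+1$ when $x_0\in D(t)$ and $\epsilon=-1$ when $x_0\notin\overline{D(t)}$.

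Next, for $z\in\mathbb{S}^1$ set $y:=x_0+\rho z$ and Taylor-expand $\varphi(y,t)$ about $\tilde{x}_t$ using $\varphi(\tilde{x}_t,t)=0$; the remainder $E(z):=\varphi(y,t)-\nabla\varphi(\tilde{x}_t,t)\cdot(y-\tilde{x}_t)$ obeys $|E(z)|\le\|\nabla\varphi(t)\|_{\dot C^\gamma}|y-\tilde{x}_t|^{1+\gamma}$, and since $|y-\tilde{x}_t|\le d(x_0,t)+\rho$ and $(a+b)^{1+\gamma}\le 2^\gamma(a^{1+\gamma}+b^{1+\gamma})$ for $a,b\ge 0$, we obtain $|E(z)|\le 2^\gamma\|\nabla\varphi(t)\|_{\dot C^\gamma}\big(d(x_0,t)^{1+\gamma}+\rho^{1+\gamma}\big)$. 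Substituting $y-\tilde{x}_t=\epsilon\,d(x_0,t)\,n+\rho z$ into the expansion, the condition $z\in S_\rho(x_0,t)$, i.e.\ $\varphi(y,t)>0$, becomes
\[
  n\cdot z>-\frac{\epsilon\,d(x_0,t)}{\rho}-\frac{E(z)}{|\nabla\varphi(\tilde{x}_t,t)|\,\rho},
\]
a half-circle perturbed by the $z$-dependent error term.

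Since $\Sigma(x_0,t)=\{z\in\mathbb{S}^1:n\cdot z\ge 0\}$, every $z$ in the symmetric difference $R_\rho(x_0,t)$ has $n\cdot z$ lying between $0$ and the threshold above, so $|n\cdot z|\le\frac{d(x_0,t)}{\rho}+\frac{|E(z)|}{|\nabla\varphi(\tilde{x}_t,t)|\,\rho}$. To control the last term I would use $|\nabla\varphi(\tilde{x}_t,t)|\ge\|\nabla\varphi(t)\|_{\inf}$ and the definition of $\delta(t)$, which gives $\|\nabla\varphi(t)\|_{\dot C^\gamma}/\|\nabla\varphi(t)\|_{\inf}\le\delta(t)^{-\gamma}$; combined with the standing hypothesis $d(x_0,t)\le\delta(t)$ this yields
\[
  \frac{|E(z)|}{|\nabla\varphi(\tilde{x}_t,t)|\,\rho}\le 2^\gamma\Big(\frac{d(x_0,t)}{\rho}\Big(\frac{d(x_0,t)}{\delta(t)}\Big)^\gamma+\Big(\frac{\rho}{\delta(t)}\Big)^\gamma\Big)\le 2^\gamma\Big(\frac{d(x_0,t)}{\rho}+\Big(\frac{\rho}{\delta(t)}\Big)^\gamma\Big).
\]
Hence $R_\rho(x_0,t)\subseteq\big\{z\in\mathbb{S}^1:|n\cdot z|\le(1+2^\gamma)\tfrac{d(x_0,t)}{\rho}+2^\gamma(\tfrac{\rho}{\delta(t)})^\gamma\big\}$, and the band estimate recorded above gives exactly \eqref{eq:geo-lem}.

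The main obstacle is the $z$-dependence of the Taylor remainder $E(z)$: because of it $S_\rho(x_0,t)$ is not literally a half-circle, so one cannot simply ``subtract two half-circles'' and must argue directly that membership in the symmetric difference, with a variable error term in the defining inequality, still confines $n\cdot z$ to the claimed thin band. This is precisely where the normalization built into $\delta(t)$ and the hypothesis $d(x_0,t)\le\delta(t)$ (needed to absorb $(d(x_0,t)/\delta(t))^\gamma\le 1$) enter; the other ingredients --- the nearest-point/normal identity and the $\mathcal{H}^1$-measure of a band on $\mathbb{S}^1$ --- are elementary.
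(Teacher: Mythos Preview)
Your argument is correct and is precisely the Bertozzi--Constantin geometric lemma proof; the paper does not supply its own proof of this statement but explicitly defers to \cite[Geometric Lemma]{BerC}, so your approach coincides with the intended one. One minor remark: the sign determination of $\epsilon$ is not actually needed for the final bound, since in both cases of the symmetric difference you only use $|\epsilon|\le 1$ to pass from $-\epsilon\,d(x_0,t)/\rho - E(z)/(|\nabla\varphi|\rho)$ to the absolute-value estimate.
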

\noindent Thus inserting \eqref{eq:geo-lem} into \eqref{eq:I2-es1} leads to
\begin{equation}\label{eq:I2-es2}
  |I_{22}(x_0,t)|, |I_{24}(x_0,t)|  \leq 2\pi C_0  \|\bar{\theta}_0\|_{L^\infty(\overline{D_0})}  \Big((1+ 2^\gamma) + \frac{2^\gamma}{\gamma}\Big).
\end{equation}
Hence, gathering \eqref{eq:SIO-exp}, \eqref{eq:I1-es}--\eqref{I23-es} and \eqref{eq:I2-es2} ensures the assertion \eqref{eq:claim1}.

Therefore, by combining \eqref{Gam-Lip-es} with \eqref{eq:claim1} we get the desired estimate that for every $\rho\in [1,\frac{2p}{p+2})$,
\begin{align}\label{es:nab2u-Linf}
  \|\nabla^2 u\|_{L^\rho_T (L^\infty)} \leq \|\nabla^2 \nabla^\perp \Lambda^{-2} \Gamma\|_{L^\rho_T(L^\infty)} + \|\nabla^2 \nabla^\perp \partial_1 \Lambda^{-4} \theta\|_{L^\infty} \leq C e^{C(1+T)^2}.
\end{align}

\subsection{Persistence of $C^{2,\gamma}$-boundary regularity}\label{subsec:C2gam}

In this subsection we dedicate to proving the persistence of $C^{2,\gamma}$-regularity of the temperature front boundary.

Observing that $W=\nabla^\perp \varphi$ satisfies
\begin{equation}\label{W-eq}
  \partial_t W + u\cdot\nabla W = W\cdot\nabla u = \partial_W u,\quad W(0,x) = W_0(x) ,
\end{equation}
we infer that
\begin{align}\label{nabW-eq}
  \partial_t \nabla W + u\cdot \nabla (\nabla W) = \partial_W \nabla u + \nabla W\cdot \nabla u - \nabla u \cdot \nabla W.
\end{align}
Owing to \eqref{eq:T-sm2} and $B^{\gamma}_{\infty,\infty}(\R^2)\subset L^\infty(\R^2)$, we obtain
\begin{align}\label{nab-W-Cgam}
  \|\nabla W(t)\|_{B^\gamma_{\infty,\infty}}
  \leq C \bigg(\|\nabla W_0\|_{B^\gamma_{\infty,\infty}} + \int_0^t \|\partial_W\nabla u\|_{B^\gamma_{\infty,\infty}} \dd \tau
  + \int_0^t \|\nabla u\|_{B^\gamma_{\infty,\infty}} \|\nabla W\|_{B^\gamma_{\infty,\infty}} \dd \tau \bigg). &
\end{align}

Below we focus on estimating $\partial_W\nabla u$.
From the Biot-Savart law and relation $\omega =\Gamma + \RR_{-1}\theta =\Gamma + \partial_1 \Lambda^{-2}\theta$, it follows that
\begin{equation}\label{W-nab2u-decom}
  \partial_W \nabla u = \partial_W \nabla \nabla^\perp \Lambda^{-2} \omega = \partial_W \nabla\nabla^\perp \Lambda^{-2} \Gamma + \partial_W \nabla \nabla^\perp \partial_1 \Lambda^{-4} \theta.
\end{equation}
We first consider the estimation of $\partial_W \Gamma = W\cdot\nabla \Gamma$. Note that from equation \eqref{Gamm-eq}
and the fact that $[\partial_W,\partial_t + u\cdot\nabla]=0$, $\partial_W \Gamma$ solves the following equation
\begin{equation}\label{eq:parW-Gam}
\begin{split}
  \partial_t (\partial_W \Gamma) + u\cdot \nabla (\partial_W \Gamma) - \Delta (\partial_W \Gamma) & = - [\Delta,\partial_W]\Gamma + \partial_W ([\RR_{-1},u\cdot\nabla]\theta) \\
  & = - \Delta W\cdot \nabla \Gamma -2 \nabla W\cdot \nabla^2\Gamma + \partial_W ([\RR_{-1},u\cdot\nabla]\theta).
\end{split}
\end{equation}
According to the smoothing estimate \eqref{eq:TD-sm2}, we find that for every $0<\gamma' < \min\{\gamma, 1-\frac{2}{p}\}$,
\begin{align}
  & \quad \|\partial_W \Gamma \|_{L^\infty_t (B^{\gamma'-1}_{\infty,1})} + \|\partial_W \Gamma\|_{L^1_t (B^{\gamma'+1}_{\infty,1})} \nonumber\\
  & \leq C (1+t) \bigg(\|\partial_{W_0}\Gamma_0\|_{B^{\gamma'-1}_{\infty,1}}
  + \int_0^t \|\nabla u(\tau)\|_{L^\infty} \|\partial_W \Gamma(\tau)\|_{B^{\gamma'-1}_{\infty,1}} \dd \tau
  + \|\Delta W \cdot \nabla \Gamma\|_{L^1_t(B^{\gamma'-1}_{\infty,1})}  \nonumber \\
  & \qquad \qquad \qquad + \|\nabla W \cdot\nabla^2 \Gamma\|_{L^1_t(B^{\gamma'-1}_{\infty,1})} + \|\partial_W ([\RR_{-1},u\cdot\nabla]\theta)\|_{L^1_t(B^{\gamma'-1}_{\infty,1})} \bigg).
\end{align}
In view of $\Gamma_0 = \omega_0 + \RR_{-1}\theta_0 = \omega_0 + \partial_1 \Lambda^{-2}\theta_0$ and the embedding $W^{1,p}\subset B^{\gamma'}_{\infty,1}$
with $0<\gamma' <1-\frac{2}{p}$, we get
\begin{align}
  \|\partial_{W_0} \Gamma_0\|_{B^{\gamma'-1}_{\infty,1}} & \leq \|\partial_{W_0} \nabla u_0\|_{B^{\gamma'-1}_{\infty,1}}
  + \|\partial_{W_0} \RR_{-1}\theta_0\|_{B^{\gamma'-1}_{\infty,1}} \nonumber \\
  & \leq C \|\partial_{W_0} u_0\|_{B^{\gamma'}_{\infty,1}} + C \|\nabla W_0\|_{L^\infty} \|\nabla u_0\|_{B^{\gamma'-1}_{\infty,1}}
  + C \|W_0\|_{L^\infty} \|\nabla \RR_{-1} \theta_0\|_{B^{\gamma'-1}_{\infty,1}} \nonumber \\
  & \leq C \|\partial_{W_0} u_0\|_{W^{1,p}} + C \|\varphi_0\|_{W^{2,\infty}} \|u_0\|_{W^{1,p}} + C \|\varphi_0\|_{W^{1,\infty}} \|\theta_0\|_{L^2 \cap L^\infty} <\infty.
\end{align}
Notice that by virtue of \eqref{Gam-es2},
\begin{align}\label{es:Gam-L1TBes}
  \|\Gamma\|_{L^1_T(B^{\gamma'+1}_{\infty,1})} & \leq C \|\Delta_{-1}\Gamma\|_{L^1_T(L^\infty)} + C \sum_{q\in \N} 2^{q(1+\gamma' + \frac{2}{p})} \|\Delta_q \Gamma\|_{L^1_T(L^p)} \nonumber \\
  & \leq C T \|\Gamma\|_{L^\infty_T(L^p)} + C \sum_{q\in \N} 2^{q(\gamma'+\frac{2}{p} -1)}  (1+ T)^3  \leq C (1+ T)^3,
\end{align}
thus \eqref{eq:prod-es} and \eqref{eq:nab2-vap-es} ensure that
\begin{align*}
  \|\nabla W \cdot \nabla^2 \Gamma\|_{L^1_t(B^{\gamma'-1}_{\infty,1})} & \leq C \|\nabla W\|_{L^\infty_t(L^\infty)} \|\nabla^2 \Gamma\|_{L^1_t(B^{\gamma'-1}_{\infty,1})}
  \leq C e^{C(1+t)^2},
\end{align*}
\begin{align*}
  \|\Delta W \cdot \nabla \Gamma\|_{L^1_t(B^{\gamma'-1}_{\infty,1})}
  \leq C \int_0^t \|\Delta W(\tau)\|_{B^{\gamma'-1}_{\infty,1}} \|\nabla \Gamma(\tau)\|_{L^\infty} \dd \tau.
\end{align*}
Taking advantage of \eqref{eq:comm-es1}, \eqref{u-W1p-es} and the embedding $B^1_{p,\infty} \subset B^{\gamma'}_{\infty,1}$ for every $0<\gamma' < 1-\frac{2}{p}$, we deduce that
\begin{align}\label{es:R-1comm1}
  \|[\RR_{-1},u\cdot\nabla]\theta\|_{L^1_t (B^1_{p,\infty})} \leq C \big(\|\nabla u\|_{L^1_t(L^p)} \|\theta\|_{L^\infty_t(L^\infty)} + t \|u\|_{L^\infty_t(L^2)}\|\theta\|_{L^\infty_t(L^2)} \big) \leq C (1+t)^2,
\end{align}
and
\begin{align*}
  \|\partial_W ([\RR_{-1},u\cdot\nabla]\theta)\|_{L^1_t(B^{\gamma'-1}_{\infty,1})} \leq C \|W\|_{L^\infty_t(L^\infty)} \|[\RR_{-1},u\cdot\nabla]\theta\|_{L^1_t(B^{\gamma'}_{\infty,1})}  \leq  C e^{C(1+ t)^2} .
\end{align*}
Gathering the above estimates yields
\begin{align}\label{es:parW-Gam1}
  & \quad \|\partial_W \Gamma(t)\|_{B^{\gamma'-1}_{\infty,1}} + \|\partial_W \Gamma\|_{L^1_t(B^{\gamma'+1}_{\infty,1})} \nonumber \\
  & \leq C e^{C (1+ t)^2}
  + C \int_0^t \Big( \|\nabla u(\tau)\|_{L^\infty} \|\partial_W \Gamma(\tau)\|_{B^{\gamma'-1}_{\infty,1}} + \| W(\tau)\|_{B^{\gamma'+1}_{\infty,1}} \|\nabla \Gamma(\tau)\|_{L^\infty}\Big) \dd \tau .
\end{align}
Thanks to estimates \eqref{eq:prod-es}, \eqref{eq:str-es1} and \eqref{es:parW-Gam1}, we also infer that
\begin{align}\label{es:parW-Gam2}
  & \quad \|\partial_W(\nabla \nabla^\perp \Lambda^{-2}\Gamma)\|_{L^1_t(B^{\gamma}_{\infty,\infty})} \nonumber \\
  & \lesssim \|\Delta_{-1} \partial_W(\nabla \nabla^\perp \Lambda^{-2}\Gamma)\|_{L^1_t(L^\infty)}
  + \|\nabla \partial_W (\nabla \nabla^\perp \Lambda^{-2}\Gamma)\|_{L^1_t(B^{\gamma-1}_{\infty,\infty})} \nonumber \\
  & \lesssim \|W\|_{L^1_t (L^\infty)} \|\nabla \nabla^\perp \Lambda^{-2}\Gamma\|_{L^1_t (L^p)} +  \|\nabla W\|_{L^\infty_t(L^\infty)} \|\nabla^2 \nabla^\perp \Lambda^{-2} \Gamma\|_{L^1_t (B^{\gamma-1}_{\infty,\infty})}
  +  \|\partial_W(\nabla^2 \nabla^\perp \Lambda^{-2} \Gamma)\|_{L^1_t(B^{\gamma-1}_{\infty,\infty})} \nonumber \\
  & \lesssim \|W\|_{L^1_t (L^\infty)} \|\Gamma\|_{L^1_t (L^p)}  + \|\nabla W\|_{L^\infty_t(L^\infty)} \|\Gamma\|_{L^1_t (B^{\gamma}_{\infty,\infty})} \nonumber \\
  & \quad + \|\partial_W \nabla \Gamma\|_{L^1_t(B^{\gamma-1}_{\infty,\infty})} + \int_0^t \|W\|_{B^1_{\infty,1}}
  \big(\|\Delta_{-1}\nabla^2 \nabla^\perp \Lambda^{-2} \Gamma\|_{L^\infty} + \|\nabla\Gamma\|_{B^{\gamma-1}_{\infty,\infty}}\big) \dd \tau \nonumber \\
  & \lesssim  e^{C(1+t)^2} + \|\partial_W \Gamma\|_{L^1_t(B^\gamma_{\infty,\infty})} + \int_0^t \|W\|_{B^1_{\infty,1}} \big( \|\Gamma\|_{L^p} + \|\Gamma\|_{B^\gamma_{\infty,\infty}}\big) \dd \tau \nonumber \\
  & \leq C  e^{C(1+t)^2} + C \int_0^t \big(\|W\|_{B^{\gamma'+1}_{\infty,1}} + \|\partial_W \Gamma\|_{B^{\gamma'-1}_{\infty,1}}\big)
  \big( \|\nabla u(\tau)\|_{L^\infty} + \|\Gamma(\tau)\|_{L^p \cap W^{1,\infty}} \big) \dd \tau.
\end{align}

For the estimation of the $\theta$-term in \eqref{W-nab2u-decom}, the argument is similar to that of obtaining \eqref{es:parW-Gam2}:
\begin{align}\label{es:parW-the0}
  & \quad \|\partial_W (\nabla \nabla^\perp \partial_1 \Lambda^{-4}\theta)\|_{L^1_t(B^\gamma_{\infty,\infty})} \nonumber \\
  & \lesssim \|\Delta_{-1} \partial_W (\nabla \nabla^\perp \partial_1 \Lambda^{-4} \theta)\|_{L^1_t(L^\infty)}
  + \|\nabla \partial_W (\nabla \nabla^\perp \partial_1 \Lambda^{-4} \theta)\|_{L^1_t(B^{\gamma-1}_{\infty,\infty})} \nonumber \\
  & \lesssim \|W\|_{L^\infty_t(L^\infty)} \|\theta\|_{L^1_t(L^2)} + \|\nabla W\|_{L^\infty_t(L^\infty)} \|\nabla^2 \nabla^\perp \partial_1 \Lambda^{-4}\theta\|_{L^1_t(B^{\gamma-1}_{\infty,\infty})}
  + \|\partial_W (\nabla^2 \nabla^\perp \partial_1 \Lambda^{-4}\theta)\|_{L^1_t(B^{\gamma-1}_{\infty,\infty})} \nonumber \\
  & \lesssim \|W\|_{L^\infty_t(W^{1,\infty})} \|\theta\|_{L^1_t(L^2\cap L^\infty)} + \|\partial_W \theta\|_{L^1_t(B^{\gamma-1}_{\infty,\infty})} +
  \int_0^t \|W\|_{B^1_{\infty,1}} (\|\theta\|_{L^2} + \|\theta\|_{B^{\gamma-1}_{\infty,\infty}}) \dd \tau \nonumber \\
  & \lesssim e^{C(1+t)^2} + \|\partial_W \theta\|_{L^1_t(B^{\gamma-1}_{\infty,\infty})} + \int_0^t \|W(\tau)\|_{B^1_{\infty,1}} \|\theta(\tau)\|_{L^2\cap L^\infty} \dd \tau .
\end{align}
Since the operator $\partial_W = W\cdot\nabla$ commutates with $\partial_t + u\cdot \nabla$, we see that
\begin{equation}\label{eq:par-W-the}
  \partial_t \partial_W \theta + u\cdot \nabla \partial_W \theta =0,\qquad \partial_W \theta|_{t=0} = \partial_{W_0}\theta_0,
\end{equation}
and then the regularity preservation estimate \eqref{eq:T-sm3} ensures that
\begin{equation}\label{es:parWthe1}
  \|\partial_W \theta(t)\|_{B^{\gamma-1}_{\infty,\infty}} \leq e^{C\int_0^t \|\nabla u\|_{L^\infty}\dd\tau} \|\partial_{W_0}\theta_0\|_{C^{-1,\gamma}}
  \leq \|\partial_{W_0}\theta_0\|_{C^{-1,\gamma}} e^{C(1+t)^3} .
\end{equation}
In the above $\partial_{W_0}\theta_0\in C^{-1,\gamma}(\R^2)$ is guaranteed by Lemma \ref{lem:str-reg}.

Therefore, by collecting estimates \eqref{nab-W-Cgam}, \eqref{es:parW-Gam1}--\eqref{es:parW-the0} and \eqref{es:parWthe1},
it follows from the embedding $B^{\gamma+1}_{\infty,\infty}\subset B^{\gamma'+1}_{\infty,1}$ (for every $0<\gamma' <\min\{\gamma, 1-\frac{2}{p}\}$)
that, 
\begin{align}
  & \quad \|W(t)\|_{B^{\gamma+1}_{\infty,\infty}} +  \|\partial_W \Gamma(t)\|_{B^{\gamma'-1}_{\infty,1}} + \|\partial_W \Gamma\|_{L^1_t(B^{\gamma'+1}_{\infty,1})} + \|\partial_W \nabla u\|_{L^1_t (B^\gamma_{\infty,\infty})} \nonumber \\
  & \leq C e^{C(1+ t)^2} +
  C \int_0^t \big(\|W\|_{B^{\gamma+1}_{\infty,\infty}} + \|\partial_W \Gamma\|_{B^{\gamma'-1}_{\infty,1}}\big)
  \big(1+ \|\nabla u(\tau)\|_{B^\gamma_{\infty,\infty}} + \|\Gamma(\tau)\|_{L^p \cap W^{1,\infty}} \big) \dd \tau .
\end{align}
Gronwall's inequality and \eqref{eq:uC1-gam}, \eqref{es:Gam-Lp}, \eqref{es:Gam-L1TBes} guarantee that
\begin{align}\label{nab-W-Cgam-es}
  \|W\|_{L^\infty_T(B^{\gamma+1}_{\infty,\infty})} + \|\partial_W \Gamma \|_{L^1_T(B^{\gamma'-1}_{\infty,1})} + \|\partial_W \Gamma\|_{L^1_T(B^{\gamma'+1}_{\infty,1})} + \|\partial_W \nabla u\|_{L^1_T (B^\gamma_{\infty,\infty})}
  \leq C e^{C (1+T)^3},
\end{align}
which directly implies $\varphi \in L^\infty(0,T; C^{2,\gamma}(\R^2))$, as desired.

In terms of the notations \eqref{norm:BBsln2}-\eqref{eq:abbr}, the estimates \eqref{eq:uC1-gam}, \eqref{es:Gam-Lp}, \eqref{es:Gam-L1TBes}, \eqref{nab-W-Cgam-es} imply that
\begin{align}\label{eq:W-Gam-step1}
  & \quad \|W\|_{L^\infty_T(\bb^{\gamma+1,0}_{\infty,W})} + \|\nabla u\|_{L^1_T(\bb^{\gamma,1}_{\infty,W})}
  + \|\Gamma\|_{L^\infty_T(\bb^{\gamma'-1,1}_W)} + \|\Gamma\|_{L^1_T(\bb^{\gamma'+1,1}_W)}  \nonumber \\
  & = \|W\|_{L^\infty_T(B^{\gamma+1}_{\infty,\infty})} + \|\nabla u\|_{L^1_T(B^{\gamma}_{\infty,\infty})}
  + \|\partial_W \nabla u\|_{L^1_T(B^{\gamma}_{\infty,\infty})} \nonumber \\
  & \quad + \|\Gamma\|_{L^\infty_T(B^{\gamma'-1}_{\infty,1})} + \|\partial_W \Gamma\|_{L^\infty_T(B^{\gamma'-1}_{\infty,1})}
  + \|\Gamma\|_{L^1_T(B^{\gamma'+1}_{\infty,1})} + \|\partial_W \Gamma\|_{L^1_T(B^{\gamma'+1}_{\infty,1})} \nonumber \\
  & \leq C e^{C (1+T)^3}.
\end{align}

\section{Persistence of $C^{k,\gamma}$-boundary regularity with $k\in \N\cap [3,\infty)$}\label{sec:Ck-gam}

In this section, assuming that $\partial D_0\in C^{k,\gamma}(\R^2)$ and $\bar{\theta}_0\in C^{k-2,\gamma}(\overline{D_0})$
for every $k \in \N \cap [ 3,\infty)$ and $\gamma\in (0,1)$, we intend to prove
\begin{align}\label{eq:goal0}
  \partial D(t)\in  C^{k,\gamma},\quad \forall t\in [0,T].
\end{align}

In order to study the higher regularity of front boundary $\partial D(t)$, we first establish its deep connection with the striated regularity of
$W=\nabla^\perp \varphi$ (e.g. see \cite{Chem91,LZ16}). Recalling that $\partial D_0$ has the parameterization \eqref{patch-para-exp},
$\partial D(t)$ can thus be expressed as $X_t(z_0(\alpha))$ with $\alpha\in \mathbb{S}^1$ and $X_t$ the particle trajectory given by \eqref{eq:flow0},
then
\begin{equation}
  \partial_\alpha \big( X_t(z_0(\alpha)) \big)= W_0(z_0(\alpha)) \cdot \nabla_x X_t(z_0(\alpha)) = (\partial_{W_0} X_t)(z_0(\alpha)) .
\end{equation}
On the other hand, noting that $W=\nabla^\perp \varphi$ solves equation \eqref{W-eq}, Lemma 1.4 of \cite{MB02} ensures
\begin{equation}\label{W-rela1}
  W(X_t(x),t) = W_0(x)\cdot \nabla X_t(x) = \big(\partial_{W_0} X_t\big)(x).
\end{equation}
Combining the above two formulas leads to
\begin{equation}
  \partial_\alpha \big(X_t(z_0(\alpha))\big) = W(X_t(z_0(\alpha)),t) = \big(\partial_{W_0}X_t\big)(z_0(\alpha)).
\end{equation}
Moreover, by iteration, it follows that for any $k\in\Z^+ $,
\begin{align}\label{par-k-bdr-rela}
  \partial^k_\alpha \big(X_t(z_0(\alpha))\big) = \partial_\alpha^{k-1}\big((\partial_{W_0} X_t)(z_0(\alpha)) \big)= \cdots = (\partial_{W_0}^k X_t)(z_0(\alpha)) .
\end{align}
From \eqref{W-rela1}, we get
\begin{align*}
  \big(\partial_{W_0}^2 X_t\big)(x) = \partial_{W_0}X_t(x)\,\cdot (\nabla W)(X_t(x),t)
  = W(X_t(x),t)\cdot(\nabla W)(X_t(x),t)= (\partial_W W)(X_t(x),t),
\end{align*}
and by induction,
\begin{align}\label{W-rela2}
  \big(\partial_{W_0}^k X_t\big)(x) = (\partial_W^{k-1} W)(X_t(x),t), \quad \forall k\in \Z^+.
\end{align}

Hence in light of \eqref{par-k-bdr-rela} and \eqref{W-rela2}, in order to prove the persistence result \eqref{eq:goal0},
that is, $\partial_\alpha^k\big(X_t(z_0(\alpha))\big)\in C^\gamma$, it suffices to show that
\begin{align}\label{eq:target0}
  \big(\partial_{W_0}^k X_t\big)(x) \in L^\infty(0,T; C^\gamma(\R^2)),
\end{align}
which in turn remains to verify that (thanks to Lemma \ref{lem:flow}) 
\begin{equation}\label{eq:target}
  \big(\partial_W^{k-1} W\big)(\cdot,t) \in L^\infty(0,T; C^\gamma(\R^2)),\quad \forall k\in\N\cap [3,\infty),\gamma\in(0,1).
\end{equation}
\vskip0.15cm

In the following we mainly will prove that
\begin{align}\label{eq:Targ-k}
  \| W\|_{L^\infty_T (\bb^{\gamma+1,k-2}_{\infty,W})}  + \|\nabla u\|_{L^1_T ( \bb^{\gamma,k-1}_{\infty,W})}
  & + \|\Gamma\|_{L^\infty_T (\bb^{\gamma'-1,k-1}_W)} + \|\Gamma\|_{L^1_T(\bb^{\gamma'+1,k-1}_W)}  \leq H_{k-1}(T),
\end{align}
with $H_{k-1}(T)$ depending on $k-1$ and $T$. 
A direct consequence of \eqref{eq:Targ-k} is that
\begin{align*}
  \|\partial_W^{k-1} W\|_{L^\infty_T (C^\gamma)} \leq \|W\cdot\nabla \partial_W^{k-2} W\|_{L^\infty_T (C^\gamma)}
  \lesssim \|W\|_{L^\infty_T (C^\gamma)}
  \|\partial_W^{k-2} W\|_{L^\infty_T (B^{\gamma+1}_{\infty,\infty})} \lesssim H_{k-1}(T),
\end{align*}
which corresponds to the wanted result \eqref{eq:target} with $k\in \N\cap [3,\infty)$.

In order to show the target estimate \eqref{eq:Targ-k},
we apply the induction method. Assume that for some $\ell \in \{1,\cdots, k-2\}$, we have
\begin{align}\label{assum-el}
  \| W\|_{L^\infty_T (\bb^{\gamma+1,\ell-1}_{\infty,W})}  + \|\nabla u\|_{L^1_T ( \bb^{\gamma,\ell}_{\infty,W})}
  + \|\Gamma\|_{L^\infty_T (\bb^{\gamma'-1,\ell}_W)}  + \|\Gamma\|_{L^1_T(\bb^{\gamma'+1,\ell}_W)}  \leq H_\ell(T),
\end{align}
we intend to prove that it also holds for $\ell$ replaced by $\ell+1$, that is,
\begin{align}\label{eq:target-ell}
  \| W\|_{L^\infty_T (\bb^{\gamma+1,\ell}_{\infty,W})}  + \|\nabla u\|_{L^1_T ( \bb^{\gamma,\ell+1}_{\infty,W})}
  + \|\Gamma\|_{L^\infty_T (\bb^{\gamma'-1,\ell+1}_W)}
  + \|\Gamma\|_{L^1_T(\bb^{\gamma'+1,\ell+1}_W)}  \leq H_{\ell+1}(T).
\end{align}

Note that estimate \eqref{eq:W-Gam-step1} in subsection \ref{subsec:C2gam} corresponds to \eqref{assum-el} with $\ell=1$.
Notice also that under \eqref{assum-el}, Lemma \ref{lem:prod-es2} can be applied with the $k$-index replaced by $\ell$.

We start with the estimation of the $L^\infty_T(B^{\gamma-1}_{\infty,\infty})$-norm of $\partial_W^\ell \nabla^2 W$.
From equation \eqref{nabW-eq} and the fact that $[\partial_W, \partial_t + u\cdot\nabla]=0$,
we see that
\begin{equation}\label{par-W-Wel-eq}
\begin{split}
  \partial_t (\partial_W^\ell \nabla^2 W) + u\cdot\nabla (\partial_W^\ell \nabla^2 W) & = \partial_W^{\ell+1} \nabla^2 u + 2 \partial_W^\ell (\nabla W\cdot\nabla^2 u) + \partial_W^\ell (\nabla^2 W\cdot\nabla u) \\
  &\quad - \partial_W^\ell (\nabla^2 u\cdot\nabla W) - \partial_W^\ell (\nabla u \cdot\nabla^2 W) .
\end{split}
\end{equation}
Thanks to Lemma \ref{lem:TD-sm2}, we find that for every $\gamma\in (0,1)$,
\begin{align}\label{es:parW2W-3}
  \|\partial_W^\ell \nabla^2 W(t)\|_{B^{\gamma-1}_{\infty,\infty}}
  \leq & C \| \partial_{W_0}^\ell \nabla^2 W_0\|_{B^{\gamma-1}_{\infty,\infty}}
  + C \int_0^t \|\nabla u\|_{L^\infty} \| \partial_W^\ell \nabla^2 W\|_{B^{\gamma-1}_{\infty,\infty}} \dd \tau
  \nonumber \\
  & + C \int_0^t \|\partial_W^{\ell+1} \nabla^2 u\|_{B^{\gamma-1}_{\infty,\infty}} \dd \tau
  + C \int_0^t \|\big(\partial_W^\ell (\nabla W \cdot \nabla^2 u), \partial_W^\ell (\nabla^2 u\cdot\nabla W)\big)\|_{B^{\gamma-1}_{\infty,\infty}} \dd \tau \nonumber \\
  & + C \int_0^t \|\big(\partial_W^\ell (\nabla^2 W\cdot\nabla u),
  \partial_W^\ell (\nabla u\cdot\nabla^2 W)\big)\|_{B^{\gamma-1}_{\infty,\infty}} \dd \tau .
\end{align}
From $\varphi_0\in C^{k,\gamma}(\R^2)$, 
and by arguing as \eqref{es:parW0-k-the} we get
\begin{align}
  \|\partial_{W_0}^\ell \nabla^2 W_0\|_{B^{\gamma-1}_{\infty,\infty}} \lesssim_{\|W_0\|_{W^{\ell-1,\infty}}}
  \|\nabla^2 W_0\|_{B^{\ell-1+\gamma}_{\infty,\infty}} \lesssim \|\varphi_0\|_{B^{\ell+2+\gamma}_{\infty,\infty}}
  \lesssim \|\varphi_0\|_{B^{k+\gamma}_{\infty,\infty}}.
\end{align}
Taking advantage of Lemma \ref{lem:prod-es2} and \eqref{eq:BB-nab-es}, \eqref{normBB-equiv}, \eqref{eq:W-BBbb-equ}, the last two integrals on the right-hand side of \eqref{es:parW2W-3} can be treated as follows
\begin{align}\label{es:parW2W-4}
  & \quad \int_0^t \|\big(\partial_W^\ell (\nabla W \cdot \nabla^2 u), \partial_W^\ell (\nabla^2 u\cdot\nabla W)\big)\|_{B^{\gamma-1}_{\infty,\infty}} \dd \tau \nonumber \\
  & \lesssim  \int_0^t \|(\nabla W \cdot \nabla^2 u, \nabla^2 u\cdot\nabla W)\|_{\bb^{\gamma-1,\ell}_{\infty,W}} \dd \tau \nonumber \\
  & \lesssim \int_0^t \|\nabla W\|_{\bb^{0,\ell}_W} \|\nabla^2 u\|_{\bb^{\gamma-1,\ell}_{\infty,W}} \dd \tau
  \lesssim \int_0^t \| W\|_{\bb^{1,\ell}_W} \|\nabla u\|_{\bb^{\gamma,\ell}_{\infty,W}} \dd \tau,
\end{align}
and
\begin{align}\label{es:parW2W-5}
  & \quad \int_0^t \|\big(\partial_W^\ell (\nabla^2 W \cdot \nabla u), \partial_W^\ell (\nabla u\cdot\nabla^2 W)\big)\|_{B^{\gamma-1}_{\infty,\infty}} \dd \tau \nonumber \\
  & \lesssim  \int_0^t \|(\nabla^2 W \cdot \nabla u, \nabla u\cdot\nabla^2 W)\|_{\bb^{\gamma-1}_{\infty,W}} \dd \tau \nonumber \\
  & \lesssim \int_0^t \|\nabla^2 W\|_{\bb^{\gamma-1,\ell}_{\infty,W}} \|\nabla u\|_{\bb^{0,\ell}_W} \dd \tau
  \lesssim \int_0^t \|W\|_{\bb^{\gamma+1,\ell}_{\infty,W}} \|\nabla u\|_{\bb^{\gamma,\ell}_{\infty,W}} \dd \tau.
\end{align}

For the third term on the right-hand side of \eqref{es:parW2W-3}, it follows from equality \eqref{exp:nab2u} that
\begin{align}\label{es:parWel-nab2u}
  \|\partial_W^{\ell+1} \nabla^2 u\|_{L^1_t(B^{\gamma-1}_{\infty,\infty})}
  & \leq \|\partial_W^{\ell+1} \nabla^2 \nabla^\perp \Lambda^{-2}\Gamma\|_{L^1_t(B^{\gamma-1}_{\infty,\infty})}
  + \|\partial_W^{\ell+1} \nabla^2 \nabla^\perp \partial_1 \Lambda^{-4}\theta \|_{L^1_t(B^{\gamma-1}_{\infty,\infty})} \nonumber \\
  & \leq \| \nabla \nabla^\perp \Lambda^{-2}(\nabla\Gamma)\|_{L^1_t(\bb^{\gamma-1,\ell+1}_{\infty,W})} + \|\nabla^2 \nabla^\perp \partial_1 \Lambda^{-4}\theta \|_{L^1_t(\bb^{\gamma-1,\ell+1}_{\infty, W})}.
\end{align}
By virtue of \eqref{eq:prod-es6}, we get
\begin{align*}
  \|\nabla^2 \nabla^\perp \partial_1 \Lambda^{-4} \theta \|_{L^1_t(\bb^{\gamma-1,\ell+1}_{\infty,W})}
  \lesssim \|\theta\|_{L^1_t(\bb^{\gamma-1,\ell+1}_{\infty,W})} + \int_0^t (\|W\|_{\bb^{1,\ell}_W} +1 )
  \big( \|\theta\|_{\bb^{\gamma-1,\ell}_{\infty,W}} + \|\theta\|_{L^2} \big) \dd \tau.
\end{align*}
Since $\partial_W^i \theta$ for every $i\in \{1,\cdots,\ell+1\}$ satisfies
$\partial_t (\partial_W^i \theta) + u\cdot\nabla (\partial_W^i \theta) = 0$, we use \eqref{eq:T-sm3} and Lemma \ref{lem:str-reg} to infer that for every $i\in \{1,\cdots,\ell+1\}$,
\begin{equation}\label{es:parW-the-2}
  \|\partial_W^i \theta\|_{L^\infty_t (B^{\gamma-1}_{\infty,\infty})} \leq C e^{C \|\nabla u\|_{L^1_t (L^\infty)}}
  \|\partial_{W_0}^i \theta_0\|_{ B^{\gamma-1}_{\infty,\infty}} \leq C e^{C(1+ t)^3},
\end{equation}
and
\begin{align}
  \|\theta\|_{L^\infty_t (\bb^{\gamma-1,\ell+1}_{\infty,W})}
  \leq C e^{C \|\nabla u\|_{L^1_t(L^\infty)}} \|\theta_0\|_{\bb^{\gamma-1,\ell+1}_{\infty,W}} \leq C e^{C(1+t)^3}. \nonumber
\end{align}
Collecting the above estimates leads to
\begin{align}\label{eq:parW-el-the2}
  \| \nabla^2\nabla^\perp \partial_1 \Lambda^{-4}\theta\|_{L^1_t(\bb^{\gamma-1,\ell+1}_{\infty,W})}
  \leq C e^{C(1+t)^3} + C e^{C(1+t)^3} \int_0^t \|W(\tau)\|_{\bb^{1,\ell}_W}  \dd \tau.
\end{align}

For the first term of the right-hand side of \eqref{es:parWel-nab2u},
we use \eqref{eq:prod-es6} to deduce that
\begin{align}\label{es:parW-el-nabGam}
  \|\nabla\nabla^\perp \Lambda^{-2} (\nabla \Gamma)\|_{L^1_t(\bb^{\gamma-1,\ell+1}_{\infty,W})}
  & \lesssim \|\nabla \Gamma\|_{L^1_t (\bb^{\gamma-1,\ell+1}_{\infty,W})} + \int_0^t (1+ \|W\|_{\bb^{1,\ell}_W})
  (\|\nabla \Gamma\|_{\bb^{\gamma-1,\ell}_{\infty,W}} + \|\Gamma\|_{L^p} )\dd \tau \nonumber \\
  & \lesssim \|\Gamma\|_{L^1_t (\bb^{\gamma,\ell+1}_{\infty,W})}
  + \int_0^t \|W(\tau)\|_{\bb^{1,\ell}_W} (\|\Gamma(\tau)\|_{\bb^{\gamma,\ell}_{\infty,W}}
  + 1) \dd \tau + 1,
\end{align}
where in the last line we have used the following estimate (in light of Lemma \ref{lem:Tw-es} and \eqref{u0-comm-2} below)
\begin{align}\label{es:nab-Gam}
  \|\nabla \Gamma\|_{\bb^{\gamma-1,\ell+1}_{\infty,W}} & = \|\nabla \Gamma\|_{\bb^{\gamma-1,\ell}_{\infty,W}}
  +  \|\partial_W^{\ell+1} \nabla \Gamma\|_{B^{\gamma-1}_{\infty,\infty}} \nonumber \\
  & \lesssim \|\Gamma\|_{\bb^{\gamma,\ell}_{\infty,W}} +  \|\partial_W^{\ell+1}\Gamma\|_{B^\gamma_{\infty,\infty}}
  + \|[\nabla,\partial_W^{\ell+1}]\Gamma\|_{B^{\gamma-1}_{\infty,\infty}} \nonumber \\
  & \lesssim \|\Gamma\|_{\bb^{\gamma,\ell+1}_{\infty,W}}
  + \sum_{i=0}^\ell \|\nabla W \cdot \nabla \partial_W^{\ell-i} \Gamma \|_{\bb^{\gamma-1,i}_{\infty,W}} \nonumber \\
  & \lesssim \|\Gamma\|_{\bb^{\gamma,\ell+1}_{\infty,W}} + \sum_{i=1}^\ell \|\nabla W\|_{\bb^{0,i}_W}
  \|\partial_W^{\ell-i} \Gamma\|_{\bb^{\gamma,i}_{\infty,W}}
  \lesssim \|\Gamma\|_{\bb^{\gamma,\ell+1}_{\infty,W}} +  \|W\|_{\bb^{1,\ell}_W} \|\Gamma\|_{\bb^{\gamma,\ell}_{\infty,W}}.
\end{align}
In the following we consider the smoothing estimate of $\partial_W^{\ell+1} \Gamma$. 
From \eqref{Gamm-eq} and $[\partial_W^{\ell+1},\partial_t  + u\cdot\nabla]=0$, we see that
\begin{equation}\label{par-Wel-Gam-eq}
\begin{split}
  \partial_t (\partial_W^{\ell+1}\Gamma) + u\cdot \nabla (\partial_W^{\ell+1} \Gamma) - \Delta (\partial_W^{\ell+1}\Gamma) =
  [\Delta, \partial_W^{\ell+1}] \Gamma +   \partial_W^{\ell+1} ([\RR_{-1}, u\cdot\nabla]\theta)
   =: F_{\ell+1},
\end{split}
\end{equation}
with
\begin{align}\label{Gam-comm-2}
  [\Delta,\partial_W^{\ell+1} ]\Gamma  & = [\Delta,\partial_W]\partial_W^\ell \Gamma  + \partial_W \big([\Delta, \partial_W^\ell] \Gamma\big)
  = \sum_{i=0}^\ell \partial_W^i\big( [\Delta,\partial_W]\partial_W^{\ell-i} \Gamma  \big) \nonumber \\
  & = \sum_{i=0}^\ell \partial_W^i \big(\Delta W \cdot \nabla \partial_W^{\ell-i} \Gamma  \big)
  +   \sum_{i=0}^\ell \partial_W^i \big(2 \nabla W : \nabla^2 \partial_W^{\ell-i} \Gamma  \big).
\end{align}

According to Lemma \ref{lem:TD-sm2}, we infer that for every $\gamma'\in (0, \min \{\gamma,1- \frac{2}{p}\})$,
\begin{equation}\label{par-Wel-u-es}
\begin{split}
  & \|\partial_W^{\ell+1} \Gamma \|_{L^1_t  (B^{\gamma'+1}_{\infty,1})} + \|\partial_W^{\ell+1} \Gamma(t) \|_{B^{\gamma'-1}_{\infty,1}} \\
  \leq & C (1+t)\bigg(\|\partial_{W_0}^{\ell+1} \Gamma _0\|_{ B^{\gamma'-1}_{\infty,1}}
  + \|F_{\ell+1}\|_{L^1_t  (B^{\gamma'-1}_{\infty,1})}+ \int_0^t \|\nabla u \|_{L^\infty}
  \|\partial_W^{\ell+1} \Gamma \|_{ B^{\gamma'-1}_{\infty,1}} \dd \tau\bigg) .
\end{split}
\end{equation}
By virtue of the relation $\Gamma_0 = \omega_0 -\RR_{-1}\theta_0$ and the following equality
\begin{equation}\label{u0-comm-2}
\begin{split}
  [\nabla, \partial_W^{\ell+1}]f = [\nabla, \partial_W] \partial_W^\ell f + \partial_W [\nabla,\partial_W]\partial_W^{\ell-1} f + \partial_W^2 [\nabla,\partial_W] \partial_W^{\ell-2} f
  = \sum_{i=0}^\ell \partial_W^i \big( \nabla W \cdot\nabla \partial_W^{\ell-i} f \big) ,
\end{split}
\end{equation}
and using Lemma \ref{lem:prod-es2}, we deduce that
\begin{align}\label{eq:parW-ell-Gam0}
  \|\partial_{W_0}^{\ell+1} \Gamma_0 \|_{B^{\gamma'-1}_{\infty,1}} & \leq \|\partial_{W_0}^{\ell+1}\nabla u_0\|_{B^{\gamma'-1}_{\infty,1}}
  + \|\partial_{W_0}^{\ell+1} \RR_{-1}\theta_0\|_{B^{\gamma'-1}_{\infty,1}} \nonumber \\
  & \lesssim \|\nabla \partial_{W_0}^{\ell+1} u_0\|_{B^{\gamma'-1}_{\infty,1}}
  + \sum_{i=0}^\ell \|\nabla W_0\cdot \nabla \partial_{W_0}^{\ell-i}u_0\|_{\bb^{\gamma'-1,i}_{W_0}}
  + \|W_0\cdot \nabla \RR_{-1}\theta_0\|_{\bb^{\gamma'-1,\ell}_{W_0}} \nonumber \\
  & \lesssim \|\partial_{W_0}^{\ell+1} u_0\|_{W^{1,p}}
  + \sum_{i=0}^\ell \|\nabla W_0\|_{\bb^{0,i}_{W_0}} \|\nabla \partial_{W_0}^{\ell-i}u_0\|_{\bb^{\gamma'-1,i}_{W_0}}
  + \|W_0\|_{\bb^{0,\ell}_{W_0}} \|\nabla \RR_{-1} \theta_0\|_{\bb^{\gamma'-1,\ell}_{W_0}} \nonumber \\
  & \lesssim \|\partial_{W_0}^{\ell+1} u_0\|_{W^{1,p}} + \|W_0\|_{\bb^{1,\ell}_{W_0}}
  \|u_0\|_{\bb^{\gamma',\ell}_{W_0}}  \nonumber \\
  & \quad + \|W_0\|_{\bb^{0,\ell}_{W_0}} \big( 1+ \| W_0\|_{\bb^{1,\ell-1}_{W_0}} \big)
  \big( \|\theta_0\|_{\bb^{\gamma'-1,\ell}_{W_0}} + \|\theta_0\|_{L^2} \big) \nonumber \\
  & \lesssim (1+ \|W_0\|_{\bb^{1,\ell}_{W_0}}) \Big(\sum_{i=0}^{\ell+1} \|\partial_{W_0}^i u_0\|_{W^{1,p}} +
  \sum_{i=0}^\ell \|\partial_{W_0}^i \theta_0\|_{C^{-1,\gamma}} + \|\theta_0\|_{L^2} \Big) \lesssim 1.
\end{align}
For the term $[\Delta,\partial_W^{\ell+1}]\Gamma$ given by \eqref{Gam-comm-2}, taking advantage of Lemmas \ref{lem:prod-es2}
and \ref{lem:Tw-es}, we find
\begin{align}\label{es:Lap-parWel-Gam}
  \|[\Delta,\partial_W^{\ell+1}]\Gamma \|_{B^{\gamma'-1}_{\infty,1}}
  & \lesssim \sum_{i=0}^\ell \|\Delta W \cdot \nabla \partial_W^{\ell-i} \Gamma \|_{\bb^{\gamma'-1,i}_W}
  + \sum_{i=0}^\ell \| \nabla W \cdot \nabla^2 \partial_W^{\ell-i} \Gamma \|_{\bb^{\gamma'-1,i}_W} \nonumber \\
  & \lesssim \sum_{i=0}^\ell \Big( \|\Delta W\|_{\bb^{\gamma'-1,i}_W} \|\nabla \partial_W^{\ell-i} \Gamma\|_{\bb^{0,i}_W}
  + \sum_{j=0}^i \|\nabla W\|_{\bb^{0,j}_W} \|\nabla^2 \partial_W^{\ell-i} \Gamma\|_{\bb^{\gamma'-1,i-j}_W}\Big) \nonumber \\
  & \lesssim \sum_{i=0}^\ell  \| W\|_{\bb^{\gamma'+1,i}_W} \|\Gamma\|_{\bb^{1,\ell}_W}
  + \sum_{i=0}^\ell \sum_{j=0}^i \| W\|_{\bb^{1,j}_W} \|\partial_W^{\ell-i}\Gamma \|_{\BB^{\gamma'+1,i-j }_W}  \nonumber \\
  & \leq C  \| W\|_{\bb^{\gamma'+1,\ell}_W} \|\Gamma\|_{\bb^{\gamma'+1,\ell}_W},
\end{align}
where in the last line we have used the following estimate
\begin{align}
  & \quad \sum_{i=0}^\ell \sum_{j=0}^i \| W\|_{\bb^{1,j}_W} \|\partial_W^{\ell-i}\Gamma \|_{\BB^{\gamma'+1,i-j}_W} \nonumber \\
  & \lesssim \sum_{i=0}^\ell \sum_{j=0}^i \|W\|_{\bb^{1,j}_W} \Big( \|\partial_W^{\ell-i}\Gamma\|_{\bb^{\gamma'+1,i-j}_W}
  + \|\partial_W^{\ell-i}\Gamma\|_{\bb^{1,i-j}_W} \|W\|_{\bb^{\gamma'+1,i-j}_W} \Big) \nonumber \\
  & \lesssim  \sum_{j=0}^\ell \|W\|_{\bb^{1,j}_W} \Big( \|\Gamma\|_{\bb^{\gamma'+1,\ell-j}_W}
  + \|\Gamma\|_{\bb^{1,\ell-j}_W} \|W\|_{\bb^{\gamma'+1,\ell-j}_W} \Big)
  \nonumber \\
  & \lesssim \|W\|_{\bb^{1,\ell}_W} \|\Gamma\|_{\bb^{\gamma'+1,0}_W} \|W\|_{\bb^{\gamma'+1,0}_W}
  + \|W\|_{\bb^{1,\ell-1}_W} \big(\|\Gamma\|_{\bb^{\gamma'+1,\ell}_W} +
  \|\Gamma\|_{\bb^{1,\ell}_W} \|W\|_{\bb^{\gamma'+1,\ell}_W}\big) \nonumber \\
  & \leq C \|W\|_{\bb^{\gamma'+1,\ell}_W} \|\Gamma\|_{\bb^{\gamma'+1,\ell}_W}   \nonumber ,
\end{align}
and in the above $C>0$ depends on $H_\ell(T)$ that is the upper bound of $\|W\|_{\bb^{\gamma'+1,\ell-1}_W}$.
For the second term in $F_{\ell+1}$, by using the formula
$\nabla ([\RR_{-1},u\cdot\nabla]\theta) = [\nabla \RR_{-1}, u\cdot\nabla] \theta - (\nabla u) \cdot \nabla \RR_{-1}\theta$,
we see
\begin{align}
  & \quad \|\partial_W^{\ell+1}([\RR_{-1},u\cdot\nabla]\theta)\|_{L^1_t (B^{\gamma'-1}_{\infty,1})}
  \lesssim \|W\|_{L^\infty_t(L^\infty)} \|\nabla \partial_W^\ell ([\RR_{-1}, u\cdot\nabla]\theta)\|_{L^1_t(B^{\gamma'-1}_{\infty,1})} \nonumber \\
  & \lesssim \|\partial_W^\ell ([\nabla \RR_{-1},u\cdot\nabla]\theta)\|_{L^1_t(B^{\gamma'-1}_{\infty,1})}
  + \|\partial_W^\ell (\nabla u\cdot \nabla \RR_{-1}\theta)\|_{L^1_t (B^{\gamma'-1}_{\infty,1})} \nonumber \\
  & \quad + \|[\nabla, \partial_W^\ell] ([\RR_{-1}, u\cdot\nabla]\theta)\|_{L^1_t(B^{\gamma'-1}_{\infty,1})}\nonumber \\
  & = : N_1 + N_2 + N_3.
\end{align}
For $N_1$ and $N_2$, according to Lemma \ref{lem:prod-es2} and \eqref{es:parW-the-2}, it follows that
\begin{align}\label{es:N1}
  N_1 \leq \|[\nabla \RR_{-1},u\cdot\nabla]\theta\|_{L^1_t(\bb^{\gamma'-1,\ell}_W)}
  & \lesssim \big(\|\nabla u\|_{L^1_t (\bb^{0,\ell}_W)} + \|u\|_{L^1_t (L^\infty)} \big) \|\theta\|_{L^\infty_t (\bb^{\gamma'-1,\ell}_W)} \nonumber \\
  & \lesssim \big( \|\nabla u\|_{L^1_t (\bb^{0,\ell}_W)} + t \big) \Big(\sum_{\lambda=0}^\ell \|\partial_W^\lambda \theta\|_{L^\infty_t (C^{-1,\gamma})} \Big)
  \leq C,
\end{align}
and
\begin{align}\label{es:N2}
  & \quad N_2 \leq \|\nabla u\cdot \nabla \RR_{-1} \theta\|_{L^1_t (\bb^{\gamma'-1,\ell}_W)}
  \lesssim \|\nabla u\|_{L^1_t (\bb^{0,\ell}_W)} \|\nabla \RR_{-1}\theta\|_{L^\infty_t(\bb^{\gamma'-1,\ell}_W)} \nonumber \\
  & \lesssim \|\nabla u\|_{L^1_t (\bb^{0,\ell}_W)} \Big(\|\theta\|_{L^\infty_t(\bb^{\gamma'-1,\ell}_W)}
  + \big(1 + \|W\|_{L^\infty_t(\bb^{1,\ell-1}_W)} \big)
  \big(\|\theta\|_{L^\infty_t(\bb^{\gamma'-1,\ell-1}_W)} + \|\theta\|_{L^\infty_t(L^2)}\big) \Big)
  \leq C ,
\end{align}
where $C>0$ depends on $ H_\ell(T)$ with $H_\ell(T) \gtrsim e^{C_0(1+t)^3}$.
For $N_3$, we use formula \eqref{u0-comm-2}, Lemma \ref{lem:prod-es2} and estimates \eqref{es:N1}-\eqref{es:N2} to obtain that
\begin{align}
  N_3 & \leq \sum_{i=0}^{\ell-1} \|\partial_W^i (\nabla W \cdot \nabla \partial_W^{\ell-1-i} [\RR_{-1},u\cdot\nabla]\theta)\|_{L^1_t(B^{\gamma'-1}_{\infty,1})} \nonumber \\
  & \lesssim \sum_{i=0}^{\ell-1} \|\nabla W \cdot \nabla \partial_W^{\ell-1-i}[\RR_{-1},u\cdot\nabla]\theta\|_{L^1_t(\bb^{\gamma'-1,i}_W)} \nonumber \\
  & \lesssim  \sum_{i=0}^{\ell-1} \|\nabla W\|_{L^\infty_t (\bb^{0,i}_W)} \|\nabla \partial_W^{\ell-1-i}[\RR_{-1},u\cdot\nabla]\theta\|_{L^1_t(\bb^{\gamma'-1,i}_W)} \nonumber \\
  & \lesssim \|W\|_{L^\infty_t(\bb^{1,\ell-1}_W)} \sum_{i=0}^{\ell-1} \|\nabla \partial_W^{\ell-1-i}[\RR_{-1},u\cdot\nabla]\theta\|_{L^1_t(\bb^{\gamma'-1,i}_W)}
  \nonumber \\
  & \lesssim  \|\nabla ([\RR_{-1},u\cdot\nabla]\theta)\|_{L^1_t(\bb^{\gamma'-1,\ell-1}_W)} + \sum_{i=0}^{\ell-2}  \|[\nabla, \partial_W^{\ell-1-i}]([\RR_{-1},u\cdot\nabla]\theta)\|_{L^1_t(\bb^{\gamma'-1,i}_W)}  \nonumber \\
  & \leq  C + C \sum_{i=0}^{\ell-2} \sum_{j=0}^{\ell-2-i} \|\partial_W^j (\nabla W \cdot \nabla \partial_W^{\ell-2-i-j} ([\RR_{-1},u\cdot\nabla]\theta) )\|_{L^1_t(\bb^{\gamma'-1,i}_W)}  \nonumber \\
  & \leq  C + C \sum_{0\leq i+j\leq \ell-2} \|\nabla W \cdot \nabla \partial_W^{\ell-2-i-j} ([\RR_{-1},u\cdot\nabla] \theta)\|_{L^1_t(\bb^{\gamma'-1,i+j}_W)}  \nonumber \\
  & \leq  C + C \sum_{0\leq i+j \leq \ell-2} \|\nabla \partial_W^{\ell-2-i-j} ([\RR_{-1}, u\cdot\nabla]\theta)\|_{L^1_t(\bb^{\gamma'-1,i+j}_W)} , \nonumber
\end{align}
where $C>0$ depends on $ H_\ell(t)$ with $H_\ell(T) \geq H_{\ell-1}(T) \gtrsim e^{C_0(1+t)^3}$.
Iteratively repeating the above process yields that
\begin{align}\label{es:N3}
  N_3 \lesssim C + C \|\nabla ([\RR_{-1},u\cdot\nabla]\theta)\|_{L^1_t(B^{\gamma'-1}_{\infty,1})} \leq C.
\end{align}
Thus it follows from the above estimates \eqref{es:Lap-parWel-Gam} and \eqref{es:N1}-\eqref{es:N3} that
\begin{equation}\label{eq:Fel-es2}
\begin{split}
  \|F_{\ell+1}\|_{L^1_t (B^{\gamma'-1}_{\infty,1})}
  \leq C  \int_0^t \|W(\tau)\|_{\bb^{\gamma'+1,\ell}_W}
  \|\Gamma(\tau) \|_{\bb^{\gamma'+1,\ell}_W}  \dd \tau  + C .
\end{split}
\end{equation}
In combination with \eqref{par-Wel-u-es} and \eqref{eq:parW-ell-Gam0} we obtain
\begin{align}\label{eq:Gam-bb-es}
  & \quad \|\Gamma(t)\|_{\bb^{\gamma'-1,\ell+1}_W} + \|\Gamma\|_{L^1_t (\bb^{\gamma'+1,\ell+1}_W)} \nonumber \\
  & = \|\partial_W^{\ell+1} \Gamma(t)\|_{B^{\gamma'-1}_{\infty,1}}
  + \|\partial_W^{\ell+1} \Gamma\|_{L^1_t (B^{\gamma'+1}_{\infty,1})}
  + \|\Gamma(t)\|_{\bb^{\gamma'-1,\ell}_W} + \|\Gamma\|_{L^1_t(\bb^{\gamma'+1,\ell}_W)} \nonumber \\
  & \leq C \int_0^t \big(\|\Gamma\|_{\bb^{\gamma'+1,\ell}_W} + \|\nabla u\|_{L^\infty}\big)
  \big(\|W\|_{\bb^{\gamma'+1,\ell}_W} + \|\Gamma\|_{\bb^{\gamma'-1,\ell+1}_W}\big) \dd \tau + C .
\end{align}

Hence inserting \eqref{eq:Gam-bb-es} into \eqref{es:parW-el-nabGam}, and gathering \eqref{es:parW2W-3}-\eqref{es:parWel-nab2u}
and \eqref{eq:parW-el-the2}-\eqref{es:parW-el-nabGam} lead to that for every $0<\gamma'<\min\{\gamma, 1-\frac{2}{p}\}$,
\begin{align}\label{eq:bb-el-key}
  & \quad \|\partial_W^\ell \nabla^2 W(t)\|_{B^{\gamma-1}_{\infty,\infty}} + \|\partial_W^{\ell+1} \nabla^2 u\|_{L^1_t(B^{\gamma-1}_{\infty,\infty})} \nonumber \\
  & \leq C \int_0^t \big(\|\Gamma\|_{\bb^{\gamma'+1,\ell}_W} + \|\nabla u\|_{\bb^{\gamma,\ell}_{\infty,W}} + 1 \big)
   \big(\|W\|_{\bb^{\gamma+1,\ell}_{\infty,W}} + \|\Gamma\|_{\bb^{\gamma'-1,\ell+1}_W}\big) \dd \tau + C ,
\end{align}
where $C>0$ depends on $H_\ell(T)$.

By using \eqref{Gam-comm-2}, \eqref{u0-comm-2} and Lemmas \ref{lem:prod-es2}, \ref{lem:Tw-es}, we also infer that
\begin{align}
  \|[\nabla^2,\partial_W^\ell]W(t)\|_{B^{\gamma-1}_{\infty,\infty}}
  & \leq  \sum_{i=0}^{\ell-1} \| \partial_W^i \big(\nabla^2 W \cdot \nabla \partial_W^{\ell-1-i} W  \big)\|_{B^{\gamma-1}_{\infty,\infty}}
  +  2 \sum_{i=0}^{\ell-1} \| \partial_W^i \big( \nabla W \cdot \nabla^2 \partial_W^{\ell-1-i}W \big)\|_{B^{\gamma-1}_{\infty,\infty}} \nonumber \\
  & \lesssim \sum_{i=0}^{\ell-1} \|\nabla^2 W \cdot\nabla \partial_W^{\ell-1-i} W\|_{\bb^{\gamma-1,i}_{\infty,W}}
  + \sum_{i=0}^{\ell-1} \|\nabla W\cdot\nabla^2 \partial_W^{\ell-1-i}W\|_{\bb^{\gamma-1,i}_{\infty,W}} \nonumber \\
  & \lesssim \sum_{i=0}^{\ell-1} \Big( \|\nabla^2 W\|_{\bb^{\gamma-1,i}_{\infty,W}}
  \|\nabla \partial_W^{\ell-1-i}W\|_{\bb^{0,i}_W}
  +  \|\nabla W\|_{\bb^{0,i}_W} \|\nabla^2 \partial_W^{\ell-1-i}W\|_{\bb^{\gamma-1,i}_{\infty,W}} \Big) \nonumber \\
  & \lesssim \|W\|_{\bb^{\gamma+1,\ell-1}_{\infty,W}} \|W\|_{\bb^{1,\ell-1}_W} \leq C , \nonumber
\end{align}
and
\begin{align}
  \|[\nabla,\partial_W^{\ell+1}] \nabla u\|_{L^1_t (B^{\gamma-1}_{\infty,\infty})} & \leq \sum_{i=0}^\ell \|\partial_W^i \big( \nabla W \cdot\nabla \partial_W^{\ell-i} \nabla u\big) \|_{L^1_t (B^{\gamma-1}_{\infty,\infty})} \nonumber \\
  & \leq \sum_{i=0}^\ell \|\nabla W \cdot \nabla \partial_W^{\ell-i} \nabla u\|_{L^1_t(\bb^{\gamma-1,i}_{\infty,W})} \nonumber \\
  & \lesssim \sum_{i=0}^\ell \int_0^t \|\nabla W\|_{\bb^{0,i}_W} \|\nabla \partial_W^{\ell-i} \nabla u\|_{\bb^{\gamma-1,i}_{\infty,W}} \dd \tau
  \lesssim \int_0^t \|W(\tau)\|_{\bb^{1,\ell}_W} \|\nabla u(\tau)\|_{\bb^{\gamma,\ell}_{\infty,W}} \dd \tau. \nonumber
\end{align}
Consequently, in view of the following estimates
\begin{align}\label{es:W-bb-el}
  \|W\|_{\bb^{\gamma+1,\ell}_{\infty,W}} & \leq \|\partial_W^\ell W\|_{B^{\gamma+1}_{\infty,\infty}} + \|W\|_{\bb^{\gamma+1,\ell-1}_{\infty,W}} \nonumber \\
  & \lesssim \|\nabla^2 \partial_W^\ell W\|_{B^{\gamma-1}_{\infty,\infty}} + \|\Delta_{-1} \partial_W^\ell W\|_{L^\infty} + C\nonumber \\
  & \lesssim \|\partial_W^\ell \nabla^2 W\|_{B^{\gamma-1}_{\infty,\infty}} + \|[\nabla^2, \partial_W^\ell] W\|_{B^{\gamma-1}_{\infty,\infty}} + \|W\|_{L^\infty} \|\partial_W^{\ell-1}W\|_{L^\infty}
  + C \nonumber \\
  & \lesssim \|\partial_W^\ell \nabla^2 W\|_{B^{\gamma-1}_{\infty,\infty}} + C,
\end{align}
and
\begin{align}\label{es:nab-u-bbel}
  \|\nabla u\|_{L^1_t(\bb^{\gamma,\ell+1}_{\infty,W})}
  & \leq \|\partial_W^{\ell+1} \nabla u\|_{L^1_t (B^{\gamma}_{\infty,\infty})}
  + \|\nabla u\|_{L^1_t(\bb^{\gamma,\ell}_{\infty,W})} \nonumber \\
  & \lesssim \|\nabla\partial_W^{\ell+1} \nabla u\|_{L^1_t(B^{\gamma-1}_{\infty,\infty})}
  +  \|\Delta_{-1}\partial_W^{\ell+1} \nabla u\|_{L^1_t(L^\infty)} + C \nonumber \\
  & \lesssim \|\partial_W^{\ell+1}\nabla^2 u\|_{L^1_t(B^{\gamma-1}_{\infty,\infty})}
  + \|[\nabla,\partial_W^{\ell+1}]\nabla u\|_{L^1_t(B^{\gamma-1}_{\infty,\infty})}
  + \|W\|_{L^\infty_t(L^\infty)} \|\partial_W^\ell \nabla u\|_{L^1_t(L^\infty)}
  + C \nonumber \\
  & \lesssim \|\partial_W^{\ell+1}\nabla^2 u\|_{L^1_t(B^{\gamma-1}_{\infty,\infty})}
  + \int_0^t \|W(\tau)\|_{\bb^{1,\ell}_W} \|\nabla u(\tau)\|_{\bb^{\gamma,\ell}_{\infty,W}} \dd \tau + C,
\end{align}
we collect estimates \eqref{eq:Gam-bb-es}-\eqref{es:nab-u-bbel} to yield that for every $0<\gamma'<\min\{\gamma, 1-\frac{2}{p}\}$,
\begin{align}\label{eq:final}
  & \quad \|W(t)\|_{\bb^{\gamma+1,\ell}_{\infty,W}} + \|\nabla u\|_{L^1_t(\bb^{\gamma,\ell+1}_{\infty,W})}
  + \|\Gamma(t)\|_{\bb^{\gamma'-1,\ell+1}_W} + \|\Gamma\|_{L^1_t(\bb^{\gamma'+1,\ell+1}_W)} \nonumber \\
  & \leq C \int_0^t \big(\|\Gamma(\tau)\|_{\bb^{\gamma'+1,\ell}_W} + \|\nabla u(\tau)\|_{\bb^{\gamma,\ell}_{\infty,W}} + 1\big)
  \big(\|W(\tau)\|_{\bb^{\gamma+1,\ell}_{\infty,W}}
  + \|\Gamma(\tau)\|_{\bb^{\gamma'-1,\ell+1}_W}\big) \dd \tau
  + C ,
\end{align}
where $C>0$ depends on $H_\ell(T)$.
Gronwall's inequality and assumption \eqref{assum-el} guarantee that
\begin{align}\label{key-es-3}
  & \quad \|W\|_{L^\infty_T (\bb^{\gamma+1,\ell}_{\infty,W})} + \|\nabla u\|_{L^\infty_T (\bb^{\gamma,\ell+1}_{\infty,W})}
  + \|\Gamma\|_{L^\infty_T (\bb^{\gamma'-1,\ell+1}_W)} + \|\Gamma\|_{L^1_T(\bb^{\gamma'+1,\ell+1}_W)} \nonumber \\
  & \leq \, C \exp \Big\{C  \|\nabla u \|_{L^1_T (\bb^{\gamma,\ell}_{\infty,W})} + C  \|\Gamma\|_{L^1_T (\bb^{\gamma'+1,\ell}_W)} + C T\Big\} \lesssim_{H_\ell(T)} 1,
\end{align}
which corresponds to \eqref{eq:target-ell}, as desired.
Hence, the target estimate \eqref{eq:Targ-k} is fulfilled and the proof is completed.

\section{Striated estimates: proof of Lemmas \ref{lem:prod-es2} and \ref{lem:prod-es}}\label{sec:prod-es}

\subsection{Proof of Lemma \ref{lem:prod-es2}}\label{subsec:str-es1}

Similar to \cite[Section 2.2]{Chem88} or \cite[Lemma 7.1]{LZ19}, denote by the operator $R_q$ as
\begin{align}\label{def:Rq}
  R_q(\alpha_1,\cdots,\alpha_m) := \int_{[0,1]^m} \int_{\R^d} \prod_{i=1}^m \alpha_i(x +f_i (\tau) 2^{-q}y) h(\tau,y) \dd y \dd \tau,
\end{align}
where $q\in\N$, $h\in C([0,1]^m; \mathcal{S}(\R^d))$, $f_i\in L^\infty([0,1]^m)$, $f_i(\tau)\neq 0$ for every $\tau\in (0,1)^m$.
Note that when $f_i \equiv 0$ and $\int_{\R^d} h(\tau,y) \dd y =1$, one has $R_q(\alpha_1,\cdots,\alpha_m) = \prod\limits_{i=1}^m \alpha_i(x)$.
We first have the following crucial result, and its detailed proof is postponed in the subsection \ref{subsec:lem5.1}.
\begin{lemma}\label{lem:Rq}
  Let $k\in \Z^+$, $\sigma\in (0,1)$, $N\in \mathbb{Z}^+$ and $\mathcal{W}=\{W_i\}_{1\leq i \leq N}$ be a set of regular divergence-free vector fields of $\R^d$ satisfying that
\begin{equation}\label{norm:W2}
\begin{split}
  \|\mathcal{W}\|_{\BB^{1+\sigma,k-1}_{\infty,\mathcal{W}}} := \sum_{\lambda=0}^{k-1}
  \|(T_{\mathcal{W}\cdot\nabla})^\lambda \mathcal{W}\|_{B^{1+\sigma}_{\infty,\infty}}
  = \sum_{\lambda=0}^{k-1} \sum_{\lambda_1 + \cdots \lambda_N =\lambda} \|(T_{W_1\cdot\nabla})^{\lambda_1}\cdots (T_{W_N\cdot \nabla})^{\lambda_N} \mathcal{W}\|_{B^{1+\sigma}_{\infty,\infty}} < \infty.
\end{split}
\end{equation}
Let $\alpha_i$ ($i=1,\cdots,m$) be such that $\mathrm{supp}\,\widehat{\alpha_i} \subset B(0,C_i 2^q)$,
and let $\psi$ be a smooth function with compact support in a ball.
Then we have that for every $s \in \R$, $(p,r)\in [1,\infty]^2$ and $\ell \in \{0,1,\cdots,k\}$,
\begin{align}\label{eq:Tw-Rq}
  \big\|(T_{\mathcal{W}\cdot\nabla})^\ell R_q(\alpha_1,\cdots,\alpha_m)\big\|_{L^p}
  \lesssim \min_{1\leq i\leq m} \Big(\sum_{|\mu|\leq \ell} \|(T_{\mathcal{W}\cdot\nabla})^{\mu_i} \alpha_i\|_{L^p} \prod_{1\leq j\leq m,j\neq i} \|(T_{\mathcal{W}\cdot\nabla})^{\mu_j} \alpha_j\|_{L^\infty}\Big),
\end{align}
with $\mu =(\mu_1,\cdots,\mu_m)$ and $|\mu|= \mu_1 + \cdots + \mu_m$,
and 
\begin{equation}\label{eq:Tw-phi-es1}
  \|(T_{\mathcal{W} \cdot\nabla})^\ell \psi(2^{-q}D) \phi\|_{L^p} \lesssim \sum_{\lambda=0}^\ell
  \|(T_{\mathcal{W} \cdot\nabla})^\lambda \phi\|_{L^p},
\end{equation}
and 
\begin{align}\label{eq:Tw-es-key}
  \big\|\big\{ 2^{q s}\|(T_{\mathcal{W}\cdot\nabla})^\ell \Delta_q \phi\|_{L^p}\big\}_{q\geq -1} \big\|_{\ell^r} +
  \big\|\big\{ 2^{q(s-1)}\|(T_{\mathcal{W}\cdot\nabla})^\ell \nabla \Delta_q \phi\|_{L^p}\big\}_{q\geq -1} \big\|_{\ell^r}
  \lesssim 
  \|\phi\|_{\BB^{s,\ell}_{p,r,\mathcal{W}}} ,
\end{align}
and
\begin{equation}\label{eq:BB-nab-es}
  \|\nabla \phi\|_{\BB^{s,\ell}_{p,r,\mathcal{W}}} \lesssim \|\phi\|_{\BB^{s+1,\ell}_{p,r,\mathcal{W}}}.
\end{equation}
In the above all the hidden constants depend on $\|\mathcal{W}\|_{\BB^{1+\sigma,k-1}_{\infty,\mathcal{W}}}$.
\end{lemma}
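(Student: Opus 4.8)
The plan is to reduce everything to a single basic bound for the multilinear operator $R_q$ in \eqref{def:Rq} and then to propagate the paraproduct operators $T_{W_i\cdot\nabla}$ through that structure. First I would record the $\ell=0$ case: since the Fourier transform of $x\mapsto\alpha_i(x+f_i(\tau)2^{-q}y)$ agrees with that of $\alpha_i$ up to a unimodular factor, each factor keeps spectral support in $B(0,C_i2^q)$, so $R_q(\alpha_1,\dots,\alpha_m)$ is spectrally supported in $B(0,C2^q)$ with $C=\sum_iC_i$; and Hölder's inequality applied inside the $(\tau,y)$-integral together with $h\in C([0,1]^m;\mathcal S(\R^d))$ gives $\|R_q(\alpha_1,\dots,\alpha_m)\|_{L^p}\lesssim\min_{1\le i\le m}\big(\|\alpha_i\|_{L^p}\prod_{j\neq i}\|\alpha_j\|_{L^\infty}\big)$. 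This is \eqref{eq:Tw-Rq} for $\ell=0$, and it also shows that $R_q$ remains spectrally localized at scale $2^q$, which is what lets me iterate.

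The heart of the argument is a \emph{quasi-Leibniz identity}: for $R_q$ as above one should show
\[
T_{W_i\cdot\nabla}\,R_q(\alpha_1,\dots,\alpha_m)=\sum_{n=1}^m\widetilde R_q^{(n)}\big(\alpha_1,\dots,T_{W_i\cdot\nabla}\alpha_n,\dots,\alpha_m\big)+\mathcal E_q,
\]
where each $\widetilde R_q^{(n)}$ is again of the type \eqref{def:Rq} (new Schwartz kernel, new dilation factors, still adapted to scale $2^{-q}$) and $\mathcal E_q$ is a finite sum of terms of the same form in which one factor $\alpha_n$ is replaced by $G\,\alpha_n$ with $G$ built from low-frequency truncations $S_{q'-1}\nabla W_i$, $q'\lesssim q$, obeying $\|G\|_{L^\infty}\lesssim\|\nabla W_i\|_{L^\infty}\lesssim\|W_i\|_{B^{1+\sigma}_{\infty,\infty}}$. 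The mechanism: because $R_q$ is spectrally localized in $B(0,C2^q)$, applying $T_{W_i\cdot\nabla}=\sum_{q'}S_{q'-1}W_i\cdot\nabla\Delta_{q'}$ only engages $S_{q'-1}W_i$ with $q'\le q+N_0$; rewriting $\nabla$ and $S_{q'-1}W_i$ through the integral representation, the derivative either lands on some $\alpha_n$ (reproducing $T_{W_i\cdot\nabla}\alpha_n$ up to a harmless $R_q$-type remodelling) or on the frozen coefficient, producing $\mathcal E_q$. The crucial point — and the place where this differs from \cite[Lemma A.2]{Chem91} — is that $W_i\in B^{1+\sigma}_{\infty,\infty}$ with $\sigma>0$ is genuinely above the Lipschitz threshold, so the extra derivative on $W_i$ costs nothing in frequency and no factor $2^{q\varepsilon(\ell-|\mu|)}$ is created. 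Iterating this identity $\ell\le k$ times, each step either moves one more $T_{W_i\cdot\nabla}$ onto a factor or generates a remainder carrying $S_{q'-1}$ of some $(T_{\mathcal W\cdot\nabla})^\lambda\nabla W_j$ with $\lambda\le k-1$, whose $L^\infty$ norm is dominated by $\|(T_{\mathcal W\cdot\nabla})^\lambda W_j\|_{B^{1+\sigma}_{\infty,\infty}}$ (invoking, inductively, that $\nabla$ essentially commutes with the paraproducts modulo lower order — this is \eqref{eq:BB-nab-es}, to be proved in parallel by induction on $\ell$ from $[\nabla,T_{W\cdot\nabla}]=T_{\nabla W\cdot\nabla}$). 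Collecting terms and closing with the $\ell=0$ bound yields \eqref{eq:Tw-Rq} with implicit constant controlled by $\|\mathcal W\|_{\BB^{1+\sigma,k-1}_{\infty,\mathcal W}}$ of \eqref{norm:W2}. The estimate \eqref{eq:Tw-phi-es1} is the simplest instance: $\psi(2^{-q}D)$ is convolution against $2^{qd}\check\psi(2^q\cdot)$ with spectrally localized output, so commuting $(T_{\mathcal W\cdot\nabla})^\ell$ through it produces $\psi(2^{-q}D)(T_{\mathcal W\cdot\nabla})^\ell\phi$ plus remainders involving $(T_{\mathcal W\cdot\nabla})^\lambda\phi$, $\lambda<\ell$, and the uniform $L^p$-boundedness of $\psi(2^{-q}D)$ closes it.

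For \eqref{eq:Tw-es-key} the plan is to write $\Delta_q\phi=\Delta_q\widetilde\Delta_q\phi$ and commute $(T_{\mathcal W\cdot\nabla})^\ell$ past $\Delta_q$: the single commutator $[\Delta_q,T_{W\cdot\nabla}]=\sum_{|q'-q|\le4}[\Delta_q,S_{q'-1}W\cdot\nabla]\Delta_{q'}$ gains one derivative, $\|[\Delta_q,T_{W\cdot\nabla}]\phi\|_{L^p}\lesssim\|\nabla W\|_{L^\infty}\|\widetilde\Delta_q\phi\|_{L^p}$, so expanding the iterated commutator $[\Delta_q,(T_{\mathcal W\cdot\nabla})^\ell]$ leaves $\Delta_q(T_{\mathcal W\cdot\nabla})^\ell\phi$ plus finitely many spectrally-localized terms with at most $\ell-1$ paraproducts on $\phi$ times $L^\infty$-norms of $(T_{\mathcal W\cdot\nabla})^\lambda\nabla W$, $\lambda\le k-1$; multiplying by $2^{qs}$ (resp.\ $2^{q(s-1)}$ for the $\nabla\Delta_q$ variant, handled identically since $\nabla\Delta_q$ is an annulus projector with an extra $2^q$) and taking the $\ell^r$ norm in $q$ returns $\|\phi\|_{\BB^{s,\ell}_{p,r,\mathcal W}}$. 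Finally \eqref{eq:BB-nab-es} follows from $\nabla(T_{\mathcal W\cdot\nabla})^\lambda\phi=(T_{\mathcal W\cdot\nabla})^\lambda\nabla\phi+(\text{sum of terms with one }T_{\nabla W_j\cdot\nabla}\text{ and }\le\lambda-1\text{ paraproducts})$, giving $\|(T_{\mathcal W\cdot\nabla})^\lambda\nabla\phi\|_{B^s_{p,r}}\lesssim\|(T_{\mathcal W\cdot\nabla})^\lambda\phi\|_{B^{s+1}_{p,r}}+\cdots\lesssim\|\phi\|_{\BB^{s+1,\lambda}_{p,r,\mathcal W}}$ after summing over $\lambda\le\ell$. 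The main obstacle throughout is the bookkeeping in the quasi-Leibniz iteration: making sure that every remainder involves only $(T_{\mathcal W\cdot\nabla})^\lambda\mathcal W$-norms with $\lambda\le k-1$ and carries no negative power of $2^q$ — and this is exactly where the gain $\sigma>0$, i.e.\ $\mathcal W\in C^{1,\sigma}$ rather than merely $C^\sigma$, is indispensable.
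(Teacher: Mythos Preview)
Your proposal is correct and follows essentially the same route as the paper: a simultaneous induction on $\ell$ for all four estimates, a quasi-Leibniz decomposition of $T_{\mathcal W\cdot\nabla}R_q$ into a main Leibniz term plus lower-order remainders controlled by $\|\mathcal W\|_{\BB^{1+\sigma,k-1}_{\infty,\mathcal W}}$, and commutator arguments $[\Delta_q,T_{\mathcal W\cdot\nabla}]$, $[\psi(2^{-q}D),T_{\mathcal W\cdot\nabla}]$, $[\nabla,T_{\mathcal W\cdot\nabla}]$ for \eqref{eq:Tw-phi-es1}--\eqref{eq:BB-nab-es}. The paper's proof makes the quasi-Leibniz step fully explicit via a five-term splitting (its $R_q^1,\dots,R_q^5$, see \eqref{eq:Ra123}--\eqref{eq:Rq45}): the Leibniz piece is exactly $\sum_i R_q(\alpha_1,\dots,T_{\mathcal W\cdot\nabla}\alpha_i,\dots,\alpha_m)$ with the \emph{same} $R_q$, and the errors involve either a Taylor-remainder factor $\nabla S_{q_1-1}\mathcal W$ paired with $2^{-q}\nabla\Delta_{q_1}\alpha_i$ (summable via $\sum_{q_1\le q}2^{q_1-q}\lesssim1$) or a difference $S_{q_1-1}\mathcal W-S_{q-1}\mathcal W$ whose dyadic pieces obey $2^{q_1}\sum_{q_2=q_1}^{q-1}\|\Delta_{q_2}\mathcal W\|_{L^\infty}\lesssim 2^{-q_1\sigma}\|\mathcal W\|_{B^{1+\sigma}_{\infty,\infty}}$; it is this latter summability over $q_1$ (not merely the pointwise bound $\|\nabla W\|_{L^\infty}<\infty$) that precisely consumes the hypothesis $\sigma>0$ and eliminates the $2^{q\varepsilon(\ell-|\mu|)}$ loss you flagged.
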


Based on Lemma \ref{lem:Rq}, we get the following useful striated estimates, whose proof is placed in Subsection \ref{subsec:lem5.2}.
\begin{lemma}\label{lem:Tw-es}
  Let $\mathcal{W}=\{W_i\}_{1\leq i\leq N}$ ($N\in \mathbb{Z}^+$)
be a set of regular divergence-free vector fields of $\R^d$ satisfying \eqref{norm:W2} with $k\in \Z^+$, $\sigma\in (0,1)$.
Let $m(D)$ be a zero-order pseudo-differential operator with $m(\xi)\in C^\infty(\R^d\setminus\{0\})$.
Then there exist positive constants depending on $\|\mathcal{W}\|_{\BB^{1+\sigma,k-1}_{\infty,\mathcal{W}}}$
such that the following statements hold true for every $\ell\in \{0,1,\cdots, k\}$.
\begin{enumerate}[(1)]
\item
We have that
for every $q\geq -1$,
\begin{align}\label{eq:Tw-es-key2}
  \|\Delta_q (T_{\mathcal{W}\cdot\nabla})^\ell \nabla m(D) \phi\|_{L^p} \lesssim\sum_{\lambda=0}^\ell 2^q \|(T_{\mathcal{W}\cdot\nabla})^\lambda \phi\|_{L^p},
\end{align}
and for every $q\in \N$,
\begin{align}\label{eq:Tw-es-key3}
  2^q \| (T_{\mathcal{W}\cdot\nabla})^\ell \Delta_q \phi \|_{L^p}
  \lesssim \sum_{q_1\in \N,|q_1-q|\leq N_\ell} \sum_{\lambda=0}^\ell \|(T_{\mathcal{W}\cdot\nabla})^\lambda \Delta_{q_1} \nabla \phi\|_{L^p},
\end{align}
with $N_\ell\in \N$ depending only on $\ell$.
\item 
We have that for every $s< 0$,
\begin{equation}\label{es:Tw-parap-1}
  \|T_v w\|_{\BB^{s,\ell}_{p,r,\mathcal{W}}} 
  \lesssim   \min\bigg\{\sum_{\mu=0}^\ell \|v\|_{\BB^{0,\mu}_\mathcal{W}} \| w\|_{\BB^{s,\ell-\mu}_{p,r,\mathcal{W}}} ,
  \sum_{\mu=0}^\ell \|v\|_{\BB^{s,\mu}_{p,r,\mathcal{W}}} \| w\|_{\BB^{0,\ell-\mu}_\mathcal{W}}\bigg\},
\end{equation}
and
\begin{equation}\label{es:Tw-parap-1b}
  \|T_v w\|_{\BB^{0,\ell}_{p,r,\mathcal{W}}} \lesssim \sum_{\mu=0}^\ell \|v\|_{\BB^{0,\mu}_\mathcal{W}} \|w\|_{\BB^{0,\ell-\mu}_{p,r,\mathcal{W}}},
\end{equation}
and for every $s < 1$,
\begin{equation}\label{es:Tw-parap-2}
  \|T_{\nabla w} v\|_{\BB^{s,\ell}_{p,r,\mathcal{W}}} 
  \lesssim  \sum_{\mu=0}^\ell  \|w\|_{\BB^{s,\mu}_{\infty,\mathcal{W}}} \| v\|_{\BB^{1,\ell-\mu}_{p,r,\mathcal{W}}},
\end{equation}
and for every $s\in \R$,
\begin{equation}\label{es:Tw-parap-2c}
  \|T_{\nabla w}v\|_{\BB^{s,\ell}_{p,r,\mathcal{W}}} \lesssim \sum_{\mu=0}^\ell \|\nabla w\|_{\BB^{0,\mu}_\mathcal{W}} \|v\|_{\BB^{s,\ell-\mu}_{p,r,\mathcal{W}}}.
\end{equation}
\item Assume that $v$ is a divergence-free vector field of $\R^d$, then we have that for every $s>-1$,
\begin{equation}\label{es:Tw-rem-1}
\begin{split}
  \|R(v\cdot,\nabla w)\|_{\BB^{s,\ell}_{p,r,\mathcal{W}}}
  \lesssim
  \min\bigg\{\sum_{\mu=0}^\ell \|v\|_{\BB^{0,\mu}_\mathcal{W}} \|\nabla w\|_{\BB^{s,\ell-\mu}_{p,r,\mathcal{W}}} ,
  \sum_{\mu=0}^\ell \|v\|_{\BB^{s,\mu}_{p,r,\mathcal{W}}} \|\nabla w\|_{\BB^{0,\ell-\mu}_\mathcal{W}},
  \sum_{\mu=0}^\ell \|v\|_{\BB^{1,\mu}_\mathcal{W}} \| w\|_{\BB^{s,\ell-\mu}_{p,r,\mathcal{W}}}
  \bigg\}.
\end{split}
\end{equation}
\item We have that for every $s\in (-1,1)$,
\begin{equation}\label{normBB-equiv}
  \|\phi\|_{\bb^{s,\ell}_{p,r,\mathcal{W}}} \lesssim \|\phi\|_{\BB^{s,\ell}_{p,r,\mathcal{W}}} \lesssim \|\phi\|_{\bb^{s,\ell}_{p,r,\mathcal{W}}},
\end{equation}
and
\begin{equation}\label{normBB-equivb}
  \|\phi\|_{\bb^{1,\ell}_\mathcal{W}} \lesssim \|\phi\|_{\BB^{1,\ell}_\mathcal{W}} \lesssim \|\phi\|_{\bb^{1,\ell}_\mathcal{W}},
\end{equation}
and for every $s>-1$,
\begin{align}\label{eq:W-BBbb-equ}
  \|\mathcal{W}\|_{\BB^{s,\ell}_{p,r,\mathcal{W}}} \lesssim \|\mathcal{W}\|_{\bb^{s,\ell}_{p,r,\mathcal{W}}},
\end{align}
and for every $s\geq 1$,
\begin{align}\label{eq:BBbb-equiv1}
  \|\phi\|_{\BB^{s,\ell}_{p,r,\mathcal{W}}} \lesssim \|\phi\|_{\bb^{s,\ell}_{p,r,\mathcal{W}}} + \|\phi\|_{\bb^{1,\ell}_\mathcal{W}} \|\mathcal{W}\|_{\bb^{s,\ell}_{p,r,\mathcal{W}}}.
\end{align}
\end{enumerate}
\end{lemma}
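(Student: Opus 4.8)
The plan is to derive all five groups of estimates from Lemma~\ref{lem:Rq}, which is the engine of the whole section: it says that iterating the paraproduct operator $T_{\mathcal{W}\cdot\nabla}$ against a function whose spectrum sits in a ball of radius $C2^q$ produces \emph{no} regularity loss, provided $\mathcal{W}$ is controlled in $\BB^{1+\sigma,k-1}_{\infty,\mathcal{W}}$. Accordingly, every step is an induction on $\ell\in\{0,1,\dots,k\}$, the base case $\ell=0$ being the classical Bony product, paraproduct and remainder estimates in $B^s_{p,r}$ (cf. \cite{BCD11}); the inductive step reduces, after a Bony decomposition, to applying $(T_{\mathcal{W}\cdot\nabla})^\ell$ to frequency-localized products and invoking \eqref{eq:Tw-Rq}--\eqref{eq:BB-nab-es}. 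For part (1), to prove \eqref{eq:Tw-es-key2} one notes that, by the spectral localization of the paraproduct, $\Delta_q(T_{\mathcal{W}\cdot\nabla})^\ell\nabla m(D)\phi$ only sees $\Delta_{q'}\phi$ with $|q'-q|\le C$; on each such block $\nabla m(D)$ gains a factor $2^{q'}\sim 2^q$ and is, up to a Fourier multiplier supported in an annulus, of Calder\'on--Zygmund type, so \eqref{eq:Tw-phi-es1} with $\psi(2^{-q}D)$ replaced by that multiplier converts $(T_{\mathcal{W}\cdot\nabla})^\ell$ on the localized piece into $\sum_{\lambda\le\ell}\|(T_{\mathcal{W}\cdot\nabla})^\lambda\phi\|_{L^p}$. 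Estimate \eqref{eq:Tw-es-key3} is the Bernstein-type converse: since $\Delta_q\phi$ can be recovered from $\Delta_{q_1}\nabla\phi$ with $|q_1-q|\le1$ via a $2^{-q}$-times a smooth annular symbol, one applies $(T_{\mathcal{W}\cdot\nabla})^\ell$ and uses \eqref{eq:Tw-phi-es1} once more, the integer $N_\ell$ accounting for the slight spreading of supports at each of the $\ell$ applications.

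For parts (2) and (3) I would write the relevant bilinear object (a paraproduct $T_vw$ or $T_{\nabla w}v$, or the remainder $R(v\cdot,\nabla w)$, the latter rewritten as $\sum_q\div(\Delta_q v\,\widetilde\Delta_q w)$ using $\div v=0$ to buy the endpoint $s>-1$) as a sum over dyadic pieces, each of the schematic form $S_{q-1}a\,\Delta_q b$ or $\Delta_q a\,\widetilde\Delta_q b$. Applying $(T_{\mathcal{W}\cdot\nabla})^\lambda$ and repeatedly using that $T_{\mathcal{W}\cdot\nabla}$ acts on such a frequency-localized product like a derivation modulo error terms of the same type --- which is precisely what the operators $R_q$ of \eqref{def:Rq} encode --- one expands into $\sum_{\mu_1+\mu_2\le\lambda}(T_{\mathcal{W}\cdot\nabla})^{\mu_1}(\cdot)\,(T_{\mathcal{W}\cdot\nabla})^{\mu_2}(\cdot)$ and bounds each factor by \eqref{eq:Tw-Rq}: one factor in $L^p$, carrying the $\ell^r$ summation against $2^{qs}$ and hence a $\BB^{s,\cdot}$ norm, the other in $L^\infty$ and hence a $\BB^{0,\cdot}$ norm. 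Summing in $q$ yields the desired $\sum_\mu\|\cdot\|_{\BB^{0,\mu}}\|\cdot\|_{\BB^{s,\ell-\mu}}$ bounds; the three alternatives in the $\min$ in \eqref{es:Tw-rem-1} correspond to which factor absorbs the derivative and to moving $\nabla$ onto $w$ through the divergence form, and the case distinctions $s<0$, $s=0$, $s<1$ in \eqref{es:Tw-parap-1}--\eqref{es:Tw-parap-2c} reflect exactly the ranges in which the classical $\ell=0$ estimates already hold.

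For part (4), the bridge between $\bb$ (built from the true directional derivative $\partial_\mathcal{W}=\mathcal{W}\cdot\nabla$) and $\BB$ (built from $T_{\mathcal{W}\cdot\nabla}$) is the Bony identity $\mathcal{W}\cdot\nabla f = T_{\mathcal{W}\cdot\nabla}f + \sum_i T_{\partial_i f}W_i + \sum_i R(W_i,\partial_i f)$, whose two error terms are exactly of the type controlled by \eqref{es:Tw-parap-2}, \eqref{es:Tw-parap-2c} and \eqref{es:Tw-rem-1}. Iterating this identity expresses $\partial_\mathcal{W}^\lambda f$ in terms of $(T_{\mathcal{W}\cdot\nabla})^{\le\lambda}f$ together with lower-order error pieces, and conversely, so the two norms bound each other by induction on $\ell$; the special estimate \eqref{eq:W-BBbb-equ} for $f=\mathcal{W}$ itself is established first (directly from the same identity, since there the error terms only involve $\mathcal{W}$ and lower numbers of striated derivatives) and then feeds into the error estimates. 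The restriction $s\in(-1,1)$ in \eqref{normBB-equiv} is precisely the intersection of the ranges $s>-1$ (for the remainder) and $s<1$ (for $T_{\nabla f}\mathcal{W}$) in which those error terms are controlled by $\|\mathcal{W}\|$ in a norm of the same order; for $s\geq 1$ one can no longer absorb $T_{\nabla\phi}\mathcal{W}$ with a small coefficient and must instead use \eqref{es:Tw-parap-2c}, which produces the extra term $\|\phi\|_{\bb^{1,\ell}_\mathcal{W}}\|\mathcal{W}\|_{\bb^{s,\ell}_{p,r,\mathcal{W}}}$ in \eqref{eq:BBbb-equiv1}.

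\textbf{Main obstacle.} The real work is the bookkeeping in parts (2)--(4): each application of $T_{\mathcal{W}\cdot\nabla}$ Leibniz-expands a product into several terms, and over $\ell$ applications one must organize the resulting multi-indexed sum so that (i) every term is genuinely of product type with one factor in $L^p$ and one in $L^\infty$, (ii) no term forces a derivative onto a factor that cannot afford it --- this is where the divergence-free hypotheses and the reorganization of the remainder into divergence form are essential --- and (iii) all implicit constants depend only on $\|\mathcal{W}\|_{\BB^{1+\sigma,k-1}_{\infty,\mathcal{W}}}$ and not on higher norms of $\mathcal{W}$, which is what makes the final striated estimates loss-free, in contrast to \cite{Chem88,Chem91,LZ16}. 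The most delicate point is ensuring that the error pieces generated when commuting $T_{\mathcal{W}\cdot\nabla}$ with $\Delta_q$ or with $\nabla$ never cost a power $2^{q\epsilon}$; this is exactly the content of \eqref{eq:Tw-Rq} --- the vanishing of the factor $2^{q\varepsilon(\ell-|\mu|)}$ mentioned in the introduction --- and it must be threaded carefully through every induction.
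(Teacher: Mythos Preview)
Your proposal is correct and follows essentially the same route as the paper: induction on $\ell$ with Lemma~\ref{lem:Rq} as the engine, direct application of \eqref{eq:Tw-Rq} to the dyadic pieces $S_{q_1-1}v\,\Delta_{q_1}w$ and $\Delta_{q_1}v\,\widetilde\Delta_{q_1}w$ for parts (2)--(3), and the Bony identity $\partial_{\mathcal W}f = T_{\mathcal W\cdot\nabla}f + T_{\nabla f}\cdot\mathcal W + R(\mathcal W\cdot,\nabla f)$ iterated for part (4). Two small discrepancies worth flagging: for \eqref{eq:Tw-es-key2} the paper does not apply \eqref{eq:Tw-phi-es1} directly to the multiplier $\Delta_q\nabla m(D)$ (the $\Delta_q$ sits \emph{outside} $(T_{\mathcal W\cdot\nabla})^\ell$), but instead peels off one $T_{\mathcal W\cdot\nabla}$, commutes it with $\nabla m(D)$ via the explicit integral formula \eqref{exp:mDnab-com}, and closes by induction---your shortcut would also work once you note that $(T_{\mathcal W\cdot\nabla})^\ell$ preserves approximate spectral localization; and in part (4) the paper actually establishes \eqref{normBB-equiv}--\eqref{normBB-equivb} \emph{first} (using the bound $\|\mathcal W\|_{\BB^{1,\ell}_{\mathcal W}}\lesssim\|\mathcal W\|_{\BB^{1+\sigma,k-1}_{\infty,\mathcal W}}$ which is already part of the hypothesis), and only then derives \eqref{eq:W-BBbb-equ} by replacing \eqref{es:Tw-parap-2} with \eqref{es:Tw-parap-2c} to extend the range of $s$---the reverse order you sketch would need a separate bootstrap.
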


Now we turn to the proof of Lemmas \ref{lem:prod-es2}.
\begin{proof}[Proof of Lemma \ref{lem:prod-es2}]
  (1) By using Bony's decomposition and \eqref{normBB-equiv}, \eqref{es:Tw-parap-1}, \eqref{es:Tw-rem-1}, we get
\begin{align*}
  \|u\cdot\nabla \phi\|_{\bb^{-\epsilon,k}_{p,r,\mathcal{W}}} & \lesssim \|u\cdot \nabla \phi\|_{\BB^{-\epsilon,k}_{p,r,\mathcal{W}}} \\
  & \lesssim \|T_{u\cdot}\nabla \phi\|_{\BB^{-\epsilon,k}_{p,r,\mathcal{W}}}
  + \|T_{\nabla\phi}\cdot u\|_{\BB^{-\epsilon,k}_{p,r,\mathcal{W}}}
  + \|R(u\cdot,\nabla \phi)\|_{\BB^{-\epsilon,k}_{p,r,\mathcal{W}}} \\
  & \lesssim  \min\Big\{\sum_{\mu=0}^k \|u\|_{\BB^{0,\mu}_\mathcal{W}}
  \|\nabla \phi\|_{\BB^{-\epsilon,k-\mu}_{p,r,\mathcal{W}}},
  \sum_{\mu=0}^k \|u\|_{\BB^{-\epsilon,\mu}_{p,r,\mathcal{W}}}
  \|\nabla \phi\|_{\BB^{0,k-\mu}_\mathcal{W}} \Big\} \\
  &  \lesssim  \min\Big\{\sum_{\mu=0}^k \|u\|_{\bb^{0,\mu}_\mathcal{W}}
  \|\nabla \phi\|_{\bb^{-\epsilon,k-\mu}_{p,r,\mathcal{W}}},
  \sum_{\mu=0}^k \|u\|_{\bb^{-\epsilon,\mu}_{p,r,\mathcal{W}}} \|\nabla \phi\|_{\bb^{0,k-\mu}_\mathcal{W}} \Big\}.
\end{align*}

(2) We are devoted to proving \eqref{eq:prod-es6} by induction on the index $k$.
For $k=0$, \eqref{eq:prod-es6} is explicitly estimated by \eqref{eq:str-es1}
where $C>0$ is a universal constant (the norm $\|\mathcal{W}\|_{\bb^{1+\sigma,k-1}_{\infty,\mathcal{W}}}$ plays no role).

Assume that \eqref{eq:prod-es6} holds for $\ell\in \{0,1,\cdots,k-1\}$ (when $k=0$, for $\ell=0$) with $\ell$ in place of the $k$-index, we intend to show that it also holds for $\ell+1$.
By using \eqref{eq:Bes-prop3} and \eqref{normBB-equiv}, we see that
\begin{align}\label{es:mD-phi1}
  \|m(D)\phi\|_{\bb^{-\epsilon,\ell+2}_{p,r,\mathcal{W}}} &
  = \|\partial_\mathcal{W} (m(D)\phi)\|_{\bb^{-\epsilon,\ell+1}_{p,r,\mathcal{W}}}
  + \|m(D)\phi\|_{B^{-\epsilon}_{p,r}} \nonumber \\
  & \lesssim \|\mathcal{W}\cdot \nabla(m(D)\phi)\|_{\BB^{-\epsilon,\ell+1}_{p,r,\mathcal{W}}} + \|\phi\|_{B^{-\epsilon}_{p,r}} + \|\Delta_{-1}m(D)\phi\|_{L^p}.
\end{align}
Noting that there exists a bump function $\widetilde{\psi} \in C^\infty_c(\R^d)$ supported on an annulus so that
$S_{j-1}\mathcal{W}\cdot \Delta_j f = \widetilde{\psi}(2^{-j}D) (S_{j-1}\mathcal{W}\cdot \Delta_j f )$ for every $j\in \N$,
we have
\begin{align}\label{exp:Tw-mD0}
  (T_{\mathcal{W}\cdot\nabla})m(D)f & = \sum_{j\in \N} S_{j-1} \mathcal{W} \cdot \nabla \Delta_j m(D) f \nonumber \\
  & = - \sum_{j\in\N} [m(D)\widetilde{\psi}(2^{-j}D), S_{j-1}\mathcal{W}\cdot]\nabla \Delta_j f + m(D) (T_{\mathcal{W}\cdot\nabla}) f,
\end{align}
then Bony's decomposition yields that
\begin{align}\label{decom:Xi}
  & \quad \mathcal{W}\cdot\nabla (m(D)\phi)
  = T_{\nabla m(D)\phi}\cdot \mathcal{W} + R(\mathcal{W}\cdot,\nabla m(D)\phi)
  + (T_{\mathcal{W}\cdot\nabla})(m(D)\phi) \nonumber \\
  & =  T_{\nabla m(D)\phi}\cdot \mathcal{W} +  R(\mathcal{W}\cdot,\nabla m(D)\phi) -  \sum_{j\in\N} [m(D)\widetilde{\psi}(2^{-j}D), S_{j-1}\mathcal{W}\cdot]\nabla \Delta_j  \phi  + m(D)(T_{\mathcal{W}\cdot\nabla}) \phi \nonumber \\
  & = : \Xi_1 + \Xi_2 + \Xi_3 + \Xi_4 .
\end{align}
In view of \eqref{es:Tw-parap-2}, \eqref{es:Tw-rem-1} and the induction assumption, we find
\begin{align}\label{es:X12}
  \|\Xi_1\|_{\BB^{-\epsilon,\ell+1}_{p,r,\mathcal{W}}} + \|\Xi_2\|_{\BB^{-\epsilon,\ell+1}_{p,r,\mathcal{W}}}
  & \lesssim \|m(D)\phi\|_{\BB^{-\epsilon,\ell+1}_{p,r,\mathcal{W}}} \|\mathcal{W}\|_{\BB^{1,\ell+1}_\mathcal{W}} \nonumber \\
  & \lesssim \|m(D)\phi\|_{\bb^{-\epsilon,\ell+1}_{p,r,\mathcal{W}}} \|\mathcal{W}\|_{\bb^{1,\ell+1}_\mathcal{W}} \nonumber \\
  & \lesssim \Big( \|\phi\|_{\bb^{-\epsilon,\ell+1}_{p,r,\mathcal{W}}} +
  \big(1+ \| \mathcal{W}\|_{\bb^{1,\ell}_\mathcal{W}}\big) \big(\|\phi\|_{\bb^{-\epsilon,\ell}_{p,r,\mathcal{W}}} + \|\Delta_{-1}m(D)\phi\|_{L^p}\big) \Big)
  \|\mathcal{W}\|_{\bb^{1,\ell+1}_\mathcal{W}} \nonumber \\
  & \lesssim \big(\|\phi\|_{\bb^{-\epsilon,\ell+1}_{p,r,\mathcal{W}}} + \|\Delta_{-1}m(D)\phi\|_{L^p} \big) \|\mathcal{W}\|_{\bb^{1,\ell+1}_\mathcal{W}},
\end{align}
where in the last line we have used the fact that
$\|\mathcal{W}\|_{\bb^{1,\ell}_\mathcal{W}} \leq \|\mathcal{W}\|_{\bb^{1,k-1}_\mathcal{W}} \lesssim 1$.
Noticing that $m(D) \widetilde{\psi}(2^{-j} D) = 2^{kd} \widetilde{h}(2^j\cdot)*$ with $\widetilde{h}= \mathcal{F}^{-1}(m \widetilde{\psi})\in \mathcal{S}(\R^d)$, we infer that
\begin{align}\label{exp:mD-comm0}
  &\quad  [m(D)\widetilde{\psi}(2^{-j}D), S_{j-1}\mathcal{W}\cdot]\nabla \Delta_j \phi(x) \nonumber \\
  & = \int_{\R^d} \widetilde{h}(y)  \big(S_{j-1}\mathcal{W}(x+2^{-j}y) - S_{j-1}\mathcal{W}(x)\big) \cdot \nabla \Delta_j \phi(x+2^{-j}y) \dd y \nonumber \\
  & = 2^{-j}\int_0^1 \int_{\R^d} \widetilde{h}(y) y\cdot \nabla S_{j-1}\mathcal{W}(x+ \tau 2^{-j}y)\cdot \nabla \Delta_j \phi(x+2^{-j}y) \dd y \dd \tau,
\end{align}
and by applying Lemmas \ref{lem:Rq} and \ref{lem:Tw-es} we obtain that for every $\lambda\in \{0,1,\cdots,\ell+1\}$,
\begin{align}\label{es:Tw-el-Xi3}
  2^{-q\epsilon} \|\Delta_q (T_{\mathcal{W}\cdot\nabla})^\lambda \Xi_3\|_{L^p}
  & \lesssim 2^{-q\epsilon} \sum_{j\in \N,j\sim q} \|\Delta_q (T_{\mathcal{W}\cdot\nabla})^\lambda \big([m(D)\widetilde{\psi}(2^{-j}D),S_{j-1}\mathcal{W}\cdot]\nabla \Delta_j \phi\big)\|_{L^p} \nonumber \\
  & \lesssim 2^{-q\epsilon} \sum_{j\in \N,j\sim q} \sum_{\lambda_1+\lambda_2\leq \lambda} 2^{-j}
  \|(T_{\mathcal{W}\cdot\nabla})^{\lambda_1}\nabla S_{j-1}\mathcal{W}\|_{L^\infty}
  \|(T_{\mathcal{W}\cdot\nabla})^{\lambda_2} \nabla \Delta_j \phi\|_{L^p} \nonumber \\
  & \lesssim \sum_{j\in\N,j\sim q} \sum_{\lambda_1+\lambda_2\leq \lambda} \Big( \sum_{j'\leq j-1}\|(T_{\mathcal{W}\cdot\nabla})^{\lambda_1} \nabla \Delta_{j'} \mathcal{W}\|_{L^\infty}\Big)
  2^{-j -j\epsilon} \|(T_{\mathcal{W}\cdot\nabla})^{\lambda_2} \Delta_j \nabla \phi \|_{L^p} \nonumber \\
  & \lesssim c_q \sum_{\lambda_1=0}^{\ell+1} \|\nabla \mathcal{W}\|_{\BB^{0,\lambda_1}_\mathcal{W}}
  \Big(\sum_{\mu_2=0}^{\ell+1-\lambda_1} \|(T_{\mathcal{W}\cdot\nabla})^{\mu_2}\nabla \phi\|_{B^{-1-\epsilon}_{p,r}}\Big) \nonumber \\
  & \lesssim c_q \sum_{\lambda_1=0}^{\ell+1} \|\nabla \mathcal{W}\|_{\BB^{0,\lambda_1}_\mathcal{W}} \|\nabla \phi\|_{\BB^{-1-\epsilon,\ell+1-\lambda_1}_{p,r,\mathcal{W}}} \nonumber \\
  & \lesssim c_q \sum_{\lambda_1=0}^{\ell+1} \|\mathcal{W}\|_{\BB^{1,\lambda_1}_\mathcal{W}} \|\phi\|_{\BB^{-\epsilon,\ell+1-\lambda_1}_{p,r,\mathcal{W}}}
  \lesssim c_q \| \mathcal{W}\|_{\bb^{1,\ell+1}_\mathcal{W}} \|\phi\|_{\bb^{-\epsilon,\ell+1}_{p,r,\mathcal{W}}},
\end{align}
with $\{c_q\}_{q\geq -1}$ satisfying $\|c_q\|_{\ell^r}=1$. It immediately leads to
\begin{align}\label{es:X3}
  \|\Xi_3\|_{\BB^{-\epsilon,\ell+1}_{p,r,\mathcal{W}}} \lesssim  \| \mathcal{W}\|_{\bb^{1,\ell+1}_\mathcal{W}} \|\phi\|_{\bb^{-\epsilon,\ell+1}_{p,r,\mathcal{W}}}.
\end{align}
For the remaining term $\Xi_4$, Lemma \ref{lem:Tw-es} together with the induction assumption ensure that
\begin{align}\label{es:X4}
  \|\Xi_4\|_{\BB^{-\epsilon,\ell+1}_{p,r,\mathcal{W}}} & \lesssim \|m(D)(T_{\mathcal{W}\cdot\nabla})\phi\|_{\bb^{-\epsilon,\ell+1}_{p,r,\mathcal{W}}} \nonumber \\
  & \lesssim \|(T_{\mathcal{W}\cdot\nabla})\phi\|_{\bb^{-\epsilon,\ell+1}_{p,r,\mathcal{W}}} +
  (1+ \|\mathcal{W}\|_{\bb^{1,\ell}_\mathcal{W}} ) \big(\|(T_{\mathcal{W}\cdot\nabla})\phi\|_{\bb^{-\epsilon,\ell}_{p,r,\mathcal{W}}}
  + \|\Delta_{-1}m(D)\phi\|_{L^p} \big) \nonumber \\
  & \lesssim \|\partial_\mathcal{W} \phi\|_{\bb^{-\epsilon,\ell+1}_{p,r,\mathcal{W}}}
  + \|T_{\nabla \phi}\cdot \mathcal{W}\|_{\bb^{-\epsilon,\ell+1}_{p,r,\mathcal{W}}}
  + \|R(\mathcal{W}\cdot,\nabla \phi)\|_{\bb^{-\epsilon,\ell+1}_{p,r,\mathcal{W}}}
  + \|\Delta_{-1}m(D)\phi\|_{L^p}  \nonumber \\
  & \lesssim \|\phi\|_{\bb^{-\epsilon,\ell+2}_{p,r,\mathcal{W}}}
  + \|T_{\nabla \phi}\cdot \mathcal{W}\|_{\BB^{-\epsilon,\ell+1}_{p,r,\mathcal{W}}}
  + \|R(\mathcal{W}\cdot,\nabla \phi)\|_{\BB^{-\epsilon,\ell+1}_{p,r,\mathcal{W}}}
  + \|\Delta_{-1}m(D) \phi\|_{L^p} \nonumber \\
  & \lesssim \|\phi\|_{\bb^{-\epsilon,\ell+2}_{p,r,\mathcal{W}}}
  + \|\phi\|_{\BB^{-\epsilon,\ell+1}_{p,r,\mathcal{W}}} \|\mathcal{W}\|_{\BB^{1,\ell+1}_\mathcal{W}}
  + \|\Delta_{-1}m(D)\phi\|_{L^p} \nonumber \\
  & \lesssim \|\phi\|_{\bb^{-\epsilon,\ell+2}_{p,r,\mathcal{W}}} + \|\mathcal{W}\|_{\bb^{1,\ell+1}_\mathcal{W}} \|\phi\|_{\bb^{-\epsilon,\ell+1}_{p,r,\mathcal{W}}}
  + \|\Delta_{-1}m(D)\phi\|_{L^p}.
\end{align}
Inserting estimates \eqref{decom:Xi}-\eqref{es:X4} into \eqref{es:mD-phi1} yields \eqref{eq:prod-es6} in the $(\ell+1)$-case,
and thus the induction method guarantees the desired inequality \eqref{eq:prod-es6}.

(3) We prove \eqref{eq:prod-es7} also by induction on the index $k$.
Note that for $k=0$, \eqref{eq:prod-es7} follows from \eqref{eq:str-es3-0}
with $C>0$ a universal constant (the norm $\|\mathcal{W}\|_{\bb^{1+\sigma,k-1}_{\infty,\mathcal{W}}}$ plays no role).

Now suppose that \eqref{eq:prod-es7} is satisfied for every $\ell\in \{0,1,\cdots, k-1\}$
(when $k=0$, for $\ell=0$) with $\ell$ in place of $k$-index, we intend to show that it also holds for $\ell+1$.
Using the decomposition \eqref{eq:mD-decom} and \eqref{decom:III} below, we have
\begin{align}\label{decom:mD-comm}
  [m(D),u\cdot\nabla]\phi & = \sum_{j\in \N} [m(D), S_{j-1}u\cdot\nabla]\Delta_j \phi + \sum_{j\in \N} [m(D), \Delta_j u\cdot\nabla] S_{j-1} \phi \nonumber \\
  & \quad + \sum_{j\geq 3}  m(D) \divg(\Delta_j u\,\widetilde{\Delta}_j \phi) - \sum_{j\geq 3} \divg(\Delta_j u\, m(D)\widetilde{\Delta}_j \phi)  + \sum_{-1\leq j\leq 2} [m(D),\Delta_j u\cdot\nabla] \widetilde{\Delta}_j \phi \nonumber \\
  & = : \mathcal{I} + \mathcal{II} + \mathcal{III}_1 + \mathcal{III}_2 + \mathcal{III}_3 .
\end{align}
It follows from \eqref{normBB-equiv} that
\begin{align}
  \|[m(D), u\cdot\nabla]\phi\|_{\bb^{-\epsilon,\ell+1}_{p,r,\mathcal{W}}}
  \lesssim \|(\mathcal{I}, \mathcal{II} , \mathcal{III}_1 , \mathcal{III}_2, \mathcal{III}_3)\|_{\BB^{-\epsilon,\ell+1}_{p,r,\mathcal{W}}}.
\end{align}
The estimation of $\mathcal{I}$ is quite similar to that of $\Xi_3$ in \eqref{es:Tw-el-Xi3}-\eqref{es:X3} (where $u$ plays a role as $\mathcal{W}$),
and one gets
\begin{align}
  \|\mathcal{I}\|_{\BB^{-\epsilon,\ell+1}_{p,r,\mathcal{W}}} \lesssim \sum_{\lambda_1=0}^{\ell+1}\|\nabla u\|_{\BB^{0,\lambda_1}_\mathcal{W}} \|\phi\|_{\BB^{-\epsilon,\ell+1-\lambda_1}_{p,r,\mathcal{W}}}
  \lesssim \|\nabla u\|_{\bb^{0,\ell+1}_\mathcal{W}} \|\phi\|_{\bb^{-\epsilon,\ell+1}_{p,r,\mathcal{W}}}.
\end{align}
Noting that $[m(D),\Delta_j u \cdot\nabla]S_{j-1}\phi$ has the expression formula \eqref{exp:mD-2} below,
and by arguing as \eqref{es:Tw-el-Xi3}, we find that for every $\lambda\in \{0,1,\cdots,\ell+1\}$,
\begin{align}
  & \quad 2^{-q\epsilon} \|\Delta_q (T_{\mathcal{W}\cdot\nabla})^\lambda \mathcal{II}\|_{L^p} \nonumber \\
  & \lesssim 2^{-q\epsilon}  \sum_{j\in \N,j\sim q}
  \| \Delta_q (T_{\mathcal{W}\cdot\nabla})^\lambda ([m(D) \widetilde{\psi}(2^{-j}D), \Delta_j u \cdot\nabla] S_{j-1}\phi) \|_{L^p} \nonumber \\
  & \lesssim 2^{-q \epsilon}  \sum_{j\in \N, j\sim q} \sum_{\lambda_1 + \lambda_2 \leq \lambda} 2^{-j}
  \|(T_{\mathcal{W}\cdot\nabla})^{\lambda_1} \nabla \Delta_j u\|_{L^\infty}
  \|(T_{\mathcal{W}\cdot\nabla})^{\lambda_2}\nabla S_{j-1} \phi\|_{L^p} \nonumber \\
  & \lesssim \sum_{j\in \N, j\sim q} \sum_{\lambda_1 + \lambda_2 \leq \lambda} 2^{-j(1+\epsilon)}
  \|(T_{\mathcal{W}\cdot\nabla})^{\lambda_1}\Delta_j\nabla u\|_{L^\infty}
  \Big(\sum_{j'\leq j-1} \|(T_{\mathcal{W}\cdot\nabla})^{\lambda_2} \nabla \Delta_{j'} \phi\|_{L^p} \Big) \nonumber \\
  & \lesssim c_q \sum_{\lambda_1 + \lambda_2 \leq \lambda} \Big(\sum_{\mu_1 =0}^{\lambda_1} \|(T_{\mathcal{W}\cdot\nabla})^{\mu_1} \nabla u\|_{B^0_{\infty,1}} \Big)
  \Big\|\sum_{j'\leq j-1} 2^{(j'-j)(1+\epsilon)} 2^{-j'(1+\epsilon)} \|(T_{\mathcal{W}\cdot\nabla})^{\lambda_2} \nabla \Delta_{j'}\phi\|_{L^p}\Big\|_{\ell^r} \nonumber \\
  & \lesssim c_q \|\nabla u\|_{\BB^{0,\ell+1}_\mathcal{W}} \sum_{\lambda_2=0}^\lambda \|\nabla \phi\|_{\BB^{-1-\epsilon,\lambda_2}_{p,r,\mathcal{W}}} \nonumber \\
  & \lesssim c_q \|\nabla u\|_{\BB^{0,\ell+1}_\mathcal{W}} \|\phi\|_{\BB^{-\epsilon,\ell+1}_{p,r,\mathcal{W}}}
  \lesssim c_q \|\nabla u\|_{\bb^{0,\ell+1}_\mathcal{W}} \|\phi\|_{\bb^{-\epsilon,\ell+1}_{p,r,\mathcal{W}}},  \nonumber
\end{align}
with $\{c_q\}_{q\geq -1}$ satisfying $\|c_q\|_{\ell^r}=1$. Then it directly leads to
\begin{align}
  \|\mathcal{II}\|_{\BB^{-\epsilon,\ell+1}_{p,r,\mathcal{W}}}
  \lesssim \|\nabla u\|_{\bb^{0,\ell+1}_\mathcal{W}} \|\phi\|_{\bb^{-\epsilon,\ell+1}_{p,r,\mathcal{W}}}.
\end{align}
For $\mathcal{III}_1$, by applying Lemmas \ref{lem:Rq} and \ref{lem:Tw-es}, we deduce that for every $\lambda\in \{0,1,\cdots,\ell+1\}$,
\begin{align}
  & \quad 2^{-q\epsilon}  \|\Delta_q (T_{\mathcal{W}\cdot\nabla})^\lambda \mathcal{III}_1\|_{L^p} \nonumber \\
  & \lesssim 2^{-q\epsilon}  \sum_{j\geq \max\{3,q -N_\lambda\}}
  \|\Delta_q (T_{\mathcal{W}\cdot\nabla})^\lambda \nabla m(D) (\Delta_j u\, \widetilde{\Delta}_j \phi)\|_{L^p} \nonumber \\
  & \lesssim 2^{q(1- \epsilon)} \sum_{\lambda_1=0}^\lambda  \Big( \sum_{j\geq \max\{3,q-N_\lambda\}}
  \|(T_{\mathcal{W}\cdot\nabla})^{\lambda_1} (\Delta_j u\, \widetilde{\Delta}_j \phi)\|_{L^p} \Big) \nonumber \\
  & \lesssim  2^{q(1-\epsilon)} \sum_{\lambda_1=0}^\lambda \sum_{j\geq \max\{3,q-N_\lambda\}} \sum_{\lambda_2+\lambda_3 \leq \lambda_1}
  \|(T_{\mathcal{W}\cdot\nabla})^{\lambda_2} \Delta_j u\|_{L^\infty} \|(T_{\mathcal{W}\cdot\nabla})^{\lambda_3} \widetilde{\Delta}_j \phi\|_{L^p} \nonumber \\
  & \lesssim \sum_{\lambda_2+\lambda_3 \leq \ell+1} \sum_{j\geq \max\{3,q- N_\lambda\}} 2^{(q-j)(1-\epsilon)}
  2^j \|(T_{\mathcal{W}\cdot\nabla})^{\lambda_2}\Delta_j u\|_{L^\infty}
  2^{-j\epsilon} \|(T_{\mathcal{W}\cdot\nabla})^{\lambda_3} \widetilde{\Delta}_j \phi\|_{L^p} \nonumber \\
  & \lesssim \sum_{\lambda_2 =0}^{\ell+1} \sum_{j\geq \max\{3,q-N_\lambda\}} 2^{(q-j)(1-\epsilon)}
  \Big( \sum_{j_1\sim j} \sum_{\mu_2=0}^{\lambda_2}  \|(T_{\mathcal{W}\cdot\nabla})^{\mu_2} \Delta_{j_1}\nabla u\|_{L^\infty} \Big) \|\phi\|_{\BB^{-\epsilon,\ell+1}_{p,\infty,\mathcal{W}}} \nonumber \\
  & \lesssim c_q \|\nabla u\|_{\BB^{0,\ell+1}_\mathcal{W}} \|\phi\|_{\BB^{-\epsilon,\ell+1}_{p,r,\mathcal{W}}}
  \lesssim c_q \|\nabla u\|_{\bb^{0,\ell+1}_\mathcal{W}} \|\phi\|_{\bb^{-\epsilon,\ell+1}_{p,r,\mathcal{W}}},\nonumber
\end{align}
which guarantees that
\begin{align}
  \|\mathcal{III}_1 \|_{\BB^{-\epsilon,\ell+1}_{p,r,\mathcal{W}}} \lesssim \|\nabla u\|_{\bb^{0,\ell+1}_\mathcal{W}} \|\phi\|_{\bb^{-\epsilon,\ell+1}_{p,r,\mathcal{W}}}.
\end{align}
For $\mathcal{III}_2$, noting that $\widetilde\Delta_j S_1 =0$ for every $j\geq 3$ (with $S_1$ defined by \eqref{exp:Del-Sj}), similarly as above we infer that for each $\lambda\in \{0,1,\cdots,\ell+1\}$,
\begin{align}
  &\quad 2^{-q\epsilon} \|\Delta_q (T_{\mathcal{W}\cdot\nabla})^\lambda \mathcal{III}_2\|_{L^p} \nonumber \\
  & \lesssim 2^{-q\epsilon}  \sum_{j\geq \max\{3,q- N_\lambda\}}
  \|\Delta_q (T_{\mathcal{W}\cdot\nabla})^\lambda \divg(\Delta_j u\, \widetilde{\Delta}_j m(D) \phi)\|_{L^p} \nonumber \\
  & \lesssim 2^{q (1-\epsilon)} \sum_{\lambda_1=0}^\lambda \sum_{j\geq \max\{3,q-N_\lambda\}}
  \|(T_{\mathcal{W}\cdot\nabla})^{\lambda_1}(\Delta_j u\, \widetilde{\Delta}_j m(D) \phi)\|_{L^p} \nonumber \\
  & \lesssim 2^{q(1-\epsilon)} \sum_{\lambda_1=0}^\lambda \sum_{j\geq \max\{3,q-N_\lambda\}} \sum_{\lambda_2+\lambda_3\leq \lambda}
  \|(T_{\mathcal{W}\cdot\nabla})^{\lambda_2} \Delta_j u\|_{L^\infty} \|(T_{\mathcal{W}\cdot\nabla})^{\lambda_3} \widetilde{\Delta}_j m(D) \phi\|_{L^p} \nonumber \\
  & \lesssim \sum_{\lambda_2 +\lambda_3 =0}^{\ell+1} \sum_{j\geq \max\{3,q-N_\lambda\}} 2^{(q-j)(1-\epsilon)}
  2^j \|(T_{\mathcal{W}\cdot\nabla})^{\lambda_2}\Delta_j u\|_{L^\infty}
  2^{-j\epsilon} \|(T_{\mathcal{W}\cdot\nabla})^{\lambda_3} \widetilde{\Delta}_j m(D) \phi\|_{L^p} \nonumber \\
  & \lesssim \sum_{\lambda_2=0}^{\ell+1} \sum_{j\geq \max\{3,q-N_\lambda\}} 2^{(q-j)(1-\epsilon)}
  \Big(\sum_{j_1\sim j} \sum_{\mu_2=0}^{\lambda_2} \|(T_{\mathcal{W}\cdot\nabla})^{\mu_2} \Delta_{j_1} \nabla u\|_{L^\infty} \Big)
  \|m(D)(\Id -S_1)\phi\|_{\BB^{-\epsilon,\ell+1}_{p,r,\mathcal{W}}} \nonumber \\
  & \lesssim c_q \|\nabla u\|_{\BB^{0,\ell+1}_\mathcal{W}} \|m(D) (\Id -S_1) \phi\|_{\BB^{-\epsilon,\ell+1}_{p,r,\mathcal{W}}}
  \lesssim c_q \| \nabla u\|_{\bb^{0,\ell+1}_\mathcal{W}} \|m(D) (\Id -S_1) \phi\|_{\bb^{-\epsilon,\ell+1}_{p,r,\mathcal{W}}}, \nonumber
\end{align}
and by using \eqref{eq:prod-es6} and the fact that $\Delta_{-1}(\Id -S_1)=0$,
\begin{align}\label{es:BB-LP}
  \|m(D) (\Id -S_1) \phi\|_{\bb^{-\epsilon,\ell+1}_{p,r,\mathcal{W}}}
  & \lesssim \|(\Id -S_1)\phi\|_{\bb^{-\epsilon,\ell+1}_{p,r,\mathcal{W}}} \nonumber \\
  & \lesssim \|\phi\|_{\bb^{-\epsilon,\ell+1}_{p,r,\mathcal{W}}} + \|S_1\phi\|_{\BB^{-\epsilon,\ell+1}_{p,r,\mathcal{W}}} \nonumber \\
  &  \lesssim \|\phi\|_{\bb^{-\epsilon,\ell+1}_{p,r,\mathcal{W}}} + \sum_{l=-1,0}\sum_{ q =-1}^{N_\ell} \sum_{\lambda=0}^{\ell+1} \|\Delta_q (T_{\mathcal{W}\cdot\nabla})^\lambda \Delta_l \phi\|_{L^p} \nonumber \\
  & \lesssim \|\phi\|_{\bb^{-\epsilon,\ell+1}_{p,r,\mathcal{W}}} + \|\mathcal{W}\|_{L^\infty}\sum_{l=-1,0}\sum_{q=-1}^{N_\ell} \sum_{q_1\sim q} \sum_{\lambda=1}^{\ell+1} \|\Delta_{q_1} (T_{\mathcal{W}\cdot\nabla})^{\lambda-1} \Delta_l\phi \|_{ L^p} \nonumber \\
  & \lesssim \|\phi\|_{\bb^{-\epsilon,\ell+1}_{p,r,\mathcal{W}}} + \sum_{l=-1,0}\|\Delta_l\phi\|_{L^p} \lesssim  \|\phi\|_{\bb^{-\epsilon,\ell+1}_{p,r,\mathcal{W}}},
\end{align}
thus the above two estimates yield that
\begin{align}
  \|\mathcal{III}_2\|_{\BB^{-\epsilon,\ell+1}_{p,r,\mathcal{W}}} \lesssim \|\nabla u\|_{\bb^{0,\ell+1}_\mathcal{W}} \|\phi\|_{\bb^{-\epsilon,\ell+1}_{p,r,\mathcal{W}}}.
\end{align}
Arguing as \eqref{es:BB-LP} and using the following fact (see e.g. Proposition 3.1 of \cite{HKR11}) that
\begin{align}\label{eq:fact3}
  \textrm{$\nabla \Delta_{-1} m(D)$ is bounded on $L^p$ for every $p\in [1,\infty]$},
\end{align}
the term $\mathcal{III}_3$ can be directly estimated as follows
\begin{align}\label{es:III3-BB}
  & \quad \|\mathcal{III}_3\|_{\BB^{-\epsilon,\ell+1}_{p,r,\mathcal{W}}} \nonumber \\
  & \lesssim \sum_{q=-1}^{N_\ell} \sum_{\lambda=0}^{\ell+1} \sum_{j=-1}^2
  \Big(\|\Delta_q (T_{\mathcal{W}\cdot\nabla})^\lambda m(D) \divg (\Delta_j u \,\widetilde{\Delta}_j\phi)\|_{L^p}
  + \|\Delta_q (T_{\mathcal{W}\cdot\nabla})^\lambda (\Delta_j u\cdot \nabla m(D)\widetilde{\Delta}_j \phi)\|_{L^p} \Big) \nonumber \\
  & \lesssim \sum_{j=-1}^2 \Big( \|m(D)\divg (\Delta_j u\, \widetilde{\Delta}_j \phi)\|_{L^p} + \|\Delta_j u\cdot\nabla m(D) \widetilde\Delta_j \phi\|_{L^p}\Big) \nonumber \\
  & \lesssim \|u\|_{L^\infty} \Big(\sum_{-1\leq j\leq 2} \|\widetilde\Delta_j \phi\|_{L^p} \Big) \lesssim \|u\|_{L^\infty} \|\phi\|_{\bb^{-\epsilon,\ell+1}_{p,r,\mathcal{W}}}.
\end{align}
Hence, gathering the above estimates \eqref{decom:mD-comm}--\eqref{es:III3-BB} leads to the desired inequality \eqref{eq:prod-es7}.

\end{proof}

\subsection{Proof of Lemma \ref{lem:prod-es}}\label{subsec:str-es2}

In the $k=1$ case of Lemmas \ref{lem:Rq} and \ref{lem:Tw-es}, we can show the explicit dependence of constant $C$ on $\|W\|_{B^1_{\infty,1}}$ as follows,
and one can see Subsection \ref{subsec:lem5.3} for the proof.
\begin{lemma}\label{lem:Tw-es-k1}
  Let $\mathcal{W}=\{W_i\}_{1\leq i\leq N}$ ($N\in \mathbb{Z}^+$) be a set of regular divergence-free vector fields of $\R^d$.
Let $(p,r)\in [1,\infty]^2$. The following statements are satisfied.
\begin{enumerate}[(1)]
\item Let $\alpha_i$ ($i=1,\cdots,m$) be a function satisfying that $\mathrm{supp}\,\widehat{\alpha_i} \subset B(0,C_i 2^q)$, $q\in\N$.
Then we have
\begin{align}\label{eq:Tw-Rq-2}
  & \quad  \big\|(T_{\mathcal{W}\cdot\nabla}) R_q(\alpha_1,\cdots,\alpha_m)\big\|_{L^p} \nonumber \\
  & \leq C \sum_{i=1}^m \min\bigg\{ \|(T_{\mathcal{W}\cdot\nabla}) \alpha_i\|_{L^p} \prod_{j\neq i} \|\alpha_j\|_{L^\infty}, \|(T_{\mathcal{W}\cdot\nabla})\alpha_i\|_{L^\infty} \min_{1\leq j\leq m,j\neq i}\Big(\|\alpha_j\|_{L^p} \prod_{l\neq i,j}\|\alpha_l\|_{L^\infty} \Big) \bigg\} \nonumber \\
  & \quad + C \|\mathcal{W}\|_{B^1_{\infty,1}} \min_{1\leq i\leq m} \Big(\|\alpha_i\|_{L^p} \prod_{j\neq i} \|\alpha_j\|_{L^\infty}\Big).
\end{align}
\item
We have that for every $s \in \R$,
\begin{align}\label{eq:Tw-es-key.2}
  &\;  \big\| \{2^{q s} \|(T_{\mathcal{W}\cdot\nabla}) \Delta_q \phi\|_{L^p}\}_{q\geq -1} \big\|_{\ell^r}
  + \big\| \{2^{q (s-1)} \|(T_{\mathcal{W}\cdot\nabla}) \nabla \Delta_q \phi\|_{L^p} \}_{q\geq -1} \big\|_{\ell^r}  \nonumber \\
  \leq &
  C \| (T_{\mathcal{W}\cdot\nabla}) \phi\|_{B^s_{p,r}} + C \|\mathcal{W}\|_{W^{1,\infty}} \|\phi\|_{B^s_{p,r}} ,
\end{align}
and for every $s< 0$,
\begin{align}\label{eq:Tw-es-key.22}
  \big\|\{ 2^{qs} \|(T_{\mathcal{W}\cdot\nabla})S_{q-1}\phi\|_{L^p} \}_{q\in \N} \big\|_{\ell^r}
  \leq C \|(T_{\mathcal{W}\cdot\nabla}) \phi\|_{B^s_{p,r}} + C \|\mathcal{W}\|_{W^{1,\infty}} \|\phi\|_{B^s_{p,r}} ,
\end{align}
and for every $q\geq -1$,
\begin{align}\label{eq:Tw-es-key2.2}
  \|\Delta_q (T_{\mathcal{W}\cdot\nabla}) \nabla \phi\|_{L^p} \leq C 2^q \|\Delta_q (T_{\mathcal{W}\cdot\nabla}) \phi\|_{L^p}
  + C 2^q \|\mathcal{W}\|_{W^{1,\infty}} \Big(\sum_{q_1\in \N, |q_1-q|\leq 5} \|\Delta_{q_1}\phi\|_{L^p} \Big),
\end{align}
and for every $q\in \N$,
\begin{align}\label{eq:Tw-es-key3.2}
  2^q \| (T_{\mathcal{W}\cdot\nabla}) \Delta_q \phi \|_{L^p} \leq C \sum_{q_1\in \N,|q_1-q|\leq 5} \Big(\|(T_{\mathcal{W}\cdot\nabla}) \Delta_{q_1} \nabla \phi\|_{L^p}
  + \|\mathcal{W}\|_{W^{1,\infty}} \|\Delta_{q_1}\nabla \phi\|_{L^p} \Big).
\end{align}
\item 
We have that for every $s< 0$,
\begin{equation}\label{es:Tw-parap-1-2}
  \|(T_{\mathcal{W}\cdot\nabla})T_{\nabla \phi}\cdot u\|_{B^s_{p,r}}  + \|(T_{\mathcal{W}\cdot\nabla})T_{u\cdot} \nabla \phi\|_{B^s_{p,r}}
  \leq C \big(B_1(s) + B_2(s) + B_3(s) \big),
\end{equation}
with
\begin{align}
  B_1(s):=& \min\big\{\|u\|_{B^0_{\infty,1}} \|(T_{\mathcal{W}\cdot\nabla})\nabla \phi\|_{B^s_{p,r}}, \|u\|_{B^s_{p,r}} \|(T_{\mathcal{W}\cdot\nabla})\nabla \phi\|_{B^0_{\infty,1}} \big\}, \label{B1} \\
  B_2(s):=& \min\big\{\|(T_{\mathcal{W}\cdot\nabla})u\|_{B^0_{\infty,1}} \|\nabla \phi\|_{B^s_{p,r}} , \|(T_{\mathcal{W}\cdot\nabla})u\|_{B^s_{p,r}} \|\nabla \phi\|_{B^0_{\infty,1}} \big\}, \label{B2}\\
  B_3(s):=& \|\mathcal{W}\|_{B^1_{\infty,1}} \min\big\{\|u\|_{B^0_{\infty,1}} \|\nabla \phi\|_{B^s_{p,r}}, \|u\|_{B^s_{p,r}} \|\nabla \phi\|_{B^0_{\infty,1}} \big\}. \label{B3}
\end{align}
\item Assume that $u$ is a divergence-free vector field of $\R^d$, then we have that for every $s>-1$,
\begin{equation}\label{es:Tw-rem-1-2}
\begin{split}
  \|(T_{\mathcal{W}\cdot\nabla})R(u\cdot,\nabla \phi)\|_{B^s_{p,r}}
  &  \leq C \big(B_1(s) + B_2(s) + B_3(s)\big).
\end{split}
\end{equation}
\item For every $s\in (-1,1)$, we have
\begin{equation}\label{normBB-equiv-2}
   \|\partial_\mathcal{W} \phi - (T_{\mathcal{W}\cdot\nabla})\phi\|_{B^s_{p,r}}
   \leq C \|\mathcal{W}\|_{W^{1,\infty}} \|\phi\|_{B^s_{p,r}}.
\end{equation}
\end{enumerate}
\end{lemma}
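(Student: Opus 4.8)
The plan is to treat Lemma~\ref{lem:Tw-es-k1} as the $k=1$ specialization of Lemmas~\ref{lem:Rq} and \ref{lem:Tw-es}: since the paraproduct operator $T_{\mathcal{W}\cdot\nabla}$ is applied at most once, the dependence on $\mathcal{W}$ can be made explicit. The organizing principle is that on a function spectrally supported near frequency $2^q$ the operator $T_{\mathcal{W}\cdot\nabla}$ agrees with the genuine derivation $S_{q-1}\mathcal{W}\cdot\nabla$ up to commutator and frequency-mismatch errors, and every such error is controlled either by $\|\nabla S_{q-1}\mathcal{W}\|_{L^\infty}\lesssim\|\mathcal{W}\|_{W^{1,\infty}}$ when no summation over scales intervenes, or by $\sum_{l}2^l\|\Delta_l\mathcal{W}\|_{L^\infty}\lesssim\|\mathcal{W}\|_{B^1_{\infty,1}}$ when one telescopes over scales. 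I would first prove \eqref{eq:Tw-Rq-2} and then deduce (2)--(5) from it together with Bony's decomposition, mirroring the structure of the proofs of Lemmas~\ref{lem:Rq}--\ref{lem:Tw-es} but keeping the lower-order constant visible.

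\emph{Step 1 (Part (1)).} Since $\mathrm{supp}\,\widehat{\alpha_i}\subset B(0,C_i2^q)$, the integrand in \eqref{def:Rq} has frequencies $\lesssim 2^q$, so $T_{\mathcal{W}\cdot\nabla}R_q=\sum_{j\le q+c_0}S_{j-1}\mathcal{W}\cdot\nabla\Delta_jR_q$. Using $S_{j-1}-S_{q-1}=-\sum_{j-1\le l\le q-2}\Delta_l$ one splits off $S_{q-1}\mathcal{W}\cdot\nabla R_q$; the complementary piece is bounded, via $\|\nabla\Delta_jR_q\|_{L^p}\lesssim 2^j\|R_q\|_{L^p}$ and $\sum_{j\le l+1}2^j\lesssim 2^l$, by $\|\mathcal{W}\|_{B^1_{\infty,1}}\min_i(\|\alpha_i\|_{L^p}\prod_{j\ne i}\|\alpha_j\|_{L^\infty})$. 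In the main term I differentiate under the integral, producing $m$ terms in which $\nabla$ lands on a single $\alpha_i$, and then replace $S_{q-1}\mathcal{W}(x)$ by $S_{q-1}\mathcal{W}$ evaluated at the shifted argument of $\alpha_i$; the Taylor error is $|f_i2^{-q}y|\,\|\nabla S_{q-1}\mathcal{W}\|_{L^\infty}$, and after absorbing $2^{-q}|y|$ into the Schwartz profile $h$ and compensating the gradient on $\alpha_i$ by Bernstein ($\|\nabla\alpha_i\|\lesssim 2^q\|\alpha_i\|$) this contributes $\lesssim\|\mathcal{W}\|_{W^{1,\infty}}\|\alpha_i\|\prod_{j\ne i}\|\alpha_j\|$. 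After the shift, $S_{q-1}\mathcal{W}\cdot\nabla\alpha_i=T_{\mathcal{W}\cdot\nabla}\alpha_i+O(\|\mathcal{W}\|_{B^1_{\infty,1}}\|\alpha_i\|)$ by the same mismatch estimate, and since $T_{\mathcal{W}\cdot\nabla}\alpha_i$ is again spectrally supported in a ball of radius $\lesssim 2^q$, the leftover is an $R_q$-type integral to which Young's and Hölder's inequalities apply, placing $T_{\mathcal{W}\cdot\nabla}\alpha_i$ in $L^p$ or in $L^\infty$ according to the two bounds in \eqref{eq:Tw-Rq-2}.

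\emph{Step 2 (Parts (2)--(4)).} For \eqref{eq:Tw-es-key.2} and \eqref{eq:Tw-es-key2.2} I use $T_{\mathcal{W}\cdot\nabla}\Delta_q\phi=\Delta_qT_{\mathcal{W}\cdot\nabla}\phi+[T_{\mathcal{W}\cdot\nabla},\Delta_q]\phi$, the identity $[T_{\mathcal{W}\cdot\nabla},\nabla]=-T_{\nabla\mathcal{W}\cdot\nabla}$, and the elementary bounds $\|[T_{\mathcal{W}\cdot\nabla},\Delta_q]\phi\|_{L^p}\lesssim\|\nabla\mathcal{W}\|_{L^\infty}\sum_{|j-q|\le 5}\|\Delta_j\phi\|_{L^p}$ and $\|\Delta_qT_{\nabla\mathcal{W}\cdot\nabla}\phi\|_{L^p}\lesssim\|\nabla\mathcal{W}\|_{L^\infty}\sum_{|j-q|\le 5}\|\nabla\Delta_j\phi\|_{L^p}$; combining these with the fact that $T_{\mathcal{W}\cdot\nabla}\Delta_q\phi$ is annulus-localized at scale $2^q$ (hence $\|T_{\mathcal{W}\cdot\nabla}\Delta_q\phi\|_{L^p}\lesssim 2^{-q}\|\nabla T_{\mathcal{W}\cdot\nabla}\Delta_q\phi\|_{L^p}$) yields \eqref{eq:Tw-es-key3.2}, while summing \eqref{eq:Tw-es-key.2} over low frequencies gives \eqref{eq:Tw-es-key.22}, the sum converging because $s<0$. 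For (3) and (4) I expand $T_{\mathcal{W}\cdot\nabla}T_vw$ scale by scale: each building block $S_{j-1}v\cdot\Delta_j\nabla\phi$ — and in (4), after the divergence-free rewriting $R(v\cdot,\nabla\phi)=\sum_q\mathrm{div}(\Delta_qv\,\widetilde{\Delta}_q\phi)$, each $\mathrm{div}(\Delta_jv\,\widetilde{\Delta}_j\phi)$ — is frequency-localized, so $T_{\mathcal{W}\cdot\nabla}$ acts on it through $S_{j-1}\mathcal{W}\cdot\nabla$ up to a commutator and the derivative distributes by Leibniz; applying Step~1 with $m=2$ and summing the dyadic series (convergent for $s<0$ in (3), for $s>-1$ in (4)) assembles $B_1(s)+B_2(s)+B_3(s)$, with $B_3(s)$ collecting the $\|\mathcal{W}\|_{B^1_{\infty,1}}$-errors.

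\emph{Step 3 (Part (5)) and main obstacle.} Since $T_{\mathcal{W}\cdot\nabla}\phi=T_{\mathcal{W}\cdot}\nabla\phi$, Bony's decomposition gives $\partial_{\mathcal{W}}\phi-T_{\mathcal{W}\cdot\nabla}\phi=T_{\nabla\phi}\cdot\mathcal{W}+R(\mathcal{W}\cdot,\nabla\phi)$. For the first piece, $\|\Delta_j(T_{\nabla\phi}\cdot\mathcal{W})\|_{L^p}\lesssim\|S_{j-1}\nabla\phi\|_{L^p}\,\|\Delta_j\mathcal{W}\|_{L^\infty}\lesssim\big(\sum_{l\le j-2}2^{l(1-s)}c_l\big)2^{-j}\|\mathcal{W}\|_{W^{1,\infty}}\|\phi\|_{B^s_{p,r}}$, and the inner sum is $\lesssim c'_j2^{j(1-s)}$ precisely when $s<1$, giving $\|T_{\nabla\phi}\cdot\mathcal{W}\|_{B^s_{p,r}}\lesssim\|\mathcal{W}\|_{W^{1,\infty}}\|\phi\|_{B^s_{p,r}}$; for the remainder one uses $\mathrm{div}\,\mathcal{W}=0$ to write $R(\mathcal{W}\cdot,\nabla\phi)=\sum_q\mathrm{div}(\Delta_q\mathcal{W}\,\widetilde{\Delta}_q\phi)$ and bounds $\|\Delta_j\sum_{q\ge j-3}\mathrm{div}(\Delta_q\mathcal{W}\,\widetilde{\Delta}_q\phi)\|_{L^p}\lesssim 2^j\sum_{q\ge j-3}2^{-q}\|\mathcal{W}\|_{W^{1,\infty}}c_q2^{-qs}\|\phi\|_{B^s_{p,r}}$, the geometric sum converging for $s>-1$; intersecting the two ranges gives \eqref{normBB-equiv-2} on $s\in(-1,1)$. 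The genuinely delicate point, which I expect to be the main obstacle, is the bookkeeping in Step~1: one must verify that the Taylor shift inside $R_q$ gains exactly the factor $2^{-q}$ cancelling the loss $2^q$ incurred when $\nabla$ falls on a factor $\alpha_i$ localized in $B(0,C_i2^q)$, so that no net power of $2^q$ remains and the error is genuinely of size $\|\mathcal{W}\|_{W^{1,\infty}}$ times the same product of norms as $R_q$ itself, and similarly that each mismatch sum $\sum_{j-1\le l\le q-2}\Delta_l\mathcal{W}$, paired with a $2^j$-weighted factor, telescopes into $\|\mathcal{W}\|_{B^1_{\infty,1}}$ rather than a stronger norm; a secondary technical point is to confirm, so that Step~1 may be legitimately reused inside (3)--(4), that the outputs $T_{\mathcal{W}\cdot\nabla}\alpha_i$ after the Leibniz split remain spectrally supported in balls comparable to $2^q$.
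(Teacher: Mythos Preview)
Your proposal is correct and follows essentially the same route as the paper. The paper's proof of part~(1) invokes the very decomposition $(T_{\mathcal{W}\cdot\nabla})R_q=R_q^1+R_q^2+R_q^4+R_q^5$ from \eqref{eq:Ra123}--\eqref{eq:Rq45}, bounds $R_q^1,R_q^2,R_q^5$ by the $\|\mathcal{W}\|_{B^1_{\infty,1}}$ term via the telescoping estimate \eqref{eq:fact2} (exactly your mismatch/Taylor errors), and reads off the main contribution from $R_q^4$; for parts~(2)--(4) it uses the same commutator identities $[\Delta_q,T_{\mathcal{W}\cdot\nabla}]$, $[\nabla,T_{\mathcal{W}\cdot\nabla}]=T_{\nabla\mathcal{W}\cdot\nabla}$ and then feeds the paraproduct/remainder blocks $S_{q_1-1}u\cdot\Delta_{q_1}\nabla\phi$ and $\mathrm{div}(\Delta_{q_1}u\,\widetilde\Delta_{q_1}\phi)$ back into part~(1) with $m=2$, while part~(5) is exactly your Bony decomposition argument. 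Your worry about the ``main obstacle'' is the content of \eqref{eq:fact2} together with the observation that the Taylor shift inside $R_q$ contributes the factor $2^{-q}$ cancelling the Bernstein loss, and your side remark that each paraproduct block $S_{j-1}\mathcal W\cdot\nabla\Delta_j(\cdot)$ remains annulus-localized at scale $2^j$ is indeed what makes the inverse-Bernstein step in \eqref{eq:Tw-es-key3.2} legitimate.
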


Now relied on Lemma \ref{lem:Tw-es-k1}, we present the proof of Lemma \ref{lem:prod-es}.
\begin{proof}[Proof of Lemma \ref{lem:prod-es}]
  (1) Owing to \eqref{es:Tw-parap-1-2}, \eqref{es:Tw-rem-1-2} and Bony's decomposition, we see that 
\begin{align}\label{es:Tw-uphi-1}
  \|(T_{\mathcal{W}\cdot\nabla})(u\cdot\nabla \phi)\|_{B^{-\epsilon}_{p,r}} &
  \leq \|(T_{\mathcal{W}\cdot\nabla})(T_{u\cdot}\nabla \phi)\|_{B^{-\epsilon}_{p,r}}
  + \|(T_{\mathcal{W}\cdot\nabla})(T_{\nabla \phi}\cdot u)\|_{B^{-\epsilon}_{p,r}} + \|(T_{\mathcal{W}\cdot\nabla})R(u\cdot,\nabla\phi)\|_{B^{-\epsilon}_{p,r}} \nonumber \\
  & \leq C\big( B_1(-\epsilon)+  B_2(-\epsilon) +  B_3(-\epsilon)\big) \nonumber \\
  & \leq C \min\Big\{ \|u\|_{B^0_{\infty,1}} \big(\|\partial_\mathcal{W} \nabla \phi\|_{B^{-\epsilon}_{p,r}} + \|\mathcal{W}\|_{B^1_{\infty,1}} \|\nabla \phi\|_{B^{-\epsilon}_{p,r}}\big), \nonumber\\
  & \qquad\qquad\quad \|u\|_{B^{-\epsilon}_{p,r}} \big( \|\partial_\mathcal{W} \nabla \phi\|_{B^0_{\infty,1}}
  + \|\mathcal{W}\|_{B^1_{\infty,1}} \|\nabla \phi\|_{B^0_{\infty,1}} \big) \Big\} \nonumber \\
  &\quad + C \min\Big\{ \big(\|\partial_\mathcal{W} u\|_{B^0_{\infty,1}} + \|\mathcal{W}\|_{B^1_{\infty,1}} \|u\|_{B^0_{\infty,1}}\big)\|\nabla \phi\|_{B^{-\epsilon}_{p,r}}, \nonumber \\
  &\qquad \qquad\qquad  \big(\| \partial_\mathcal{W} u\|_{B^{-\epsilon}_{p,r}} + \|\mathcal{W}\|_{B^1_{\infty,1}} \|u\|_{B^{-\epsilon}_{p,r}} \big)
  \|\nabla \phi\|_{B^0_{\infty,1}} \Big\} \nonumber \\
  & \leq C \min\{A_1,  A_2,  A_3\} ,
\end{align}
where $B_1,B_2,B_3$ are given by \eqref{B1}--\eqref{B3} and $A_1,A_2,A_3$ are defined by \eqref{A1}--\eqref{A3}.

By using \eqref{eq:prod-es}, \eqref{es:Tw-uphi-1} and \eqref{normBB-equiv-2}, we directly deduce that
\begin{align*}
  \|\partial_\mathcal{W} (u\cdot\nabla \phi)\|_{B^{-\epsilon}_{p,r}} & \leq
  \|(T_{\mathcal{W}\cdot\nabla})(u\cdot\nabla \phi)\|_{B^{-\epsilon}_{p,r}} + \|\partial_\mathcal{W}(u\cdot\nabla \phi)-(T_{\mathcal{W}\cdot\nabla})(u\cdot\nabla\phi)\|_{B^{-\epsilon}_{p,r}} \\
  & \leq C \min\{A_1, A_2, A_3\} + C B_3(-\epsilon)
  \leq C \min\{A_1, A_2, A_3\}.
\end{align*}

(2) Thanks to \eqref{normBB-equiv-2} and the unform boundedness property of $m(D) \Delta_j$ ($j\in \N$) on $L^p$,
we see that
\begin{align}\label{es:str-es1-1}
  \|\partial_\mathcal{W} (m(D) \phi)\|_{B^{-\epsilon}_{p,r}} & \leq  \|(T_{\mathcal{W}\cdot\nabla}) m(D)\phi\|_{B^{-\epsilon}_{p,r}}
  + C \|\mathcal{W}\|_{B^1_{\infty,1}} \|m(D)\phi\|_{B^{-\epsilon}_{p,r}} \nonumber \\
  & \leq \|(T_{\mathcal{W}\cdot\nabla}) m(D)\phi\|_{B^{-\epsilon}_{p,r}}
  +  C \|\mathcal{W}\|_{B^1_{\infty,1}}  \big( \|\Delta_{-1}m(D)\phi\|_{L^p} + \|\phi\|_{B^{-\epsilon}_{p,r}} \big).
\end{align}

Note that there exists a bump function $\widetilde{\psi} \in C^\infty_c(\R^d)$ supported on an annulus so that \eqref{exp:Tw-mD0} holds,
we have that for every $q\geq -1$,
\begin{align}\label{J1qJ2q}
  2^{-q\epsilon} \|\Delta_q (T_{\mathcal{W}\cdot \nabla})m(D)\phi\|_{L^p}
  & \lesssim 2^{-q\epsilon} \sum_{j\in \N, |j-q|\leq 4} \|\Delta_q \big([m(D)\widetilde{\psi}(2^{-j}D),
  S_{j-1}\mathcal{W}\cdot\nabla] \Delta_j \phi\big)\|_{L^p} \nonumber \\
  & \quad + 2^{-q\epsilon} \big\|\Delta_q m(D) (T_{\mathcal{W}\cdot\nabla}) \phi\big\|_{L^p} =: J_{1,q} + J_{2,q} .
\end{align}
Thanks to \eqref{exp:mD-comm0}, we immediately get
\begin{align}\label{es:J1q}
  J_{1,q} \lesssim 2^{-q\epsilon} \sum_{j\in\N,|j-q|\leq 4} 2^{-j} \|\nabla S_{j-1}\mathcal{W}\|_{L^\infty} \|\nabla \Delta_j \phi\|_{L^p}  \lesssim c_q \|\mathcal{W}\|_{W^{1,\infty}} \|\phi\|_{B^{-\epsilon}_{p,r}},
\end{align}
with $\{c_q\}_{q\geq -1}$ satisfying $\|c_q\|_{\ell^r} =1$. For $J_{2,q}$,
by virtue of \eqref{eq:fact3} and \eqref{normBB-equiv-2} again,
we find
\begin{align}\label{es:J2q}
  & \quad \|J_{2,q}\|_{\ell^r(q\geq -1)} \lesssim \|m(D) (T_{\mathcal{W}\cdot\nabla}) \phi\|_{B^{-\epsilon}_{p,r}} \nonumber \\
  & \lesssim \sum_{0\leq j\leq 3}\|\Delta_{-1}m(D) \div \big(S_{j-1} \mathcal{W} \, \Delta_j \phi\big)\|_{L^p}
  +  \|(T_{\mathcal{W}\cdot\nabla}) \phi\|_{B^{-\epsilon}_{p,r}}
  \lesssim \|\mathcal{W}\|_{B^1_{\infty,1}} \|\phi\|_{B^{-\epsilon}_{p,r}} + \|\partial_\mathcal{W} \phi\|_{B^{-\epsilon}_{p,r}}.
\end{align}
Combining \eqref{es:J1q} and \eqref{es:J2q} leads to
\begin{align}\label{es:Tw-mD1}
  \|(T_{\mathcal{W}\cdot\nabla})m(D)\phi\|_{B^{-\epsilon}_{p,r}} \lesssim \|\mathcal{W}\|_{B^1_{\infty,1}}
  \|\phi\|_{B^{-\epsilon}_{p,r}}   + \|\partial_\mathcal{W} \phi\|_{B^{-\epsilon}_{p,r}},
\end{align}
which together with \eqref{es:str-es1-1} guarantees the desired estimate \eqref{eq:str-es1}.


(3) Bony's decomposition gives that
\begin{align}\label{eq:mD-decom}
  [m(D),u\cdot\nabla] \phi & = \sum_{j\in \N} [m(D), S_{j-1}u\cdot\nabla]\Delta_j \phi + \sum_{j\in \N} [m(D), \Delta_j u\cdot\nabla] S_{j-1} \phi + \sum_{j\geq -1} [m(D), \Delta_j u\cdot\nabla]\widetilde{\Delta}_j \phi \nonumber \\
  & = \mathcal{I} + \mathcal{II} + \mathcal{III}.
\end{align}
Similarly as \eqref{exp:Tw-mD0}, \eqref{exp:mD-comm0}, there exists $\widetilde{h}\in \mathcal{S}(\R^d)$ so that
\begin{align}\label{exp:mD-1}
  [m(D), S_{j-1}u\cdot\nabla] \Delta_j \phi 
  = 2^{-j}\int_0^1 \int_{\R^d} \widetilde{h}(y) y\cdot \nabla S_{j-1}u(x+ \delta 2^{-j}y)\cdot \nabla \Delta_j \phi(x+2^{-j}y) \dd y \dd \delta,
\end{align}
and
\begin{align}\label{exp:mD-2}
  [m(D), \Delta_j u\cdot\nabla] S_{j-1} \phi = 2^{-j}\int_0^1 \int_{\R^d} \widetilde{h}(y) y\cdot \nabla \Delta_j u(x+ \delta 2^{-j}y)\cdot \nabla S_{j-1} \phi(x+2^{-j}y) \dd y \dd \delta.
\end{align}
Thus by using \eqref{eq:fact}, we directly get
\begin{align}\label{es:I-1}
  2^{-q\epsilon} \|\Delta_q \mathcal{I}\|_{L^p}
  \lesssim 2^{-q\epsilon}  \sum_{j\in \N, |q-j|\leq 4} \|\Delta_q [m(D), S_{j-1}u\cdot\nabla] \Delta_j \phi \|_{L^p}
  \lesssim c_q \|\nabla u\|_{L^\infty} \|\phi\|_{B^{-\epsilon}_{p,r}},
\end{align}
and
\begin{align}
  2^{-q\epsilon}  \|\Delta_q \mathcal{II}\|_{L^p} &
  \lesssim 2^{-q\epsilon} \sum_{j\in \N, |q-j|\leq 4} \|\Delta_q [m(D), \Delta_j u\cdot\nabla] S_{j-1} \phi\|_{L^p} \nonumber \\
  & \lesssim \|\nabla u\|_{L^\infty} 2^{-q\epsilon} \sum_{j\in \N,|q-j|\leq 4} \|S_{j-1} \phi\|_{L^p}
  \lesssim c_q \|\nabla u\|_{L^\infty} \|\phi\|_{B^{-\epsilon}_{p,r}},
\end{align}
where $\{c_q\}_{q\geq -1}$ is such that $\|c_q\|_{\ell^r} =1$.
We further decompose the term $\mathcal{III}$ as
\begin{align}\label{decom:III}
  \mathcal{III} & = \sum_{j\geq 2}  m(D) \divg(\Delta_j u\,\widetilde{\Delta}_j \phi) - \sum_{j\geq 2} \divg(\Delta_j u\, m(D)\widetilde{\Delta}_j \phi)  + \sum_{-1\leq j\leq 1} [m(D),\Delta_j u\cdot\nabla] \widetilde{\Delta}_j \phi \nonumber \\
  & = : \mathcal{III}_1 + \mathcal{III}_2 + \mathcal{III}_3.
\end{align}
For $\mathcal{III}_1$ and $\mathcal{III}_2$, thanks to \eqref{eq:fact3} and the discrete Young's inequality, we infer that
\begin{align}
  & \quad 2^{-q \epsilon}  \|\Delta_q \mathcal{III}_1\|_{L^p}
  \lesssim 2^{-q\epsilon} \sum_{j\geq q-3, j\geq 2} \|\Delta_q m(D)\divg(\Delta_j u\,\widetilde{\Delta}_j \phi) \|_{L^p} \nonumber \\
  & \lesssim 2^{q(1-\epsilon)} \sum_{j\geq q-3, j\geq 2} 2^{-j(1-\epsilon)} 2^j\|\Delta_j u\|_{L^\infty} 2^{-j\epsilon}\|\widetilde{\Delta}_j \phi\|_{L^p}
  \lesssim c_q \|\nabla u\|_{L^\infty} \|\phi\|_{B^{-\epsilon}_{p,r}},
\end{align}
and
\begin{align}
  &\quad 2^{-q\epsilon} \|\Delta_q \mathcal{III}_2\|_{L^p}
  \lesssim 2^{q (1- \epsilon)} \sum_{j\geq q-3,j\geq 2} \| \Delta_j u\|_{L^\infty} \|m(D) \widetilde{\Delta}_j \phi\|_{L^p} \nonumber \\
  & \lesssim \sum_{j\geq q-3,j\geq 2} 2^{(q-j)(1-\epsilon)} \|\nabla \Delta_j u\|_{L^\infty} 2^{-j\epsilon} \|\widetilde{\Delta}_j\phi\|_{L^p}
  \lesssim c_q \|\nabla u\|_{L^\infty} \|\phi\|_{B^{-\epsilon}_{p,r}}.
\end{align}
For term $\mathcal{III}_3$, we do not use the commutator structure, and by virtue of \eqref{eq:fact3} we find
\begin{align}\label{es:III3}
  & \quad \|\{2^{-q\epsilon} \|\Delta_q \mathcal{III}_3\|_{L^p}\}_{q\geq -1}\|_{\ell^r} \nonumber \\
  & \lesssim \sum_{-1\leq q\leq 5} \sum_{-1\leq j \leq 1} \Big(\| \Delta_q m(D) \divg (\Delta_j u\, \widetilde{\Delta}_j \phi)\|_{L^p}
  + \|\Delta_q(\Delta_j u \cdot \nabla m(D)\widetilde{\Delta}_j \phi)\|_{L^p} \Big) \nonumber \\
  & \lesssim \sum_{-1\leq j\leq 1} \|\Delta_j u\|_{L^\infty} \|\widetilde{\Delta}_j \phi\|_{L^p}
  \lesssim \|u\|_{L^\infty} \|\phi\|_{B^{-\epsilon}_{p,r}}.
\end{align}
Gathering the above estimates \eqref{es:I-1}--\eqref{es:III3} leads to the desired estimate \eqref{eq:str-es3-0}.

\end{proof}

\section{Proof of Lemmas \ref{lem:Rq} - \ref{lem:Tw-es-k1}}\label{sec:str-es-Lem}

\subsection{Proof of Lemma \ref{lem:Rq}}\label{subsec:lem5.1}
  The proof is via the induction method. It is clear that \eqref{eq:Tw-Rq}--\eqref{eq:BB-nab-es} are satisfied for $\ell=0$.
Assume that \eqref{eq:Tw-Rq}--\eqref{eq:BB-nab-es} hold for all $\ell' \in \{0,1,\cdots,\ell\}$ with $ \ell\in \{0,1,\cdots, k-1\}$.
Next we will prove that \eqref{eq:Tw-Rq}--\eqref{eq:BB-nab-es} hold for $\ell+1$ (replacing the $\ell$-index in \eqref{eq:Tw-Rq}--\eqref{eq:BB-nab-es}).
Observe that
\begin{align}\label{eq:Ra123}
  & (T_{\mathcal{W}\cdot\nabla}) R_q(\alpha_1,\cdots,\alpha_m) =
  \sum_{ -1\leq q_1\leq q + N_0} S_{q_1-1}\mathcal{W} \cdot \nabla \Delta_{q_1} R_q(\alpha_1,\cdots,\alpha_m) \nonumber \\
  & = \sum_{-1\leq q_1\leq q + N_0} \big( S_{q_1-1}\mathcal{W} - S_{q-1} \mathcal{W} \big)\cdot \nabla \Delta_{q_1} R_q(\alpha_1,\cdots,\alpha_m)  \nonumber \\
  & \quad + \sum_{ -1\leq q_1\leq q+ N_0}\big(S_{q-1}\mathcal{W} -S_{q_1-1}\mathcal{W}\big)\cdot
  \sum_{i=1}^m R_q(\alpha_1,\cdots, \nabla \Delta_{q_1} \alpha_i,\cdots, \alpha_m) \nonumber \\
  & \quad + \sum_{ -1\leq q_1\leq q+ N_0} S_{q_1-1} \mathcal{W} \cdot \sum_{i=1}^m R_q(\alpha_1,\cdots, \nabla \Delta_{q_1} \alpha_i ,\cdots,\alpha_m)  \nonumber \\
  & = : R_q^1 + R_q^2 + R_q^3 .
\end{align}
Note that
\begin{align*}
  S_{q_1-1}\mathcal{W} (x) = S_{q_1-1} \mathcal{W}(x + 2^{-q} f_i(\tau) y) - \int_0^1 2^{-q} f_i(\tau) y \cdot \nabla S_{q_1-1} \mathcal{W}(x + \delta 2^{-q} f_i(\tau) y) \dd \delta ,
\end{align*}
we can further decompose $R_q^3$ as
\begin{align}\label{eq:Rq45}
  R_q^3 & = \sum_{i=1}^m R_q(\alpha_1,\cdots, (T_{\mathcal{W}\cdot\nabla})\alpha_i,\cdots, \alpha_m)
  + \sum_{ -1\leq q_1\leq q+ N_0} \sum_{i=1}^m
  R_q^{(i)}(\nabla S_{q_1-1}\mathcal{W},\alpha_1, \cdots, \nabla \Delta_{q_1} \alpha_i,\cdots, \alpha_m) \nonumber \\
  & =: R_q^4 + R_q^5,
\end{align}
with
\begin{align*}
  & R_q^{(i)}(\nabla S_{q_1-1}\mathcal{W},\alpha_1, \cdots, \nabla \Delta_{q_1} \alpha_i,\cdots, \alpha_m) \\
  & : = \int_0^1 \int_{[0,1]^m} \int_{\R^d} \nabla S_{q_1-1} \mathcal{W}(x + \delta 2^{-q} f_i(\tau) y) \cdot \nabla \Delta_{q_1} \alpha_i(x + 2^{-q}f_i(\tau) y) \\
  &\quad \times\prod_{j\neq i} \alpha_j(x + 2^{-q}f_j(\tau)y)\cdot 2^{-q} y f_i(\tau) h(\tau,y) \,\dd y \dd \tau \dd \delta .
\end{align*}

We first consider $(T_{W\cdot\nabla})^\ell R_q^1$. Denoting by $\nabla \Delta_{q_1} = i D \varphi(2^{-q_1}D) = : i 2^{q_1} \varphi_1(2^{-q_1}D)$ for $q_1\in \N$, we infer that
\begin{align*}
  & \|(T_{\mathcal{W}\cdot\nabla})^\ell R_q^1 \|_{L^p} \leq \sum_{-1\leq q_1 \leq q+ N_0}
  \Big\|(T_{\mathcal{W}\cdot\nabla})^\ell \Big(\big( S_{q_1-1}\mathcal{W} - S_{q-1} \mathcal{W} \big)\cdot \nabla \Delta_{q_1} R_q(\alpha_1,\cdots,\alpha_m)  \Big) \Big\|_{L^p} \\
  & \lesssim \sum_{-1\leq q_1\leq q+N_0} \sum_{\ell_1 + \ell_2 \leq \ell} \|(T_{\mathcal{W}\cdot\nabla})^{\ell_1} (S_{q_1-1} \mathcal{W} - S_{q-1} \mathcal{W})\|_{L^\infty}
  \|(T_{\mathcal{W}\cdot\nabla})^{\ell_2} \nabla \Delta_{q_1} R_q(\alpha_1, \cdots,\alpha_m)\|_{L^p} \\
  & \lesssim \sum_{q_1=-1, \ell_1 + \ell_2\leq \ell} \|(T_{\mathcal{W}\cdot\nabla})^{\ell_1} S_{q-1} \mathcal{W}\|_{L^\infty}
  \|(T_{\mathcal{W}\cdot\nabla})^{\ell_2} \nabla \Delta_{-1} R_q(\alpha_1, \cdots,\alpha_m)\|_{L^p} \\
  &\quad + \sum_{0\leq q_1 \leq q-1} \sum_{\ell_1 + \ell_2 \leq \ell}  \sum_{q_2= q_1}^{q-1} \|(T_{\mathcal{W}\cdot\nabla})^{\ell_1}
  \Delta_{q_2} \mathcal{W}\|_{L^\infty} 2^{q_1} \|(T_{\mathcal{W}\cdot\nabla})^{\ell_2} \varphi_1(2^{-q_1} D) R_q(\alpha_1,\cdots,\alpha_m) \|_{L^p} \\
  & \quad   + \sum_{q+1\leq q_1 \leq q+ N_0}  \sum_{\ell_1 + \ell_2 \leq \ell} \sum_{q_2= q}^{q_1-1} \|(T_{\mathcal{W}\cdot\nabla})^{\ell_1}
  \Delta_{q_2} \mathcal{W}\|_{L^\infty} 2^{q_1} \|(T_{\mathcal{W}\cdot\nabla})^{\ell_2} \varphi_1(2^{-q_1} D) R_q(\alpha_1,\cdots,\alpha_m) \|_{L^p} \\
  & = : R_q^{11} + R_q^{12} + R_q^{13} .
\end{align*}

For $R_q^{11}$, by using the induction assumptions \eqref{eq:Tw-Rq}-\eqref{eq:Tw-es-key}, it follows that
\begin{align}\label{eq:Tw-SqW-1}
  \|(T_{W\cdot\nabla})^{\ell_1} S_{q-1}\mathcal{W}\|_{L^\infty} & \leq \sum_{q_2 =-1}^{q-1} \|(T_{\mathcal{W}\cdot\nabla})^{\ell_1} \Delta_{q_2} \mathcal{W}\|_{L^\infty} \nonumber\\
  & \lesssim \sum_{q_2 =-1}^{q-1} 2^{-q_2(1+\sigma)} \big( 2^{q_2(1+\sigma)} \|(T_{\mathcal{W}\cdot \nabla})^{\ell_1} \Delta_{q_2} \mathcal{W}\|_{L^\infty} \big)
  \lesssim \|\mathcal{W}\|_{\BB^{1+\sigma,\ell_1}_{\infty,\mathcal{W}}} < \infty,
\end{align}
and
\begin{align*}
  & \quad \|(T_{\mathcal{W}\cdot\nabla})^{\ell_2} \nabla \Delta_{-1} R_q(\alpha_1,\cdots,\alpha_m)\|_{L^p}
  \lesssim \sum_{\lambda_2=0}^{\ell_2} \|(T_{\mathcal{W}\cdot\nabla})^{\lambda_2} R_q(\alpha_1,\cdots,\alpha_m)\|_{L^p} \\
  & \lesssim \min_{1\leq i\leq m}\Big(\sum_{\lambda_2=0 }^{\ell_2}\sum_{ |\mu|\leq \lambda_2}
  \|(T_{\mathcal{W}\cdot\nabla})^{\mu_i} \alpha_i\|_{L^p} \prod_{1\leq j\neq i\leq m}\|(T_{\mathcal{W}\cdot\nabla})^{\mu_j}\alpha_j\|_{L^\infty} \Big)\\
  & \lesssim \min_{1\leq i\leq m}\Big(\sum_{|\mu|\leq \ell_2}  \|(T_{\mathcal{W}\cdot\nabla})^{\mu_i} \alpha_i\|_{L^p} \prod_{1\leq j\neq i\leq m}\|(T_{\mathcal{W}\cdot\nabla})^{\mu_j}\alpha_j\|_{L^\infty} \Big),
\end{align*}
thus we get
\begin{align*}
  |R_q^{11}| & \lesssim \min_{1\leq i\leq m}\Big( \sum_{\ell_1+ \ell_2 \leq \ell} \sum_{ |\mu|\leq \ell_2}
  \|(T_{\mathcal{W}\cdot\nabla})^{\mu_i} \alpha_i\|_{L^p} \prod_{1\leq j\neq i\leq m}\|(T_{\mathcal{W}\cdot\nabla})^{\mu_j}\alpha_j\|_{L^\infty} \Big) \\
  & \lesssim \min_{1\leq i\leq m}\Big( \sum_{ |\mu|\leq \ell}
  \|(T_{\mathcal{W}\cdot\nabla})^{\mu_i} \alpha_i\|_{L^p} \prod_{1\leq j\neq i\leq m}\|(T_{\mathcal{W}\cdot\nabla})^{\mu_j}\alpha_j\|_{L^\infty} \Big).
\end{align*}
For $R_q^{12}$, observe that for every $q_1\in\N$,
\begin{align}\label{es:Tw-el-W1}
  2^{q_1} \sum_{q_2= q_1}^{q-1} \|(T_{\mathcal{W}\cdot\nabla})^{\ell_1}  \Delta_{q_2} \mathcal{W}\|_{L^\infty}
  & \lesssim 2^{q_1} \sum_{q_2=q_1}^{q-1} 2^{-q_2(1+\sigma)}
  \Big(\sup_{q_2\in \N} 2^{q_2(1+\sigma)} \|(T_{\mathcal{W}\cdot\nabla})^{\ell_1} \Delta_{q_2} \mathcal{W}\|_{L^\infty} \Big) \nonumber \\
  & \lesssim 2^{-q_1\sigma} \|\mathcal{W}\|_{\BB^{1+\sigma,\ell_1}_{\infty,\mathcal{W}}} < \infty,
\end{align}
and
\begin{align*}
  & \quad \|(T_{\mathcal{W}\cdot\nabla})^{\ell_2} \varphi_1(2^{-q_1} D) R_q(\alpha_1,\cdots,\alpha_m) \|_{L^p}
  \lesssim \sum_{\lambda_2=0}^{\ell_2}
  \|(T_{\mathcal{W}\cdot\nabla})^{\lambda_2}R_q(\alpha_1,\cdots,\alpha_m) \|_{L^p} \\
  & \lesssim \min_{1\leq i\leq m}\Big( \sum_{\lambda_2=0}^{\ell_2} \sum_{|\mu|\leq \lambda_2}
  \|(T_{\mathcal{W}\cdot\nabla})^{\mu_i} \alpha_i\|_{L^p} \prod_{1\leq j\neq i\leq m}\|(T_{\mathcal{W}\cdot\nabla})^{\mu_j}\alpha_j\|_{L^\infty} \Big),
\end{align*}
thus we find
\begin{align*}
  \|R_q^{12}\|_{L^p} & \lesssim \min_{1\leq i\leq m}\bigg( \sum_{0\leq q_1 \leq q-1} \sum_{\ell_1 + \ell_2 \leq \ell} \sum_{|\mu|\leq \ell_2}
  2^{-q_1\sigma}\|\mathcal{W}\|_{\BB^{1+\sigma,\ell_1}_{\infty,\mathcal{W}}} \|(T_{\mathcal{W}\cdot\nabla})^{\mu_i} \alpha_i\|_{L^p} \prod_{1\leq j\neq i\leq m}\|(T_{\mathcal{W}\cdot\nabla})^{\mu_j}\alpha_j\|_{L^\infty} \bigg) \\
  & \lesssim  \min_{1\leq i\leq m}\Big( \sum_{|\mu|\leq \ell} \|(T_{\mathcal{W}\cdot\nabla})^{\mu_i} \alpha_i\|_{L^p} \prod_{1\leq j\neq i\leq m}\|(T_{\mathcal{W}\cdot\nabla})^{\mu_j}\alpha_j\|_{L^\infty} \Big).
\end{align*}
The estimation of $R_q^{13}$ is almost identical to that of $R_q^{12}$,
one also gets
\begin{align*}
  \|R_q^{13}\|_{L^p} \lesssim  \min_{1\leq i\leq m}\Big( \sum_{|\mu|\leq \ell} \|(T_{\mathcal{W}\cdot\nabla})^{\mu_i} \alpha_i\|_{L^p} \prod_{1\leq j\neq i\leq m}\|(T_{\mathcal{W}\cdot\nabla})^{\mu_j}\alpha_j\|_{L^\infty} \Big).
\end{align*}

The term $R_q^2$ can be estimated in the analogous manner, and we infer that
\begin{align*}
  &\quad \|(T_{\mathcal{W}\cdot\nabla})^\ell R_q^2\|_{L^p} \\
  & \lesssim \sum_{-1\leq q_1 \leq q+ N_0} \sum_{i=1}^m \Big\|(T_{\mathcal{W}\cdot\nabla})^\ell \Big(\big( S_{q_1-1}\mathcal{W} - S_{q-1} \mathcal{W} \big)
  \cdot R_q(\alpha_1,\cdots,\nabla \Delta_{q_1}\alpha_i,\cdots \alpha_m)  \Big) \Big\|_{L^p} \\
  & \lesssim \sum_{i=1}^m \sum_{-1\leq q_1\leq q+N_0, \ell_1 + \ell_2 \leq \ell}
  \|(T_{\mathcal{W}\cdot\nabla})^{\ell_1} (S_{q_1-1} \mathcal{W} - S_{q-1} \mathcal{W})\|_{L^\infty}
  \|(T_{\mathcal{W}\cdot\nabla})^{\ell_2} R_q(\alpha_1, \cdots, \nabla \Delta_{q_1} \alpha_i,\cdots \alpha_m)\|_{L^p}  \\
  & \lesssim \sum_{i=1}^m \sum_{q_1=-1, \ell_1 + \ell_2\leq \ell} \|(T_{\mathcal{W}\cdot\nabla})^{\ell_1} S_{q-1} \mathcal{W}\|_{L^\infty}
  \|(T_{\mathcal{W}\cdot\nabla})^{\ell_2}R_q(\alpha_1, \cdots, \nabla \Delta_{-1} \alpha_i,\cdots,\alpha_m)\|_{L^p} \\
  &\quad + \sum_{i=1}^m \sum_{0\leq q_1 \leq q+N_0} \sum_{ \ell_1 + \ell_2 \leq \ell}  \|(T_{\mathcal{W}\cdot\nabla})^{\ell_1}
  (S_{q_1-1}\mathcal{W} - S_{q-1} \mathcal{W})\|_{L^\infty} 2^{q_1} \cdot \\
  & \qquad \cdot\|(T_{\mathcal{W}\cdot\nabla})^{\ell_2} R_q(\alpha_1,\cdots, \varphi_1(2^{-q_1} D) \alpha_i ,\cdots, \alpha_m) \|_{L^p} \\
  & = : R_q^{21} + R_q^{22} .
\end{align*}
For $R_q^{21}$, noting that
\begin{align}\label{es:Tw-el-Rq1}
  & \quad \sum_{i=1}^m \|(T_{\mathcal{W}\cdot\nabla})^{\ell_2} R_q(\alpha_1, \cdots, \nabla \Delta_{-1} \alpha_i,\cdots,\alpha_m)\|_{L^p} \nonumber \\
  & \lesssim  \sum_{i=1}^m \min \bigg\{\sum_{|\mu| \leq \ell_2} \|(T_{\mathcal{W}\cdot\nabla})^{\mu_i}\nabla \Delta_{-1}\alpha_i\|_{L^p} \prod_{j\neq i} \|(T_{\mathcal{W}\cdot\nabla})^{\mu_j} \alpha_j\|_{L^\infty}, \nonumber \\
  & \qquad\;  \min_{1\leq j\neq i \leq m} \Big(\sum_{|\mu| \leq \ell_2} \|(T_{\mathcal{W}\cdot\nabla})^{\mu_j}\alpha_j\|_{L^p}
  \|(T_{\mathcal{W}\cdot\nabla})^{\mu_i} \nabla \Delta_{-1} \alpha_i\|_{L^\infty} \prod_{l\neq i,j} \|(T_{\mathcal{W}\cdot\nabla})^{\mu_l} \alpha_l\|_{L^\infty} \Big)\bigg\} \nonumber \\
  & \lesssim \sum_{i=1}^m \min \bigg\{\sum_{|\mu|\leq \ell_2} \sum_{\lambda_i=0}^{\mu_i} \|(T_{\mathcal{W}\cdot\nabla})^{\lambda_i} \alpha_i\|_{L^p}
  \prod_{j\neq i} \|(T_{\mathcal{W}\cdot\nabla})^{\mu_j} \alpha_j\|_{L^\infty}, \nonumber \\
  & \qquad\quad  \min_{1\leq j\neq i \leq m} \Big( \sum_{|\mu|\leq \ell_2} \sum_{\lambda_i=0}^{\mu_i} \|(T_{\mathcal{W}\cdot\nabla})^{\mu_j}\alpha_j\|_{L^p}\|(T_{\mathcal{W}\cdot\nabla})^{\lambda_i}\alpha_i\|_{L^\infty}
  \prod_{l\neq i,j} \|(T_{\mathcal{W}\cdot\nabla})^{\mu_l} \alpha_l\|_{L^\infty} \Big)\bigg\} \nonumber \\
  & \lesssim \sum_{i=1}^m \min \bigg\{ \sum_{|\tilde\mu|\leq \ell_2}  \|(T_{\mathcal{W}\cdot\nabla})^{\tilde\mu_i} \alpha_i\|_{L^p}
  \prod_{j\neq i} \|(T_{\mathcal{W}\cdot\nabla})^{\tilde\mu_j} \alpha_j\|_{L^\infty}, \nonumber \\
  & \qquad \qquad \qquad\min_{1\leq j\neq i \leq m} \Big( \sum_{|\tilde\mu|\leq \ell_2}  \|(T_{\mathcal{W}\cdot\nabla})^{\tilde\mu_j}\alpha_j\|_{L^p}
  \prod_{l\neq j} \|(T_{\mathcal{W}\cdot\nabla})^{\tilde\mu_l} \alpha_l\|_{L^\infty} \Big)\bigg\} \nonumber \\
  & \lesssim \min_{1\leq j \leq m} \Big( \sum_{|\tilde\mu|\leq \ell_2} \|(T_{\mathcal{W}\cdot\nabla})^{\tilde\mu_j}\alpha_j\|_{L^p}
  \prod_{l\neq j} \|(T_{\mathcal{W}\cdot\nabla})^{\tilde\mu_l} \alpha_l\|_{L^\infty} \Big) ,
\end{align}
where in the above third inequality we have denoted $\tilde\mu=(\tilde\mu_1,\cdots,\tilde\mu_m)\in \N^m$ as $\tilde\mu_j = \mu_j$ for every $j\neq i$ and $\tilde\mu_i = \lambda_i$.
Then by using \eqref{eq:Tw-SqW-1} and the induction assumptions, we obtain
\begin{align*}
  R_q^{21}& \lesssim  \min_{1\leq j\leq m}\Big( \sum_{|\mu|\leq \ell}  \|(T_{\mathcal{W}\cdot\nabla})^{\mu_j} \alpha_j\|_{L^p} \prod_{1\leq l\neq j\leq m}\|(T_{\mathcal{W}\cdot\nabla})^{\mu_l}\alpha_l\|_{L^\infty} \Big) .
\end{align*}

For $R_q^{22}$, similarly as obtaining \eqref{es:Tw-el-Rq1}, we see that for every $0\leq q_1 \leq q+ N_0$,
\begin{align}\label{es:Tw-el-Rq2}
  & \quad \sum_{i=1}^m \|(T_{\mathcal{W}\cdot\nabla})^{\ell_2} R_q(\alpha_1,\cdots, \varphi_1(2^{-q_1} D) \alpha_i ,\cdots, \alpha_m) \|_{L^p} \nonumber \\
  & \lesssim  \sum_{i=1}^m  \min \bigg\{ \sum_{|\mu| \leq \ell_2} \|(T_{\mathcal{W}\cdot\nabla})^{\mu_i}\varphi_1(2^{-q_1}D)\alpha_i\|_{L^p} \prod_{j\neq i} \|(T_{\mathcal{W}\cdot\nabla})^{\mu_j} \alpha_j\|_{L^\infty}, \nonumber \\
  & \quad  \min_{1\leq j\neq i \leq m} \Big( \sum_{|\mu| \leq \ell_2} \|(T_{\mathcal{W}\cdot\nabla})^{\mu_i} \varphi_1(2^{-q_1}D)\alpha_i\|_{L^\infty}\|(T_{\mathcal{W}\cdot\nabla})^{\mu_j}\alpha_j\|_{L^p}
  \prod_{l\neq i,j} \|(T_{\mathcal{W}\cdot\nabla})^{\mu_l} \alpha_l\|_{L^\infty} \Big)\bigg\} \nonumber \\
  & \lesssim \min_{1\leq j \leq m} \Big( \sum_{|\tilde\mu|\leq \ell_2} \|(T_{\mathcal{W}\cdot\nabla})^{\tilde\mu_j}\alpha_j\|_{L^p}
  \prod_{l\neq j} \|(T_{\mathcal{W}\cdot\nabla})^{\tilde\mu_l} \alpha_l\|_{L^\infty} \Big),
\end{align}
thus by using \eqref{es:Tw-el-W1}, we deduce that
\begin{align*}
  R_q^{22}& \lesssim  \min_{1\leq j\leq m}\Big( \sum_{|\mu|\leq \ell} \|(T_{\mathcal{W}\cdot\nabla})^{\mu_j} \alpha_j\|_{L^p} \prod_{1\leq l\neq j\leq m}\|(T_{\mathcal{W}\cdot\nabla})^{\mu_l}\alpha_l\|_{L^\infty} \Big) .
\end{align*}

Next we consider $(T_{\mathcal{W}\cdot\nabla})^\ell R_q^4$, and by applying \eqref{eq:Tw-Rq}, one gets
\begin{align*}
  & \quad \|(T_{\mathcal{W}\cdot\nabla})^\ell R_q^4\|_{L^p} \leq \sum_{i=1}^m \Big\|(T_{\mathcal{W}\cdot\nabla})^\ell R_q(\alpha_1,\cdots, (T_{\mathcal{W}\cdot\nabla})\alpha_i,\cdots,\alpha_m) \Big\|_{L^p} \\
  & \lesssim \sum_{i=1}^m  \min \bigg\{ \sum_{|\mu|\leq \ell}
  \|(T_{\mathcal{W}\cdot\nabla})^{\mu_i+1} \alpha_i\|_{L^p} \prod_{j\neq i} \|(T_{\mathcal{W}\cdot\nabla})^{\mu_j} \alpha_j\|_{L^\infty}, \\
  & \quad \quad \;\min_{1\leq  j\neq i\leq m } \Big( \sum_{|\mu|\leq \ell} \|(T_{\mathcal{W}\cdot\nabla})^{\mu_j} \alpha_j\|_{L^p} \|(T_{\mathcal{W}\cdot\nabla})^{\mu_i+1} \alpha_i\|_{L^\infty} \prod_{1\leq l\leq m,l\neq i,j}\|(T_{\mathcal{W}\cdot\nabla})^{\mu_l} \alpha_l\|_{L^\infty}\Big) \bigg\} \\
  & \lesssim \sum_{i=1}^m \min \bigg\{ \sum_{|\mu|\leq \ell+1}  \|(T_{\mathcal{W}\cdot\nabla})^{\mu_i} \alpha_i\|_{L^p} \prod_{j\neq i} \|(T_{\mathcal{W}\cdot\nabla})^{\mu_j} \alpha_j\|_{L^\infty}, \\
  & \quad \quad \qquad \min_{1\leq  j\neq i\leq m } \Big(\sum_{|\mu|\leq \ell+1} \|(T_{\mathcal{W}\cdot\nabla})^{\mu_j} \alpha_j\|_{L^p}  \prod_{1\leq l\leq m,l\neq j}\|(T_{\mathcal{W}\cdot\nabla})^{\mu_l} \alpha_l\|_{L^\infty} \Big) \bigg\} \\
  & \lesssim \min_{1\leq j\leq m}\Big( \sum_{|\mu|\leq \ell+1}  \|(T_{\mathcal{W}\cdot\nabla})^{\mu_j} \alpha_j\|_{L^p} \prod_{1\leq l\neq j\leq m}\|(T_{\mathcal{W}\cdot\nabla})^{\mu_l}\alpha_l\|_{L^\infty} \Big).
\end{align*}
For the last term $(T_{\mathcal{W}\cdot\nabla})^\ell R_q^5$,
notice that 
for every $0\leq \mu_{m+1}\leq \ell$, 
\begin{align}\label{eq:Tw-nabSqW}
  \|(T_{\mathcal{W}\cdot\nabla})^{\mu_{m+1}} \nabla S_{q_1-1}\mathcal{W}\|_{L^\infty} & \leq
  \sum_{q_2=-1}^{q_1-1} 2^{-q_2 \sigma} \big( 2^{q_2 \sigma} \|(T_{\mathcal{W}\cdot\nabla})^{\mu_{m+1}} \Delta_{q_2} \nabla \mathcal{W}\|_{L^\infty} \big) \nonumber \\
  & \lesssim \|\nabla \mathcal{W}\|_{\BB^{\sigma,\mu_{m+1}}_{\infty,\mathcal{W}}} \lesssim \|\mathcal{W}\|_{\BB^{1+\sigma,\mu_{m+1}}_{\infty,\mathcal{W}}} <\infty,
\end{align}
then by applying the induction assumptions and arguing as \eqref{es:Tw-el-Rq2}, we have
\begin{align*}
  & \quad \|(T_{\mathcal{W}\cdot\nabla})^\ell R_q^5\|_{L^p} \lesssim \sum_{i=1}^m \sum_{q_1=0}^{ q+ N_0}
  \Big\|(T_{\mathcal{W}\cdot\nabla})^\ell R_q^{(i)}(\nabla S_{q_1-1}\mathcal{W},\alpha_1, \cdots, \nabla \Delta_{q_1} \alpha_i,\cdots, \alpha_m) \Big\|_{L^p} \\
  & \lesssim \sum_{i=1}^m \sum_{q_1=0}^{ q+ N_0} \min \bigg\{ \sum_{\mu_1+\cdots + \mu_{m+1}\leq \ell} 2^{-q}
  \|(T_{\mathcal{W}\cdot\nabla})^{\mu_{m+1}} \nabla S_{q_1-1}\mathcal{W}\|_{L^\infty} \cdot \\
  &\qquad \qquad \cdot 2^{q_1}\|(T_{\mathcal{W}\cdot\nabla})^{\mu_i} \varphi_1(2^{-q_1}D) \alpha_i\|_{L^p}
  \prod_{j\neq i} \|(T_{\mathcal{W}\cdot\nabla})^{\mu_j} \alpha_j\|_{L^\infty}, \\
  & \qquad \qquad \quad \min_{1\leq j\neq i\leq m} \Big( \sum_{\mu_1+\cdots + \mu_{m+1}\leq \ell} 2^{-q}
  \|(T_{\mathcal{W}\cdot\nabla})^{\mu_{m+1}} \nabla S_{q_1-1}\mathcal{W}\|_{L^\infty} \cdot \\
  & \qquad \qquad \qquad \qquad \cdot 2^{q_1}\|(T_{\mathcal{W}\cdot\nabla})^{\mu_i} \varphi_1(2^{-q_1}D) \alpha_i\|_{L^\infty}
  \|(T_{\mathcal{W}\cdot\nabla})^{\mu_j} \alpha_j\|_{L^p} \prod_{l\neq i,j}
  \|(T_{\mathcal{W}\cdot\nabla})^{\mu_l} \alpha_l\|_{L^\infty}\Big) \bigg\} \\
  & \lesssim \sum_{i=1}^m \sum_{q_1=0}^{q+N_0} 2^{q_1 -q}
  \min\bigg\{ \sum_{|\mu|\leq \ell} \sum_{\lambda_i=0}^{\mu_i}  \|(T_{\mathcal{W}\cdot\nabla})^{\lambda_i}\alpha_i\|_{L^p} \prod_{ j\neq i}\|(T_{\mathcal{W}\cdot\nabla})^{\mu_j}\alpha_j\|_{L^\infty}, \\
  & \qquad \quad \min_{1\leq j \neq i\leq m} \Big(\sum_{|\mu|\leq \ell} \sum_{\lambda_i=0}^{\mu_i} \|(T_{\mathcal{W}\cdot\nabla})^{\lambda_i} \alpha_i\|_{L^\infty}
  \|(T_{\mathcal{W}\cdot\nabla})^{\mu_j}\alpha_j\|_{L^p} \prod_{l\neq j,i}\|(T_{\mathcal{W}\cdot\nabla})^{\mu_l}\alpha_l\|_{L^\infty}\Big) \bigg\} \\
  & \lesssim \min_{1\leq j\leq m}\Big( \sum_{|\mu|\leq \ell}  \|(T_{\mathcal{W}\cdot\nabla})^{\mu_j} \alpha_j\|_{L^p} \prod_{1\leq l\neq j\leq m}\|(T_{\mathcal{W}\cdot\nabla})^{\mu_l}\alpha_l\|_{L^\infty} \Big) .
\end{align*}
Collecting the above estimates concludes estimate \eqref{eq:Tw-Rq} at the $(\ell+1)$-case.

Now we prove \eqref{eq:Tw-phi-es1}-\eqref{eq:BB-nab-es} at the ($\ell+1$)-case. We see that
\begin{align*}
  \|(T_{\mathcal{W}\cdot\nabla})^{\ell+1} \psi(2^{-q}D) \phi\|_{L^p} \leq \|(T_{\mathcal{W}\cdot\nabla})^\ell\psi(2^{-q} D) (T_{\mathcal{W}\cdot\nabla} \phi)\|_{L^p}
  + \|(T_{\mathcal{W}\cdot\nabla})^\ell[\psi(2^{-q}D), T_{\mathcal{W}\cdot\nabla}] \phi\|_{L^p} .
\end{align*}
The induction assumption guarantees that
\begin{align}\label{eq:Tw-phi-es1-1}
  \|(T_{\mathcal{W}\cdot\nabla})^\ell\psi(2^{-q} D)( T_{\mathcal{W}\cdot\nabla} \phi)\|_{L^p} \lesssim
  \sum_{\lambda=0}^\ell \|(T_{\mathcal{W}\cdot\nabla})^{\lambda+1} \phi\|_{L^p}
  \lesssim \sum_{\lambda=1}^{\ell+1} \|(T_{\mathcal{W}\cdot\nabla})^\lambda \phi\|_{L^p}.
\end{align}
Noting that
\begin{align}\label{eq:comm-exp2}
  & \quad\, [\psi(2^{-q}D),T_{\mathcal{W}\cdot\nabla}] \phi = \sum_{q_1\in \N,q_1\lesssim q} [\psi(2^{-q}D), S_{q_1-1}\mathcal{W} \cdot\nabla] \Delta_{q_1}\phi \nonumber \\
  & = \sum_{q_1\in \N, q_1\lesssim q} \int_{\R^d} \mathcal{F}^{-1}\psi(y) (S_{q_1-1}\mathcal{W}(x + 2^{-q}y) -S_{q_1-1}\mathcal{W}(x)) \cdot \nabla \Delta_{q_1} \phi(x + 2^{-q}y) \dd y \nonumber \\
  & = \sum_{q_1\in \N, q_1\lesssim q } 2^{-q} \int_0^1 \int_{\R^d} \big(\mathcal{F}^{-1}\psi(y)\big) y \cdot\nabla S_{q_1-1}\mathcal{W}(x + \delta 2^{-q}y) \cdot\nabla \Delta_{q_1} \phi(x + 2^{-q}y) \dd y \dd \delta,
\end{align}
where $q_1\lesssim q$ means that $q_1\leq q+N_0$ with some $N_0\in\N$,
we then use \eqref{eq:Tw-Rq} and \eqref{eq:Tw-nabSqW} to get
\begin{align}\label{eq:Tw-phi-es1-2}
  & \quad \|(T_{\mathcal{W}\cdot\nabla})^\ell [\psi(2^{-q}D),T_{\mathcal{W}\cdot\nabla}] \phi\|_{L^p} \nonumber \\
  & \lesssim 2^{-q} \sum_{q_1\in \N,q_1 \lesssim q }\sum_{\mu_1 + \mu_2\leq \ell}
  \|(T_{\mathcal{W}\cdot\nabla})^{\mu_1} \nabla S_{q_1-1}\mathcal{W}\|_{L^\infty} \|(T_{W\cdot\nabla})^{\mu_2} \nabla \Delta_{q_1}\phi\|_{L^p} \nonumber \\
  & \lesssim \sum_{q_1\in \N, q_1\lesssim q} \sum_{\mu_1 + \mu_2\leq \ell} 2^{q_1 -q}
  \|\mathcal{W}\|_{\BB^{1+\sigma,\mu_1}_{\infty,\mathcal{W}}}
  \|(T_{\mathcal{W}\cdot\nabla})^{\mu_2} \varphi_1(2^{-q_1}D) \phi\|_{L^p} \nonumber \\
  & \lesssim \sum_{0\leq\mu_2\leq \ell}  \sum_{\lambda_2 =0}^{\mu_2} \|(T_{\mathcal{W}\cdot\nabla})^{\lambda_2}\phi\|_{L^p}
  \lesssim \sum_{\lambda=0}^\ell  \|(T_{\mathcal{W}\cdot\nabla})^\lambda \phi\|_{L^p}.
\end{align}
Combining \eqref{eq:Tw-phi-es1-1} with \eqref{eq:Tw-phi-es1-2} leads to the desired estimate \eqref{eq:Tw-phi-es1} in the $(\ell+1)$-case.

Concerning \eqref{eq:Tw-es-key}, note that for $q=-1$,
\begin{align}
  & \quad \|(T_{\mathcal{W}\cdot\nabla})^{\ell+1} \Delta_{-1}\phi\|_{L^p} + \|(T_{\mathcal{W}\cdot\nabla})^{\ell+1} \nabla \Delta_{-1} \phi\|_{L^p} \nonumber \\
  & \lesssim \sum_{q_1 \leq N_\ell} \|S_{q_1-1} \mathcal{W}\cdot \nabla \Delta_{q_1} (T_{\mathcal{W}\cdot\nabla})^\ell \Delta_{-1}\phi\|_{L^p}
  + \sum_{q_1\leq N_\ell} \|S_{q_1-1}\mathcal{W}\cdot\nabla \Delta_{q_1} (T_{\mathcal{W}\cdot\nabla})^\ell \nabla \Delta_{-1}\phi\|_{L^p} \nonumber \\
  & \lesssim  \|\mathcal{W}\|_{L^\infty} \big( \|(T_{\mathcal{W}\cdot\nabla})^\ell \Delta_{-1}\phi\|_{L^p} + \|(T_{\mathcal{W}\cdot\nabla})^\ell \nabla \Delta_{-1}\phi\|_{L^p}\big) \lesssim \|\phi\|_{\BB^{s,\ell}_{p,r,\mathcal{W}}},
\end{align}
and for every $q\in \N$,
\begin{align}\label{decom:Tw-es1}
  &\; 2^{q s} \|(T_{\mathcal{W}\cdot\nabla})^{\ell+1} \Delta_q \phi\|_{L^p}
  + 2^{q (s-1)} \|(T_{\mathcal{W}\cdot\nabla})^{\ell+1} \nabla \Delta_q \phi\|_{L^p} \nonumber \\
  \leq &\; 2^{q s}  \|(T_{\mathcal{W}\cdot\nabla})^\ell \Delta_q (T_{\mathcal{W}\cdot\nabla} \phi)\|_{L^p}
  +  2^{q (s-1)} \|(T_{\mathcal{W}\cdot\nabla})^\ell \nabla\Delta_q (T_{\mathcal{W}\cdot\nabla} \phi)\|_{L^p} \nonumber \\
  &\; + 2^{q s} \|(T_{\mathcal{W}\cdot\nabla})^\ell [\Delta_q, T_{\mathcal{W}\cdot\nabla}]\phi\|_{L^p}
  + 2^{q (s-1)}  \|(T_{\mathcal{W}\cdot\nabla})^\ell [\nabla\Delta_q, T_{\mathcal{W}\cdot\nabla}]\phi\|_{L^p}.
\end{align}
The induction assumption implies that
\begin{align*}
  2^{q s} \|(T_{\mathcal{W}\cdot\nabla})^\ell \Delta_q (T_{\mathcal{W}\cdot\nabla} \phi)\|_{L^p}
  + 2^{q (s-1)}  \|(T_{\mathcal{W}\cdot\nabla})^\ell \nabla\Delta_q (T_{\mathcal{W}\cdot\nabla} \phi)\|_{L^p}
  \lesssim c_q  \|(T_{\mathcal{W}\cdot\nabla})\phi\|_{\BB^{s,\ell}_{p,r,\mathcal{W}}}
  \lesssim c_q \|\phi\|_{\BB^{s,\ell+1}_{p,r,\mathcal{W}}},
\end{align*}
where $\{c_q\}_{q\in \N}$ is such that $\|c_q\|_{\ell^r} = 1$.
For every $q\in \N$, according to the following formulas (analogous to \eqref{eq:comm-exp2})
\begin{align}\label{eq:comm-exp2b}
  & \quad [\Delta_q, T_{\mathcal{W}\cdot\nabla}]\phi = \sum_{q_1\in \N,q_1\sim q} [\varphi(2^{-q}D), S_{q_1-1}\mathcal{W}] \cdot\nabla \Delta_{q_1}\phi  \nonumber \\
  & = \sum_{q_1\in \N, q_1\sim q } 2^{-q} \int_0^1 \int_{\R^d} \big(\mathcal{F}^{-1}\varphi(y)\big) y \cdot\nabla S_{q_1-1}\mathcal{W}(x + \delta 2^{-q}y) \cdot\nabla \Delta_{q_1} \phi(x + 2^{-q}y) \dd y \dd \delta,
\end{align}
and
\begin{align}\label{eq:comm-exp3}
  & \quad [\nabla\Delta_q, T_{\mathcal{W}\cdot\nabla}]\phi = 2^q [\varphi_1(2^{-q}D), T_{\mathcal{W}\cdot\nabla}]\phi \nonumber \\
  & = \sum_{q_1\in \N, q_1\sim q } \int_0^1 \int_{\R^d} \big(\mathcal{F}^{-1}\varphi_1(y)\big) y \cdot\nabla S_{q_1-1}\mathcal{W}(x + \delta 2^{-q}y) \cdot\nabla \Delta_{q_1} \phi(x + 2^{-q}y) \dd y \dd \delta,
\end{align}
we can apply \eqref{eq:Tw-Rq} and \eqref{eq:Tw-nabSqW} to get
\begin{align*}
  & 2^{q s} \|(T_{\mathcal{W}\cdot\nabla})^\ell [\Delta_q, T_{\mathcal{W}\cdot\nabla}]\phi\|_{L^p}
  + 2^{q (s-1)} \|(T_{\mathcal{W}\cdot\nabla})^\ell [\nabla\Delta_q, T_{\mathcal{W}\cdot\nabla}]\phi\|_{L^p} \\
  \lesssim & 2^{q (s-1)}
  \sum_{q_1\in \N,q_1 \sim q }\sum_{\mu_1 + \mu_2\leq \ell}
  \|(T_{\mathcal{W}\cdot\nabla})^{\mu_1} \nabla S_{q_1-1}\mathcal{W}\|_{L^\infty}
  \|(T_{\mathcal{W}\cdot\nabla})^{\mu_2} \nabla \Delta_{q_1}\phi\|_{L^p} \\
  \lesssim &
  \sum_{q_1\in \N, q_1\sim q} \sum_{\mu_1 + \mu_2\leq \ell}
  \|\mathcal{W}\|_{\BB^{1+\sigma,\mu_1}_{\infty,\mathcal{W}}}
   2^{q_1(s-1)} \|(T_{\mathcal{W}\cdot\nabla})^{\mu_2}\nabla \Delta_{q_1} \phi\|_{L^p} \\
   \lesssim &  c_q \sum_{\mu_2=0}^\ell \|\phi\|_{\BB^{s,\mu_2}_{p,r,\mathcal{W}}}
   \lesssim  c_q \|\phi\|_{\BB^{s,\ell}_{p,r,\mathcal{W}}}.
\end{align*}
Gathering the above estimates implies \eqref{eq:Tw-es-key} in the $(\ell+1)$-case. 

We then consider \eqref{eq:BB-nab-es}.
In view of the induction assumption and \eqref{eq:Tw-es-key}, \eqref{eq:Tw-nabSqW},
we have 
\begin{align}
  & \quad 2^{qs } \| \Delta_q (T_{\mathcal{W}\cdot\nabla})^{\ell+1} \nabla \phi\|_{L^p} \nonumber \\
  & \lesssim 2^{qs }  \|\Delta_q (T_{\mathcal{W}\cdot\nabla})^\ell \nabla(T_{\mathcal{W}\cdot\nabla} \phi)\|_{L^p}
  + 2^{qs} \sum_{j\in \N,j\sim q} \|\Delta_q (T_{\mathcal{W}\cdot\nabla})^\ell (\nabla S_{j-1} \mathcal{W} \cdot\nabla \Delta_j \phi)\|_{L^p} \nonumber \\
  & \lesssim c_q \|\nabla (T_{\mathcal{W}\cdot\nabla} \phi)\|_{\BB^{s,\ell}_{p,r,\mathcal{W}}} +  2^{qs} \sum_{j\in \N,j\sim q} \sum_{\mu_1+\mu_2\leq \ell}
  \|(T_{\mathcal{W}\cdot\nabla})^{\mu_1}(\nabla S_{j-1}\mathcal{W})\|_{L^\infty}
  \|(T_{\mathcal{W}\cdot\nabla})^{\mu_2} (\nabla \Delta_j\phi)\|_{L^p} \nonumber \\
  & \lesssim c_q \|(T_{\mathcal{W}\cdot\nabla})\phi\|_{\BB^{s+1,\ell}_{p,r,\mathcal{W}}}
  + \sum_{j\in\N,j\sim q}\sum_{\mu_1 +\mu_2\leq \ell}
  2^{js} \|\mathcal{W}\|_{\BB^{1+\sigma,\mu_1}_{\infty,\mathcal{W}}}
  \|(T_{\mathcal{W}\cdot\nabla})^{\mu_2}(\Delta_j\nabla\phi)\|_{L^p} \nonumber \\
  & \lesssim c_q \|\phi\|_{\BB^{s+1,\ell+1}_{p,r,\mathcal{W}}} + c_q \|\mathcal{W}\|_{\BB^{1+\sigma,\ell}_{\infty,\mathcal{W}}}
  \|\nabla \phi\|_{\BB^{s,\ell}_{p,r,\mathcal{W}}}
  \lesssim c_q \|\phi\|_{\BB^{s+1,\ell+1}_{p,r,\mathcal{W}}} + c_q \|\phi\|_{\BB^{s+1,\ell}_{p,r,\mathcal{W}}}
  \lesssim c_q \|\phi\|_{\BB^{s+1,\ell+1}_{p,r,\mathcal{W}}}, \nonumber
\end{align}
with $\{c_q\}_{q\geq -1}$ satisfying $\|c_q\|_{\ell^r}=1$, and it clearly leads to $\|(T_{\mathcal{W}\cdot\nabla})^{\ell+1}\nabla \phi\|_{B^s_{p,r}} \lesssim \|\phi\|_{\BB^{s+1,\ell+1}_{p,r,\mathcal{W}}}$.
Hence, we find
\begin{align}
  \|\nabla \phi\|_{\BB^{s,\ell+1}_{p,r,\mathcal{W}}}
  = \|(T_{\mathcal{W}\cdot\nabla})^{\ell+1}\nabla \phi\|_{B^s_{p,r}} + \|\nabla \phi\|_{\BB^{s,\ell}_{p,r,\mathcal{W}}}
  \lesssim \|\phi\|_{\BB^{s+1,\ell+1}_{p,r}} + \|\phi\|_{\BB^{s+1,\ell}_{p,r,\mathcal{W}}} \lesssim \|\phi\|_{\BB^{s+1,\ell+1}_{p,r,\mathcal{W}}},
\end{align}
which corresponds to \eqref{eq:BB-nab-es} in the $(\ell+1)$-case.

Finally, the induction method completes the proof of \eqref{eq:Tw-Rq}-\eqref{eq:BB-nab-es}.


\subsection{Proof of Lemmas \ref{lem:Tw-es}}\label{subsec:lem5.2}
(1)  We prove estimates \eqref{eq:Tw-es-key2}-\eqref{eq:Tw-es-key3} by the induction method.
It is obvious that \eqref{eq:Tw-es-key2}-\eqref{eq:Tw-es-key3} are correct for $\ell=0$.
Assume they hold for every $\ell'\in \{0,\cdots,\ell\}$ with some $\ell\in \{0,1,\cdots, k-1\}$,
we show that \eqref{eq:Tw-es-key2}-\eqref{eq:Tw-es-key3} are also satisfied for the $(\ell+1)$-case.

Concerning \eqref{eq:Tw-es-key2} in the $(\ell+1)$-case, noticing that (similar to \eqref{exp:Tw-mD0} and \eqref{exp:mD-comm0})
\begin{align}\label{exp:mDnab}
  (T_{\mathcal{W}\cdot\nabla})m(D) \nabla f & = \sum_{q_1\in \N} S_{q_1-1}\mathcal{W}\cdot \nabla m(D)\widetilde{\psi}(2^{-{q_1}} D) \nabla \Delta_{q_1} f \nonumber \\
  & = - \sum_{q_1\in \N}[m(D)\widetilde{\psi}(2^{-{q_1}}D)\nabla, S_{q_1-1}\mathcal{W}\cdot\nabla] \Delta_{q_1} f + m(D)\nabla (T_{\mathcal{W}\cdot\nabla}) f ,
\end{align}
and
\begin{align}\label{exp:mDnab-com}
  & \quad [m(D)\widetilde{\psi}(2^{-{q_1}}D)\nabla, S_{q_1-1}\mathcal{W}\cdot\nabla] \Delta_{q_1} f \nonumber \\
  & = 2^{q_1} \int_{\R^d} \nabla \widetilde{h}(y) \big(S_{q_1-1}\mathcal{W}(x+ 2^{-{q_1}}y) - S_{q_1-1}\mathcal{W}(x) \big)
  \cdot \nabla \Delta_{q_1} f(x + 2^{-{q_1}}y ) \dd y \nonumber \\
  & = \int_0^1 \int_{\R^d} \nabla \widetilde{h}(y) y \cdot \nabla S_{q_1-1}\mathcal{W}(x + \delta 2^{-{q_1}}y) \cdot\nabla \Delta_{q_1} f(x+ 2^{-{q_1}}y) \dd y \dd \delta,
\end{align}
and by using the induction assumption, Lemma \ref{lem:Rq} and \eqref{eq:Tw-nabSqW},
we see that for every $q\geq -1$,
\begin{align*}
  & \quad\; \|\Delta_q(T_{\mathcal{W}\cdot\nabla})^{\ell +1} \nabla m(D) \phi \|_{L^p} \nonumber \\
  &  \leq \|\Delta_q (T_{\mathcal{W}\cdot\nabla})^\ell \nabla m(D) (T_{\mathcal{W}\cdot\nabla}\phi)\|_{L^p}
  + \|\Delta_q (T_{\mathcal{W}\cdot\nabla})^\ell ( [m(D)\nabla, T_{\mathcal{W}\cdot\nabla}]\phi)\|_{L^p} \\
  & \lesssim 2^q \sum_{\lambda=0}^\ell \|(T_{\mathcal{W}\cdot\nabla})^{\lambda+1} \phi\|_{L^p}
  + \sum_{q_1\in \N, q_1\sim q} \sum_{\mu_1+\mu_2\leq \ell}  \|(T_{\mathcal{W}\cdot\nabla})^{\mu_1} S_{q_1-1}\nabla \mathcal{W}\|_{L^\infty}
  \|(T_{\mathcal{W}\cdot\nabla})^{\mu_2} \nabla \Delta_{q_1} \phi\|_{L^p} \\
  & \lesssim \sum_{\lambda=1}^{\ell+1} 2^q  \|(T_{\mathcal{W}\cdot\nabla})^\lambda \phi\|_{L^p}  +
  \sum_{\lambda=0}^\ell 2^q  \|(T_{\mathcal{W}\cdot\nabla})^\lambda \phi\|_{L^p}
  \lesssim \sum_{\lambda=0}^{\ell+1} 2^q   \|(T_{\mathcal{W}\cdot\nabla})^\lambda \phi\|_{L^p},
\end{align*}
as desired. Hence the induction method guarantees \eqref{eq:Tw-es-key2}.

For \eqref{eq:Tw-es-key3} in the $(\ell+1)$-case, the induction assumption ensures that for every $q\in \N$,
\begin{align*}
  &\quad 2^q \|(T_{\mathcal{W}\cdot\nabla})^{\ell+1} \Delta_q \phi\|_{L^p} \leq 2^q \|(T_{\mathcal{W}\cdot\nabla})^\ell \Delta_q (T_{\mathcal{W}\cdot\nabla}\phi)\|_{L^p}
  + 2^q \|(T_{\mathcal{W}\cdot\nabla})^\ell [\Delta_q, T_{\mathcal{W}\cdot\nabla}]\phi\|_{L^p} \\
  & \lesssim \sum_{q_1\in \N, |q_1-q|\leq N_\ell} \sum_{\lambda=0}^\ell  \|(T_{\mathcal{W}\cdot\nabla})^\lambda \Delta_{q_1} (T_{\nabla \mathcal{W}\cdot\nabla}\phi)\|_{L^p}
  + \sum_{q_1\in \N, |q_1-q|\leq N_\ell} \sum_{\lambda=0}^\ell  \|(T_{\mathcal{W}\cdot\nabla})^{\lambda+1} \Delta_{q_1} \nabla \phi \|_{L^p} \\
  & \quad \; + \sum_{q_1\in \N, |q_1-q|\leq N_\ell} \sum_{\lambda=0}^\ell  \|(T_{\mathcal{W}\cdot\nabla})^\lambda ([\Delta_{q_1}, T_{\mathcal{W}\cdot\nabla}]\nabla\phi)\|_{L^p}
  +  2^q \|(T_{\mathcal{W}\cdot\nabla})^\ell [\Delta_q, T_{\mathcal{W}\cdot\nabla}]\phi\|_{L^p} \\
  & = : \mathcal{I}_q^1 + \mathcal{I}_q^2 + \mathcal{I}_q^3 + \mathcal{I}_q^4.
\end{align*}
By virtue of \eqref{eq:Tw-Rq}-\eqref{eq:Tw-phi-es1}, we treat $\mathcal{I}_q^1$ as follows
\begin{align*}
  \mathcal{I}_q^1 & \lesssim \sum_{q_1\in \N,|q_1-q|\leq N_\ell} \sum_{q_2\in \N,|q_2-q_1|\leq 4}\sum_{\lambda=0}^\ell
  \|(T_{\mathcal{W}\cdot\nabla})^\lambda \Delta_{q_1} (S_{q_2-1}\nabla \mathcal{W}\cdot\nabla \Delta_{q_2} \phi)\|_{L^p} \\
  & \lesssim \sum_{q_2\in \N, |q_2-q|\leq N_\ell+4} \sum_{\lambda =0}^\ell
  \|(T_{\mathcal{W}\cdot\nabla})^\lambda (S_{q_2-1}\nabla \mathcal{W} \cdot\nabla \Delta_{q_2}\phi)\|_{L^p} \\
  & \lesssim \sum_{q_2\in \N, |q_2-q|\leq N_\ell + 4} \sum_{\lambda=0}^\ell \sum_{\lambda_1+\lambda_2\leq \lambda}
  \|(T_{\mathcal{W}\cdot\nabla})^{\lambda_1} S_{q_2-1}\nabla \mathcal{W}\|_{L^\infty}
  \|(T_{\mathcal{W}\cdot\nabla})^{\lambda_2} \nabla \Delta_{q_2} \phi\|_{L^p} \\
  & \lesssim \sum_{q_2 \in \N, |q_2-q|\leq N_\ell+4} \sum_{\lambda=0}^\ell
  \|(T_{\mathcal{W}\cdot\nabla})^\lambda \Delta_{q_2} \nabla \phi\|_{L^p} .
\end{align*}
The estimation of $\mathcal{I}_q^2$ and $\mathcal{I}_q^4$ is relatively easy:
\begin{align*}
  \mathcal{I}_q^2 \lesssim \sum_{q_1\in \N, |q_1-q|\leq N_\ell} \sum_{\lambda=1}^{\ell+1} \|(T_{\mathcal{W}\cdot\nabla})^\lambda \Delta_{q_1} \nabla \phi \|_{L^p},
\end{align*}
\begin{align*}
   \mathcal{I}_q^4 & \lesssim \sum_{q_1\in\N, |q_1-q|\leq 4} \sum_{\mu_1+\mu_2\leq \ell} \|(T_{\mathcal{W}\cdot\nabla})^{\mu_1}\nabla S_{q_1-1} \mathcal{W}\|_{L^\infty}
   \|(T_{\mathcal{W}\cdot\nabla})^{\mu_2}\nabla\Delta_{q_1}\phi\|_{L^p} \\
   & \lesssim \sum_{q_1\in\N, |q_1-q|\leq 4} \sum_{\mu_2 =0}^\ell \|(T_{\mathcal{W}\cdot\nabla})^{\mu_2} \Delta_{q_1} \nabla\phi\|_{L^p}.
\end{align*}
For $\mathcal{I}_q^3$, owing to \eqref{eq:Tw-phi-es1}, \eqref{eq:Tw-nabSqW}, \eqref{eq:comm-exp2b} and using the fact that $\Delta_{q_2} = \Delta_{q_2} \widetilde{\Delta}_{q_2}$ for $q_2\in \N$, we get
\begin{align*}
  \mathcal{I}_q^3 & \lesssim \sum_{q_2\in \N, |q_2-q|\leq N_\ell+4} \sum_{\lambda=0}^\ell \sum_{\lambda_1+\lambda_2\leq \lambda} 2^{-q_2}
  \|(T_{\mathcal{W}\cdot\nabla})^{\lambda_1} S_{q_2-1}\nabla \mathcal{W}\|_{L^\infty} \|(T_{\mathcal{W}\cdot\nabla})^{\lambda_2}\nabla \Delta_{q_2} \nabla\phi\|_{L^p} \\
  & \lesssim \sum_{q_2\in \N, |q_2-q|\leq N_\ell+4} \sum_{\lambda=0}^\ell  2^{-q_2}
  \|(T_{\mathcal{W}\cdot\nabla})^\lambda \nabla \Delta_{q_2} \nabla\phi\|_{L^p} \\
  & \lesssim \sum_{q_2\in \N, |q_2-q|\leq N_\ell+4} \sum_{\lambda=0}^\ell
  \|(T_{\mathcal{W}\cdot\nabla})^\lambda \varphi_1(2^{-q_2} D) \widetilde\Delta_{q_2} \nabla\phi\|_{L^p} \\
  & \lesssim \sum_{q_2\in \N, |q_2-q|\leq N_\ell+4} \sum_{\lambda=0}^\ell
  \|(T_{\mathcal{W}\cdot\nabla})^\lambda \widetilde\Delta_{q_2} \nabla\phi\|_{L^p}
  \lesssim \sum_{q_3\in \N, |q_3-q|\leq N_\ell+5} \sum_{\lambda=0}^\ell
  \|(T_{\mathcal{W}\cdot\nabla})^\lambda \Delta_{q_3} \nabla\phi\|_{L^p}.
\end{align*}
Collecting the above estimates yields \eqref{eq:Tw-es-key3} in the $(\ell+1)$-case and thus finishes the proof of \eqref{eq:Tw-es-key3}.

(2) We first consider \eqref{es:Tw-parap-1} with $s<0$. In light of the Fourier support property, for every $\lambda\in \N$, there exists a positive integer $N_\lambda\in \N$
such that
\begin{align*}
  \Delta_q (T_{\mathcal{W}\cdot\nabla})^\lambda T_v w = \sum_{q_1\in \N,|q_1-q|\leq N_\lambda} \Delta_q (T_{\mathcal{W}\cdot\nabla})^\lambda (S_{q_1-1} v \Delta_{q_1} w) .
\end{align*}
By virtue of \eqref{eq:Tw-Rq}, and using the following estimates that for $q_1\in \N$,
\begin{align}\label{eq:Tw-fact}
  \|(T_{\mathcal{W}\cdot\nabla})^{\mu_1} S_{q_1-1}v\|_{L^\infty} \leq \sum_{q'\leq q_1-1} \|(T_{\mathcal{W}\cdot\nabla})^{\mu_1}\Delta_{q'}v\|_{L^\infty}
  \lesssim \|v\|_{\BB^{0,\mu_1}_\mathcal{W}},
\end{align}
\begin{align*}
  \|(T_{\mathcal{W}\cdot\nabla})^{\mu_1} S_{q_1-1} v\|_{L^p}
  & \leq \sum_{q'\leq q_1-1} \|(T_{\mathcal{W}\cdot\nabla})^{\mu_1}\Delta_{q'}v\|_{L^p} \\
  & \lesssim  \sum_{q'\leq q_1-1} 2^{-q's} \big( 2^{q's}\|(T_{\mathcal{W}\cdot\nabla})^{\mu_1}\Delta_{q'}v\|_{L^p} \big)
  \lesssim 2^{-q_1s}\|v\|_{\BB^{s,\mu_1}_{p,\infty,\mathcal{W}}}, 
\end{align*}
 we get that for every $\lambda \in \{0,1,\cdots, \ell\}$,
\begin{align*}
  & \quad\; 2^{qs} \|\Delta_q (T_{\mathcal{W}\cdot\nabla})^\lambda T_v w\|_{L^p} \\
  & \lesssim
  \sum_{|q_1-q|\leq N_\lambda} \min \bigg\{ \sum_{\mu_1+ \mu_2\leq\lambda} 2^{q_1 s} \|(T_{\mathcal{W}\cdot\nabla})^{\mu_1} S_{q_1-1} v\|_{L^\infty} \|(T_{\mathcal{W}\cdot\nabla})^{\mu_2} \Delta_{q_1} w\|_{L^p},  \\
  & \qquad \qquad\qquad \qquad \sum_{\mu_1+ \mu_2\leq\lambda} 2^{q_1 s}
  \|(T_{\mathcal{W}\cdot\nabla})^{\mu_1} S_{q_1-1} v\|_{L^p} \|(T_{\mathcal{W}\cdot\nabla})^{\mu_2} \Delta_{q_1} w\|_{L^\infty} \bigg \} \\
  & \lesssim c_q \min\bigg\{ \sum_{\mu_1=0}^\ell \sum_{\mu_2=0}^{\ell - \mu_1}  \|v\|_{\BB^{0,\mu_1}_\mathcal{W}}  \Big(\sum_{\lambda_2=0}^{\mu_2} \|(T_{\mathcal{W}\cdot\nabla})^{\lambda_2} w\|_{B^s_{p,r}}\Big),
  \sum_{\mu_1=0}^\ell \sum_{\mu_2=0}^{\ell - \mu_1}  \|v\|_{\BB^{s,\mu_1}_{p,r,\mathcal{W}}}
  \Big(\sum_{\lambda_2=0}^{\mu_2} \|(T_{\mathcal{W}\cdot\nabla})^{\lambda_2} w\|_{B^0_{\infty,1}} \Big)\bigg\} \\
  & \lesssim c_q  \min\Big\{ \sum_{\mu_1=0}^\ell \|v\|_{\BB^{0,\mu_1}_\mathcal{W}} \|w\|_{\BB^{s,\ell -\mu_1}_{p,r,\mathcal{W}}} ,
  \sum_{\mu_1=0}^\ell \|v\|_{\BB^{s,\mu_1}_{p,r,\mathcal{W}}} \|w\|_{\BB^{0,\ell-\mu_1}_\mathcal{W}} \Big\},
\end{align*}
where $\{c_q\}_{q\geq -1}$ is such that $\|c_q\|_{\ell^r} = 1$. By taking the $\ell^r$-norm and summing $\lambda$ from $0$ to $\ell$, we finish the proof of \eqref{es:Tw-parap-1} with $s<0$.

The estimation of \eqref{es:Tw-parap-1b} is quite similar, and from \eqref{eq:Tw-Rq} and \eqref{eq:Tw-fact}, we easily see that
\begin{align*}
  \|T_v w\|_{\BB^{0,\ell}_{p,r,\mathcal{W}}}
  \lesssim \sum_{\mu_1=0}^\ell \sum_{\mu_2=0}^{\ell -\mu_1}
  \|v\|_{\BB^{0,\mu_1}_\mathcal{W}}
  \Big(\sum_{\lambda_2=0}^{\mu_2} \|(T_{\mathcal{W}\cdot\nabla})^{\lambda_2} w\|_{B^0_{p,r}}\Big)
  \lesssim \sum_{\mu_1=0}^\ell \|v\|_{\BB^{0,\mu_1}_\mathcal{W}} \|w\|_{\BB^{0,\ell -\mu_1}_{p,r,\mathcal{W}}} .
\end{align*}

We then consider \eqref{es:Tw-parap-2} for every $s <1$. Note that
\begin{align*}
  \Delta_q (T_{\mathcal{W}\cdot\nabla})^\lambda T_{\nabla w}v = \sum_{q_1\in \N,|q_1-q|\leq N_\lambda} \Delta_q (T_{\mathcal{W}\cdot\nabla})^\lambda (S_{q_1-1} \nabla w\, \Delta_{q_1} v),
\end{align*}
with $N_\lambda\in \N$ an integer, and using the following estimate (thanks to \eqref{eq:Tw-es-key2} and \eqref{eq:Tw-es-key})
\begin{align*}
  \|(T_{\mathcal{W}\cdot\nabla})^{\mu_1} S_{q_1-1} \nabla w\|_{L^\infty} & \leq \sum_{q'\leq q_1-1} \|(T_{\mathcal{W}\cdot\nabla})^{\mu_1} \Delta_{q'} \nabla w\|_{L^\infty} \\
  & \lesssim \sum_{q'\leq q_1-1} \sum_{\lambda_1=0}^{\mu_1} 2^{q'}  \|(T_{\mathcal{W}\cdot\nabla})^{\lambda_1} \Delta_{q'}w\|_{L^\infty} \\
  & \lesssim 2^{q_1(1-s)} \sum_{\lambda_1=0}^{\mu_1} \|(T_{\mathcal{W}\cdot\nabla})^{\lambda_1} w\|_{B^s_{\infty,\infty}} \lesssim 2^{q_1(1-s)} \|w\|_{\BB^{s,\mu_1}_{\infty,\mathcal{W}}},
\end{align*}
we deduce that for every $\lambda \in \{0,\cdots, \ell\}$,
\begin{align*}
  &\quad 2^{qs} \|\Delta_q (T_{\mathcal{W}\cdot\nabla})^\lambda T_{\nabla w} v\|_{L^p} \\
  & \lesssim 2^{qs} \sum_{|q_1-q|\leq N_\lambda} \sum_{\mu_1+ \mu_2\leq\lambda}
  \|(T_{\mathcal{W}\cdot\nabla})^{\mu_1} S_{q_1-1} \nabla w\|_{L^\infty} \|(T_{\mathcal{W}\cdot\nabla})^{\mu_2} \Delta_{q_1} v\|_{L^p} \\
  & \lesssim \sum_{\mu_1+ \mu_2\leq \ell} \sum_{q_1\in \N, |q_1-q|\leq N_\ell} 2^{q_1 } \|(T_{\mathcal{W}\cdot\nabla})^{\mu_2} \Delta_{q_1} v\|_{L^p}
  \|w\|_{\BB^{s,\mu_1}_{\infty,\mathcal{W}}} \\
  & \lesssim c_q \sum_{\mu_1=0}^\ell \sum_{\mu_2=0}^{\ell-\mu_1}\Big(\sum_{\lambda_2=0}^{\mu_2}\|(T_{\mathcal{W}\cdot\nabla})^{\lambda_2} v\|_{B^1_{p,r}}\Big)
   \|w\|_{\BB^{s,\mu_1}_{\infty,\mathcal{W}}}
  \lesssim c_q \sum_{\mu_1=0}^\ell \|v\|_{\BB^{1,\ell -\mu_1}_{p,r,\mathcal{W}}} \|w\|_{\BB^{s,\mu_1}_{\infty,\mathcal{W}}},
\end{align*}
where $\{c_q\}_{q\geq -1}$ is such that $\|c_q\|_{\ell^r} = 1$. Then it directly implies the desired estimate \eqref{es:Tw-parap-2}.

For the estimation of \eqref{es:Tw-parap-2c}, noticing that
\begin{align}\label{eq:fact4}
  \|(T_{\mathcal{W}\cdot\nabla})^{\mu_1} S_{q_1-1}\nabla w\|_{L^\infty}
  \leq \sum_{q'\leq q_1-1} \|(T_{\mathcal{W}\cdot\nabla})^{\mu_1} \Delta_{q'}\nabla w\|_{L^\infty}
  \lesssim \|\nabla w\|_{\BB^{0,\mu_1}_\mathcal{W}},
\end{align}
we obtain that for every $s\in \R$ and $\lambda\in \{0,1,\cdots,\ell\}$,
\begin{align}
  &\quad 2^{qs} \|\Delta_q (T_{\mathcal{W}\cdot\nabla})^\lambda T_{\nabla w} v\|_{L^p} \nonumber \\
  & \lesssim \sum_{\mu_1+ \mu_2\leq \mu} \sum_{q_1\in \N, q_1\sim q} 2^{q_1 s} \|(T_{\mathcal{W}\cdot\nabla})^{\mu_2 }\Delta_{q_1} v\|_{L^p}
  \|(T_{\mathcal{W}\cdot\nabla})^{\mu_1} S_{q_1-1}\nabla w\|_{L^\infty} \nonumber \\
  & \lesssim c_q \sum_{\mu_1=0}^\ell \sum_{\mu_2=0}^{\ell-\mu_1} \Big(\sum_{\lambda_2=0}^{\mu_2} \|(T_{\mathcal{W}\cdot\nabla})^{\lambda_2} v\|_{B^s_{p,r}} \Big)
  \|\nabla w\|_{\BB^{0,\mu_1}_\mathcal{W}}
  \lesssim c_q \sum_{\mu_1 =0}^\ell \|\nabla w\|_{\BB^{0,\mu_1}_\mathcal{W}} \|v\|_{\BB^{s,\ell-\mu_1}_{p,r,\mathcal{W}}}, \nonumber
\end{align}
then taking the $\ell^r$-norm on $\{q\geq -1\}$ and summing over $\lambda$ lead to the estimate \eqref{es:Tw-parap-2c}.

(3) For every $\lambda \in \N$, there exists an integer $N_\lambda'>0$ such that
\begin{align}\label{decom:Tw-el-Rem}
  & \quad\; \Delta_q (T_{\mathcal{W}\cdot\nabla})^\lambda R(v\cdot,\nabla w) = \sum_{q_1\geq \max\{ q- N_\lambda',-1\}}
  \Delta_q (T_{\mathcal{W}\cdot\nabla})^\lambda (\Delta_{q_1} v\cdot \nabla \widetilde\Delta_{q_1} w ) \nonumber \\
  & = \sum_{q_1\geq \max\{q-N_\lambda' ,3\}} \Delta_q (T_{\mathcal{W}\cdot\nabla})^\lambda \nabla \cdot(\Delta_{q_1}v\, \widetilde\Delta_{q_1} w)
  + \sum_{q_1\leq 3} 1_{\{q\leq N_\lambda' +3\}} \Delta_q (T_{\mathcal{W}\cdot\nabla})^\lambda (\Delta_{q_1}v\,\cdot\nabla \widetilde\Delta_{q_1} w) \nonumber \\
  & = : \Upsilon_{q,\lambda}^1 + \Upsilon_{q,\lambda}^2 .
\end{align}
By applying \eqref{eq:Tw-es-key2} and Lemma \ref{lem:Rq}, we find that for every $\lambda \in \{0,1,\cdots, \ell\}$,
\begin{align*}
  2^{qs} \|\Upsilon_{q,\lambda}^1\|_{L^p}
  & \leq  2^{qs}  \sum_{q_1\geq \max\{q-N_\lambda', 3\}}
  \|\Delta_q (T_{\mathcal{W}\cdot\nabla})^\lambda \nabla \cdot(\Delta_{q_1}v\, \widetilde\Delta_{q_1} w)\|_{L^p} \\
  & \lesssim \sum_{q_1\geq \max\{q-N_\lambda',3\}}\sum_{\mu=0}^\lambda 2^{q(1+s)}
  \|(T_{\mathcal{W}\cdot\nabla})^\mu (\Delta_{q_1}v\, \widetilde\Delta_{q_1}w) \|_{L^p} \\
  & \lesssim \sum_{q_1\geq \max\{q-N_\lambda',3\}} 2^{(q-q_1)(1+s)}\min \bigg\{ \sum_{\mu_1+\mu_2\leq \ell}
  2^{q_1(1+s)}  \|(T_{\mathcal{W}\cdot\nabla})^{\mu_1}\Delta_{q_1}v\|_{L^p}
  \|(T_{\mathcal{W}\cdot\nabla})^{\mu_2} \widetilde\Delta_{q_1}w\|_{L^\infty}, \\
  & \qquad \qquad \qquad \qquad \quad \sum_{\mu_1+\mu_2\leq \ell}
  2^{q_1(1+s)}  \|(T_{\mathcal{W}\cdot\nabla})^{\mu_1}\Delta_{q_1}v\|_{L^\infty}
  \|(T_{\mathcal{W}\cdot\nabla})^{\mu_2} \widetilde\Delta_{q_1}w\|_{L^p}\bigg\} \\
  & \lesssim c_q \min \bigg\{ \sum_{\mu_1+\mu_2\leq \ell} \Big\|
  2^{q_1(1+s)} \|(T_{\mathcal{W}\cdot\nabla})^{\mu_1}\Delta_{q_1}v\|_{L^p}
  \|(T_{\mathcal{W}\cdot\nabla})^{\mu_2} \widetilde\Delta_{q_1}w\|_{L^\infty}\Big\|_{\ell^r(\{q_1\geq 3\})}, \\
  & \qquad\qquad \quad  \sum_{\mu_1+\mu_2\leq \ell} \Big\|  2^{q_1(1+s)} \|(T_{\mathcal{W}\cdot\nabla})^{\mu_1}\Delta_{q_1}v\|_{L^\infty}
  \|(T_{\mathcal{W}\cdot\nabla})^{\mu_2} \widetilde\Delta_{q_1}w\|_{L^p}\Big\|_{\ell^r(\{q_1\geq 3\})} \bigg\},
\end{align*}
where $\{c_q\}_{q\geq -1}$ is such that $\|c_q\|_{\ell^r}= 1$. Thanks to estimates \eqref{eq:Tw-es-key} and \eqref{eq:Tw-es-key3}, we deduce that
\begin{align*}
  & \quad\; \big\| 2^{q_1} \|(T_{\mathcal{W}\cdot\nabla})^{\mu_2} \widetilde\Delta_{q_1}w\|_{L^\infty} \big\|_{\ell^\infty(\{q\geq 3\})} \nonumber \\
  & \lesssim  \big\| 2^{q_1} \|(T_{\mathcal{W}\cdot\nabla})^{\mu_2} \Delta_{q_1}w\|_{L^\infty}
  \big\|_{\ell^\infty(\{q_1\geq 2\})} \\
  & \lesssim  \Big\| \sum_{q_2\in \N,q_2\sim q_1} \sum_{\lambda_2=0}^{\mu_2}
  \|(T_{\mathcal{W}\cdot\nabla})^{\lambda_2} \Delta_{q_2} \nabla w\|_{L^\infty}\Big\|_{\ell^\infty(\{q_1\geq 2 \})} \\
  & \lesssim \sum_{\lambda_2=0}^{\mu_2} \big\| \|(T_{\mathcal{W}\cdot\nabla})^{\lambda_2} \Delta_{q_2} \nabla w\|_{L^\infty}\big\|_{\ell^\infty(\{q_2\in \N\})}
  \lesssim \sum_{\lambda_2=0}^{\mu_2} \|(T_{\mathcal{W}\cdot\nabla})^{\lambda_2} \nabla w\|_{B^{0,\ell- \mu_1-\lambda_2}_{\infty,\infty}} ,
\end{align*}
and
\begin{align*}
  \big\|2^{q_1(1+s)}  \|(T_{\mathcal{W}\cdot\nabla})^{\mu_2} \widetilde\Delta_{q_1}w\|_{L^p}\big\|_{\ell^r(\{q_1\geq 3\})}
  & \lesssim  \sum_{\lambda_2=0}^{\mu_2} \big\| 2^{q_2 s} \|(T_{\mathcal{W}\cdot\nabla})^{\lambda_2} \Delta_{q_2} \nabla w\|_{L^p} \big\|_{\ell^r(\{q_2\in \N\})} \\
  & \lesssim \sum_{\lambda_2=0}^{\mu_2} \|(T_{\mathcal{W}\cdot\nabla})^{\lambda_2} \nabla w\|_{B^s_{p,r}},
\end{align*}
and
\begin{align*}
  \big\| 2^{q_1} \|(T_{\mathcal{W}\cdot\nabla})^{\mu_1}\Delta_{q_1}v\|_{L^\infty} \big\|_{\ell^\infty(\{q_1\geq 3\})} \lesssim
  \|v\|_{\BB^{1,\mu_1}_{\infty,\mathcal{W}}},
\end{align*}
which immediately leads to that
\begin{align}\label{es:Ups-1}
  & \quad 2^{qs} \|\Upsilon_{q,\lambda}^1\|_{L^p} \nonumber \\
  & \lesssim c_q \min\bigg\{ \sum_{\mu_1=0}^\ell \sum_{\mu_2=0}^{\ell-\mu_1} \|v\|_{\BB^{0,\mu_1}_{\infty,\mathcal{W}}}
  \Big(\sum_{\lambda_2=0}^{\mu_2} \|(T_{\mathcal{W}\cdot\nabla})^{\lambda_2} \nabla w\|_{B^s_{p,r}}\Big), \nonumber \\
  &\qquad\qquad\quad \sum_{\mu_1=0}^\ell \sum_{\mu_2=0}^{\ell-\mu_1}  \|v\|_{\BB^{s,\mu_1}_{p,r,\mathcal{W}}}
  \Big(\sum_{\lambda_2=0}^{\mu_2} \|(T_{\mathcal{W}\cdot\nabla})^{\lambda_2} \nabla w\|_{B^0_{\infty,\infty}} \Big), \nonumber \\
  &\qquad\qquad\quad \sum_{\mu_1=0}^\ell \sum_{\mu_2=0}^{\ell-\mu_1} \|v\|_{\BB^{1,\mu_1}_{\infty,\mathcal{W}}}
  \Big(\sum_{\lambda_2=0}^{\mu_2} \|(T_{\mathcal{W}\cdot\nabla})^{\lambda_2} w\|_{B^s_{p,r}} \Big)\bigg\} \nonumber \\
  & \lesssim c_q\min\bigg\{ \sum_{\mu_1=0}^\ell \|v\|_{\BB^{0,\mu_1}_{\infty,\mathcal{W}}} \|\nabla w\|_{\BB^{s,\ell -\mu_1}_{p,r,\mathcal{W}}} ,
  \sum_{\mu_1=0}^\ell \|v\|_{\BB^{s,\mu_1}_{p,r,\mathcal{W}}} \|\nabla w\|_{\BB^{0,\ell-\mu_1}_{\infty,\mathcal{W}}},
  \sum_{\mu_1=0}^\ell \|v\|_{\BB^{1,\mu_1}_{\infty,\mathcal{W}}} \|w\|_{\BB^{s,\ell-\mu_1}_{p,r,\mathcal{W}}} \bigg\}.
\end{align}

On the other hand, we argue as \eqref{es:BB-LP} to infer that for every $\lambda \in \{0,1,\cdots,\ell\}$,
\begin{align}\label{es:Ups-2}
  & \quad \big\| 2^{qs} \|\Upsilon_{q,\lambda}^2\|_{L^p}\big\|_{\ell^r(q\geq -1)}
  \lesssim \sum_{-1\leq q_1\leq 3} \|(T_{\mathcal{W}\cdot\nabla})^\lambda (\Delta_{q_1}v\,\cdot\nabla \widetilde\Delta_{q_1} w)\|_{L^p} \nonumber \\
  & \lesssim \sum_{-1\leq q_1\leq 3}  \min\Big\{ \|\Delta_{q_1}v\|_{L^p}
  \|\widetilde\Delta_{q_1}\nabla w\|_{L^\infty}, \|\Delta_{q_1}v\|_{L^\infty} \|\widetilde\Delta_{q_1}\nabla w\|_{L^p} \Big\} \nonumber \\
  & \lesssim \min\Big\{ \|v\|_{B^0_{\infty,\infty}} \|\nabla w\|_{B^s_{p,r}} , \|v\|_{B^s_{p,r}}
  \|\nabla w\|_{B^0_{\infty,\infty}}, \|v\|_{B^0_{\infty,\infty}} \|w \|_{B^s_{p,r}}  \Big\} .
\end{align}
Hence, collecting the above estimates \eqref{decom:Tw-el-Rem}--\eqref{es:Ups-2} yields the desired inequality \eqref{es:Tw-rem-1}.

(4) We prove \eqref{normBB-equiv} by the induction method. Suppose that \eqref{normBB-equiv} holds for $\ell\in \{0,1,\cdots,k-1\}$, we next prove that it holds for the $(\ell+1)$-case.
Bony's decomposition gives
\begin{align}\label{decom:Tw-el-phi}
  (T_{\mathcal{W}\cdot\nabla})^{\ell+1} \phi &  = (T_{\mathcal{W}\cdot\nabla})^\ell (T_{\mathcal{W}\cdot\nabla} - \partial_\mathcal{W})\phi + (T_{\mathcal{W}\cdot\nabla})^\ell \partial_\mathcal{W} \phi \nonumber\\
  & = - (T_{\mathcal{W}\cdot\nabla})^\ell (T_{\nabla \phi}\cdot \mathcal{W}) - (T_{\mathcal{W}\cdot\nabla})^\ell R(\mathcal{W}\cdot,\nabla \phi) + (T_{\mathcal{W}\cdot\nabla})^\ell \partial_\mathcal{W} \phi.
\end{align}
According to \eqref{es:Tw-parap-2}, \eqref{es:Tw-rem-1} and the induction assumption, we get
\begin{align}\label{es:Tw-para-key}
  \|(T_{\mathcal{W}\cdot\nabla})^\ell (T_{\nabla \phi}\cdot \mathcal{W})\|_{B^s_{p,r}}
  \lesssim \|T_{\nabla \phi}\cdot \mathcal{W}\|_{\BB^{s,\ell}_{p,r,\mathcal{W}}}  \lesssim
  \| \phi\|_{\BB^{s,\ell}_{p,r,\mathcal{W}}} \|\mathcal{W}\|_{\BB^{1,\ell}_\mathcal{W}}
  \lesssim \|\phi\|_{\BB^{s,\ell}_{p,r,\mathcal{W}}} \lesssim \|\phi\|_{\bb^{s,\ell}_{p,r,\mathcal{W}}},
\end{align}
and
\begin{align*}
  \|(T_{\mathcal{W}\cdot\nabla})^\ell R(\mathcal{W}\cdot,\nabla \phi)\|_{B^s_{p,r}}
  \lesssim \|R(\mathcal{W}\cdot,\nabla \phi)\|_{\BB^{s,\ell}_{p,r,\mathcal{W}}}
  \lesssim \|\mathcal{W}\|_{\BB^{1,\ell}_\mathcal{W}} \|\phi\|_{\BB^{s,\ell}_{p,r,\mathcal{W}}}
  \lesssim \|\phi\|_{\BB^{s,\ell}_{p,r,\mathcal{W}}} \lesssim \|\phi\|_{\bb^{s,\ell}_{p,r,\mathcal{W}}},
\end{align*}
where in the above we have used the fact $\|\mathcal{W}\|_{\BB^{1,\ell}_\mathcal{W}}
\leq \|\mathcal{W}\|_{\BB^{1,k-1}_\mathcal{W}}<\infty$. 
The induction assumption also guarantees that
\begin{align*}
  \|(T_{\mathcal{W}\cdot\nabla})^\ell \partial_\mathcal{W} \phi\|_{B^s_{p,r}}  \lesssim \|\partial_\mathcal{W} \phi\|_{\BB^{s,\ell}_{p,r,\mathcal{W}}}
  \lesssim \sum_{\lambda=0}^\ell \|\partial_\mathcal{W}^{\lambda+1} \phi\|_{B^s_{p,r}}
  \lesssim  \sum_{\lambda=1}^{\ell+1} \|\partial_\mathcal{W}^\lambda \phi\|_{B^s_{p,r}}
  \lesssim \|\phi\|_{\bb^{s,\ell+1}_{p,r,\mathcal{W}}} .
\end{align*}
Gathering the above estimates lead to
\begin{align}\label{es:phiBBbb-equ}
  \|\phi\|_{\BB^{s,\ell+1}_{p,r,\mathcal{W}}} = \|(T_{\mathcal{W}\cdot\nabla})^{\ell +1} \phi\|_{B^s_{p,r}} + \|\phi\|_{\BB^{s,\ell}_{p,r,\mathcal{W}}}
  \lesssim \|\phi\|_{\bb^{s,\ell+1}_{p,r,\mathcal{W}}}.
\end{align}

For the second part of \eqref{normBB-equiv}, note that
\begin{align*}
  \partial_\mathcal{W}^{\ell+1} \phi 
  = \partial_\mathcal{W}^\ell (T_{\nabla \phi}\cdot \mathcal{W}) + \partial_\mathcal{W}^\ell R(\mathcal{W}\cdot,\nabla \phi) + \partial_\mathcal{W}^\ell (T_{\mathcal{W}\cdot\nabla}) \phi.
\end{align*}
Thanks to \eqref{es:Tw-parap-2}, \eqref{es:Tw-rem-1} and the induction assumption, we find
\begin{align}
  \|\partial_\mathcal{W}^\ell (T_{\nabla \phi}\cdot \mathcal{W})\|_{B^s_{p,r}} &
  \leq \|T_{\nabla \phi}\cdot \mathcal{W}\|_{\bb^{s,\ell}_{p,r,\mathcal{W}}}
  \lesssim  \|T_{\nabla \phi}\cdot \mathcal{W}\|_{\BB^{s,\ell}_{p,r,\mathcal{W}}} \nonumber \\
  & \lesssim \|\phi\|_{\BB^{s,\ell}_{p,r,\mathcal{W}}} \|\mathcal{W}\|_{\BB^{1,\ell}_\mathcal{W}}
  \lesssim \|\phi\|_{\BB^{s,\ell}_{p,r,\mathcal{W}}}, \nonumber
\end{align}
and
\begin{align*}
  \|\partial_\mathcal{W}^\ell R(\mathcal{W}\cdot,\nabla \phi)\|_{B^s_{p,r}} & \lesssim
  \|R(\mathcal{W}\cdot,\nabla \phi)\|_{\bb^{s,\ell}_{p,r,\mathcal{W}}} \lesssim \|R(\mathcal{W}\cdot,
  \nabla \phi)\|_{\BB^{s,\ell}_{p,r,\mathcal{W}}} \\
  & \lesssim \|\mathcal{W}\|_{\BB^{1,\ell}_\mathcal{W}} \|\phi\|_{\BB^{s,\ell}_{p,r,\mathcal{W}}}
  \lesssim \|\phi\|_{\BB^{s,\ell}_{p,r,\mathcal{W}}},
\end{align*}
and
\begin{align*}
  \|\partial_\mathcal{W}^\ell (T_{\mathcal{W}\cdot\nabla})\phi\|_{B^s_{p,r}}
  \lesssim \|(T_{\mathcal{W}\cdot\nabla}) \phi\|_{\bb^{s,\ell}_{p,r,\mathcal{W}}}
  \lesssim \|(T_{\mathcal{W}\cdot\nabla}) \phi\|_{\BB^{s,\ell}_{p,r,\mathcal{W}}} \lesssim \|\phi\|_{\BB^{s,\ell+1}_{p,r,\mathcal{W}}}.
\end{align*}
Collecting the above estimates yields
\begin{align}\label{es:phiBBbb-equ2}
  \|\phi\|_{\bb^{s,\ell+1}_{p,r,\mathcal{W}}} = \|\partial_\mathcal{W}^{\ell+1} \phi\|_{B^s_{p,r}}
  + \|\phi\|_{\bb^{s,\ell}_{p,r,\mathcal{W}}} \lesssim \|\phi\|_{\BB^{s,\ell+1}_{p,r,\mathcal{W}}}.
\end{align}
Hence \eqref{es:phiBBbb-equ}, \eqref{es:phiBBbb-equ2} and the induction method ensure the desired estimate \eqref{normBB-equiv}.

By using \eqref{es:Tw-parap-1b} in place of \eqref{es:Tw-parap-2}, the estimation of \eqref{normBB-equivb} is almost identical to the proof of \eqref{normBB-equiv} with $(s,p,r)$ replaced by $(1,\infty,1)$, and thus we omit the details.

Now we consider \eqref{eq:W-BBbb-equ}. By using \eqref{es:Tw-parap-2c}, the estimate \eqref{es:Tw-para-key} with $\phi=W$
can be improved to hold for every $s>-1$:
\begin{align}
  \|(T_{\mathcal{W}\cdot\nabla})^\ell (T_{\nabla \mathcal{W}} \cdot \mathcal{W})\|_{B^s_{p,r}}
  & \lesssim \|T_{\nabla \mathcal{W}}\cdot \mathcal{W}\|_{\BB^{s,\ell}_{p,r,\mathcal{W}}}
  \lesssim \|\nabla \mathcal{W}\|_{\BB^{0,\ell}_\mathcal{W}} \|\mathcal{W}\|_{\BB^{s,\ell}_{p,r,W}}
  \lesssim \|\mathcal{W}\|_{\BB^{s,\ell}_{p,r,\mathcal{W}}} , \nonumber
\end{align}
thus along the same lines as proving \eqref{es:phiBBbb-equ}, we can easily verify \eqref{eq:W-BBbb-equ}.

We then prove \eqref{eq:BBbb-equiv1} by the induction method. Clearly it holds for $\ell=0$.
Suppose that \eqref{eq:BBbb-equiv1} holds for $\ell\in \{0,1,\cdots,k-1\}$, we intend to prove it for the $(\ell+1)$-case.
In view of \eqref{decom:Tw-el-phi}, and by applying \eqref{es:Tw-parap-2c}, \eqref{es:Tw-rem-1}, \eqref{normBB-equivb}-\eqref{eq:W-BBbb-equ} and the induction assumption,
we infer that for every $s\geq 1$,
\begin{align}
  \|(T_{\mathcal{W}\cdot\nabla})^\ell (T_{\nabla \phi}\cdot \mathcal{W})\|_{B^s_{p,r}} \leq \|T_{\nabla \phi}\cdot \mathcal{W}\|_{\BB^{s,\ell}_{p,r,\mathcal{W}}}
  \lesssim  \|\phi\|_{\BB^{1,\ell}_\mathcal{W}} \|\mathcal{W}\|_{\BB^{s,\ell}_{p,r,\mathcal{W}}} \lesssim \|\phi\|_{\bb^{1,\ell}_\mathcal{W}} \|\mathcal{W}\|_{\bb^{s,\ell}_{p,r,\mathcal{W}}} , \nonumber
\end{align}
and
\begin{align}
  \|(T_{\mathcal{W}\cdot\nabla})^\ell R(\mathcal{W}\cdot,\nabla \phi)\|_{B^s_{p,r}} &
  \lesssim \|R(\mathcal{W}\cdot,\nabla \phi)\|_{\BB^{s,\ell}_{p,r,\mathcal{W}}}
  \lesssim \|\mathcal{W}\|_{\BB^{1,\ell}_\mathcal{W}} \|\phi\|_{\BB^{s,\ell}_{p,r,\mathcal{W}}} \nonumber \\
  & \lesssim \|\phi\|_{\BB^{s,\ell}_{p,r,\mathcal{W}}} \lesssim \|\phi\|_{\bb^{s,\ell}_{p,r,\mathcal{W}}}
  +  \|\phi\|_{\bb^{1,\ell}_\mathcal{W}} \|\mathcal{W}\|_{\bb^{s,\ell}_{p,r,\mathcal{W}}}, \nonumber
\end{align}
and
\begin{align}
  \|(T_{\mathcal{W}\cdot\nabla})^\ell \partial_\mathcal{W} \phi\|_{B^s_{p,r}}
  \leq \|\partial_\mathcal{W} \phi\|_{\BB^{s,\ell}_{p,r,\mathcal{W}}}
  & \lesssim \|\partial_\mathcal{W} \phi\|_{\bb^{s,\ell}_{p,r,\mathcal{W}}}
  + \|\partial_\mathcal{W} \phi\|_{\bb^{1,\ell}_\mathcal{W}} \|\mathcal{W}\|_{\bb^{s,\ell}_{p,r,\mathcal{W}}} \nonumber \\
  & \lesssim \|\phi\|_{\bb^{s,\ell+1}_{p,r,\mathcal{W}}} + \|\phi\|_{\bb^{1,\ell+1}_\mathcal{W}}
  \|\mathcal{W}\|_{\bb^{s,\ell}_{p,r,W}}. \nonumber
\end{align}
Gathering the above estimates gives
\begin{align}
  \|\phi\|_{\BB^{s,\ell+1}_{p,r,\mathcal{W}}} = \|(T_{\mathcal{W}\cdot\nabla})^{\ell +1} \phi\|_{B^s_{p,r}}
  + \|\phi\|_{\BB^{s,\ell}_{p,r,\mathcal{W}}}
  \lesssim \|\phi\|_{\bb^{s,\ell+1}_{p,r,\mathcal{W}}} + \|\phi\|_{\bb^{1,\ell+1}_\mathcal{W}}
  \|\mathcal{W}\|_{\bb^{s,\ell+1}_{p,r,\mathcal{W}}}, \nonumber
\end{align}
which corresponds to \eqref{eq:BBbb-equiv1} at the $(\ell+1)$-case, and thus the desired estimate \eqref{eq:BBbb-equiv1} is proved.

\subsection{Proof of Lemma \ref{lem:Tw-es-k1}}\label{subsec:lem5.3}

  (1) Recalling that the quantities $R_q^i$ ($i=1,2,4,5$) are given by \eqref{eq:Ra123}-\eqref{eq:Rq45},
and noting that
\begin{align}\label{eq:fact2}
  &\quad \sum_{-1\leq q_1\leq q + N_0} 2^{q_1}\| S_{q_1-1}\mathcal{W} - S_{q-1} \mathcal{W} \|_{L^\infty} \nonumber  \\
  &\leq 2^{-1}\|S_{q-1}\mathcal{W}\|_{L^\infty}
  + \sum_{q_1=0}^{q-1} 2^{q_1} \sum_{q_2=q_1}^{q-1} \|\Delta_{q_2}\mathcal{W}\|_{L^\infty} + \sum_{q_1=q+1}^{q+N_0} 2^{q_1} \sum_{q_2=q}^{q_1-1} \|\Delta_{q_2}\mathcal{W}\|_{L^\infty} \nonumber \\
  & \leq C_0 \| \mathcal{W}\|_{W^{1,\infty}}
  + \sum_{q_2=0}^{q-1} \Big(\sum_{q_1= 0}^{q_2} 2^{q_1}\Big) \|\Delta_{q_2}\mathcal{W}\|_{L^\infty}
  \leq C  \|\mathcal{W}\|_{B^1_{\infty,1}},
\end{align}
it is obvious to see that
\begin{align}\label{es:Rq125}
  \|R_q^1\|_{L^p} + \|R_q^2\|_{L^p} + \|R_q^5\|_{L^p} \leq C  \|\mathcal{W}\|_{B^1_{\infty,1}} \min_{1\leq i\leq m}
  \Big( \|\alpha_i\|_{L^p} \prod_{1\leq j\neq i \leq m} \|\alpha_j\|_{L^\infty} \Big),
\end{align}
\begin{align}\label{es:Rq4}
  |R_q^4| \leq C \sum_{i=1}^m \min\bigg\{ \|(T_{\mathcal{W}\cdot\nabla})\alpha_i\|_{L^p} \prod_{j\neq i} \|\alpha_j\|_{L^\infty}, \|(T_{\mathcal{W}\cdot\nabla})\alpha_i\|_{L^\infty} \min_{1\leq j\neq i\leq m} \Big(\|\alpha_j\|_{L^p} \prod_{l\neq i,j} \|\alpha_l\|_{L^\infty} \Big) \bigg\}.
\end{align}
Combining \eqref{es:Rq125} with \eqref{es:Rq4} leads to the desired estimate \eqref{eq:Tw-Rq-2}.


(2) Thanks to \eqref{eq:comm-exp2} and \eqref{eq:comm-exp3}, we get that for every $s\in \R$,
\begin{align*}
  & \quad \big\|\{ 2^{q s}\|[\Delta_q, T_{\mathcal{W}\cdot\nabla}] \phi\|_{L^p}\}_{q\geq -1} \big\|_{\ell^r}
  + \big\| \{ 2^{q (s-1)}\|[\nabla\Delta_q, T_{\mathcal{W}\cdot\nabla}] \phi\|_{L^p} \}_{q\geq -1} \big\|_{\ell^r} \\
  & \leq C \Big\|  2^{q s} \sum_{q_1\in \N,q_1\sim q} \|\nabla \mathcal{W}\|_{L^\infty} \|\Delta_{q_1}\phi\|_{L^p} \Big\|_{\ell^r (q\geq -1)}
  \leq C \|\mathcal{W}\|_{W^{1,\infty}} \|\phi\|_{B^s_{p,r}},
\end{align*}
and for every $s < 0$ we find 
\begin{align*}
  & \quad\big\|\{ 2^{qs} \| [S_{q-1}, T_{\mathcal{W}\cdot\nabla}] \phi \|_{L^p}\}_{q\in\N}\big\|_{\ell^r} \leq
  C \Big\| 2^{qs}  \Big( \sum_{q_1\leq q-1} \|[\Delta_{q_1},T_{\mathcal{W}\cdot\nabla}]\phi\|_{L^p} \Big) \Big\|_{\ell^r(q\in\N)} \\
  & \leq C \Big\| \sum_{q_1\leq q-1} 2^{(q-q_1)s} \Big( 2^{q_1 s} (q_1+2)^n \|[\Delta_{q_1}, T_{\mathcal{W}\cdot\nabla}]\phi\|_{L^p} \Big)\Big\|_{\ell^r(q\in \N)}\\
  & \leq C \big\|  2^{q_1 s} \|[\Delta_{q_1}, T_{\mathcal{W}\cdot\nabla}]\phi\|_{L^p}\big\|_{\ell^r(q_1\geq -1)}
  \leq C \|\mathcal{W}\|_{W^{1,\infty}} \|\phi\|_{B^s_{p,r}},
\end{align*}
thus the desired estimates \eqref{eq:Tw-es-key.2}--\eqref{eq:Tw-es-key.22} follow from a direct computation.

Similarly, observing that
\begin{align*}
  \|\Delta_q [\nabla,T_{\mathcal{W}\cdot\nabla}] \phi\|_{L^\infty} \lesssim \sum_{q_1\in \N, |q_1- q|\leq 5}
  \|\Delta_q(\nabla S_{q_1-1} \mathcal{W} \cdot\nabla \Delta_{q_1} \phi)\|_{L^\infty}
  \lesssim \|\mathcal{W}\|_{W^{1,\infty}} \sum_{q_1\in \N, |q_1- q|\leq 5} 2^{q_1} \|\Delta_{q_1}\phi\|_{L^\infty} ,
\end{align*}
and
\begin{align*}
  2^q \| [\Delta_q, T_{\mathcal{W}\cdot\nabla}] \phi \|_{L^p} \lesssim 2^q \sum_{q_1\in \N, |q_1- q|\leq 5} 2^{-q_1}
  \|\mathcal{W}\|_{W^{1,\infty}} \|\nabla \Delta_{q_1} \phi\|_{L^p}
  \lesssim \|\mathcal{W}\|_{W^{1,\infty}} \sum_{q_1\in \N, |q_1 -q|\leq 5} \|\nabla \Delta_{q_1} \phi\|_{L^p},
\end{align*}
we can obtain \eqref{eq:Tw-es-key2.2}--\eqref{eq:Tw-es-key3.2} as desired.

(3)
By using \eqref{eq:Tw-Rq-2}--\eqref{eq:Tw-es-key.22} we deduce that
\begin{align*}
  & \quad\; 2^{qs} \|\Delta_q (T_{\mathcal{W}\cdot\nabla}) (T_{u\cdot} \nabla \phi)\|_{L^p} \\
  & \leq C 2^{qs} \sum_{q_1\in \N,|q_1-q|\leq N_1} \|\Delta_q (T_{\mathcal{W}\cdot\nabla}) \big(S_{q_1-1}u \Delta_{q_1} \nabla \phi\big)\|_{L^p} \\
  & \leq C 2^{qs} \sum_{q_1\in \N,|q_1-q|\leq N_1} \min\Big\{\|(T_{\mathcal{W}\cdot\nabla}) S_{q_1-1}u\|_{L^p}
  \|\Delta_{q_1} \nabla \phi\|_{L^\infty},
  \|(T_{\mathcal{W}\cdot\nabla}) S_{q_1-1}u\|_{L^\infty} \|\Delta_{q_1} \nabla \phi\|_{L^p}\Big\}  \\
  & \quad + C 2^{qs} \sum_{q_1\in \N,|q_1-q|\leq N_1} \min\Big\{
  \|S_{q_1-1}u\|_{L^p} \|(T_{\mathcal{W}\cdot\nabla})\Delta_{q_1}\nabla \phi\|_{L^\infty} ,
  \|S_{q_1-1}u\|_{L^\infty} \|(T_{\mathcal{W}\cdot\nabla})\Delta_{q_1}\nabla \phi\|_{L^p} \Big\}  \\
  & \quad + C 2^{qs} \sum_{q_1\in \N,|q_1-q|\leq N_1} \|\mathcal{W}\|_{B^1_{\infty,1}}
  \min\Big\{ \|S_{q_1-1}u\|_{L^p} \|\Delta_{q_1}\nabla \phi\|_{L^\infty},
  \|S_{q_1-1}u\|_{L^\infty} \|\Delta_{q_1}\nabla \phi\|_{L^p} \Big\} \\
  & \leq C c_q \min \Big\{ \big\|2^{q_1 s} \|(T_{\mathcal{W}\cdot\nabla})S_{q_1-1}u\|_{L^p}\big\|_{\ell^r(q_1\in \N)}
  \|\nabla \phi\|_{L^\infty} ,
  \sup_{q_1\in \N} \|(T_{\mathcal{W}\cdot\nabla})S_{q_1-1}u\|_{L^\infty} \|\nabla \phi\|_{B^s_{p,r}}  \Big\} \\
  & \quad + C c_q \min \Big\{ \|u\|_{B^s_{p,r}} \sum_{q_1\in \N} \|(T_{\mathcal{W}\cdot\nabla})\Delta_{q_1}\nabla \phi\|_{L^\infty} ,
  \|u\|_{B^0_{\infty,1}}  \big\| 2^{q_1 s }  \|(T_{\mathcal{W}\cdot\nabla})\Delta_{q_1} \nabla \phi\|_{L^p} \big\|_{\ell^r(q_1\in \N)} \Big\} \\
  & \quad + C c_q \|\mathcal{W}\|_{B^1_{\infty,1}} \min \big\{\|u\|_{B^s_{p,r}} \|\nabla \phi\|_{B^0_{\infty,1}} ,
  \|u\|_{B^0_{\infty,1}} \|\nabla \phi\|_{B^s_{p,r}} \big\} \\
  & \leq C c_q \big(B_1(s) + B_2(s) + B_3(s)\big),
\end{align*}
where $\{c_q\}_{q\geq -1}$ is such that $\|c_q\|_{\ell^r}=  1$ and $B_1, B_2, B_3$ are given by \eqref{B1}--\eqref{B3}.
Similarly, we can get the same estimate about $2^{qs} \|\Delta_q (T_{\mathcal{W}\cdot\nabla})(T_{\nabla \phi}\cdot u)\|_{L^p}$.

(4) Notice that
\begin{align*}
  2^{qs} \|\Delta_q(T_{\mathcal{W}\cdot\nabla}) R(u\cdot,\nabla \phi)\|_{L^p}
  & \leq C 2^{qs} \sum_{q_1\geq \max\{3,q-5\}} \|\Delta_q(T_{\mathcal{W}\cdot\nabla})\nabla\cdot \big(\Delta_{q_1}u\, \widetilde\Delta_{q_1} \phi\big)\|_{L^p} \\
  & \quad + C 2^{qs} 1_{\{q\leq 7\}} \sum_{-1\leq q_1 \leq 2} \|\Delta_q(T_{\mathcal{W}\cdot\nabla})\big(\Delta_{q_1}u\cdot \widetilde\Delta_{q_1} \nabla \phi\big)\|_{L^p}
  = : I_{1,q} + I_{2,q} .
\end{align*}
For $I_{1,q}$, thanks to \eqref{eq:Tw-es-key2.2} and \eqref{eq:Tw-es-key3.2}
we infer that for every $s>-1$,
\begin{align*}
  I_{1,q} & \lesssim 2^{q(s+1)}  \sum_{q_1\geq \max\{3,q-5\}} \|\Delta_q (T_{\mathcal{W}\cdot\nabla})\big(\Delta_{q_1}u\, \widetilde\Delta_{q_1} \phi\big)\|_{L^p}
  +  \|\mathcal{W}\|_{W^{1,\infty}} 2^{q(s+1)}   \sum_{q_1\geq \max\{3,q-5\}} \|\Delta_{q_1}u\, \widetilde{\Delta}_{q_1} \phi\|_{L^p} \\
  & \lesssim 2^{q(s+1)}  \sum_{q_1\geq \max\{3,q-5\}} \min \Big\{\|(T_{\mathcal{W}\cdot\nabla})\Delta_{q_1}u\|_{L^p} \|\widetilde\Delta_{q_1}\phi\|_{L^\infty},
  \|(T_{\mathcal{W}\cdot\nabla})\Delta_{q_1}u\|_{L^\infty} \|\widetilde\Delta_{q_1}\phi\|_{L^p}\Big\} \\
  & \quad + 2^{q(s+1)}  \sum_{q_1\geq \max\{3,q-5\}} \min \Big\{
  \|\Delta_{q_1}u\|_{L^p} \|(T_{\mathcal{W}\cdot\nabla})\widetilde\Delta_{q_1}\phi\|_{L^\infty},
  \|\Delta_{q_1}u\|_{L^\infty} \|(T_{\mathcal{W}\cdot\nabla})\widetilde\Delta_{q_1}\phi\|_{L^p}\Big\} \\
  & \quad +  \|\mathcal{W}\|_{B^1_{\infty,1}} 2^{q(s+1)} \sum_{q_1\geq \max\{3,q-5\}} \min\Big\{ \|\Delta_{q_1}u\|_{L^p} \|\widetilde\Delta_{q_1}\phi\|_{L^\infty} ,
  \|\Delta_{q_1}u\|_{L^\infty} \|\widetilde\Delta_{q_1}\phi\|_{L^p}\Big\}\\
  & \lesssim  \sum_{q_1\geq \max\{3,q-5\}} 2^{(q-q_1)(s+1)}  \min\Big\{ 2^{q_1 s} \|(T_{\mathcal{W}\cdot\nabla})\Delta_{q_1}u\|_{L^p} \|\widetilde\Delta_{q_1}\nabla \phi\|_{L^\infty},
  \|(T_{\mathcal{W}\cdot\nabla})\Delta_{q_1}u\|_{L^\infty} 2^{q_1 s} \|\widetilde\Delta_{q_1}\nabla \phi\|_{L^p}  \Big\} \\
  & \quad +  \sum_{q_1\geq \max\{3,q-5\}} 2^{(q-q_1)(s+1)}  2^{q_1 s}\cdot \\
  & \quad\quad \cdot \min\bigg\{ \|\Delta_{q_1} u\|_{L^p} \Big(\sum_{|q_2-q_1|\leq 7}\|(T_{\mathcal{W}\cdot\nabla})\Delta_{q_2}\nabla \phi\|_{L^\infty}  \Big),
  \|\Delta_{q_1} u\|_{L^\infty}  \Big(\sum_{|q_2-q_1|\leq 7}\|(T_{\mathcal{W}\cdot\nabla})\Delta_{q_2}\nabla \phi\|_{L^p}  \Big) \bigg\}\\
  & \quad +  \|\mathcal{W}\|_{B^1_{\infty,1}} \sum_{q_1\geq \max\{3,q-5\}} 2^{(q-q_1)(s+1)} \cdot \\
  & \qquad \cdot \min\bigg\{ 2^{q_1 s} \|\Delta_{q_1} u\|_{L^p} \Big(\sum_{|q_2-q_1|\leq 7} \|\Delta_{q_2}\nabla \phi\|_{L^\infty}\Big),
  \|\Delta_{q_1} u\|_{L^\infty} 2^{q_1 s} \Big(\sum_{|q_2-q_1|\leq 7} \|\Delta_{q_2}\nabla \phi\|_{L^p}\Big)\bigg\} \\
  & \lesssim c_q \min \Big\{\big\| 2^{q_1s} \|(T_{\mathcal{W}\cdot\nabla})\Delta_{q_1}u\|_{L^p} \big\|_{\ell^r } \|\nabla \phi\|_{B^0_{\infty,1}},
  \sum_{q_1\geq -1} \|(T_{\mathcal{W}\cdot\nabla}) \Delta_{q_1}u\|_{L^\infty} \|\nabla \phi\|_{B^s_{p,r}}\Big\} \\
  & \quad +  c_q \min \Big\{\|u\|_{B^0_{\infty,1}}  \big\| 2^{q_2s}
  \|(T_{\mathcal{W}\cdot\nabla})\Delta_{q_2}\nabla \phi\|_{L^p} \big\|_{\ell^r},
  \|u\|_{B^s_{p,r}} \sum_{q_2\geq -1} \|(T_{\mathcal{W}\cdot\nabla}) \Delta_{q_2}\nabla \phi\|_{L^\infty}\Big\} \\
  & \quad +  c_q  \|\mathcal{W}\|_{B^1_{\infty,1}} \min \Big\{\|u\|_{B^s_{p,r}} \|\nabla \phi\|_{B^0_{\infty,1}},
  \|u\|_{B^0_{\infty,1}} \|\nabla \phi\|_{B^s_{p,r}}\Big\}  \\
  & \leq C c_q \big(B_1(s) + B_2(s) + B_3(s)\big),
\end{align*}
where $\{c_q\}_{q\geq -1}$ is such that $\|c_q\|_{\ell^r}=  1$.
For $I_{2,q}$, by applying Bernstein's inequality, one gets
\begin{align*}
  \| I_{2,q}\|_{\ell^r}
  & \leq C \|\mathcal{W}\|_{B^1_{\infty,1}} \sup_{-1\leq q_1 \leq 2} 2^{q_1s} \min \Big\{\|\Delta_{q_1} u \|_{L^p} \|\widetilde\Delta_{q_1} \nabla \phi\|_{L^\infty},
  \|\Delta_{q_1} u \|_{L^\infty} \|\widetilde\Delta_{q_1} \nabla \phi\|_{L^p} \Big\} \\
  & \leq C \|\mathcal{W}\|_{B^1_{\infty,1}} \min\Big\{\|u\|_{B^s_{p,r}} \|\nabla \phi\|_{B^0_{\infty,1}}, \|u\|_{B^0_{\infty,1}} \|\nabla \phi\|_{B^s_{p,r}} \Big\}.
\end{align*}
Hence, gathering the above estimates on $I_{1,q}$ and $I_{2,q}$ leads to the desired estimate \eqref{es:Tw-rem-1-2}.

(5)  By arguing as \eqref{eq:Tu-phi} and \eqref{eq:Ru-phi} below, we easily find that for every $s\in (-1,1)$,
\begin{equation}\label{par-W-fact}
\begin{split}
  \|\partial_\mathcal{W} \phi - T_{\mathcal{W}\cdot\nabla} \phi\|_{B^s_{p,r}}
  \leq \|T_{\nabla \phi}\cdot \mathcal{W}\|_{B^s_{p,r}} + \|R(\mathcal{W}\cdot,\nabla \phi)\|_{B^s_{p,r}}
  \leq C \|\mathcal{W}\|_{W^{1,\infty}} \|\phi\|_{B^s_{p,r}}.
\end{split}
\end{equation}

\section{Appendix: proof of Lemmas \ref{lem:prod-es0}}\label{sec:append}

\begin{proof}[Proof of Lemma \ref{lem:prod-es0}]
Bony's decomposition gives
$u\cdot\nabla \phi = T_{u\cdot}\nabla \phi + T_{\nabla \phi} \cdot u + R(u\cdot,\nabla\phi)$.
For the paraproduct terms, by using the spectrum support property and the inequality that for every $\epsilon>0$,
\begin{equation}\label{eq:fact}
\begin{split}
  \big\|2^{-j\epsilon} \|S_{j-1} f\|_{L^p} \big\|_{\ell^r}
  \leq C \Big\|\sum_{j'\leq j-1} 2^{-(j-j')\epsilon} 2^{-\epsilon j'}  \|\Delta_{j'} f\|_{L^p} \Big\|_{\ell^r(j\geq -1)}
  \leq C \|f\|_{B^{-\epsilon}_{p,r}} ,
\end{split}
\end{equation}
we have
\begin{align}\label{eq:Tu-phi}
  & \quad 2^{-q\epsilon} \|\Delta_q (T_u\cdot\nabla \phi)\|_{L^p}
  \leq 2^{-q\epsilon} \sum_{ j\in \N,|j-q|\leq 4} \|\Delta_q (S_{j-1} u \cdot \nabla \Delta_j \phi)\|_{L^p} \nonumber \\
  & \lesssim  \sum_{ j\in\N,|j-q|\leq 4} 2^{-j\epsilon}  \min \big\{\|S_{j-1} u\|_{L^p}  \|\nabla \Delta_j \phi\|_{L^\infty}, \|S_{j-1} u\|_{L^\infty}  \|\nabla \Delta_j \phi\|_{L^p} \big\} \nonumber \\
  & \lesssim  c_q \min\Big\{\|u\|_{B^{-\epsilon}_{p,r}} \|\nabla\phi\|_{L^\infty}, \|u\|_{L^\infty} \|\nabla\phi\|_{B^{-\epsilon}_{p,r}} \Big\} ,
\end{align}
and
\begin{align*}
  & \quad 2^{-q\epsilon} \|\Delta_q (T_{\nabla \phi}\cdot u)\|_{L^\infty}\leq 2^{-q\epsilon} \sum_{j\in \N,|j-q|\leq 4} \|\Delta_q (\Delta_j u \cdot \nabla S_{j-1} \phi)\|_{L^\infty} \\
  & \lesssim  \sum_{j\in\N,|j-q|\leq 4} 2^{-j\epsilon}
  \min\big\{ \|\Delta_j u\|_{L^p} \|\nabla S_{j-1} \phi\|_{L^\infty}, \|\Delta_j u\|_{L^\infty} \|\nabla S_{j-1} \phi\|_{L^p} \big\} \\
  & \lesssim  c_q \min\Big\{\|u\|_{B^{-\epsilon}_{p,r}} \|\nabla\phi\|_{L^\infty}, \|u\|_{L^\infty} \|\nabla\phi\|_{B^{-\epsilon}_{p,r}} \Big\} ,
\end{align*}
where $\{c_q\}_{q\geq -1}$ is such that $\| c_q\|_{\ell^r} = 1$. While for the reminder term, thanks to the divergence-free property of $u$, we get that for every $\epsilon<1$,
\begin{align}\label{eq:Ru-phi}
  & \quad\;  2^{-q\epsilon} \|\Delta_q  R(u\cdot,\nabla\phi)\|_{L^p} = 2^{-q\epsilon}  \|\Delta_q \divg R(u,\phi)\|_{L^p} \nonumber \\
  & \lesssim 2^{q(1-\epsilon)}  \sum_{ j\geq 2,j\geq q-3} \|\Delta_q(\Delta_j u \widetilde{\Delta}_j \phi)\|_{L^p}
  + 2^{-q\epsilon}  \sum_{ j\leq 2,j\geq q-3} \|\Delta_q(\Delta_j u\cdot\nabla \widetilde{\Delta}_j \phi)\|_{L^p}  \nonumber \\
  & \lesssim \sum_{j\geq q-3,j\geq 2} 2^{(q-j)(1-\epsilon)}  2^{-j\epsilon} \min\big\{\|\Delta_j u\|_{L^p} \|\nabla\widetilde{\Delta}_j \phi\|_{L^\infty},
  \|\Delta_j u\|_{L^\infty} \|\nabla\widetilde{\Delta}_j \phi\|_{L^p}\big\} \nonumber \\
  & \quad + \sum_{j\geq q-3,j\leq 2} 2^{-j \epsilon} \min\big\{\|\Delta_j u \|_{L^p} \|\nabla \widetilde{\Delta}_j \phi\|_{L^\infty},
  \|\Delta_j u \|_{L^\infty} \|\nabla \widetilde{\Delta}_j \phi\|_{L^p}\big\} \nonumber \\
  & \lesssim c_q \min\Big\{\|u\|_{B^{-\epsilon}_{p,r}} \|\nabla\phi\|_{L^\infty}, \|u\|_{L^\infty} \|\nabla\phi\|_{B^{-\epsilon}_{p,r}} \Big\}.
\end{align}
Hence gathering the above estimates yields \eqref{eq:prod-es}.

\end{proof}

\vskip0.2cm

\textbf{Acknowledgements.}
D. Chae was supported partially by the grant of NRF(No. 2021R1A2C1003234). Q. Miao was partially supported by National Natural Science Foundation of China (No. 12001041).
L. Xue was partially supported by National Key Research and Development Program of China (No. 2020YFA0712900) and National Natural Science Foundation of China (No. 11771043).


\begin{thebibliography}{60}

\bibitem{AH07} H. Abidi and T. Hmidi, On the global well-posedness for Boussinesq system.
 J. Diff. Equ., \textbf{233} (2007), no. 1, 199--220.



\bibitem{BCD11} H. Bahouri, J.-Y. Chemin and R. Danchin, \textit{Fourier Analysis and Nonlinear Partial Differential Equations}. Grundlehren der
mathematischen Wissenschaften \textbf{343}, Springer-Verlag, (2011).

\bibitem{BerC} A. L. Bertozzi and P. Constantin, Global regularity for vortex patches.
Comm. Math. Phys., \textbf{152} (1993), 19--28.

\bibitem{CaoW13} C. Cao and J. Wu, Global regularity for the two-dimensional anisotropic Boussinesq equations with vertical dissipation.
Arch. Ration. Mech. Anal., \textbf{208} (2013), no. 3, 985--1004.


\bibitem{CCL19} A. Castro, D. C\'ordoba and D. Lear, On the asymptotic stability of stratified solutions for the 2D Boussinesq equations with a velocity damping term.
Math. Models Methods Appl. Sci., \textbf{29} (2019), no. 7, 1227--1277.

\bibitem{CCFG13} A. Castro, D. C\'ordoba, C. Fefferman and F. Gancedo, Breakdown of smoothness for the Muskat problem.
Arch. Ration. Mech. Anal., \textbf{208} (2013), no. 3, 805--909.

\bibitem{CCFGG13} A. Castro, D. C\'ordoba, C. Fefferman, F. Gancedo and J. G\'omez-Serrano, Finite time singularities for the free boundary incompressible Euler equations.
Ann. Math. (2), \textbf{178} (2013), no. 3, 1061--1134.

\bibitem{CCFGG19} A. Castro, D. C\'ordoba, C. Fefferman, F. Gancedo and J. G\'omez-Serrano, Splash singularities for the free boundary Navier-Stokes equations.
Ann. PDE,  \textbf{5} (2019), no. 1, Paper No. 12, 117 pp.

\bibitem{CCFG16} A. Castro, D. C\'ordoba, C. Fefferman and F. Gancedo, Splash singularities for the one-phase Muskat problem in stable regimes.
Arch. Ration. Mech. Anal., \textbf{222} (2016), no. 1, 213--243.

\bibitem{Cha06} D. Chae, Global regularity for the 2D Boussinesq equations with partial viscosity terms.
Adv. Math., \textbf{203} (2006), no. 2, 497--513.

\bibitem{CW} D. Chae and J. Wu, The 2D Boussinesq equations with logarithmically supercritical velocities, Adv. Math., \textbf{230},  (2012), 1618-1645.


\bibitem{Chem88} J.-Y. Chemin, Calcul symbolique et propagation des singularit\'es pour les \'equations aux d\'eriv\'ees partielles non semilin\'eaires.
Duke Math. J., \textbf{56} (1988), 431--469.

\bibitem{Chem91} J.-Y. Chemin, Sur le mouvement des particules d'un fluide parfait incompressible bidimensionnel.
Invent. Math., \textbf{103} (1991), 599--629.

\bibitem{CKY15} K. Choi, A. Kiselev and Y. Yao, Finite time blow up for a 1D model of 2D Boussinesq system.
Comm. Math. Phys., \textbf{334} (2015), no. 3, 1667--1679.

\bibitem{CHKLSY} K. Choi, T.Y. Hou, A. Kiselev, G. Luo, V. Sverak and Y. Yao,
On the finite time blowup of a one-dimensional model for the three-dimensional axisymmetric Euler equations.
Comm. Pure Appl. Math., \textbf{70} (2017), 2218--2243.

\bibitem{ConD99} P. Constantin and C. R. Doering, Infinite Prandtl number convection.
J. Stat. Phys., \textbf{94} (1999), 159--172.

\bibitem{CFMR} D. C\'ordoba, M.A. Fontelos, A.M. Mancho and J.L. Rodrigo,
Evidence of singularities for a family of contour dynamics equations.
Proc. Nat. Acad. Sci. USA, \textbf{102} (2005), no. 17, 5949--5952.


\bibitem{CouS14} D. Coutand and S. Shkoller, On the finite-time splash and splat singularities for the 3-D free-surface Euler equations.
Comm. Math. Phys., \textbf{325} (2014), no. 1, 143--183.

\bibitem{CouS19} D. Coutand and S. Shkoller, On the splash singularity for the free-surface of a Navier-Stokes fluid.
Ann. Inst. H. Poincar\'e Anal. Non Lin\'eaire, \textbf{36} (2019), no. 2, 475--503.

\bibitem{Dan97} R. Danchin, Poches de tourbillon visqueuses.
J. Math. Pures Appl., (9) 76 (1997), no. 7, 609--647.



%

%

\bibitem{DanP09} R. Danchin and M. Paicu, Les th\'eor\'emes de Leray et de Fujita-Kato pour le systme de Boussinesq partiellement visqueux.
Bull. Soc. Math. France, \textbf{136} (2008), no. 2, 261--309.


\bibitem{DanZ17} R. Danchin and X. Zhang, Global persistence of geometrical structures for the Boussinesq equation with no diffusion.
Comm. Partial Differ. Equ., \textbf{42} (2017), no. 1, 68--99.

\bibitem{DanZ17b} R. Danchin and X. Zhang, On the persistence of H\"older regular patches of density for the inhomogeneous Navier-Stokes equations.
J. Ec. Polytech. Math., \textbf{4} (2017), 781--811.
%
%

\bibitem{ES94} W. E and C.-W. Shu, Small-scale structures in Boussinesq convection.
Phys. Fluids, \textbf{6} (1994), no. 1, 49--58.

\bibitem{ElgJ20} T.M. Elgindi and I.-J. Jeong,
Finite-time singularity formation for strong solutions to the Boussinesq system.
Ann. PDE, \textbf{6} (2020), no. 1, paper no. 5, 50 pp.

\bibitem{ElgW16} T.M. Elgindi and K. Widmayer, Sharp decay estimates for an anisotropic linear semigroup and applications to the SQG and inviscid Boussinesq systems.
SIAM J. Math. Anal., \textbf{47} (2016), no. 6, 4672--4684.

\bibitem{FIL16} C. Fefferman, A.D. Ionescu and V. Lie, On the absence of splash singularities in the case of two-fluid interfaces.
Duke Math. J., \textbf{165} (2016), no. 3, 417--462.


%
%
%
\bibitem{FN09} E. Feireisl, A. Novotny, The Oberbeck-Boussinesq approximation as a singular limit of the full Navier-Stokes-Fourier system.
J. Math. Fluid Mech. 11 (2009), no. 2, 274--302.

%


\bibitem{GGJ17} F. Gancedo and E. Garc\'ia-Ju\'arez,
Global regularity for 2D Boussinesq temperature patches with no diffusion.
Ann. PDE, \textbf{3} (2017), no. 2, Art. 14, 34pp.

\bibitem{GGJ18} F. Gancedo and E. Garc\'ia-Ju\'arez,
Global regularity of 2D density patches for inhomogeneous Navier-Stokes.
Arch. Ration. Mech. Anal., \textbf{229} (2018), no. 1, 339--360.

\bibitem{GGJ20} F. Gancedo and E. Garc\'ia-Ju\'arez,
Regularity results for viscous 3D Boussinesq temperature fronts.
Comm. Math. Phys., \textbf{376} (2020), 1705--1736.

\bibitem{GP21} F. Gancedo and P. Neel,
On the local existence and blow-up for generalized SQG patches.
Ann. PDE, \textbf{7} (2021), no. 1, Paper No. 4, 63 pp.

\bibitem{GanS14} F. Gancedo and R. Strain, Absence of splash singularities for surface quasi-geostrophic sharp fronts and the Muskat problem.
Proc. Nat. Acad. Sci. USA, \textbf{111} (2014), no. 2, 635--639.

\bibitem{Gil82} A. Gill, \emph{Atmosphere-Ocean Dynamics}. International Geophysics Series, vol. 30. Academic Press, New York (1982).
%
%
%

\bibitem{HK07} T. Hmidi and S. Keraani, On the global well-posedness of the two-dimensional Boussinesq system with a zero diffusivity.
Adv. Diff. Equ., \textbf{12} (2007), no. 4, 461--480.

\bibitem{HKR10} T. Hmidi, S. Keraani and F. Rousset, Global well-posedness for a Boussinesq-Navier-Stokes system with critical dissipation.
J. Differential Equations, \textbf{249} (2010), 2147--2174.

\bibitem{HKR11} T. Hmidi, S. Keraani and F. Rousset, Global well-posedness for Euler-Boussinesq system with critical dissipation.
Comm. Par. Diff. Equ., \textbf{36} (2011) no. 3, 420--445.


\bibitem{HouL05} T. Hou and C. Li, Global well-posedness of the viscous Boussinesq equations.
Discrete Contin. Dyn. Syst., \textbf{12} (2005), no. 1, 1--12.

\bibitem{HKZ15} W. Hu, I. Kukavica and M. Ziane, Persistence of regularity for the viscous Boussinesq equations with zero diffusivity.
Asympt. Anal., \textbf{91} (2015), 111--124.

\bibitem{KX20} C. Khor and X. Xu, Temperature patches for the subcritical Boussinesq-Navier-Stokes system with no diffusion.
ArXiv:2007.14578v1 [math.AP].

\bibitem{KRYZ16} A. Kiselev, L. Ryzhik, Y. Yao and A. Zlato$\rm\check{s}$,
Finite time singularity for the modified SQG patch equation.
Ann. Math., \textbf{184} (2016), no. 3, 909--948.

\bibitem{KT18} A. Kiselev and C. Tan, Finite time blow up in the hyperbolic Boussinesq system.
Adv. Math., \textbf{325} (2018), 34--55.
%
%

\bibitem{LLT13} A. Larios, T. Lunasin and E. Titi, Global well-posedness for the 2D Boussinesq system with anistropic viscosity and without heat diffusion.
J. Diff. Equ., \textbf{255} (2013), no. 9, 2636--2654.

\bibitem{LiT16} J. Li and E. Titi, Global well-posedness of the 2D Boussinesq equations with vertical dissipation.
Arch. Ration. Mech. Anal., \textbf{220} (2016), no. 3, 983--1001.

\bibitem{LZ16} X. Liao and P. Zhang, On the global regularity of the two-dimensional density patch for inhomogeneous incompressible viscous flow.
Arch. Rational Mech. Anal., \textbf{220} (2016), 937--981.

\bibitem{LZ19} X. Liao and P. Zhang, Global regularity of 2D density patches for viscous inhomogeneous incompressible flow with general density: low regularity case.
Comm. Pure Appl. Math., \textbf{72} (2019), no. 4, 835--884.


\bibitem{LH14b} G. Luo and T.Y. Hou,
Toward the finite-time blowup of the 3D axisymmetric Euler equations: a numerical investigation.
Multiscale Model. Simul., \textbf{12} (2014), no. 4, 1722--1776.

\bibitem{Maj03} A. J. Majda, \emph{Introduction to PDEs and Waves for the Atmosphere and Ocean}. Courant Lect. Notes Math., vol. 9, AMS/CIMS (2003).

\bibitem{MB02} A. J. Majda and A. L. Bertozzi, \emph{Vorticity and incompressible flow}. Cambridge Texts in Applied Mathematics, 27. Cambridge University Press, Cambridge, (2002).


%

\bibitem{Mof01} H. K. Moffatt, Some remarks on topological fluid mechanics, in: R. L. Ricca (Ed.), An Introduction to the Geometry
and Topology of Fluid Flows, Kluwer Academic, Dordrecht, (2001), pp. 3--10.
%
%

\bibitem{Ped87} J. Pedlosky, \emph{Geophysical Fluid Dynamics}. Springer, New York (1987).


\bibitem{RS96} T. Runst and W. Sickel, \emph{Sobolev spaces of fractional order, Nemytskij operator, and nonlinear partial differential equations}.
de Gruyter Series in Nonlinear Analysis and Applications 3, De Gruyter, (1996).

\bibitem{Ryc99} V. S. Rychkov, On restrictions and extensions of the Besov and Triebel-Lizorkin spaces with respect to Lipschitz domains.
J. Lond. Math. Soc., \textbf{60} (1999), no. 1, 237--257.

\bibitem{Stein} E. M. Stein, \emph{Singular integrals and differentiability properties of functions}.
Princeton Landmarks in Mathematics, Princeton University Press, Princeton (1970).

\bibitem{Sue15} F. Sueur, Viscous profiles of vortex patches.
J. Inst. Math. Jussieu, \textbf{14} no. 1, (2015), 1--68.
%
%



\bibitem{WX12} G. Wu and L. Xue, Global well-posedness for the 2D inviscid B\'enard system with fractional diffusivity and Yudovich's type data.
J. Differential Equations, \textbf{253} (2012), no. 1, 100--125.

\bibitem{Yud03} V. I. Yudovich, Eleven great problems of mathematical hydrodynamics.
Moscow Math. J., \textbf{3} (2003), no. 2, 711--737.


\end{thebibliography}
\end{document}